\DeclareSymbolFont{cyrletters}{OT2}{wncyr}{m}{n}
\DeclareMathSymbol{\Sha}{\mathalpha}{cyrletters}{"58}
\let\goth\mathfrak
\def\cA{\mathcal A}
\def\cB{\mathcal B}
\def\cO{\mathcal O}
\def\cL{\mathcal L}
\def\cE{\mathcal E}
\def\cF{\mathcal F}
\def\cP{\mathcal P}
\def\gm{\goth m}
\def\GG{\mathbb{G}}
\def\WW{\mathbf{W}}
\def\VV{\mathbf{V}}
\def\bV{\mathbb{V}}
\def\gG{\goth G}
\def\gP{\goth P}
\def\gH{\goth H}
\def\gN{\goth N}
\def\gS{\goth S}
\def\gT{\goth T}
\def\gV{\goth V}
\def\gY{\goth Y}
\def\gX{\goth X}
\def\gZ{\goth Z}
\def\1{\mbox{\bf 1}}
\def\rad{\mathrm{rad}}
\def\r0{{A^1}}
 \DeclareMathOperator{\Hom}{Hom}
\DeclareMathOperator{\Aut}{Aut}
\DeclareMathOperator{\Out}{Out}
\DeclareMathOperator{\Spin}{\rm Spin}
\DeclareMathOperator{\PGL}{\rm PGL}
\DeclareMathOperator{\GL}{\rm GL}
\DeclareMathOperator{\SL}{\rm SL}
\DeclareMathOperator{\SK}{\rm SK}
\newcommand{\incl}[1][r]
{\ar@<-0.2pc>@{^(-}[#1] \ar@<+0.2pc>@{-}[#1]}
\newtheorem{stheorem}{Theorem}[section]
\newtheorem{scorollary}[stheorem]{Corollary}
\newtheorem{slemma}[stheorem]{Lemma}
\newtheorem{sproposition}[stheorem]{Proposition}
\newtheorem{sremark}[stheorem]{Remark}
\newtheorem{sremarks}[stheorem]{Remarks}
\newtheorem{sexample}[stheorem]{Example}
\newtheorem{sexamples}[stheorem]{Examples}
\newtheorem{sdefinition}[stheorem]{Definition}
\newtheorem{squestion}[stheorem]{Question}
\theoremstyle{definition}
\numberwithin{equation}{section}
\newcounter{nc}
\renewcommand{\thenc}{{\rm(\roman{nc})}}
\newenvironment{romlist}%
{\begin{list}{\thenc}{
\usecounter{nc}
\parsep=0pt
\setlength  \labelwidth{\leftmargin}
\addtolength\labelwidth{-\labelsep}
}
}{\end{list}}
\newcounter{nnc}
\renewcommand{\thennc}{{\rm(\alph{nnc})}}
{\begin{list}{\thennc}{
\usecounter{nnc}
\parsep=0pt
\setlength  \labelwidth{\leftmargin}
\addtolength\labelwidth{-\labelsep}
}
}{\end{list}}
\newcommand{\pauseromlist}%
{\global\edef\savecount{\arabic{nc}}\end{romlist}}
\newcommand{\finpauseromlist}%
{\begin{romlist}\setcounter{nc}{\savecount}}
\newcounter{ctnum}
\renewcommand{\thectnum}{\textup{(\arabic{ctnum})}}
{\begin{list}{\thectnum}{
\usecounter{ctnum}
\parsep=0pt
\leftmargin=0pt%
\setlength{\itemindent}{\labelwidth}%
\addtolength{\itemindent}{\labelsep}%
}
}{\end{list}}
\def\ZZ{\mathbb{Z}}
\def\gE{\mathfrak{E}}
\def\gG{\mathfrak{G}}
\def\gP{\mathfrak{P}}
\def\gQ{\mathfrak{Q}}
\def\gL{\mathfrak{L}}
\def\gU{\mathfrak{U}}
\def\QQ{\mathbb{Q}}
\def\A{\mathbb A}
\def\bG{\text{\rm \bf G}}
\def\bW{\text{\rm \bf W}}
\def\cL{\mathcal{L}}
\def\cO{\mathcal{O}}
\def\ol{\overline}
\def\et{\text{\rm \'et}}
\def\Lie{\mathop{\rm Lie}\nolimits}
\def\2int{\mathop{2\int}\nolimits}
\def\dim{\mathop{\rm dim}\nolimits}
\def\Spec{\mathop{\rm Spec}\nolimits}
\def\Specmax{\mathop{\rm max}\nolimits}
\def\Lie{\mathop{\rm Lie}\nolimits}
\def\Hom{\mathop{\rm Hom}\nolimits}
\def\Pic{\mathop{\rm Pic}\nolimits}
\def\Aut{\text{\rm{Aut}}}
\def\Out{\text{\rm{Out}}}
\def\resp.{\mathop{\rm resp.}\nolimits}
\def\limproj{\mathop{\oalign{lim\cr
\hidewidth$\longleftarrow$\hidewidth\cr}}}
\def\limind{\mathop{\oalign{lim\cr
\hidewidth$\longrightarrow$\hidewidth\cr}}}
\def\Ker{\mathop{\rm Ker}\nolimits}
\def\lgr{\longrightarrow}
\def\la{\longleftarrow}
\font\math=cmmi10
\def\varpi{\hbox{\math\char'44}}
\def\simlgr{\buildrel\sim\over\lgr}
\def\simla{\buildrel\sim\over\la}
\def\pa{\S\kern.15em }
\def\un{\uppercase\expandafter{\romannumeral 1}}
\def\deux{\uppercase\expandafter{\romannumeral 2}}
\def\trois{\uppercase\expandafter{\romannumeral 3}}
\def\quatre{\uppercase\expandafter{\romannumeral 4}}
\def\cinq{\uppercase\expandafter{\romannumeral 5}}
\def\six{\uppercase\expandafter{\romannumeral 6}}
\def\et{\acute et}
\def\hfl#1#2#3{\smash{\mathop{\hbox to#3{\rightarrowfill}}\limits
^{\scriptstyle#1}_{\scriptstyle#2}}}
\def\gfl#1#2#3{\smash{\mathop{\hbox to#3{\leftarrowfill}}\limits
^{\scriptstyle#1}_{\scriptstyle#2}}}
\title[$R$-equivalence on group schemes]{
$R$-equivalence on reductive group schemes}
\author{Philippe Gille}\address{UMR 5208
Institut Camille Jordan - Universit\'e Claude Bernard Lyon 1
43 boulevard du 11 novembre 1918
69622 Villeurbanne cedex - France
}
\email{gille@math.univ-lyon1.fr}
\author{Anastasia Stavrova}\address{Chebyshev Laboratory, Department of Mathematics and Computer Science, St.
Petersburg State University, 14th Line V.O. 29B, 199178, Saint Petersburg, Russia, and
St. Petersburg Department of Steklov Mathematical Institute, nab. r. Fontanki 27, 191023, Saint Petersburg, Russia
}
\thanks{The second author is supported
by the Russian Science Foundation grant 19-71-30002.}
\email{anastasia.stavrova@gmail.com}
\date{\today}
\begin{document}

 \begin{abstract}
Let $A$ be an equicharacteristic henselian regular local ring. Let $k$ and $K$
be the residue field and the fraction field of $A$. We show that
for any reductive group scheme $\gG$ over $A$
there is a canonical isomorphism of Manin's $R$-equivalence class groups $\gG_K(K)/R\cong\gG_k(k)/R$.
Our proof is based on extending the notion of $R$-equivalence from algebraic varieties over fields to schemes
over commutative rings, and showing that the two canonical homomorphisms $\gG(A)/R\to \gG_k(k)/R$ and $\gG(A)/R\to \gG_K(K)/R$
are isomorphisms. If $\gG$ is a torus or an isotropic simply connected semisimple group, the first isomorphism
in fact holds without the assumption that $A$ is regular, and the second one without the assumption that $A$ is
henselian. As a consequence, if $X$ is a connected smooth scheme over a field $k$, and $\gG$ is a
reductive $X$-group scheme belonging to one
of the two classes mentioned above, then $\gG$ being retract rational at the generic point of $X$ implies
that all fibers $\gG_x$, $x\in X$, are retract rational.

\smallskip

\noindent {\em Keywords:} reductive group scheme, algebraic torus, $R$-equivalence, $A^1$-equivalence,
Whitehead group, non-stable $K_1$-functor.\\

\noindent {\em MSC 2020: 20G15, 20G35, 19B99}
\end{abstract}

\maketitle

{\small \tableofcontents }

\bigskip


\section{Introduction} \label{sect_intro}


Yu. Manin~\cite[\S 14]{Manin} introduced the notion of $R$-equivalence for points of algebraic varieties over a field.
This notion has been used extensively in the study of reductive algebraic groups, e.g.~\cite{CTS1,CTS2,Gi2,ACP}.
In the present paper, we propose
a generalized definition of $R$-equivalence that is applicable to arbitrary schemes over an affine base
and allows to extend several of the
above-mentioned results to reductive group schemes in the sense of~\cite{SGA3}.


Among reductive groups, two classes  play a fundamental
 role, the tori and the semisimple simply connected isotropic groups. In these two cases the $R$-equivalence
class group $G(k)/R$ of a reductive group $G$ over a field $k$ is already known to coincide
with  the value of a certain
functor defined on the category of all commutative $k$-algebras, and even on all commutative rings $B$
such that $G$ is defined over $B$.

Namely, if $G=T$ is a $k$-torus and
\begin{equation}
1\to F\to P\to T\to 1
\end{equation}
is a flasque resolution of $T$, then $T(k)/R$ coincides with the first Galois (or \'etale)
cohomology group $H^1_{\et}(k,F)$~\cite{CTS1}, and $H^1_{\et}(-,F)$ is the functor of the above kind.

If $G$ is a simply connected absolutely
almost simple $k$-group having a proper parabolic $k$-subgroup, then $G(k)/R$ coincides with the Whitehead group of $G$, which is the
subject of the Kneser--Tits problem, and with the group of $\mathbf{A}^1$-equivalence classes of $k$-points. Recall that the Whitehead group of $G$ over $k$ is defined
as the quotient of $G(k)$ by the subgroup
generated by the $k$-points of the unipotent radicals of all proper parabolic $k$-subgroups of $G$.
In the setting of reductive groups over rings, the Whitehead group is also called a non-stable $K_1$-functor,
which is defined as follows.

Let $B$ be a ring. If $\gG$ is a reductive $B$--group scheme equipped with a parabolic
$B$--subgroup $\gP$ of unipotent radical $\rad_u(\gP)$,  we define the elementary subgroup
$E_\gP(B)$ of $\gG(B)$ to be the subgroup generated by $\rad_u(\gP)$ and
$\rad_u(\gP^{-})$ where $\gP^{-}$ is an opposite $B$--parabolic to $\gP$ (it is independent of that choice,
see \cite[\S 1]{PS}).
We define the non stable $K_1$-functor
$K_1^{\gG, \gP}(B)= \gG(B)/E_\gP(B)$
called also the Whitehead coset.
We say that $\gG$ has $B$-rank $\ge n$,
if every normal semisimple $B$-subgroup of $\gG$ contains $(\GG_{m,B})^n$. If $B$ is semilocal and $\gP$ is minimal,
or if the $B$-rank of $\gG$ is $\ge 2$ and $\gP$ is strictly proper
(i.e. $\gP$ intersects properly every semisimple normal subgroup of $G$), then $E_\gP(B)$ is a normal subgroup independent of the specific choice of
$\gP$~\cite[Exp. XXVI]{SGA3},~\cite{PS}
and the group $K_1^{\gG, \gP}(B)$
is  denoted often by $K_1^{\gG}(B)$ in that case.

A related, more universal construction is the 1st Karoubi-Villamayor
$K$-functor, or the group of $\mathbf{A}^1$-equivalence classes,  denoted here by $\gG(B)/\r0$
where  $\r0\gG(B)$ consists in the (normal) subgroup of $\gG(B)$
generated by the elements $g(0) g^{-1}(1)$ for $g$ running over $\gG(B[t])$.

Coming back to the field case, if $G$ is a semisimple simply connected over a field $k$ and equipped with a strictly proper parabolic
$k$--subgroup $P$, the preceding paragraph
defines the groups  $K_1^{G, P}(k)$ and $G(k)/\r0$ and
we know that the  natural maps
$$
K_1^{G, P}(k) \to G(k)/\r0 \to G(k)/R
$$
are bijective~\cite{Gi2}.
In the present paper, we investigate to which extent
such a result holds over the ring $B$, especially in the semilocal case and in the regular case.

Our first task is the extension  of  the notion of $R$-equivalence for rational points
of algebraic varieties to integral points of a $B$-scheme in such a way that it is functorial with respect
to ring homomorphisms.
 This is the matter of section \ref{sec_R_eq};  an advantage of $R$-equivalence
 is  the nice functoriality with respect to fibrations.

In the subsequent sections we study the properties of $R$-equivalence on reductive group schemes.
 For tori over regular rings, the  Colliot-Th\'el\`ene and Sansuc computation of
 $R$--equivalence extend verbatim to the ring setting, see \S \ref{subsec_tori}.

For non-toral reductive groups we obtain several results under the assumption that $B$
is an equicharacteristic semilocal regular domain.
Namely, we show in Theorem~\ref{thm_main} that
for a   semisimple simply connected $B$--group $\gG$ of $B$-rank  $\geq 2$,
the maps
 $$
K_1^{\gG, \gP}(B) \to \gG(B)/\r0 \to \gG(B)/R
$$ are isomorphisms;
if the $B$-rank of $\gG$ is only $\ge 1$, then the second map is an isomorphism (Theorem~\ref{thm_KV_R}).
 In particular, this provides several new cases where
 $E_\gP(B)=\gG(B)$ holds (Cor.\ \ref{cor_main}).

Let $K$ be the fraction field of $B$. Another  main result is the surjectivity of the map
$\gG(B)/R \to \gG(K)/R$, assuming either that $\gG$ is a reductive group of $B$-rank $\ge 1$,
or that $\gG$ has no parabolic subgroups over the residue fields of $B$ (Theorem \ref{thm:surj}).
If $\gG$ is simply connected semisimple of $B$-rank $\ge 1$, then this map is an isomorphism
(Theorem~\ref{thm_KV_R}).
This statement was previously known
(for $\gG(B)/\r0$ instead of $\gG(B)/R$) in the case where $\gG$ is defined over an infinite perfect subfield of
$B$ and is of classical type, see~\cite[Corollary 4.3.6]{AHW} and~\cite[Example 2.3]{Mo-book}.

As a corollary,  we conclude that if $\gG$ is a $B$-torus (necessarily isotrivial \cite[X.5.16]{SGA3}) or a simply connected semisimple $B$-group of $B$-rank $\ge 2$, then $\gG(C)/R=1$ for each semilocal $B$--ring $C$
 if and only if $\gG_K$ is retract rational over $K$
(Proposition~\ref{prop_retract_torus} and Theorem~\ref{thm_vanish}).
In particular, if $X$ is a connected smooth scheme over a field $k$, and $\gG$ is a
reductive $X$-group scheme belonging to one
of the two classes mentioned above, then $\gG$ being retract rational at the generic point of $X$ implies
that all fibers $\gG_x$, $x\in X$, are retract rational. This is reminiscent of the recent results on the rationality
of fibers of smooth proper schemes over smooth curves~\cite{KoTsch,NiSh}.

The assumption that $B$ is equicharacteristic arises from the fact that we
use a geometric construction developped by I. Panin for the proof of the Serre--Grothendieck
conjecture for equicharacteristic semilocal regular rings~\cite[Theorem 2.5]{Pa} (see also~\cite{PaStV,FP}).
Recently, K. \v{C}esnavi\v{c}ius partially generalized this construction
to semilocal regular rings which are essentially smooth
over a discrete valuation ring and proved the Serre--Grothendieck conjecture for quasi-split
reductive groups over such rings in the unramified case~\cite{Ces-GS}.
We expect that in the future this approach will yield a similar extension of our results.


Another motivation for the present work was to address the specialization problem for
$R$-equivalence~\cite[6.1]{CT},~\cite{Ko},~\cite{Gi1}.
Let $A$ be a  henselian  local domain  of residue field $k$
and fraction field $K$.
Let $\gG$ be a reductive $A$--group scheme and denote by $G= G \times_A k$
its closed fiber. In this setting,
the specialization problem asks whether there exists a natural  specialization
homomorphism $\gG(K) /R \to G(k)/R$ and a
lifting map $G(k)/R \to \gG(K) /R$.
It makes sense to approach these questions using the generalized $R$-equivalence for $\gG$, since
we may investigate whether the maps in the diagram
$$
\xymatrix{
G(k)/R &\ar[l]  \gG(A)/R  \ar[r]  & \gG(K)/R
}
$$
are injective/surjective/bijective. In general, the only apriori evidence
is the surjectivity of $\gG(A)/R \to G(k)/R$ which follows from the surjectivity
of $\gG(A) \to G(k)$ (Hensel's lemma). We prove that if $\gG$ is a torus or a simply connected semisimple group
scheme equipped with a strictly proper parabolic $A$-subgroup, then the map $\gG(A)/R \to G(k)/R$
is an isomorphism (Proposition~\ref{prop_torus2} and Theorem~\ref{thm:hens-isotr}).
In the case where $A$ is a  henselian  regular local ring containing a prime field $k_0$,
we prove  that for any reductive $\gG$ there are two isomorphisms
$$
    G(k)/R  \simla \gG(A)/R \simlgr G(K)/R
$$
and in particular there is a well-defined specialization (resp.\ lifting) homomorphism (Theorem~\ref{thm:hens-inj}).
Note that the recent results  on the local-global principles
over semi-global fields~\cite{CTHHKPS} crucially use the existence of an (independently constructed) specialization map
for two-dimensional rings.

\smallskip


\bigskip

\noindent{\bf Acknowledgments}. We thank K.~\v{C}esnavi\v{c}ius for
valuable comments in particular about fields of representatives
for henselian rings. We thank D.~Izquierdo for useful conversations. Finally we  are indebted to the referee for his constructive review.

\bigskip

\noindent{\bf Notations and conventions.}

We use mainly  the terminology and notation of Grothendieck-Dieudonn\'e
\cite[\S 9.4  and 9.6]{EGA1},  which agrees with that of Demazure-Grothendieck used in \cite[Exp. I.4]{SGA3}

Let $S$ be a scheme and let $\cE$ be a quasi-coherent sheaf over $S$.
 For each morphism  $f:T \to S$,
we denote by $\cE_{T}=f^*(\cE)$ the inverse image of $\cE$ by the morphism $f$.
 Recall that the $S$--scheme
$\VV(\cE)=\Spec\bigl( \mathrm{Sym}^\bullet(\cE)\bigr)$ is affine over $S$ and
represents the $S$--functor $T \mapsto \Hom_{\cO_T}(\cE_{T}, \cO_T)$
\cite[9.4.9]{EGA1}.

We assume now that $\cE$ is locally free of finite rank
and denote by $\cE^\vee$ its dual.
In this case the affine $S$--scheme $\VV(\cE)$ is  of finite presentation
(ibid, 9.4.11); also
the $S$--functor $T \mapsto H^0(T, \cE_{(T)})= \Hom_{\cO_T}(\cO_T, \cE_{T} )$
is representable by the  affine $S$--scheme $\VV(\cE^\vee)$
which is also denoted by  $\WW(\cE)$  \cite[I.4.6]{SGA3}.

For scheme morphisms $Y \to X \to S$, we denote by $\prod\limits_{X/S}(Y/X)$
the $S$--functor defined by $$
\Bigl( \prod\limits_{X/S}(Y/X) \Bigr)(T)= Y(X \times_S T)
$$
for each $S$--scheme $T$. Recall that if $\prod\limits_{X/S}(Y/X)$ is representable by an $S$-scheme, this scheme is called the Weil restriction of $Y$ to $S$.

If $\gG$ is a $S$--group scheme locally of finite presentation, we denote by $H^1(S, \gG)$
the set of isomorphism classes of sheaf $\gG$--torsors
 for the fppf topology.


\section{$R$-equivalence for schemes}\label{sec_R_eq}



\subsection{Definition} Let $B$ be a ring (unital, commutative).
We denote by $\Sigma$ the multiplicative subset
of polynomials $P \in B[T]$ satisfying $P(0), P(1) \in B^\times$.
Note that evaluation at $0$ (and $1$) extend from $B[t]$
to the localization $B[t]_\Sigma$.

Let $\cF$ be a $B$-functor in sets.
We say that two points $x_0,x_1 \in \cF(B)$ are \emph{directly $R$--equivalent} if
there exists
$x \in \cF\bigl( B[t]_\Sigma \bigr)$
such that  $x_0=x(0)$ and $x_1=x(1)$.
The $R$-equivalence on $\cF(B)$ is the equivalence relation
generated by this elementary relation.

\smallskip

\begin{sremarks}\label{rem_def}{\rm

(a) If $B$ is a field, then $B[t]_\Sigma$ is the semilocalization
of $B[t]$ at $0$ and $1$ so that the definition
agrees with the classical definition.

\smallskip

\noindent (b) If $B$ is a semilocal ring with maximal ideals
$\gm_1,\dots,  \gm_r$, then $B[t]_\Sigma$ is the semilocalization of
$B[t]$ at the maximal ideals $\gm_1 B[t]+ t B[t]$, $\gm_1 B[t]+ (t-1) B[t]$,
\dots, $\gm_r B[t]+ t B[t]$, $\gm_r B[t]+ (t-1) B[t]$. In particular
$B[t]_\Sigma$ is a semilocal ring.

\smallskip

\noindent (c) The most important case is for the $B$--functor of points $h_\gX$ of
 a $B$--scheme $\gX$. In this case we write $\gX(B) /R$ for $h_\gX(B)/R$.

 \smallskip

\noindent  (d) If the $B$--functor $\cF$ is locally of finite presentation
 (that is commutes with filtered direct limits), then two
points $x_0,x_1 \in \cF(B)$ are directly $R$--equivalent if
there exists a polynomial $P \in B[t]$ and
$x \in \cF\bigl( B[t, \frac{1}{P}] \bigr)$ such that $P(0)$,
$P(1) \in B^\times $ and
$x_0=x(0)$ and $x_1=x(1)$.
This applies in particular to the case of $h_\gX$ for a $B$--scheme $\gX$
locally of finite presentation.
}
\end{sremarks}

The important thing is the functoriality. If $ B \to C$ is a morphism of
rings, then the map $\cF(B) \to \cF(C)$ induces a map
$\cF(B)/R \to \cF(C)/R$.
We have also a product compatibility $(\gX \times_B \gY)(B)/R
\simlgr \gX(B)/R \times \gY(B)/R$ for $B$-schemes $\gX, \gY$.

If $\gG$ is a $B$--group scheme (and more generally
a $B$-functor in groups), then the $R$--equivalence is compatible with
left/right translations by $\gG(B)$, also the subset $R\gG(B)$ of elements of $\gG(B)$
which are $R$-equivalent to $1$ is a normal subgroup. It follows that the set
$\gG(B)/R \cong \gG(B)/R\gG(B)$ is equipped with a natural group structure.

\subsection{Elementary properties}

We start with the homotopy property.

\begin{slemma} \label{lem_homotopy} Let $\cF$ be a $B$--functor.

\smallskip

\noindent (1) The map $\cF(B)/R \to \cF(B[u])/R$ is bijective.

\smallskip

\noindent (2) Assume that $\cF$ is a $B$--functor in groups. Then two points of $\cF(B)$ which are
$R$--equivalent are directly $R$--equivalent.

\end{slemma}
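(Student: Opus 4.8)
The plan is to treat the two assertions separately: part (1) via a retraction combined with a scaling homotopy, and part (2) via the observation that, for a group-valued functor, $R$-equivalence paths can be \emph{multiplied} rather than concatenated as in the topological setting.

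For part (1), I would first note that the composite $B \xrightarrow{\iota} B[u] \xrightarrow{\mathrm{ev}_0} B$, where $\iota$ is the inclusion and $\mathrm{ev}_0$ sends $u \mapsto 0$, equals $\mathrm{id}_B$. By the functoriality of $R$-equivalence with respect to ring maps recorded above, the induced maps $\cF(B)/R \to \cF(B[u])/R \to \cF(B)/R$ compose to the identity, so the map in question is split injective and only surjectivity remains. For surjectivity, given $y \in \cF(B[u])$, I would apply $\cF$ to the $B$-algebra homomorphism $B[u] \to B[u][t]$, $u \mapsto ut$, obtaining $\tilde{y} \in \cF(B[u][t])$, and then push $\tilde{y}$ forward along $B[u][t] \to (B[u])[t]_\Sigma$. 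Since evaluation at $t=0$ and at $t=1$ extend to this localization (Remarks~\ref{rem_def}), specializing $\tilde y$ at $t=1$ returns $y$ while specializing at $t=0$ returns the image of $\mathrm{ev}_0(y) \in \cF(B)$. Hence $y$ is directly $R$-equivalent over $B[u]$ to a point coming from $\cF(B)$, which yields surjectivity and therefore bijectivity.

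For part (2), write $\sim_d$ for direct $R$-equivalence, $\sim_R$ for $R$-equivalence, and set $D = \{\, g \in \cF(B) : g \sim_d 1 \,\}$. I would first check that $D$ is a subgroup: the constant path gives $1 \in D$; if $x \in \cF(B[t]_\Sigma)$ witnesses $g \sim_d 1$ (say $x(0)=1$, $x(1)=g$), then its inverse $x^{-1}$ in the group $\cF(B[t]_\Sigma)$ witnesses $g^{-1} \sim_d 1$; and if $x,y$ witness $g\sim_d 1$ and $h\sim_d 1$, the product $xy$ taken in the group $\cF(B[t]_\Sigma)$ satisfies $(xy)(0)=1$ and $(xy)(1)=gh$, so $gh \in D$. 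Using the left-translation compatibility noted above, $a \sim_d b$ is equivalent to $a^{-1}b \in D$ (translate a path by the constant $a^{-1}$, resp.\ by $a$). Consequently, if $1 = g_0 \sim_d g_1 \sim_d \cdots \sim_d g_n = g$ is a chain realizing $g \sim_R 1$, then each $g_i^{-1} g_{i+1} \in D$, whence $g = \prod_i g_i^{-1} g_{i+1} \in D$; thus the $R$-equivalence class of $1$ coincides with $D$. Finally, for arbitrary $a \sim_R b$ one has $a^{-1}b \sim_R 1$, so $a^{-1}b \in D$ and hence $a \sim_d b$, as required.

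The formal retraction disposes of injectivity in (1) and of the translation bookkeeping in (2), so the only genuine ideas are the scaling homotopy $u \mapsto ut$ and the multiplicativity of paths. I expect the main point requiring care to be the surjectivity step of (1): one must ensure the construction produces a bona fide element over the correctly localized ring. This is handled by factoring through $\cF(B[u][t])$ \emph{before} localizing, so that the precise description of the multiplicative set $\Sigma$ relative to the base $B[u]$ never intervenes — only the fact that the two evaluations survive the passage to $(B[u])[t]_\Sigma$ is used.
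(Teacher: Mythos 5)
Your proposal is correct and follows essentially the same route as the paper: part (1) is the identical argument (split injectivity via evaluation at $u=0$, surjectivity via the scaling homotopy $u \mapsto ut$), and part (2) rests on the same key idea of multiplying paths in the group $\cF(B[t]_\Sigma)$. The only cosmetic difference is in (2): the paper proceeds by induction on chain length, merging two consecutive paths via $g(t)^{-1}h(1-t)$, whereas you package the same multiplicativity as the statement that the elements directly $R$-equivalent to $1$ form a subgroup and then telescope along the chain.
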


\begin{proof} (1) The specialization at $0$ provides a splitting
of $B \to B[u]$, so that the map $\cF(B)/R \to \cF(B[u])/R$ is split injective.
It is then enough to establish the surjectivity.
Let $f \in \cF(B[u])$. We put $x(u,t)=f(ut) \in  \cF(B[u,t])$
so that $x(u,0) = f(0)_{B[u]}$ and $x(u,1) =f$.
In other words, $f$ is directly $R$-equivalent to $f(0)_{B[u]}$
and we conclude that the map is surjective.

\smallskip

\noindent (2) We put $\cB=B[t]$ and are given two elements $f, f' \in \cF(B)$ which are
$R$-equivalent. By induction on the length of the chain connecting $f$ and $f'$,
we can assume that there exists $f_1 \in  \cF(B)$ which is directly $R$-equivalent
to $f$ and $f'=f_2$. Also by translation we can assume that $f=1$.
There exists $g(t), h(t) \in \cF(\cB)$ such that $g(0)=1$,
$g(1)=f_1^{1}=h(0)$ and $h(1)= f_2$.
We put $f(t)= g(1-t)^{-1} \, h(t) \in \cF(\cB)$. Then
$f(0)=1$ and $f(1)= f_2$ as desired.
\end{proof}

\begin{slemma} \label{lem_limit} Let $\cF$ be a $B$--functor
locally of finite presentation and consider a direct limit
$B_ \infty= \limind_{\lambda \in \Lambda} B_\lambda$ of $B$--rings.
Then the map $\limind_{\lambda \in \Lambda}  \cF(B_\lambda)/R \to \cF(B_\infty)/R$ is bijective.
\end{slemma}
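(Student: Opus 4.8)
The plan is to establish the two directions separately: surjectivity is formal, while injectivity requires descending a \emph{finite} $R$-chain through the filtered system. Throughout I use that $\cF$ being locally of finite presentation means exactly that $\cF$ commutes with filtered direct limits, so that $\cF(B_\infty)=\limind_\lambda \cF(B_\lambda)$, together with the characterization of direct $R$-equivalence in Remarks~\ref{rem_def}(d). For surjectivity, a class in $\cF(B_\infty)/R$ is represented by some $\xi\in\cF(B_\infty)$, and by the displayed identification $\xi$ is the image of some $\xi_\lambda\in\cF(B_\lambda)$; the class of $\xi_\lambda$ in $\limind_\lambda \cF(B_\lambda)/R$ then maps to the class of $\xi$.

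For injectivity, suppose two classes of $\limind_\lambda \cF(B_\lambda)/R$ have the same image. Choosing a common index in the filtered index set, I may represent them by elements $x,y\in\cF(B_\lambda)$ whose images $x_\infty,y_\infty\in\cF(B_\infty)$ are $R$-equivalent. By the definition of the generated equivalence relation there is a \emph{finite} chain $x_\infty=z_0,z_1,\dots,z_n=y_\infty$ in $\cF(B_\infty)$ in which consecutive terms are directly $R$-equivalent. By Remarks~\ref{rem_def}(d) each such step is witnessed by a polynomial $P_i\in B_\infty[t]$ with $P_i(0),P_i(1)\in B_\infty^\times$ and an element $w_i\in\cF\bigl(B_\infty[t,\frac{1}{P_i}]\bigr)$ with $w_i(0)=z_i$ and $w_i(1)=z_{i+1}$. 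Since the chain is finite, only finitely many data $(P_i,w_i,z_i)$ and finitely many equalities are involved.

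The heart of the argument is to descend all of this to a single $B_\nu$. First, $B_\infty[t]=\limind_\lambda B_\lambda[t]$, so each $P_i$ comes from a polynomial over some $B_\nu$, and passing to a common upper bound I may assume all $P_i$ are defined over one $B_\nu$. The unit conditions $P_i(0),P_i(1)\in B_\infty^\times$ are equalities $P_i(0)\,Q_i=1$ with $Q_i\in B_\infty=\limind_\lambda B_\lambda$, hence hold already over some larger $B_\nu$, where then $P_i(0),P_i(1)\in B_\nu^\times$. Next, localization commutes with filtered colimits, so $B_\infty[t,\frac{1}{P_i}]=\limind_{\nu} B_\nu[t,\frac{1}{P_i}]$, and since $\cF$ commutes with filtered limits each $w_i$ descends to some $w_i^{(\nu)}\in\cF\bigl(B_\nu[t,\frac{1}{P_i}]\bigr)$ and each intermediate $z_i$ descends to $\cF(B_\nu)$. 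Finally, the finitely many equalities $w_i^{(\nu)}(0)=z_i$, $w_i^{(\nu)}(1)=z_{i+1}$, together with $z_0=x$ and $z_n=y$, hold in $\cF(B_\infty)=\limind_\nu\cF(B_\nu)$ and therefore hold already in $\cF(B_\nu)$ for $\nu$ large, because in a filtered colimit of sets two elements that agree in the limit agree at a finite stage. The resulting chain over $B_\nu$ exhibits the images of $x$ and $y$ as $R$-equivalent, so $x$ and $y$ define the same class in $\limind_\lambda\cF(B_\lambda)/R$.

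The one genuine obstacle is that the ring $B[t]_\Sigma$ depends on the multiplicative set $\Sigma=\Sigma_B$, which varies with $B$ and is not transparently compatible with colimits. The way around it is precisely Remarks~\ref{rem_def}(d), which replaces $B_\infty[t]_\Sigma$ by a single localization $B_\infty[t,\frac{1}{P}]$ whose behaviour under the filtered colimit is clear; everything else is a routine finite-stage descent for a functor commuting with filtered limits.
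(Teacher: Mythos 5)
Your proof is correct. The paper actually states Lemma~\ref{lem_limit} without any proof, treating it as routine, so there is no argument to compare against; yours is precisely the intended one: surjectivity is formal from $\cF(B_\infty)=\limind_\lambda\cF(B_\lambda)$, and injectivity follows by descending a finite chain of direct $R$-equivalences to a finite stage. You also correctly identified and handled the one non-trivial point, namely that $B[t]_\Sigma$ varies with the base ring, by invoking Remarks~\ref{rem_def}(d) to replace it with a finite-type localization $B_\infty[t,\frac{1}{P}]$ that visibly commutes with the filtered colimit. (An equally valid alternative is to check directly that $\limind_\nu B_\nu[t]_{\Sigma_{B_\nu}}\simeq B_\infty[t]_{\Sigma_{B_\infty}}$, since every $P\in\Sigma_{B_\infty}$ descends together with the inverses of $P(0)$ and $P(1)$; but your route avoids that verification.)
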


\begin{slemma} \label{lem_weil} Let $C$ be a locally free
$B$--algebra of degree $d$. Let  $\cE$ be a $C$--functor and consider the $B$--functor $\cF= \prod_{C/B}\cE$
defined by $\cF(B')= {\cE(C \otimes_B B')}$ for each $B$--algebra $B'$.
Then the morphism $\cF(B)/ R \to \cE(C)/R$
is an isomorphism.
\end{slemma}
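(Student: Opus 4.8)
The plan is to reduce the whole statement to a single identity of rings, after which both sides carry literally the same $R$-equivalence relation. First I would record the canonical identification $\cF(B)=\cE(C\otimes_B B)=\cE(C)$ built into the definition of the Weil restriction; the asserted morphism $\cF(B)/R\to\cE(C)/R$ is the one induced by this bijection of sets. Hence it suffices to prove that the $R$-equivalence on $\cF(B)$ cut out by the ring $B[t]_\Sigma$ coincides, under this identification, with the $R$-equivalence on $\cE(C)$ cut out by the ring $C[t]_{\Sigma_C}$, where $\Sigma_C\subset C[t]$ is the multiplicative set of polynomials $Q$ with $Q(0),Q(1)\in C^\times$.

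The technical heart is the claim that the natural map is an isomorphism $C\otimes_B B[t]_\Sigma\simlgr C[t]_{\Sigma_C}$, compatibly with evaluation at $t=0$ and $t=1$. Since localization commutes with the base change $B[t]\to C[t]=C\otimes_B B[t]$, the left-hand ring is the localization of $C[t]$ at the image of $\Sigma$. As units of $B$ map to units of $C$, that image lies inside $\Sigma_C$, so by the universal property of localization it remains only to verify that every $Q\in\Sigma_C$ already becomes invertible in $C[t]_\Sigma$; then $\Sigma$ and $\Sigma_C$ generate the same localization of $C[t]$ and the identity follows. The matching of the two evaluation maps is then automatic, since applying $\cF$ to $\mathrm{ev}_0\colon B[t]_\Sigma\to B$ is applying $\cE$ to $C\otimes_B(\mathrm{ev}_0)$, which is exactly $\mathrm{ev}_0\colon C[t]_{\Sigma_C}\to C$.

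For that invertibility I would use the norm of the finite locally free extension. Because $C$ is locally free of degree $d$ over $B$, so is $C[t]$ over $B[t]$, and the norm $N=\mathrm{N}_{C[t]/B[t]}(Q)\in B[t]$ is defined as the determinant of multiplication by $Q$. The adjugate identity gives $Q\mid N$ in $C[t]$; compatibility of the norm with the specializations $\mathrm{ev}_0,\mathrm{ev}_1$ yields $N(0)=\mathrm{N}_{C/B}(Q(0))$ and $N(1)=\mathrm{N}_{C/B}(Q(1))$, and since the norm of a unit is a unit while $Q(0),Q(1)\in C^\times$, we get $N(0),N(1)\in B^\times$, i.e.\ $N\in\Sigma$. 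Thus in $C[t]_\Sigma$ the element $N$ is a unit and $Q$ divides it, so $Q$ is a divisor of a unit, hence a unit. This proves the ring identity.

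With $C\otimes_B B[t]_\Sigma=C[t]_{\Sigma_C}$ established and the evaluations identified, a point $x\in\cF(B[t]_\Sigma)=\cE(C[t]_{\Sigma_C})$ witnessing a direct $R$-equivalence over $B$ is literally the same datum as one witnessing a direct $R$-equivalence over $C$, with the same endpoints. Therefore the two elementary relations coincide, so do the equivalence relations they generate, and the induced map $\cF(B)/R\to\cE(C)/R$ is a bijection, as desired. I expect the only genuine obstacle to be the norm step guaranteeing that $\Sigma_C$ is inverted by $\Sigma$: the divisibility $Q\mid\mathrm{N}(Q)$ and the compatibility of norms with specialization are standard, but they must be invoked with care in the locally free (rather than free) setting, where one argues Zariski-locally on $\Spec B$ and uses that the determinant of the multiplication endomorphism is independent of the chosen local basis.
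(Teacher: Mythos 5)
Your proposal is correct and follows essentially the same route as the paper: both reduce the statement to the ring isomorphism $B[t]_{\Sigma}\otimes_B C\simeq C[t]_{\Sigma_C}$, proved by showing that for $Q\in\Sigma_C$ the norm $N_{C[t]/B[t]}(Q)$ lies in $\Sigma$ and is divisible by $Q$, so that inverting $\Sigma$ already inverts $\Sigma_C$. Your write-up is in fact slightly more explicit than the paper's on two points it leaves implicit — the compatibility of the norm with the evaluations at $t=0,1$ and the matching of the evaluation maps under the identification $\cF(B[t]_\Sigma)=\cE(C[t]_{\Sigma_C})$ — but the underlying argument is identical.
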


\begin{proof}
We distinguish the multiplicative subsets $\Sigma_B$ and $\Sigma_C$. The map \break 
${B[t]_{\Sigma_B} \otimes_B C} \to C[t]_{\Sigma_C}$  induces  a map
$\cF(B[t]_\Sigma)= \cE(B[t]_{\Sigma_B} \otimes_B C) \to \cE(C[t]_{\Sigma_C})$.
We get then  a morphism $\cF(B)/ R \to \cE(C)/R$.
We claim that the map $B[t]_{\Sigma_B} \otimes_B C
 \to C[t]_{\Sigma_C}$ is an isomorphism
 which rephrases to prove that
the map $C[t]_{\Sigma_B} \to C[t]_{\Sigma_C}$
is an isomorphism in view of the isomorphism
 $B[t]_{\Sigma_B} \otimes_{B[t]} {C[t]} \simlgr C[t]_{\Sigma_B}$ \cite[Tag 00DK, 9.11.15]{St}.
 For establishing this fact,
 it is then enough to show that any element
 of $\Sigma_C$ divides an element of $\Sigma_B$
 in $C[t]$ \cite[II, \S 2.3, Prop.\ 8]{BAC}.

Since $C$ is locally free over $B$ of degree $d$, we
can consider the norm map $N: C \to B$ as defined in \cite[Tag 0BD2, 31.17.6]{St}; by definition
it is multiplicative and applies then units on units.
It is well-known that there exists a polynomial map $N': C \to B$ such that $N(c)= c \, N'(c)$ for each $c \in C$
(this follows from the Hamilton-Cayley's theorem).
Given $Q(t) \in C[t]$ such that $Q(1), Q(0) \in C^\times$,
we consider  $P(t)=N_{C/B}(Q(t)) \in B[t]$. We have $P(0)= N_{C/B}(Q(0)) \in B^\times$ and
similarly $P(1) \in B^\times$ so that $P(t)$
belongs to $\Sigma_B$. Since
 $Q(t)$ divides $P(t)$,
$Q(t)$ divides an element of $\Sigma_B$.
 Since $B[t]_\Sigma \otimes_{B[t]} {C[t]} \simlgr C[t]_{\Sigma_B}$ \cite[Tag 00DK, 9.11.15]{St},
 we conclude that the map $B[t]_{\Sigma_B} \otimes_B C \to C[t]_{\Sigma_C}$ is an isomorphism.
 As counterpart we get that the map $\cF(B)/ R \to \cE(C)/R$
is an isomorphism.
\end{proof}

\begin{slemma} \label{lem_sorite1} Let $\gX$ be a $B$-scheme.

\noindent (1)  Assume that $\gX=\Spec(B[\gX])$ is affine and let
$\gU=\gX_f$ be a principal open subset of $\gX$ where
$f \in B[\gX]$. If two points $x_0, x_1 \in \gU(B)$ are
directly $R$-equivalent in $\gX(B)$, then they are
directly $R$-equivalent in $\gU(B)$.

\smallskip

\noindent (2) Assume that $B$ is semilocal. Let $\gU$ be an open $B$--subscheme of
$\gX$. If two points $x_0, x_1 \in \gU(B)$ are
directly $R$-equivalent in $\gX(B)$, then they are
directly $R$-equivalent in $\gU(B)$.

\smallskip

\noindent (3)  Let $\gG$ be a $B$--group scheme and let $\gU$ be an open $B$--subscheme of
$\gG$. If $\gU$ is a principal  open subset or if $B$ is semilocal, then the map $\gU(B)/R \to \gG(B)/R$ is injective.
\end{slemma}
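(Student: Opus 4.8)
The plan is to prove (1) and (2) directly and then derive (3) formally, using the group structure to replace $R$-equivalence by direct $R$-equivalence. For (1), a witness to the direct $R$-equivalence of $x_0,x_1$ in $\gX(B)$ is a point $x\in\gX(B[t]_\Sigma)$ with $x(0)=x_0$ and $x(1)=x_1$, equivalently a homomorphism $\phi\colon B[\gX]\to B[t]_\Sigma$. Since $\gU=\Spec\bigl(B[\gX]_f\bigr)$, the point $x$ factors through $\gU$ exactly when $\phi(f)\in\bigl(B[t]_\Sigma\bigr)^\times$. First I would use that evaluation at $0$ and at $1$ extend to ring homomorphisms $B[t]_\Sigma\to B$; then $\phi(f)$ specializes to $f(x_0)$ and $f(x_1)$, which lie in $B^\times$ precisely because $x_0,x_1\in\gU(B)$. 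Writing $\phi(f)=Q/P$ with $Q\in B[t]$ and $P\in\Sigma$, the identities $Q(0)=f(x_0)\,P(0)$ and $Q(1)=f(x_1)\,P(1)$ force $Q\in\Sigma$, so $\phi(f)=Q\,P^{-1}$ is a unit and $x$ lifts to $\gU(B[t]_\Sigma)$ with unchanged endpoints.

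For (2) I would argue topologically, assuming (as the conclusion requires) that $x_0,x_1\in\gU(B)$. A point $x\colon\Spec(B[t]_\Sigma)\to\gX$ factors through the open subscheme $\gU$ iff the open set $x^{-1}(\gU)$ is all of $\Spec(B[t]_\Sigma)$; since every nonempty closed subset of the spectrum of a ring contains a maximal ideal, it suffices to check that $x^{-1}(\gU)$ contains every closed point. By Remark~\ref{rem_def}(b) the maximal ideals of $B[t]_\Sigma$ are the $\gm_iB[t]+tB[t]$ and $\gm_iB[t]+(t-1)B[t]$, which are exactly the images of the maximal ideals of $B$ under the two sections $\Spec(B)\to\Spec(B[t]_\Sigma)$ given by evaluation at $0$ and at $1$. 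Composing these sections with $x$ recovers $x_0$ and $x_1$, both of which factor through $\gU$; hence $x$ sends every closed point into $\gU$, so $x^{-1}(\gU)$ is the whole spectrum and $x$ lifts to $\gU(B[t]_\Sigma)$, giving the direct $R$-equivalence inside $\gU(B)$.

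For (3), suppose $u_0,u_1\in\gU(B)$ have the same image in $\gG(B)/R$. Since $h_\gG$ is a $B$-functor in groups, Lemma~\ref{lem_homotopy}(2) upgrades this $R$-equivalence to a \emph{direct} $R$-equivalence in $\gG(B)$. I would then apply (2) when $B$ is semilocal, and when $\gU$ is principal open re-run the computation of (1) (which needs only that $\gU=\gG_f$ for a global function $f$, not that $\gG$ be affine). Either way $u_0,u_1$ become directly $R$-equivalent, hence $R$-equivalent, in $\gU(B)$, which is exactly the injectivity of $\gU(B)/R\to\gG(B)/R$.

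I expect the one genuine difficulty to be the bookkeeping in (2): correctly identifying the closed points of the somewhat unusual semilocal ring $B[t]_\Sigma$ via Remark~\ref{rem_def}(b), and confirming that an open subset meeting all of them must be the entire spectrum. Granting that, (1) reduces to a units computation in $B[t]_\Sigma$ and (3) is purely formal.
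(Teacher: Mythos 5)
Your proof is correct and takes essentially the same route as the paper's: the unit computation in $B[t]_\Sigma$ for (1), the closed-points argument in the semilocal ring $B[t]_\Sigma$ for (2), and the reduction of (3) to direct $R$-equivalence via Lemma~\ref{lem_homotopy}(2) followed by (1) or (2). You are in fact slightly more careful than the paper at two points it leaves implicit: the factorization $\phi(f)=Q/P$ with $Q\in\Sigma$ justifying that $\phi(f)$ is a unit, and the remark that the computation of (1) applies to a principal open subset $\gG_f$ of a not-necessarily-affine group scheme in (3).
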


Note that (3) was known in the field case under an assumption
of unirationality \cite[Prop.\ 11]{CTS1}.

\begin{proof}
(1) Let $x_1,x_2 \in \gU(B)$ and let $x(t) \in \gX(B[t]_\Sigma)$ such that $x(0)=x_0$ and $x(1)=x_1$.
We consider the polynomial $P(t)=f(x(t)) \in B[t]_\Sigma$.
Since $P(0)= f(x(0)) \in B^\times$ and $P(1)= f(x(1)) \in B^\times$, it follows that
$P \in \Sigma$ hence $x(t) \in \gU(B[t]_\Sigma)$. Thus  $x_0$ and $x_1$ are directly $R$--equivalent in $\gU(B)$.

\smallskip

\noindent (2) Let $x(t) \in \gX(B[t]_\Sigma)$ such that $x(0)=x_1$ and $x(1)=x_1$.
Since $B[t]_\Sigma$ is a semilocal ring and the closed points of
$\Spec(B[t]_\Sigma)$ map to points of $\gU$, it follows that $x(t) \in \gU(B[t]_\Sigma)$.

\smallskip

\noindent (3)  This follows from the fact that two points of $\gG(B)$
are $R$-equivalent if and only they are directly $R$-equivalent according
to Lemma \ref{lem_homotopy}.(2).
 \end{proof}

\begin{slemma} \label{lem_sorite2}

\noindent (1) Let $\cL$ be a finitely generated locally free $B$--module and
consider the associated vector group scheme $\bW(\cL)$.
Let  $\gU \subset \bW(\cL)$
be an open  subset of the affine space $\bW(\cL)$.
We assume that  $\gU$ is a principal open subset or
 that  $B$ is semilocal.
Then any two points of $\gU(B)$ are directly $R$-equivalent.
In particular if $\gU(B) \not = \emptyset$, we have $\gU(B)/R= \bullet$.

\smallskip

\noindent (2) Let $\gG$ be an affine $B$--group scheme of finite presentation
such that $H^1(B,\gG)=1$, $H^1(B[t]_\Sigma,\gG)=1$ and $\gG(B)/R=1$.
Let $f: \gY \to \gX$ be a morphism of $B$--schemes which is a $\gG$--torsor. Then
the map $\gY(B)/R \to \gX(B)/R$ is bijective.
\end{slemma}

\begin{proof}
 (1) According to Lemma \ref{lem_sorite1}.(3), it is enough to show that
 two points of $\bW(\cL)(B)=\cL$ are $R$--equivalent.
 Let $x_0,x_1 \in \cL$ and consider
 $x(t)= (1-t)x_0 + t x_1 \in \cL \otimes_B B[t] \subset \cL \otimes_B B[t]_\Sigma
 = \bW(\cL)( B[t]_\Sigma ) $.
 Since  $x(0)=x_0$ and $x(1)=x_1$, we conclude that $x_0$ and $x_1$
 are directly $R$--equivalent.

 \smallskip

 \noindent (2) Since $H^1( B, \gG)=1$, it follows that the map $\gY(B)\to \gX(B)$ is surjective
 in view of \cite[prop.\ III.3.14]{Gd}; a fortiori the map $\gY(B)/R \to \gX(B)/R$ is onto.
 For the injectivity, it is enough to prove that two points
 $y_0, y_1 \in \gY(B)$ such that their images $x_0, x_1 \in \gX(B)$
 are directly $R$--equivalent are $R$-equivalent.
Our assumption is that there exists  $x(t) \in \gX(B[t]_\Sigma)$
 such that $x(0)=x_0$ and $x(1)=x_1$.
Since $H^1( B[t]_\Sigma, \gG)=1$ by assumption,  we can  lift $x(t)$ to some element $y(t) \in \gY(B[t]_\Sigma)$.
 Then $y_0=y(0). g_0$ and $y_1=y(1). g_1$ for (unique) elements $g_0, g_1$
 of $\gG(B)$. By (1), $g_0$ and $g_1$ are $R$--equivalent to $1$
 which enables us to conclude that $y_0$ and $y_1$ are $R$--equivalent.
 \end{proof}

\begin{sexamples}\label{ex_R_triv} {\rm
(1) The $B$-scheme $\GG_{m,B}$ is a principal
open subscheme of the vector group scheme $\GG_{a,B}$.
Lemma \ref{lem_sorite2}.(1) shows that $\GG_m(B)/R=1$.
\smallskip \newline
\noindent (2) More generally, let $C$ be a $B$--algebra
which is  finite locally free and consider the
Weil restriction $\gG=R_{C/B}(\GG_m)$.
It is the principal open subscheme of
the vector group scheme $R_{C/B}(\GG_{a,C})= \bV(C)$
defined by the norm map $N: R_{C/B}(\GG_{a,C}) \to \GG_{a,B}$.
Lemma \ref{lem_sorite2}.(1) shows that $\gG(B)/R=1$.
}
\end{sexamples}

\begin{slemma} \label{lem_sorite3}
 Let $\gG$ be a flat affine $B$--group scheme of finite presentation.
Let $f: \gY \to \gX$ be a morphism of $B$--schemes which is a $\gG_X$--torsor.  We assume either

\smallskip

(i) $\gG$
arises by successive extensions of vector group schemes
(with respect to finite locally free modules);

\smallskip

(ii) $\gG$ is a  split $B$--torus and
$\Pic(B)= \Pic(B[t]_\Sigma)=0$;

\smallskip

(iii) $B$ is regular semilocal  and
 $\gG$ is a quasitrivial $B$--torus;

 \smallskip

\noindent  Then the map $\gY(B)/R \to \gX(B)/R$ is bijective.

\end{slemma}

\begin{proof} According to Lemma \ref{lem_sorite2}.(2),
we need to show in each case that
$H^1(B,\gG)=1$, $H^1(B[t]_\Sigma,\gG)=1$ and $\gG(B)/R=1$.

\smallskip

\noindent{\it Case (i):}
 By induction we can assume that $\gG$ is a vector group scheme associated to
a finite locally free $R$--module $\cL$.
 In this case $H^1(C, \gG)=1$ for each $B$--ring $C$
 and $\gG(C)/R=1$ according to Lemma \ref{lem_sorite2}.(1).

\smallskip

\noindent{\it Case (ii):}  It remains  to show that $\GG_m(B)/R=1$ which follows of Lemma \ref{lem_sorite2}.(1).

\smallskip

\noindent{\it Case (iii):} We assume that $B$ is semilocal
and that $\gG=R_{B'/B}(\GG_m)$ for a finite \'etale $B$--algebra $B'$. The ring $B'$ is semilocal and regular
in view of \cite[Tag 07NF]{St} and so is $B'[t]$.
According to \cite[XXIV.8.4]{SGA3}, the maps
$H^1(B,\gG) \to H^1(B',\GG_m)$
and $H^1(B[t]_{\Sigma_B},\gG) \to H^1(B'[t]_{\Sigma_B},\GG_m)$ are isomorphisms. Since $B'$ is semilocal, we
have $H^1(B',\GG_m)=0$ so that $H^1(B,\gG)=0$.
According to \cite[thm.\ 2.2.(i)]{CTS2}, the map $\Pic(B'[t]) \to \Pic(B'[t]_{\Sigma_B})$ is onto.
In view of \cite[lem.\ 2.4]{CTS2}, the map
$\Pic(B') \to \Pic(B'[t])$ is an isomorphism
so that the map $\Pic(B') \to \Pic(B'[t]_{\Sigma_B})$ is onto. Since $B'$ is semilocal, we conclude that
$\Pic(B'[t]_{\Sigma_B})=0$.
We have established that
$H^1(B,\gG)=1$, $H^1(B[t]_\Sigma,\gG)=1$ and $\gG(B)/R=1$.
Finally we have $\gG(B)/R=1$ in view of Example \ref{ex_R_triv}.(2).
\end{proof}


\subsection{Retract rationality and $R$-equivalence}


It is well-known that over a field, there is a close relation between retract rationality of
algebraic varieties and the triviality of their $R$-equivalence class groups; see e.g.
the survey~\cite{CT}. We extend Saltman's definition \cite{Sa} of retract rationality over fields to the setting of pointed $B$--schemes.

A {\it pointed $B$-scheme} is  a pair $(\gX,x)$ consisting of a $B$--scheme $\gX$ and a point $x \in \gX(B)$.
A morphism $(\gX,x) \to (\gY,y)$ of pointed
$B$-schemes is a morphism  $f: \gX \to \gY$ of $B$-schemes
such that the following diagram commutes
\[\xymatrix@1{
\gX  \ar[r]^f & \gY \\
\Spec(B) \ar[u]^x \ar[ru]_y.
}\]
For short we say sometimes a {\it pointed morphism}.
By an open $B$--subset of  $(\gX,x)$ we mean
an open $B$--subset $\gU$ of $\gX$ such that there is a
(unique) factorization
\[\xymatrix@1{
\gU  \quad \ar@{^{(}->}[r] & \gX \\
& \Spec(B) \ar[u]^x \ar@{.>}[lu]^u;
}\]
in this case $(\gU,u) \to (\gX,x)$ is a pointed morphism.
\begin{sdefinition} \label{def_retract}
Let $(\gX,x)$ and $(\gY,y)$ be a pointed $B$--schemes
We say that $(\gX,x)$ is a $B$-retract of $(\gY,y)$
is there exist morphisms $i: (\gX,x) \to (\gY , y)$
and $p:(\gY,y) \to (\gX,x)$ of pointed $B$--schemes
such that $p \circ i= id_{(\gX,x)}$.
\end{sdefinition}
Note that $i$ is an immersion \cite[Tag 01KT]{St}
which is closed if $\gY$ is separated.
We remind the reader that an open $B$-subscheme
$\gU$ of a $B$-scheme $\gX$ is {\it $B$-dense} if it is dense in each fibre of $\gX$ over $\Spec(B)$. For example
$B$-density holds in the case when $\gU$ is an
open $B$-subscheme of a pointed scheme $(\gX,x)$
such that the fibers of $\gX \to \Spec(B)$ are irreducible.
Given  an immersion $i: X \to Y$ of schemes, 
we recall that $X$ is retrocompact in $Y$ if
$i$ is quasi-compact \cite[Tag 005A]{St}.

\begin{sdefinition} \label{def_rat} Let $(\gX,x)$ be a pointed $B$--scheme such that $\gX$ is finitely presented over $B$. We say that $(\gX,x)$ is

\begin{enumerate}
 \item  \emph{$B$--rational} if $(\gX,x)$ admits an
 open retrocompact $B$-subscheme $(\gU,x)$ which is $B$-dense such that $(\gU,x)$ is $B$--isomorphic to an  open retrocompact subscheme  of $(\mathbf{A}^N_B,0)$;

 \item \emph{stably $B$--rational} if $(\gX,x)$ admits an
 open retrocompact $B$-subscheme $(\gU,x)$ which is $B$-dense such that  $(\gU \times_B \mathbf{A}^d_B, (x,0))$ is $B$--rational for some $d \geq 0$;

 \item \emph{retract $B$--rational} if $(\gX,x)$ admits an open retrocompact $B$-subscheme $(\gU,x)$ which is $B$-dense and  which is a $B$--retract of an
 open retrocompact $B$--subset of some $(\mathbf{A}^N_B,0)$.
\end{enumerate}
\end{sdefinition}

\begin{sremarks} \label{rem_retro}
{\rm
(a) In (1), (2) and (3), the definition implies that $\gU$ is finitely presented  over $B$.
Of course, if $B$ is noetherian,  we can omit everywhere retrocompacity assumptions (see \cite[Tag 01OX]{St}).
\smallskip \newline
\noindent (b)  Let $\gV$ be a retrocompact open $B$-subset
of $(\gX,x)$ and $B$-dense. If $(\gX,x)$ is $B$-rational (resp.\
stably $B$-rational, resp.\ retract $B$-rational), so
is $(\gV,x)$.
\smallskip \newline
\noindent (c) If $\gX \to \Spec(B)$ has geometrically
irreducible fibers, $B$-density implies  then universal
$B$-density; in this case the three definitions
are stable after an arbitrary base change of the base ring.
}
\end{sremarks}

For later use, we record the following nice behaviour
under limits.

\begin{slemma} \label{lem_retro_limit} Let $B_0$ be a ring and assume that
$B=\limind B_i$ where the $B_i$'s are $B_0$-rings.
Let $(\gX_0,x_0)$ be a $B_0$--scheme of finite presentation
and having geometrically integral fibers.
Put $(\gX,x)=(\gX_0, x_0) \times_{B_0} B$. Then the following are equivalent:

\smallskip

(i) $(\gX, x)$ is $B$-rational (resp.\ $B$--stably
$B$-rational, resp.\ $B$--retract rational);

 \smallskip

(ii) There exists an index $i$ such that
$(\gX_0, x_0) \times_{B_0} B_j$ is $B_i$-rational (resp.\ $B_j$--stably  $B_j$-rational, resp.\ $B_j$--retract rational)
 for each $j \geq i$.
\end{slemma}

\begin{proof} The implication $(ii) \Longrightarrow (i)$
is the functoriality pointed out in Remark \ref{rem_retro}.(c). We prove the implication $(i) \Longrightarrow (ii)$
in the first case, the two others being similar.
Our assumption is that $(\gX,x)$ admits an
 open retrocompact $B$-subscheme $(\gU,x)$ which is $B$-dense such that $(\gU,x)$ is $B$--isomorphic to a  subscheme  of $(\mathbf{A}^N_B,0)$. Since $\gU$ is of finite presentation, over $B$, there exists an indice $i_1$ and
 a  finitely presented $B_{i_1}$-scheme $\gU_{i_1}$ such that
 $\gU \simlgr \gU_{i_1} \times_{B_{i_1}} B$ in view
 of  \cite[8.8.2.(2)]{EGA4}.
We put $\gX_i= \gX_0 \times_B B_i$ and
 $\gU_i= \gU_0 \times_{B_{i_1}} B_i$ for all $i \geq i_1$.
 The first item of the above reference
 provides an index $i_2 \geq i_1$ such that
 $x$  descends to point $x_{i_2} \in \gU_{i_1}(B_{i_2})$
 and such that   the map $\gU \to \gX$ (resp.\
 $\gU \to \mathbf{A}^N_B$)  descend to a map
 $f_{i_2}:  (\gU_{i_2}, x_{i_2}) \to (\gX_{i_2}, x_{0,i_2})$ (resp.\ $g_{i_2} : (\gU_{i_2}, x_{i_2}) \to
 (\mathbf{A}^N_{B_{i_2}},0)$).
 Applying \cite[8.8.5.(iii) and (iv)]{EGA4}
 provides an index $i_3$ such that
 $f_{i_2} \times_{B_{i_2}} B_{i_3}$ and
  $g_{i_2} \times_{B_{i_2}} B_{i_3}$  are open immersions.
 Both open immersions are of finite presentation  in view
 of \cite[Tag 02FV]{St} so are retrocompact.
 Finally the $B_{i_3}$-density of $\gU_{i_3}$
 in $\gX_{i_3}$ (resp.\ $\mathbf{A}^N_{B_{i_3}}$)
 follows of the comment after Definition \ref{def_retract}.
 Thus $(\gX_{i_3}, x_{0, i_3})$ is $B_{i_3}$--rational.
\end{proof}


\begin{slemma}  \label{lem_retract}
Assume that $B$ is semilocal. Let  $(\gU,x)$ be a pointed $B$-scheme which is a retract of an open subset $(\gV,0)$ of some $(\mathbf{A}^N_B,0)$. Then we have $\gU(B)/R=1$.
\end{slemma}
\begin{proof} In view of Lemma \ref{lem_sorite2}.(1), we have
$\gV(B)/R=1$. Since the map $\gU(B)/R \to \gV(R)$
admits a retraction, we conclude that $\gU(B)/R=1$.
\end{proof}

\begin{sdefinition}\label{def:lift}(1) We say that a $B$--scheme $\gX$ satisfies the
{lifting property} if for each semilocal $B$--ring $C$,
the map
\begin{equation}\label{eq_lift}
\gX(C) \to \prod\limits_{\gm \in \Specmax(C)} \gX(C/\gm)
\end{equation}
is onto, where $\Specmax(C)$ denotes the maximal spectrum of $C$.
\smallskip

\noindent (2) We say that a pointed $B$--scheme $(\gX,x)$
satisfies the lifting property if $\gX$ satisfies the lifting property.
\end{sdefinition}

\begin{slemma}\label{lem_lift}
(1) Let $n \geq 1$ be an integer. Then
$\mathbb{A}^n_B$ satisfies the lifting property.
\smallskip \newline
\noindent (2) Let $(\gX,x)$ be a $B$--scheme
satisfying the lifting property and
let $\gU$ be an open $B$-subscheme  of
$(\gX,x)$. Then $(\gU,x)$ satisfies the
lifting property.
\end{slemma}
\begin{proof}
(1) We can assume that $n=1$.
Let $C$ be a semilocal ring.
The map \eqref{eq_lift} for $\mathbb{A}^1_B$
 reads as $C \to \prod\limits_{\gm \in \Specmax(C)}
 C/\gm$. This map is surjective \cite[\S II.3.5]{BAC} so that
 the lifting property holds.
\smallskip
\noindent (2) Let $C$ be a semilocal $B$--ring.
We claim that the following commutative diagram
\[\xymatrix@1{
\gU(C) \ar@{^{(}->}[d] \ar[r]& \prod\limits_{\gm \in \Specmax(C)} \gU(C/\gm) \ar@{^{(}->}[d] \\
\gX(C) \ar[r]& \prod\limits_{\gm \in \Specmax(C)} \gX(C/\gm)}\]
is cartesian. Let $x \in \gX(C)$ mapping
to $\prod\limits_{\gm \in \Specmax(C)} \gU(C/\gm)$.
We put $\gV= \gU \times_{\gX} \Spec(C)$, this is an open
subscheme of $\Spec(C)$ which contains $\Specmax(C)$
so which $\Spec(C)$. It follows that $x \in \gX(C)$.
If the bottom map is onto, it follows
that the bottom map is onto. In other words,
if $(\gX,x)$ satisfies the lifting property, so does $\gU$.
\end{proof}

We extend Saltman's criterion of retract rationality \cite[th.\ 3.9]{Sa}.

\begin{sproposition} \label{prop_retract}
We assume that $B$ is semilocal with residue fields $\kappa_1$, \dots, $\kappa_c$.
Let $(\gX,x)$ be a pointed affine  finitely presented
 integral $B$-scheme with irreducible fibers. Then the following  assertions are equivalent:

\smallskip

$(i)$ $(\gX,x)$  is retract  $B$-rational;

 \smallskip

 $(ii)$  $(\gX,x)$   admits an
 open retrocompact affine
$B$-subscheme $(\gV,x)$,
$B$-dense and which satisfies the lifting property.
\end{sproposition}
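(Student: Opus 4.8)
This is the semilocal-base analogue of Saltman's retract-rationality criterion \cite[Th.~3.9]{Sa}, and the plan is to adapt his argument, treating the two implications separately: the forward one is formal, while the reverse one carries essentially all the content.

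\emph{The implication $(i)\Rightarrow(ii)$.} Here I would show that the retract open itself already satisfies the lifting property. Suppose $(\gX,x)$ is retract $B$-rational, witnessed by an open $(\gU,x)\subseteq(\gX,x)$ and pointed $B$-morphisms $(\gU,x)\xrightarrow{\iota}(\gV,0)\xrightarrow{r}(\gU,x)$ with $r\circ\iota=\id_{\gU}$, where $(\gV,0)$ is an open of some $(\mathbf A^N_B,0)$. I claim $\gU$ works for $(ii)$. Fix a semilocal $B$-ring $C$ with maximal ideals $\gm_1,\dots,\gm_s$ and a family $(u_j)\in\prod_j\gU(C/\gm_j)$. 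Applying $\iota$ gives $(\iota(u_j))\in\prod_j\gV(C/\gm_j)$. The key point is that an open subscheme of affine space has the lifting property over \emph{any} semilocal ring; this is exactly Lemma~\ref{lem_retract}.(3) applied with base ring $C$ to the open $\gV\times_B C$ of $\mathbf A^N_C$ (a retract of itself), whose proof reduces to a principal open $\mathbf A^N_{C,f}$ and to the surjectivity of $\{b\in C^N:f(b)\in C^\times\}\to\prod_j(\mathbf A^N_{C,f})(C/\gm_j)$, a consequence of the Chinese remainder theorem together with the fact that a non-unit of a semilocal ring lies in some maximal ideal. Thus $(\iota(u_j))$ lifts to some $v\in\gV(C)$, and $u:=r(v)\in\gU(C)$ satisfies $u\bmod\gm_j=r(\iota(u_j))=u_j$ for all $j$, so $\gU(C)\to\prod_j\gU(C/\gm_j)$ is onto.

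\emph{The implication $(ii)\Rightarrow(i)$.} Let $(\gV,x)$ be an open of $(\gX,x)$ with the lifting property; shrinking, I may assume $\gV=\Spec A$ is affine with $A$ a finitely presented $B$-domain. Choosing generators $a_1,\dots,a_N$ of $A$ with $a_j(x)=0$ gives a closed immersion $\gV\hookrightarrow\mathbf A^N_B=\Spec P$, $P=B[t_1,\dots,t_N]\twoheadrightarrow A$, $t_j\mapsto a_j$, with kernel the prime $J$ and $x$ at the origin. Then $P_J$ is a \emph{local} $B$-ring with maximal ideal $JP_J$ and residue field $L=\mathrm{Frac}(A)$, and its reduction $P_J\to L$ carries the generic point $\eta\in\gV(L)$. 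Applying the lifting property of $\gV$ to the local ring $P_J$, I lift $\eta$ to $\widetilde\eta\in\gV(P_J)\subseteq P_J^{\,N}$ with $\widetilde\eta\equiv(t_1,\dots,t_N)\pmod{JP_J}$. Since $\gV$ is of finite presentation, $\widetilde\eta$ is already defined over $P_u$ for some $u\notin J$, the equations of $\gV$ vanish on it, and $\widetilde\eta_j\equiv t_j\pmod{JP_u}$. This is precisely a $B$-morphism $r:\gW:=\mathbf A^N_{B,u}\to\gV$ whose restriction to $\gU_0:=\gV_{\bar u}=\gW\cap\gX$ is the identity; after shrinking $u$ by prime avoidance so that $\gW\cap\gX\subseteq\gV$, the closed immersion $\gU_0\hookrightarrow\gW$ and $r$ exhibit $\gU_0$ as a $B$-retract of the affine-space open $\gW$.

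\emph{The main obstacle.} The construction just given is Saltman's birational argument and proves retract $B$-rationality of \emph{some} dense open $\gU_0$. The genuine difficulty, specific to the pointed semilocal setting, is that $\gU_0=\gV_{\bar u}$ need not contain the marked point $x$: the denominator $u$ produced by the generic lift may vanish at $x$, i.e. the chosen open on which the retraction lives is controlled only at the generic point. One cannot cure this by localizing $P$ simultaneously at $J$ and at the maximal ideals $\tilde{\mathfrak m}_{x_1},\dots,\tilde{\mathfrak m}_{x_c}$ of $x$, because $J\subseteq\tilde{\mathfrak m}_{x_i}$ forces $J$ to be non-maximal in that semilocalization, so that the lifting property—which lifts residue-\emph{field} points—no longer reaches $\eta$. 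Thus the topological incompatibility between the generic point and the marked point is where I expect the real work to lie, and the full strength of hypothesis $(ii)$ (that $\gV$, not merely its function field, lifts) must be used to force $u(x)\in B^\times$. The plan is to run the generic lift over $P_J$ to obtain the section, and then, using the lifting property over the semilocal ring of $P$ at $\{\tilde{\mathfrak m}_{x_1},\dots,\tilde{\mathfrak m}_{x_c}\}$ together with Lemma~\ref{lem_retract}, to adjust the lift near $x$ so that its denominator is a unit at each $x_i$; once $u(x)\in B^\times$ is secured, $(\gV_{\bar u},x)$ is a pointed $B$-retract of $(\gW,0)$ and $(i)$ follows.
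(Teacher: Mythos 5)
Your proof of $(i)\Rightarrow(ii)$ is correct and coincides with the paper's: take $\gV=\gU$ and apply Lemma \ref{lem_retract}.(3) over an arbitrary semilocal $B$-ring $C$. The genuine gap is in $(ii)\Rightarrow(i)$, where your text is a program rather than a proof. What you actually establish is Saltman's unpointed conclusion, namely that \emph{some} dense open $\gX_{\bar u}$ is a $B$-retract of an open of $\mathbf{A}^N_B$; the step that would place the marked point $x$ inside the retract open --- ``adjust the lift near $x$ so that its denominator is a unit at each $x_i$'' --- is never carried out, and the mechanism you point to cannot carry it out. A lift furnished by the lifting property over the semilocal ring of $P=B[t_1,\dots,t_N]$ at $\tilde{\mathfrak m}_{x_1},\dots,\tilde{\mathfrak m}_{x_c}$ is a morphism $\psi$ defined near the $x_i$ which fixes the points $x_i$ \emph{only}; it need not restrict to the canonical inclusion on $\gX$ (that would require it to fix $\eta$), so it is not a retraction, and composing it with your generic lift $\phi$ does not help either, since $\phi\circ\psi$ agrees with $\psi$, not with the identity, on $\gX$.

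It is worth comparing with what the paper does, because the paper resolves the difficulty by exactly the move you declare impossible: it forms the semilocalization $C$ of $P$ at \emph{all} of $\eta,x_1,\dots,x_c$, lifts the whole tuple $(\eta,x_1,\dots,x_c)$ to a single $y\in\gX(C)$, spreads $y$ out to $\phi\colon(\mathbf{A}^N_B)_f\to\gX$ fixing $\eta$ and every $x_i$, and then uses integrality of $\gX$ to produce $g$, invertible at $\eta$ and at each $x_i$, such that $\gX_{fg}\to(\mathbf{A}^N_B)_{fg}\xrightarrow{\phi}\gX$ is the canonical map; then $(\gX_{fg},x)$ is the desired pointed retract, the point $x$ lying in it precisely because $fg$ is a unit at $x$. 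Your objection to this move --- that $\cP\subseteq\tilde{\mathfrak m}_{x_i}$ forces $\cP C$ to be non-maximal, so the lifting property as defined does not reach $\eta$ --- is a correct reading of the definition, and it pinpoints a real imprecision in the paper's proof: what that argument actually uses is the stronger property that the canonical element of $\gX(C/\cP C)$ lifts to $\gX(C)$, i.e.\ lifting along $C\to C/J$ for an ideal $J$ contained in the Jacobson radical (such a lift automatically restricts to $\eta$ at $\cP C$ and to $x_i$ at $\tilde{\mathfrak m}_{x_i}C$). This stronger form is still implied by retract rationality, since coordinates lift along $C\to C/J$ and an element congruent to a unit modulo $\rad(C)$ is a unit, so the equivalence survives under the strengthened reading. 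But the strengthening is essential, and this is why your plan cannot be completed: with the literal, maximal-ideals-only reading, $(ii)\Rightarrow(i)$ is false. For instance, a nodal plane cubic over $\QQ$, pointed at its node, satisfies the weak lifting property (push residue-field points through the normalization $\mathbf{A}^1\to\gX$ and lift coordinates by the Chinese remainder theorem), yet no neighbourhood of the node is a retract of an open of affine space, because a retract of such an open has a formally smooth, hence regular, local ring at the marked point. So no argument will extract $u(x)\in B^\times$ from hypothesis $(ii)$ as literally stated; one has to work with the stronger lifting property from the start, as the paper implicitly does.
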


\begin{sremarks}\label{rem_integral}{\rm

(a)  Note that, since $\gX$ is integral, the assumption $\gX(B) \not = \emptyset$ implies that $B$ is an integral ring.

\smallskip

(b) Assume that $B$ is an integral ring of field of fractions $K$.
Let $\gY$ be a flat  affine $B$--scheme such that $\gY_K$ is integral.
Then $B[\gY]$ injects in $K[\gY]$ so that $\gY$ is integral.
In particular, if $\gG$ is a smooth affine $B$-group scheme
such that $\gG_K$ is connected, then $\gG$ is integral.
}
\end{sremarks}

\begin{proof}[Proof of Proposition~\ref{prop_retract}]
Let $\gm_1, \dots, \gm_c$ be the maximal ideals
of $B$ and put $\kappa_i=B/\gm_i$ for $i=1,\dots,c$.

\smallskip

\noindent $(i) \Longrightarrow (ii)$. By definition $(\gX,x)$ admits an open retrocompact
$B$-subscheme $(\gU,x)$
which is $B$-dense and
 such that $(\gU,x)$ is a $B$--retract
 open retrocompact subscheme
 of some $(\mathbf{A}^N_B,0)$.
 The scheme $\gU$ is quasi-affine.
 According to \cite[Tag 0F20]{St},
$\gU$ admits  a principal  open subscheme $\gV$
which is affine and which contains the specializations $x_i$'s of $x$ with respect to the maximal ideals
$\gm_i$'s of $B$. Then $(\gV,x)$ is a $B$--retract of an open
 retrocompact subscheme of  $(\mathbf{A}^N_B,0)$.
 Since $\gV(B)$ is not empty, $\gV$ is $B$-dense
 in $\gU$ and a fortiori in $\gX$.
 The $B$-scheme $\gV$
 satisfies the lifting property by
 combining Lemma \ref{lem_lift}.(1) and (2).
%
 \smallskip \newline
 \noindent $(ii) \Longrightarrow (i)$.
Let $(\gV,x)$ be an open retrocompact subscheme of $(\gX,x)$, $B$-dense and
 satisfying the lifting property.
The same argument as for the direct application
provides an affine principal open $B$--subscheme
$\gV'$ of $\gV$ such that  $x \in \gV'(B)$
and such that $\gV'$ is $B$-dense in $\gX$.
Since $\gV$ is a retrocompact open subset of
$\gX$, so is $\gV'$; in particular $\gV'$ is of
finite presentation over $B$ (in view of \cite[Tag 01TU]{St}.
 Lemma \ref{lem_lift}.(2) shows that $(\gV',x)$ satisfies
 the lifting criterion.
 We denote by $x_i \in \gV'(\kappa_i)$ the image of
 $x$. Let $j: \gV' \to\mathbb{A}^N$ be a closed immersion and
  write $B[\gV']= B[t_1,\dots, t_N]/ \cP$ for a prime  ideal $\cP$ of $B[t_1,\dots, t_N]$ which is finitely generated.
  We denote by $\eta: \Spec(\kappa(\gX)) \to
 \mathbf{A}^N_B$ the generic point of $\gX$.
 We consider the semilocalization $C$ of  $B[t_1,\dots, t_N]$
 at the points $\eta , x_1, \dots, x_c$ of
 $\mathbf{A}^N_B$. Our assumption implies that the
 map
 $$
 \gV'(C)  \to \gV'( \kappa(\gX)) \times \gV'(\kappa_1) \times  \dots \times \gV'(\kappa_c)
 $$
 is onto. Let $y  \in \gV'(C)$ be a lifting of $(\eta, x_1,\dots, x_c)$. Since $\gV'$ is of finite presentation over $B$,
 the point $y$ extends to a principal  neighborhood
  $B[t_1,\dots, t_N]_f$ of $(\eta, x_1,\dots, x_c)$, i.e. there is a
  $B$-map $\phi: (\mathbf{A}^N_B)_{f} \to \gV'$ which satisfies
  $\phi(\eta)=\eta$ and $\phi(x_i)=x_i$ for $i=1,...,c$.
  The composite $\gV_{j \circ f} \to  (\mathbf{A}^N_B)_{f} \xrightarrow{\phi}  \gV$ fixes the point  $\eta$.
 Since $\eta$ is the generic point of $\gV_{j \circ f}$,
 this composite is the immersion map.
  Thus the open subset $(\gV'_{j \circ f},x)$ of $(\gV',x)$ is a $B$-retract of
  an open retrocompact
  subscheme  $(\mathbf{A}^N_B)_{f}$ of $\mathbf{A}^N_B$ and is $B$-dense.
\end{proof}

\begin{sremarks}{\rm (a) Under the assumptions of the proposition,
it follows that the retract rationality property is of
birational nature (with respect to our base point).
Furthermore inspection of the proof shows that if
$(\gX,x)$ is $B$--retract rational, we can take $\gV$ to be a principal open subset of $\gX$ and it is a $B$--retract of  a principal open subset of  $\mathbf{A}^N_B$.

\smallskip

\noindent (b) The direct implication $(i) \Longrightarrow (ii)$ does not require
$\gX$ to be integral.
}
\end{sremarks}

\begin{sexample}\label{ex_scheme_tori}{\rm  Let $B$ be a semilocal ring such that
its residue fields $\kappa_1, \dots, \kappa_c$ are infinite.
Let $\gG$ be a reductive
$B$--group scheme and let $\gT$ be a maximal $B$-torus of $\gG$
(such a torus exists according to Grothendieck's theorem \cite[XIV.3.20 and footnote]{SGA3}).
Let $\gX=\gG/\gN_\gG(\gT)$ be its $B$--scheme of maximal tori.
We claim that $\gX$ satisfies the lifting property so that, if $B$ is integral, $(\gX, \bullet)$ is
retract rational over $B$ according to Proposition~\ref{prop_retract}.
 It is enough to show that the map $\gX(B) \to \prod_{i=1,\dots, c} \gX(\kappa_i)$ is onto.
Let $T_i$ be a maximal $\kappa_i$-torus of $\gG_{\kappa_i}$ for $i=1,\dots, c$.
Since $\kappa_i$ is infinite, there exists $X_i \in \Lie(T_i)(\kappa_i) \subset \Lie(\gG)(\kappa_i)$
such that $T_i= C_{\gG_{\kappa_i}}(X_i)$ \cite[XIV.5.1]{SGA3}. We pick a lift $X \in  \Lie(\gG)(R)$
of the $X_i's$. Then $\gT= C_\gG(X)$ is a maximal $B$--torus of $\gG$
which lift the $T_i$'s. By inspection of the argument we can actually assume only that
$\sharp \kappa_i \geq \dim_{\kappa_i}( \gG_{\kappa_i})$ by using~\cite[Thm.\ 1]{Barnes}.

}
\end{sexample}

\begin{sproposition}\label{prop_vanish} We assume that $B$ is semilocal.
Let $\gG$ be a  $B$--group scheme
of finite presentation
 with connected geometric fibers such that

\smallskip

$(i)$ $(\gG,1)$ is retract $B$--rational;

\smallskip

$(ii)$ $\gG(\kappa)$ is dense in $\gG_\kappa$ for each residue field $\kappa$ of a maximal ideal of $B$.

\smallskip

\noindent Then  $\gG(B)/R=1$.
\end{sproposition}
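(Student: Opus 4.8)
The plan is to reduce the statement to the assertion that every $g\in\gG(B)$ can be written as a product of two elements lying in a suitable retract--rational open subset. By hypothesis $(i)$ and Lemma~\ref{lem_retract}.(1), after shrinking we may fix an affine open $B$--subscheme $(\gU,1)$ of $\gG$ which is a $B$--retract of an open subset of some $\mathbf{A}^N_B$; then Lemma~\ref{lem_retract}.(2) gives $\gU(B)/R=\bullet$, i.e.\ every element of $\gU(B)$ is $R$--equivalent to $1$, and Lemma~\ref{lem_retract}.(3) gives that $\gU(B)\to\prod_{i=1}^c\gU(\kappa_i)$ is onto. Since $R\gG(B)$ is a normal subgroup of $\gG(B)$, it suffices to show $g\in\gU(B)\cdot\gU(B)$ for each $g\in\gG(B)$: writing $g=u_1u_2$ with $u_1,u_2\in\gU(B)$ and using $u_1,u_2\sim_R 1$ together with the translation--compatibility of $R$--equivalence yields $g\sim_R 1$.

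To produce such a factorization, I would reduce the search to a single open subscheme. Let $\iota\colon\gG\to\gG$ be inversion and set $\gW=\gU\cap g\,\iota(\gU)$, an open $B$--subscheme of $\gG$ (as $\iota$ and left translation by $g$ are automorphisms); a $B$--point $w\in\gW(B)$ gives $w\in\gU(B)$ and $w^{-1}g\in\gU(B)$, hence $g=w\cdot(w^{-1}g)$ is the desired decomposition. The existence of $w$ is established fibrewise and then lifted. Over each residue field $\kappa_i$, the geometric fibres being connected, each $\gG_{\kappa_i}$ is a connected group scheme over a field, hence irreducible; thus the two nonempty opens $\gU_{\kappa_i}\ni 1$ and $(g\,\iota(\gU))_{\kappa_i}\ni g_i$ meet in a nonempty open $\gW_{\kappa_i}$. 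By the density hypothesis $(ii)$, $\gG(\kappa_i)$ meets $\gW_{\kappa_i}$, so we obtain a point $w_i\in\gW(\kappa_i)$, that is, $w_i\in\gU(\kappa_i)$ with $w_i^{-1}g_i\in\gU(\kappa_i)$.

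Finally I would lift and globalize. Using the surjectivity of $\gU(B)\to\prod_i\gU(\kappa_i)$ from Lemma~\ref{lem_retract}.(3), pick $w\in\gU(B)$ reducing to $w_i$ modulo $\gm_i$ for each $i$. Then $w^{-1}g\in\gG(B)$ reduces to $w_i^{-1}g_i\in\gU(\kappa_i)$ modulo every $\gm_i$. The key observation is that for the semilocal ring $B$ any open subset of $\Spec(B)$ containing all closed points $\gm_1,\dots,\gm_c$ equals $\Spec(B)$, since every prime specializes to some $\gm_i$; applying this to the open preimage of $\gU$ under the morphism $w^{-1}g\colon\Spec(B)\to\gG$ shows that this morphism factors through $\gU$, i.e.\ $w^{-1}g\in\gU(B)$. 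Thus $g=w\cdot(w^{-1}g)$ with both factors in $\gU(B)$, which completes the argument.

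I expect the main obstacle to be the globalization step: passing from the residue--field points $w_i$ to a genuine $B$--point $w$ while simultaneously guaranteeing that $w^{-1}g$ again lands in $\gU$. This is exactly where both hypotheses are used in tandem --- the density $(ii)$ to produce the $w_i$ inside the correct open $\gW_{\kappa_i}$, and the retract rationality $(i)$, through the lifting property of Lemma~\ref{lem_retract}.(3) and the semilocal factorization criterion, to lift them coherently --- and the verification that $w^{-1}g\in\gU(B)$ rather than merely in $\gG(B)$ is the crux.
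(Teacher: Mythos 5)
Your proof is correct, and its skeleton matches the paper's: both take the retract‑rational open $(\gU,1)$, use Lemma \ref{lem_retract}.(2) to get $\gU(B)/R=\bullet$ and Lemma \ref{lem_retract}.(3) for the lifting surjectivity onto $\prod_i\gU(\kappa_i)$, and reduce the whole statement to showing that $\gU(B)$ generates $\gG(B)$. Where you differ is in how generation is established. The paper argues globally: from density of the fibers' rational points it asserts that $\gU(B)$ is $B$-dense in $\gG$, that finitely many translates $u_1\gU,\dots,u_s\gU$ with $u_i\in\gU(B)$ cover $\gG$, and that therefore $\gU(B)$ generates $\gG(B)$. You argue pointwise: for each $g\in\gG(B)$ you produce the explicit factorization $g=w\cdot(w^{-1}g)$ with both factors in $\gU(B)$, by intersecting $\gU$ with $g\,\iota(\gU)$ in each closed fiber (using that a connected group scheme over a field is irreducible, plus hypothesis $(ii)$), lifting the residue points $w_i$ via Lemma \ref{lem_retract}.(3), and then invoking the fact that an open subscheme of $\Spec(B)$ containing all closed points equals $\Spec(B)$. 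Your route buys rigor at exactly the spot where the paper is terse: the step from the covering $\gG=\bigcup_i u_i\gU$ to generation of the group $\gG(B)$ is not immediate (a $B$-point of $\gG$ need not factor through a single translate $u_i\gU$), and your two-term semilocal factorization is precisely the argument that closes this gap; the paper's formulation, in exchange, is shorter and packages the same density input in a form it reuses later (e.g.\ in the proof of Proposition \ref{prop_W_trivial}).
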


Note that $(ii)$ is satisfied if $\gG$ is reductive and if $B$ has infinite residue fields.

\begin{proof} Let $\gm_1, \dots, \gm_c$ be
the maximal ideals of $B$ and put $\kappa_i=B/\gm_i$ for
$i=1,\dots, c$. The algebraic groups $\gG_{\kappa_i}$
are then retract rational and $\gG(\kappa_i)$ is dense
in $\gG_{\kappa_i}$ for $i=1,\dots, c$.

Let $\gU$ be an open retrocompact
subscheme of $(\gG,1)$
which is a $B$--retract of some open
retrocompact  of $\mathbf{A}^N_B$.
We consider the open subset
\begin{equation}
\gH= \bigcup_{u \in \gU(B)} u \, \gU \ \subset\ \gG
\end{equation}
Since $\gU(B)$ maps onto $\gU(\kappa_1) \times \dots \times
\gU(\kappa_c)$ (Lemma \ref{lem_lift}.(1) and (2) combined) and since  $\gU(\kappa_i)$ is dense in
$\gG(\kappa_i)$ by assumption,
it follows that $\gH_{\kappa_i}=\gG_{\kappa_i}$
for $i=1,...,c$.  It follows that $\gH=\gG$.
Lemma~\ref{lem_retract} shows that  $\gU(B)/R=1$.
We conclude that $\gG(B)/R=1$.
\end{proof}

\begin{sremark}\label{rem_quasitrivial} {\rm Proposition \ref{prop_vanish}  applies to quasitrivial tori
so that it is coherent with Lemma  \ref{lem_weil}.
}
\end{sremark}


\section{$R$-equivalence for reductive groups}



\subsection{$R$-equivalence as a birational invariant}


 The following statement generalizes \cite[Prop.\ 11]{CTS1}.

 \begin{sproposition} \label{prop_dominant}
  Assume that $B$ is semilocal with infinite residue fields. Let $\gG$ be a $B$--group scheme of finite presentation
  and let $f: (\gV, v_0) \to (\gG,1)$ be a $B$--morphism of pointed $B$--schemes such that
  $(\gV,v)$ is an open subset of some $(\mathbf{A}^n_B, 0)$ and such that
  $f_{B/\gm}$ is dominant for each maximal ideal $\gm$ of $B$.
  Let $(\gU, 1)$ be an open neighborhood of $(\gG,1)$.

  \smallskip

  \noindent (1) We have $f( \gV(B)) \, . \, \gU(B) = \gG(B)$.

  \smallskip

  \noindent (2) The map $\gU(B)/R \to \gG(B)/R$ is bijective.
 \end{sproposition}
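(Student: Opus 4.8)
The plan is to prove (1) directly and then deduce (2) formally. Throughout write $\gm_1,\dots,\gm_c$ for the maximal ideals of $B$ and $\kappa_i=B/\gm_i$ for the (infinite) residue fields, and recall that $f(v_0)=1$ since $f$ is a morphism of pointed schemes.

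For (1) I would fix $g\in\gG(B)$ and consider the $B$--morphism $\phi\colon\gV\to\gG$, $v\mapsto f(v)^{-1}\,g$, built from $f$, inversion, and right translation by the $B$--point $g$. Writing $g=f(v)\cdot u$ with $v\in\gV(B)$ and $u\in\gU(B)$ is the same as finding a $B$--point of the open subscheme $\gW:=\phi^{-1}(\gU)$ of $\mathbf{A}^n_B$. First I would check that $\gW$ is nonempty in every special fibre: over $\kappa_i$ the map $\phi_{\kappa_i}$ is the composite of the dominant map $f_{\kappa_i}$ with the variety automorphisms $x\mapsto x^{-1}$ and right translation by $g_{\kappa_i}$, so its image is dense and therefore meets the nonempty open $\gU_{\kappa_i}$ (which contains the identity). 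Hence $\gW_{\kappa_i}\neq\emptyset$, and since $\kappa_i$ is infinite this open subset of $\mathbf{A}^n_{\kappa_i}$ has a rational point $w_i\in\gW(\kappa_i)$.

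The core step is then to transfer these fibrewise points to a single honest $B$--point of $\gW$, which is where the hypotheses \emph{semilocal with infinite residue fields} genuinely enter. I would find one $h\in B[t_1,\dots,t_n]$ with $\mathbf{A}^n_{B,h}\subseteq\gW$ and $h(w_i)\neq 0$ for all $i$: writing $\mathbf{A}^n_B\setminus\gW=V(I)$, the evaluation map $I\to\prod_i\kappa_i$ has image an ideal of the product of fields $\prod_i\kappa_i$ that surjects onto each factor (because each $w_i\notin V(I)$), hence equals the whole product, producing a common $h\in I$. Then $\mathbf{A}^n_{B,h}$ is a principal open contained in $\gW$ and nonempty over every $\kappa_i$, so the surjectivity of $\mathbf{A}^n_{B,h}(B)=\{b\in B^n:h(b)\in B^\times\}$ onto $\prod_i\mathbf{A}^n_{B,h}(\kappa_i)$ — exactly the simultaneous-approximation argument used at the end of the proof of Lemma~\ref{lem_retract} — yields a point $v\in\gW(B)\subseteq\gV(B)$. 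By construction $\phi(v)=f(v)^{-1}g\in\gU(B)$, so $g\in f(\gV(B))\cdot\gU(B)$, proving (1).

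For (2), injectivity is immediate from Lemma~\ref{lem_sorite1}.(3), since $B$ is semilocal and $\gU$ is open in the group scheme $\gG$. For surjectivity I would work inside the group $\gG(B)/R$: by Lemma~\ref{lem_sorite2}.(1) any two $B$--points of the open subscheme $\gV\subseteq\mathbf{A}^n_B$ are $R$--equivalent, so $\gV(B)/R=\bullet$, and applying the functoriality of $R$--equivalence to $f$ together with $f(v_0)=1$ gives $[f(v)]=[f(v_0)]=[1]$ in $\gG(B)/R$ for every $v\in\gV(B)$. Given $g\in\gG(B)$, write $g=f(v)\cdot u$ as in (1); then $[g]=[f(v)]\,[u]=[u]$ lies in the image of $\gU(B)/R\to\gG(B)/R$, so the map is onto and hence bijective. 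The main obstacle is the third paragraph: passing from the fibrewise rational points $w_i$, which live in distinct special fibres, to a single $B$--point, since this is precisely the place where infinite residue fields are indispensable — one must simultaneously arrange $h$ to be nonvanishing modulo every $\gm_i$. The density bookkeeping of the second paragraph and the formal group-theoretic manipulation of the last are routine by comparison.
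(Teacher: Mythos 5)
Your proof is correct and follows essentially the same route as the paper: part (1) is obtained by solving the decomposition $g=f(v)\,u$ over each residue field (where dominance plus infinitude of $\kappa_i$ gives the needed rational points) and then lifting to $B$ via the semilocal simultaneous-approximation trick of Lemma \ref{lem_retract}.(3), and part (2) invokes exactly the paper's two lemmas, namely Lemma \ref{lem_sorite1}.(3) for injectivity and Lemma \ref{lem_sorite2}.(1) plus functoriality for surjectivity. The only difference is packaging: the paper quotes \cite[Prop. 11]{CTS1} for the residue-field statement and lifts the $v_i$ directly (then checks $f(v)^{-1}g\in\gU(B)$ by the semilocal criterion), whereas you reprove the field case by a density argument and lift a single point of $\phi^{-1}(\gU)\subseteq\mathbf{A}^n_B$, which makes explicit the two steps the paper leaves implicit.
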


\begin{proof} Let $\gm_1, \dots, \gm_c$
 be the maximal ideals of $B$.

 \smallskip

 \noindent (1) From the proof of \cite[Prop.\ 11]{CTS1}, we have
 $f( \gV(B/\gm_i)) \, . \, \gU(B/\gm_i) = \gG(B/\gm_i)$ for $i=1,\dots,c$.
 We are given $g \in \gG(B)$ and denote by $g_i$ its reduction
 to $\gG(B/\gm_i)$ for $i=1,\dots,c$, then
 $g_i= f(v_i) \, u_i$ for some $v_i \in \gV(B/\gm_i)$ and $u_i \in \gU(B/\gm_i)$.
Let $v \in \gV(B)$ be a common lift of the elements $v_i$, then
 $f(v)^{-1}g \in \gG(B)$ belongs to $\gU(B)$.

 \smallskip

 \noindent (2)  The surjectivity follows from (1) and the fact
  $\gV(B)/R=1$ established in Lemma \ref{lem_sorite2}.(1). On the other hand, the  injectivity has been proven in Lemma \ref{lem_sorite1}.(3).
\end{proof}

We say that a $B$-group scheme $G$ is $B$-linear, if for some $N\ge 1$
there is a closed embedding of $B$-group schemes $G\to\GL_{N,B}$.

\begin{slemma} \label{lem_unirational} Assume that $B$ is semilocal with infinite residue fields
$\kappa_1, \dots , \kappa_c$.
Let $\gG$ be a reductive $B$--group scheme.

\smallskip

(1)  There exist maximal $B$--tori $\gT_1, \dots, \gT_n$ of $\gG$ such that the
product map \break $\psi: \gT_1 \times \dots  \times \gT_n \to \gG$ satisfies the following property:
$\psi_{\kappa_j}$ is smooth at the origin  for each $j=1,\dots,c$.
Furthermore, the submodules $\Lie(\gT_i)(B)$ together generate $\Lie(\gG)(B)$ as a $B$--module.
\smallskip

(2) Assume furthermore that $\gG$ is $B$-linear. Then there exists a quasi--trivial
$B$--torus $\gQ$ and
a  $B$--morphism of pointed $B$--schemes $f: (\gQ, 1) \to (\gG,1)$  such that
  $f_{\kappa_j}$ is smooth at the origin for each $j=1,\dots,c$.
  \end{slemma}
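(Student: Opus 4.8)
The plan is to prove (1) over each residue field and transport it to $B$ by Nakayama's lemma, and then to obtain (2) by resolving the tori produced in (1) by quasi-trivial ones. For (1), fix a maximal ideal $\gm_j$ with residue field $\kappa_j$. The essential input is that over the infinite field $\kappa_j$ one may choose finitely many maximal tori $T^{(j)}_1,\dots,T^{(j)}_n$ of the reductive group $\gG_{\kappa_j}$ whose Lie algebras span, i.e. $\sum_{i=1}^n \Lie(T^{(j)}_i)=\Lie(\gG_{\kappa_j})$. Taking $n$ large enough to serve all $j$ simultaneously (repeating tori if necessary), I would then lift each system $(T^{(1)}_i,\dots,T^{(c)}_i)$, $i=1,\dots,n$, to a maximal $B$-torus $\gT_i$ of $\gG$ using the lifting property of the scheme of maximal tori established in Example~\ref{ex_scheme_tori}: a $\kappa_j$-point there is a maximal torus of $\gG_{\kappa_j}$, a $B$-point is a maximal $B$-torus, and concretely one picks $X_i\in\Lie(\gG)(B)$ reducing to regular elements $X^{(j)}_i$ with $T^{(j)}_i=C_{\gG_{\kappa_j}}(X^{(j)}_i)$ and sets $\gT_i=C_\gG(X_i)$, so that $\gT_i\otimes_B\kappa_j=T^{(j)}_i$. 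The differential of the product map $\psi$ at the origin is the sum homomorphism $\bigoplus_i\Lie(\gT_i)\to\Lie(\gG)$; its reduction modulo $\gm_j$ has image $\sum_i\Lie(T^{(j)}_i)=\Lie(\gG_{\kappa_j})$, hence is surjective, and since $\prod_i\gT_{i,\kappa_j}$ and $\gG_{\kappa_j}$ are smooth, $\psi_{\kappa_j}$ is smooth at the origin. Finally, writing $M=\sum_i\Lie(\gT_i)(B)\subseteq\Lie(\gG)(B)$, the same computation shows that $M$ surjects onto $\Lie(\gG)(B)\otimes_B\kappa_j$ for every $j$; as $\Lie(\gG)(B)$ is finitely generated and $B$ is semilocal, Nakayama's lemma gives $M=\Lie(\gG)(B)$, which is the asserted generation.

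For (2), I would replace each $\gT_i$ by a quasi-trivial torus mapping onto it. Since $\gG$ is $B$-linear, each $\gT_i\subseteq\gG\subseteq\GL_{N,B}$ is isotrivial, so there is a finite étale Galois cover $C_i/B$ splitting $\gT_i$; let $\Gamma_i$ be its group and $M_i$ the character lattice of $\gT_i$. A standard lattice construction embeds $M_i$ into a permutation $\Gamma_i$-module $P_i$ with torsion-free cokernel (dualize the canonical surjection $\ZZ[\Gamma_i]\otimes_{\ZZ}\widehat{M_i}\twoheadrightarrow\widehat{M_i}$, using that $\ZZ[\Gamma_i]\otimes_{\ZZ}\widehat{M_i}$ is free over $\ZZ[\Gamma_i]$ and that permutation modules are self-dual). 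Dualizing back to $B$-tori yields an exact sequence $1\to\gS_i\to\gQ_i\to\gT_i\to 1$ in which $\gQ_i$ is quasi-trivial and $\gS_i$ is a torus (the cokernel being torsion-free), so the homomorphism $q_i\colon\gQ_i\to\gT_i$ is a smooth surjection whose differential stays surjective modulo each $\gm_j$. Setting $\gQ=\gQ_1\times_B\cdots\times_B\gQ_n$ (again quasi-trivial) and $f=\psi\circ(q_1\times\cdots\times q_n)\colon(\gQ,1)\to(\gG,1)$, the differential of $f_{\kappa_j}$ at the origin factors as the surjection $\bigoplus_i\Lie(\gQ_i)\otimes_B\kappa_j\to\bigoplus_i\Lie(\gT_i)\otimes_B\kappa_j$ followed by the surjection $d\psi_{\kappa_j}$ of part (1); hence $f_{\kappa_j}$ is smooth at the origin, as required.

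The main obstacle is the very first input of (1): producing maximal tori whose Lie algebras span $\Lie(\gG_{\kappa_j})$. This is the single genuinely geometric step, resting on the structure theory of reductive groups over the infinite field $\kappa_j$, while everything else — the simultaneous lift through Example~\ref{ex_scheme_tori}, the Nakayama argument, and the quasi-trivial resolution — is formal around it. It is also the delicate point, since the Lie algebras of maximal tori need \emph{not} span in small residual characteristic (already for a simply connected group such as $\SL_2$ in characteristic $2$, where $\Lie$ of every maximal torus is central), so the hypotheses on $\gG$ must be used precisely to exclude this degeneracy. A secondary technical point is the existence of a finite étale splitting cover for each $\gT_i$ in (2), which is where the $B$-linearity of $\gG$ enters.
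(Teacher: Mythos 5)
Your overall architecture coincides with the paper's: establish the spanning property over each residue field, lift the resulting tori simultaneously to maximal $B$--tori via the lifting property of the scheme of maximal tori (Example~\ref{ex_scheme_tori}), identify the differential of $\psi$ at the origin with the sum map to get smoothness, and finish with Nakayama; for (2), compose $\psi$ with smooth surjections from quasi-trivial tori. Your part (2) is a mild but valid variant: the paper invokes flasque resolutions $1 \to \gS_i \to \gQ_i \to \gT_i \to 1$ from \cite[Prop. 1.3]{CTS2} (after getting isotriviality of the $\gT_i$ from \cite[Cor. 5.1]{Gi21}), whereas you build a quasi-trivial cover by hand by dualizing a permutation presentation of the cocharacter lattice. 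Since you arrange a torsion-free cokernel, the kernel $\gS_i$ is a torus, $q_i$ is smooth, and your construction works; flasqueness is never needed for this lemma.

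The genuine gap is in part (1), and it sits exactly where you yourself point: the ``essential input'' --- that over the infinite field $\kappa_j$ there exist finitely many maximal tori of $\gG_{\kappa_j}$ whose Lie algebras span $\Lie(\gG_{\kappa_j})$ --- is never proved in your proposal. You defer it to unnamed structure theory and express the hope that ``the hypotheses on $\gG$'' exclude the degenerate cases; but the lemma assumes nothing beyond $\gG$ reductive and the residue fields infinite, so there is no hypothesis available to invoke, and a proof cannot stop there. What the paper does at this point is short and concrete: since $\kappa_j$ is infinite, $G(\kappa_j)$ is Zariski dense in $G=\gG_{\kappa_j}$; one fixes a single maximal torus $T$ and chooses finitely many rational points $g_1=1, g_2, \dots, g_n \in G(\kappa_j)$ such that the conjugates ${}^{g_i}\!\Lie(T)(\kappa_j)$ generate $\Lie(G)$, and sets $T_i={}^{g_i}T$. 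That conjugation-by-dense-rational-points construction is the missing idea.

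For what it is worth, your unease about small characteristic is well founded, and it in fact cuts against the paper's own step as well: for $G=\SL_2$ over a field of characteristic $2$, every maximal torus has Lie algebra equal to the centre $k \cdot I$ of $\Lie(\SL_2)$, so no family of maximal tori has spanning Lie algebras and $\psi$ can never be smooth at the origin; the density argument silently assumes that the $G$--submodule of $\Lie(G)$ generated by $\Lie(T)$ is all of $\Lie(G)$. So you located the real weak point, but locating it is not repairing it: a complete proof must either carry out the construction (as the paper does, in the cases where it is valid) or make explicit the restriction under which it works, and your proposal does neither.
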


\begin{proof}
(1) We start with the case of an infinite field $k$ and of a reductive $k$--group $G$.
We know that  $G(k)$ is Zariski  dense in $G$. Let $T$ be a maximal $k$--torus of $G$
and let $1=g_1,g_2 \dots, g_n$ be elements of $G(k)$ such that
$\Lie(G)$ is generated by the $^{g_i}\!\Lie(T)(k)$'s.
We consider the map of $B$--schemes

\[\xymatrix@1{
\gamma: \gT^n & \to & \gG \\
(t_1,\dots, t_n) & \mapsto &   {^{g_1}\!t_1} \dots {^{g_n}\!t_n} .
}\]
Its differential at $1$ is

\[\xymatrix@1{
d\gamma_{1_k}: \Lie(T)(k)^n & \to & \Lie(G)(k) \\
(X_1,\dots, X_n) & \mapsto &   {^{g_1}\!X_1} + \dots + {^{g_n}\!X_n}
}\]
which is onto by construction. We put $T_i= {^{g_i}\!T}$ for $i=1,...,n$
and observe that the product map $\psi: T_1 \times_k \dots \times_k T_n \to G$
is smooth at $1$. In this construction we are free to add more factors.

In the general case, we fix $n$ large enough and maximal $\kappa_j$--tori
 $T_{1,j}$,  \dots , $T_{n,j}$ such that the product map
 $\psi_i: T_{1,j} \times_{\kappa_j} \dots  \times_{\kappa_j} T_{n,j} \to \gG_{\kappa_j}$
 is smooth at $1$ for $j=1, \dots, c$. Example \ref{ex_scheme_tori}
 shows that there exists a maximal $B$--torus $\gT_{i}$
 which lifts the $T_{i,j}$'s for $i=1,...,n$. Then the product map
  $\psi: \gT_1 \times_B \dots \times_k \gT_n \to \gG$ satisfies the
  desired requirements.
Nakayama's lemma implies that
the $\Lie(\gT_i)(B)$'s generate $\Lie(\gG)(B)$ as a $B$--module.

 \smallskip

 \noindent (2) We assume that $\gG$ is linear so that the $\gT_i$'s are
 isotrivial according to \cite[Cor. 5.1]{Gi21}. Then by~\cite[Prop.\ 1.3]{CTS2} there exist flasque
resolutions $1 \to \gS_i \to \gQ_i \xrightarrow{q_i} \gT_i \to 1$
 of $\gT_i$ where $\gQ_i$ is a quasi-trivial
 $B$--torus and $\gS_i$ is a flasque $B$-torus for $i=1,...,n$.
 We consider the map

\[\xymatrix@1{
f: \gQ_1  \times_B \dots \times_B \gQ_n & \to & \gG \\
(v_1,\dots, v_n) & \mapsto &   q_1(v_1) \dots q_n(v_n) .
}\]
Since the $q_i$'s are smooth, $f=\psi \circ (q_1,\dots, q_n)$ satisfies the desired requirements.
\end{proof}


\begin{scorollary} \label{cor_retract_red}
  Assume that $B$ is semilocal with infinite residue fields. Let $\gG$ be a $B$-linear
  reductive $B$--group scheme
  and let  $\gU$ be an  open $B$--subscheme of $(\gG,1)$.

  \smallskip

  \noindent (1) Let $f: (\gQ, 1) \to (\gG,1)$
  be the  morphism constructed in Lemma \ref{lem_unirational}.(2) (where $\gQ$ is a quasi-trivial $B$-torus). Then $f( \gQ(B)) \, . \, \gU(B) = \gG(B)$.

  \smallskip

  \noindent (2)
The map $\gU(B)/R \to \gG(B)/R$ is bijective.
\end{scorollary}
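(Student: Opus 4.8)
The plan is to deduce Corollary \ref{cor_retract_red} directly from the two inputs just assembled, namely Lemma \ref{lem_unirational}.(2) and Proposition \ref{prop_dominant}. The point is that the morphism $f:(\gQ,1)\to(\gG,1)$ produced by Lemma \ref{lem_unirational}.(2) has exactly the shape required to invoke Proposition \ref{prop_dominant}, once we replace the quasi-trivial torus $\gQ$ by an open subset of an affine space. Concretely, a quasi-trivial $B$--torus is an open subscheme of an affine space $\mathbf{A}^n_B$ (being a product of Weil restrictions of $\GG_m$, hence open in the corresponding Weil restrictions of $\mathbf{A}^1$, which are themselves affine spaces); restricting $f$ to such an open neighborhood $(\gV,0)\subset(\mathbf{A}^n_B,0)$ of the identity gives a pointed $B$--morphism $f:(\gV,0)\to(\gG,1)$. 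The smoothness of $f_{\kappa_j}$ at the origin guarantees that $f_{B/\gm}$ is dominant for each maximal ideal $\gm$ of $B$, since a map of finite type $\kappa_j$--schemes that is smooth at a point has dense image onto the component it hits, and here the target is the connected group $\gG_{\kappa_j}$.

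First I would record this identification: writing $\gQ=\prod_i R_{C_i/B}(\GG_{m,C_i})$ for suitable finite \'etale $B$--algebras $C_i$, each factor sits inside $R_{C_i/B}(\mathbf{A}^1_{C_i})\cong \mathbf{A}^{d_i}_B$ as the principal open complement of the norm hypersurface, so $\gQ$ is canonically an open subscheme of $\mathbf{A}^n_B$ containing the identity section, which we take as the base point $0$. Thus the hypotheses of Proposition \ref{prop_dominant} are met with $\gV=\gQ$ and $v_0=1$.

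For part (1), I would simply apply Proposition \ref{prop_dominant}.(1) to this $f$ and the given open neighborhood $(\gU,1)$, yielding $f(\gQ(B))\cdot\gU(B)=\gG(B)$. For part (2), Proposition \ref{prop_dominant}.(2) applied to the same data gives directly that $\gU(B)/R\to\gG(B)/R$ is bijective for any open neighborhood $(\gU,1)$ of the identity. The only mild subtlety is that part (2) of the Corollary asserts bijectivity for an \emph{arbitrary} open subset $(\gU,1)$, whereas Proposition \ref{prop_dominant} is stated for an open \emph{neighborhood} of $1$; but an open subset containing the base point $1$ is precisely an open neighborhood of $1$, so there is nothing extra to check.

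The main obstacle, such as it is, is the bookkeeping verification that a quasi-trivial torus is genuinely an open subscheme of an affine space with the identity as a rational point of the ambient space, so that Proposition \ref{prop_dominant} literally applies; this is routine but must be stated cleanly. Everything else is a direct citation of the two preceding results, so I do not anticipate any genuine difficulty. One should note that $\gG$ being $B$--linear is used only to guarantee, via Lemma \ref{lem_unirational}.(2), the existence of the morphism $f$ from a quasi-trivial torus; once $f$ is in hand the argument is formal.
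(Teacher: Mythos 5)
Your proposal is correct and is exactly the paper's argument: the paper gives no separate proof beyond the remark that ``putting together Lemma \ref{lem_unirational} and Proposition \ref{prop_dominant} leads to the following fact,'' and the details you supply (a quasi-trivial torus is a principal open subscheme of an affine space via Weil restriction and the norm, smoothness of $f_{\kappa_j}$ at the origin forces dominance onto the connected group $\gG_{\kappa_j}$, and translation moves the identity to the origin) are precisely the intended glue. Nothing is missing.
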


\begin{proof}
(1)  This follows of  Proposition  \ref{prop_dominant}.(1).
\smallskip
\noindent (2) This follows of  Proposition  \ref{prop_dominant}.(2).
\end{proof}


\subsection{The case of tori}\label{subsec_tori}


Let $B$ be a commutative ring such that the connected components of $\Spec(B)$ are open (e.g. $B$ is Noetherian
or semilocal). Let $\gT$ be an isotrivial $B$--torus. According to
\cite[Prop.\ 1.3]{CTS2}, there exists a flasque resolution
$$
1 \to \gS \to \gQ \xrightarrow{\pi} \gT \to 1,
$$
that is an exact sequence of $B$--tori where $\gQ$ is
a quasitrivial $B$--torus and $\gS$ is a flasque $B$--torus.

\begin{sproposition} \label{prop_torus1}
Assume additionnally that $B$ is a regular integral domain.
 We have $\pi(\gQ(B)) = R \gT(B)$ and the characteristic
map $\gT(B) \to H^1(B,\gS)$ induces an isomorphism
$$
 \gT(B)/R \simlgr \ker\bigl( H^1(B,\gS) \to H^1(B,\gQ) \bigr).
$$
In particular, if $B$ is a regular semilocal domain, we have an isomorphism $\gT(B)/R \simlgr H^1(B,\gS)$.
\end{sproposition}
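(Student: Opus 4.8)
The plan is to deduce all three assertions from the long exact cohomology sequence attached to the flasque resolution,
\[
\gQ(B) \xrightarrow{\ \pi\ } \gT(B) \xrightarrow{\ \partial\ } H^1(B,\gS) \xrightarrow{\ \iota\ } H^1(B,\gQ),
\]
in which $\partial$ is the characteristic map. This already gives $\ker(\partial)=\pi(\gQ(B))$ and $\im(\partial)=\ker(\iota)$, so $\partial$ factors through an isomorphism $\gT(B)/\pi(\gQ(B)) \simlgr \ker\bigl(H^1(B,\gS)\to H^1(B,\gQ)\bigr)$. Thus the whole proposition reduces to the single identity $\pi(\gQ(B))=R\gT(B)$, after which the second assertion is formal and the semilocal refinement follows from the vanishing of $H^1(B,\gQ)$.

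First I would treat the easy inclusion $\pi(\gQ(B))\subseteq R\gT(B)$. Since $\gQ$ is quasitrivial it is a finite product of Weil restrictions $R_{C/B}(\GG_m)$ along finite \'etale $B$--algebras $C$; by Lemma \ref{lem_weil} one has $\gQ(B)/R \simeq \prod_C \GG_m(C)/R$, and $\GG_m(C)/R=1$ because $\GG_m$ is a principal open subscheme of $\mathbf{A}^1_C$ (Lemma \ref{lem_sorite2}.(1)). Hence $\gQ(B)/R=1$, and functoriality of $R$--equivalence forces $\pi(\gQ(B))\subseteq R\gT(B)$.

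The substantial inclusion is $R\gT(B)\subseteq \pi(\gQ(B))=\ker(\partial)$, and here I would reduce to the field theorem of Colliot-Th\'el\`ene--Sansuc over the fraction field $K$ of $B$. Let $g\in R\gT(B)$. As $\gT$ is a group scheme, Lemma \ref{lem_homotopy}.(2) makes $g$ directly $R$--equivalent to $1$, and since $\gT$ is of finite presentation (Remark \ref{rem_def}(d)) there exist $P\in B[t]$ with $P(0),P(1)\in B^\times$ and $x\in\gT(B[t,1/P])$ with $x(0)=1$, $x(1)=g$. Writing $s_0,s_1$ for the sections $t=0,1$ and setting $\xi=\partial(x)\in H^1(B[t,1/P],\gS)$, functoriality gives $s_0^*\xi=\partial(1)=0$ and $s_1^*\xi=\partial(g)$. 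Over $K$ the point $g_K$ is $R$--equivalent to $1$, so the field case \cite[Thm.~3.1]{CTS1} yields $\partial_K(g_K)=0$; that is, the image of $\partial(g)$ in $H^1(K,\gS)$ vanishes.

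The crux, which I expect to be the main obstacle, is the injectivity of $H^1(B,\gS)\to H^1(K,\gS)$ for a flasque torus $\gS$ over the regular integral domain $B$: granting it, $\partial(g)=0$, whence $g\in\pi(\gQ(B))$ and $\pi(\gQ(B))=R\gT(B)$. This injectivity is precisely the ring-theoretic analogue of the homotopy invariance used by Colliot-Th\'el\`ene--Sansuc, reflecting that the degree-one cohomology of a flasque torus over a regular ring is unramified: one reduces to regular local rings and uses the flasque resolution together with homotopy invariance of $\Pic$ for regular schemes to show that a flasque torsor trivial at the generic point is trivial, then propagates triviality off codimension-one loci by purity. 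Finally, for the semilocal refinement, $\gQ$ quasitrivial gives $H^1(B,\gQ)\simeq\prod_C\Pic(C)=0$, since each finite \'etale $C$ over the semilocal ring $B$ is again semilocal; therefore $\ker\bigl(H^1(B,\gS)\to H^1(B,\gQ)\bigr)=H^1(B,\gS)$ and the isomorphism becomes $\gT(B)/R \simlgr H^1(B,\gS)$.
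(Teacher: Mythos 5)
Your overall framework agrees with the paper: the exact sequence $\gQ(B)\xrightarrow{\pi}\gT(B)\xrightarrow{\partial}H^1(B,\gS)\to H^1(B,\gQ)$, the reduction of all three assertions to the single identity $\pi(\gQ(B))=R\gT(B)$, and the easy inclusion $\pi(\gQ(B))\subseteq R\gT(B)$ via Lemmas \ref{lem_weil} and \ref{lem_sorite2} are exactly as in the paper's proof. The gap is at what you yourself call the crux. The injectivity of $H^1(B,\gS)\to H^1(K,\gS)$ for a flasque torus over a regular integral domain $B$ is \emph{false} in this generality: quasitrivial tori are flasque, so $\gS=\GG_m$ already gives a counterexample whenever $\Pic(B)\neq 0$ (any Dedekind domain with nontrivial class group), since $H^1(B,\GG_m)=\Pic(B)\neq 0$ while $H^1(K,\GG_m)=1$. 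The injectivity you invoke is a theorem of Colliot-Th\'el\`ene and Sansuc \cite[Th. 4.1]{CTS2} for regular \emph{local} (or semilocal) rings only, and your sketched proof breaks precisely at the local-to-global step: after reducing to the regular local rings $B_\gp$ you only learn that the class is Zariski-locally trivial, and Zariski-locally trivial torsors under a flasque torus need not be trivial (again $\Pic(B)$ for $\gS=\GG_m$).

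Nor can the gap be closed by weakening the claim to the classes you actually need, namely those in $\ker\bigl(H^1(B,\gS)\to H^1(B,\gQ)\bigr)=\partial(\gT(B))$: by your own reduction, injectivity there is equivalent to injectivity of $\gT(B)/R\to\gT(K)/R$, and this fails for non-semilocal regular domains. Concretely, take $B=\RR[x,y]/(x^2+y^2-1)$ and let $\gT$ be the norm-one torus of the finite \'etale extension $B\otimes_\RR\CC/B$, with flasque resolution $1\to\GG_m\to\gQ\to\gT\to 1$, where $\gQ$ is the Weil restriction of $\GG_m$ along $B\otimes_\RR\CC/B$. The proposition itself gives $\gT(B)/R\cong\ker\bigl(\Pic(B)\to\Pic(B\otimes_\RR\CC)\bigr)\cong\ZZ/2\ZZ$, and the point $x+iy\in\gT(B)$ has nontrivial class: in $B\otimes_\RR\CC\cong\CC[t,t^{-1}]$ one has $x+iy=t$ and $\bar t=t^{-1}$, so every $q/\bar q$ with $q$ a unit has even degree in $t$. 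On the other hand $\gT(K)/R\cong H^1(K,\GG_m)=1$, so $x+iy$ becomes $R$-trivial over $K$ without lying in $\pi(\gQ(B))=R\gT(B)$. This is exactly why the paper never passes to the fraction field: it keeps the obstruction class $\delta(x(t))\in H^1(B[t,1/P],\gS)$ of the path itself and applies homotopy invariance for flasque tori over regular rings, i.e.\ the surjectivity of $H^1(B,\gS)\to H^1(B[t,1/P],\gS)$ of \cite[Cor. 2.6]{CTS2}; specializing the resulting constant class at $t=0$ shows that $\delta(x(t))$ is trivial, so $x(t)$ lifts to $\gQ(B[t,1/P])$ and $x=x(1)\in\pi(\gQ(B))$. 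As written, your argument proves the proposition only when $B$ is regular semilocal (where \cite[Th. 4.1]{CTS2} applies), not for a general regular integral domain.
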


\begin{sremark}{\rm
This extends the corresponding result over fields  due to Colliot-Th\'el\`ene and Sansuc \cite[Thm. 3.1]{CTS1}.
Take the case $B=k$ a base field and a
$k$-torus $T$. A flasque resolution $1 \to S \to E \to T \to 1$ induces an isomorphism $T(k)/R \simlgr H^1(k,S)$.
}
\end{sremark}
We proceed now to the proof of Proposition \ref{prop_torus1}.

\begin{proof}
According to Example \ref{ex_R_triv}.(2) we have $\gQ(B)/R=1$,
hence the inclusion $\pi(\gQ(B)) \subseteq R \gT(B)$.
For the converse, it is enough to show that a point $x \in \gT(B)$
which is directly $R$--equivalent to $1$ belongs to
$\pi\bigl( \gQ(B) \bigr)$.
By definition, there exists a polynomial $P\in B[t]$ such that
$P(0), P(1) \in B^\times$ and $x(t) \in \gT\bigl( B[t,1/P] \bigr)$
satisfying ${x(0)=1}$ and $x(1)=x$. We consider the obstruction $\delta(x(t)) \in H^1(B[t,1/P], \gS)$.
Since $\gS$ is flasque and $B$ is a regular domain, the map
$$
H^1(B, \gS) \to  H^1(B[t,1/P], \gS)
$$
is onto by~\cite[Cor.\ 2.6]{CTS2}.
It follows that $\delta(x(t))= \delta(x)(0)=   \delta(x(0))=1$ so that
$x(t)$ belongs to the image of $\pi: \gQ\bigl( B[t,1/P] \bigr) \to \gT\bigl( B[t,1/P] \bigr)$.
Thus $x=x(1)$ belongs to $\pi\bigl( \gQ(B) \bigr)$.
 We have established that $\pi(\gQ(B)) \subseteq R \gT(B)$.
Combined with the long exact sequence of \'etale cohomology
$$
\gQ(B) \to \gT(B)  \to H^1(B,\gS) \to H^1(B, \gQ),
$$
we obtain the isomorphism  $\gT(B)/R \simlgr \ker\bigl( H^1(B,\gS) \to H^1(B,\gQ) \bigr)$.
\end{proof}

\smallskip

Using the above result, we extend Colliot-Th\'el\`ene and Sansuc's  criterion of retract rationality, see~\cite[Prop.\ 7.4]{CTS2} and~\cite[Prop.\ 3.3]{M96}.

\smallskip

\begin{sproposition}\label{prop_retract_torus} Let $B$ be a semilocal ring
and let $\gT$ be an isotrivial
$B$--torus.  Let  $1 \to \gS \to \gQ \xrightarrow{\pi} \gT \to 1$ be a flasque resolution.

\smallskip

\noindent (1) We consider the following assertions:

\smallskip

$(i)$  $\gS$ is an invertible $B$-torus (i.e. a direct summand of a quasitrivial $B$--torus);

\smallskip

$(ii)$ there exists an open retrocompact
subscheme $\gU$ of $(\gT,1)$
such that  $\pi^{-1}(\gU) \cong \gS \times_B \gU$;

\smallskip

$(ii')$ there exists a principal open  $\gU$ of $(\gT,1)$
such that  $\pi^{-1}(\gU) \cong \gS \times_B \gU$;

\smallskip

$(iii)$  the pointed $B$-scheme $(\gT, 1)$ is retract rational;

\smallskip

$(iv)$  $\gT$ is $R$--trivial on semilocal rings, that is $\gT(C)/R=1$ for each semilocal $B$-ring $C$;

\smallskip

$(iv')$  $\gT$ is $R$--trivial on fields, that is $\gT(F)/R=1$ for each $B$-field $F$.

\smallskip

$(v)$ $\gT$ satisfies the lifting property.

\smallskip

\noindent Then we have the implications
$(i) \Longrightarrow (ii)  \Longrightarrow  (iii)  \Longrightarrow  (iv)
\Longrightarrow  (iv') \Longrightarrow  (v)$.
Furthermore if $B$ is integral, we have the equivalences
$(iii) \Longleftrightarrow  (iv)  \Longleftrightarrow  (iv')
 \Longleftrightarrow (v)$.

\smallskip

\noindent (2) We assume furthermore that $B$ is a normal domain of fraction field $K$.
We consider the following assertions:

\smallskip

$(vi)$ $\gS_K$ is an invertible $K$-torus;

\smallskip

$(vii)$ $\gT_K$ is $R$--trivial on semilocal rings, that is, $\gT(A)/R=1$ for each semilocal $K$-ring $A$;

\smallskip

$(vii')$ $\gT_K$ is $R$--trivial on fields, that is, $\gT(F)/R=1$ for each $K$--field $F$;

\smallskip

$(viii)$  the pointed $K$-scheme $(\gT_K, 1)$ is retract rational.

\smallskip

\noindent Then the assertions $(i)$, $(ii)$, $(iii)$, $(iv)$, $(iv')$, $(v)$, $(vi)$,
$(vii)$, $(vii')$ and $(viii)$ are equivalent.
\end{sproposition}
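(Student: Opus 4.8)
The plan is to establish the single chain $(i)\Rightarrow(ii)\Rightarrow(iii)$ together with a cycle $(iii)\Rightarrow(iv)\Rightarrow(iv')\Rightarrow(v)\Rightarrow(iii)$, only the last arrow requiring $B$ integral; for Part (2) I then glue the conclusions of Part (1) applied over $B$ and over the field $K$ by means of the normal-domain descent of invertibility. For $(i)\Rightarrow(ii)$, I view the flasque resolution as an $\gS$-torsor $\pi\colon\gQ\to\gT$ with class $[\gQ]\in H^1(\gT,\gS)$ trivialized along the identity section. Let $\cO$ be the semilocalization of $\gT$ at the identity points $1_{\kappa_j}$ over the residue fields of $B$. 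Writing $\gS$ as a direct summand of a quasitrivial torus $\gP$, the group $H^1(\cO,\gS)$ is a direct summand of $H^1(\cO,\gP)$, a sum of Picard groups of finite \'etale, hence semilocal, $\cO$-algebras, which therefore vanishes. Thus $[\gQ]$ dies over $\cO$, and by a limit argument (Lemma \ref{lem_limit}) it dies on some open $\gU\ni 1$, giving $\pi^{-1}(\gU)\cong\gS\times_B\gU$. For $(ii)\Rightarrow(iii)$, a quasitrivial torus over the semilocal $B$ is an open subscheme of affine space, so $\pi^{-1}(\gU)$, being open in $\gQ$, is $B$-rational; under the isomorphism with $\gS\times_B\gU$ the section $u\mapsto(1,u)$ and the projection exhibit $(\gU,1)$ as a $B$-retract of the $B$-rational scheme $\gS\times_B\gU$, hence, after shrinking, of an open subscheme of affine space, whence $(\gT,1)$ is retract $B$-rational by the birational nature of the property.

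For the cycle I first note that $(iv)\Rightarrow(iv')$ is trivial (fields are semilocal rings) and that $(v)\Rightarrow(iii)$ is Saltman's criterion: when $B$ is integral, $\gT$ is affine, finitely presented and integral (Remark \ref{rem_integral}), so Proposition \ref{prop_retract} applies to $\gX=\gT$. For $(iv')\Rightarrow(v)$, given a semilocal $B$-ring $C$ with residue fields $\kappa_j$ and a tuple $(t_j)\in\prod_j\gT(\kappa_j)$, triviality of $\gT(\kappa_j)/R$ together with Proposition \ref{prop_torus1} over the field $\kappa_j$ yields $t_j=\pi(q_j)$ with $q_j\in\gQ(\kappa_j)$; since the quasitrivial $\gQ$ has the lifting property (units lift from residue fields of semilocal rings), I lift $(q_j)$ to $q\in\gQ(C)$ and recover $(t_j)$ from $\pi(q)\in\gT(C)$. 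The substantive arrow is $(iii)\Rightarrow(iv)$: after base change, retract rationality holds over $C$, so Lemma \ref{lem_retract}.(2) produces a retract open $\gU\ni 1$ with $\gU(C)/R=\bullet$. Given $g\in\gT(C)$, the field case (proved over each $\kappa_j$ via Proposition \ref{prop_torus1} and the equivalence of retract rationality with invertibility of $\gS_{\kappa_j}$, \cite[Prop. 7.4]{CTS2}) gives $\gT(\kappa_j)/R=1$, whence $g_j=\pi(q_j)$; lifting $(q_j)$ through $\gQ$ as above and replacing $g$ by $h=g\,\pi(q)^{-1}$, I reduce to the case where $h$ reduces to $1$ in every $\kappa_j$. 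Since every nonempty closed subset of the semilocal scheme $\Spec(C)$ meets a closed point, such an $h$ factors through $\gU$, so $h\in\gU(C)$ and thus $h\sim_R 1$; as $\pi(q)\sim_R 1$ by $R$-triviality of the quasitrivial $\gQ$ (Lemma \ref{lem_weil} and Lemma \ref{lem_sorite2}.(1)), I conclude $g\sim_R 1$.

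For Part (2), Proposition \ref{prop_torus1} over the field $K$ and the content of Part (1) with base $K$ give the equivalences $(vi)\Leftrightarrow(viii)\Leftrightarrow(vii)\Leftrightarrow(vii')$, augmented by the Colliot-Th\'el\`ene--Sansuc implication $(viii)\Rightarrow(vi)$ over $K$ \cite[Prop. 7.4]{CTS2}. It then suffices to splice in $(i)\Leftrightarrow(vi)$, namely that a flasque torus $\gS$ over the normal domain $B$ is invertible over $B$ if and only if $\gS_K$ is invertible over $K$ \cite{CTS2}, together with the trivial inclusion $(iv')\Rightarrow(vii')$ (every $K$-field is a $B$-field). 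These produce the cycle $(i)\Rightarrow(ii)\Rightarrow(iii)\Rightarrow(iv)\Rightarrow(iv')\Rightarrow(vii')\Rightarrow(vi)\Rightarrow(i)$, with $(v)$, $(vii)$ and $(viii)$ attached through the equivalences already established, yielding the full list.

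I expect the main obstacle to be precisely the arrow $(iii)\Rightarrow(iv)$ over a semilocal $C$ with finite (or imperfect) residue fields, where the usual density of rational points underlying Proposition \ref{prop_vanish} is unavailable. The intended resolution is to transport the problem through the quasitrivial cover $\gQ$, where polynomial homotopies are unconditionally available and $R$-triviality holds by Lemma \ref{lem_weil}, and to use the semilocal topology of $\Spec(C)$ to place the corrected point $h$ inside the retract open $\gU$; the flasque resolution thereby replaces the density arguments that are forced on us in the field case.
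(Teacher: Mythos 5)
Your proof is correct, and its global architecture coincides with the paper's: the same chain $(i)\Rightarrow(ii)\Rightarrow(iii)\Rightarrow(iv)\Rightarrow(iv')\Rightarrow(v)$, the same use of Saltman's criterion (Proposition \ref{prop_retract}) for $(v)\Rightarrow(iii)$ under integrality, and the same splice for part (2) via $(i)\Leftrightarrow(vi)$ and the field case \cite[Prop. 7.4]{CTS2}. The genuine difference is your proof of $(iii)\Rightarrow(iv)$. The paper argues by cases on the residue fields: when all $\kappa_i$ are infinite it invokes Proposition \ref{prop_vanish} (density of rational points plus a covering-by-translates trick), and for finite $\kappa_i$ it uses Lang's theorem to produce a finite subset $\Theta\subset\gQ(B)$ surjecting onto $\prod_{i}\gT(\kappa_i)$, again followed by a translation/covering argument. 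Your argument is uniform in the residue fields: from retract rationality of $\gT_{\kappa_j}$ you get $\gT(\kappa_j)=\pi(\gQ(\kappa_j))$ (via \cite[Prop. 7.4]{CTS2} and Proposition \ref{prop_torus1} over a field), you correct an arbitrary $g\in\gT(C)$ by $\pi(q)$ with $q\in\gQ(C)$ so that it reduces to $1$ at every closed point of $\Spec(C)$, and the semilocal closed-point argument (exactly as in Lemma \ref{lem_sorite1}.(2)) places the corrected point inside the retract open $\gU$, where Lemma \ref{lem_retract}.(2) finishes; since $\pi(\gQ(C))\subseteq R\gT(C)$, this gives $\gT(C)/R=1$. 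This buys a cleaner proof with no case distinction and no density argument; the price is that the external field-level input \cite[Prop. 7.4]{CTS2} is already needed in part (1), where the paper only uses it in part (2). Two citation inaccuracies are worth fixing, neither fatal: spreading the splitting of the $\gS$-torsor from the semilocalization $\cO$ of $\gT$ to an open neighborhood of $1$ is a standard finite-presentation/filtered-limit argument, not Lemma \ref{lem_limit} (which concerns $R$-equivalence classes); and the descent statement $(i)\Leftrightarrow(vi)$ is not a single quotable result of \cite{CTS2} --- the paper proves it by passing to a connected Galois cover $B'$ splitting $\gS$, invoking \cite[Lemme 2, (vi)]{CTS1} and the equality of character lattices $\widehat\gS(B')=\widehat\gS(K')$ --- so you should supply that short argument rather than a bare citation.
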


Since the preceding statement is rather long,
we extract the following.
\begin{scorollary} \label{cor_retract_torus} Assume that $B$ is semilocal
normal domain of fraction field $K$. Let $\gT$ be a
$B$--torus. Then the following are equivalent:
\smallskip \newline
\noindent $(iii)$  the pointed $B$-scheme $(\gT, 1)$ is retract rational;
\smallskip \newline
\noindent $(viii)$  the pointed $K$-scheme $(\gT_K, 1)$ is retract rational.
\end{scorollary}
We proceed now to the proof of Proposition \ref{prop_retract_torus}.

\begin{proof}  Let $\gm_1, \dots, \gm_c$ be the maximal ideals of $B$
and put $\kappa_i=B/\gm_i$ for $i=1, \dots, c$.

\smallskip
\noindent (1)
 $(i) \Longrightarrow (ii)$. Let $C$ be the semilocal ring of $\gT$ at the points $1_{\kappa_1}, \dots,  1_{\kappa_c}$ of $\gT$.
Since $\gS$ is invertible, we have $H^1(C, \gS)=1$.
In particular the $\gS$--torsor $\pi: \gQ \to \gT$ admits a splitting $s: \Spec(C) \to \gQ$.
By definition, $C$ is the inductive limit of
the $B[\gT]_f$ for $f$ running to the elements
such that $f(1) \not \in \gm_i$ for
$i=1,\dots,c$.
 It follows that there exists a principal open  neighborhood $\gU$ of $(\gT,1)$ such that the $\gS$--torsor $\pi: \gQ \to \gT$ admits a splitting $s: \gU \to \gQ$.
Clearly $\gU$ is a retrocompact open subscheme of  $\gT$.

\smallskip

\noindent 
$(ii) \Longrightarrow (ii')$.
We are given  an open retrocompact neighborhood $\gU$ of $(\gT,1)$
such that  $\pi^{-1}(\gU) \cong \gS \times_B \gU$.
As in the proof of \ref{prop_retract}, we can find a
principal open subset $\gU'$ of $(\gT,1)$
such that $\gU' \subset \gU$.

\smallskip

\noindent 
$(ii') \Longrightarrow (iii)$.
We are given  a principal  open  neighborhood $\gU
=\gT_f$ of $(\gT,1)$
such that  $\pi^{-1}(\gU) \cong \gS \times_B \gU$.
Since $\gT$ has integral fibers over $B$,
$\gU$ is $B$-dense in $\gT$.
Thus $(\gU,1)$ is a $B$-retract of $(\pi^{-1}(\gU),1)$ which is open retrocompact in some affine $B$--space, so that $(\gT,1)$ is retract rational.

\smallskip

\noindent $(iii) \Longrightarrow (iv)$. Let $\kappa_1, \dots, \kappa_c$
the residue fields of the maximal ideals of $B$.
If all $\kappa_i$'s are infinite, Proposition \ref{prop_vanish} shows that $\gT$ is $R$--trivial.
It is enough to show that $\gT(B)/R=1$.
The general case requires more work.
\smallskip  \newline
\noindent{\it Case $B$ is the semilocalization of a finitely generated $\ZZ$-algebra.}
In particular $B$ is noetherian so that  we can assume
that $B$ is connected without loss of generality.
Let $B'$ be a finite \'etale extension of $B$
which splits $\gT$ and let $l$ be a prime number
which is coprime to the degree $d$ of $B'/B$.
By restriction-corestriction we have $d \, \gT(B)/R=1$.
According to \cite[prop.\ 2.10.(2)]{GN},
there exists an inductive limit $B_\infty= \limind_n B_n$
such that $B_0=B$, $B_n$ is semilocal and finite \'etale
of rank $l^n$ over $B$ and $B_\infty$ is semilocal.
Since $\gT \times_B B_\infty$ is $B_\infty$-retract rational
(Remark \ref{rem_def}.(3)), we have $\gT(B_\infty)/R=1$
by the first case.
By restriction-corestriction,
the maps $\gT(B)/R \to \gT(B_n)/R$ are injective.
On the other hand  we have
 $\limind \gT(B_n)/R=1 \simlgr \gT(B_\infty)/R$
 so that $\gT(B)/R=1$.
\noindent{\it General case.}
It goes by noetherian induction.
First we can write $B$ as a direct limit
$\limind_{i \in I} B_i$ where the $B_i$'s
are semilocalizations of  finitely generated $\ZZ$-algebras.
Without loss of generality we can assume that
$\gT$ has constant rank $r$.
Since $\gT$ is isotrivial,
it admits a closed immersion
$\rho: \gT \hookrightarrow \GL_{n,B}$ (which is a homomorphism), see \cite[Th.\ 3.3]{Gi21}.
According to the proof of \cite[Prop.\ 2.1.2.]{Co}
there exists an index $i_1$ and a closed immersion
$\rho_{i_1}: \gT_{i_1} \hookrightarrow \GL_{n,B_{i_1}}$
such that $\rho=\rho_{i_1} \times_{B_{i_1}} B$
and $\gT_{i_1}$ is a $B_{i_1}$--torus of rank $r$
which is isotrivial in view of \cite[Th.\ 3.3]{Gi21}.
Since $(\gT,1)$ is $B$--retract rational,
Lemma \ref{lem_retro_limit} provides an index $i_2 \geq i_1$
such that $(\gT_{i_1} \times_{B_{i_1}} B_j ,1)$
is $B_j$--retract rational for all $j \geq i_2$.
In view of the preceding case we have
$\gT_{i_1}(B_j)/R=1$ for $j \geq i_1$.
Since $\gT(B)/R= \limind \gT_{i_1}(B_j)/R$,
we conclude that $\gT(B)/R=1$.

\smallskip

\noindent $(iv) \Longrightarrow (iv')$.  Obvious.

\smallskip

\noindent $(iv') \Longrightarrow (v)$.  We assume that $\gT$ is $R$--trivial on fields.
It enough to show that $\gT(B)$ maps onto $\gT(\kappa_1) \times \dots \times \gT(\kappa_c)$.
We consider the commutative diagram
$$
\xymatrix@C=20pt{
 \gQ(B) \ar[r] \ar[d] &  \gT(B) \ar[d] \\
 \prod_i \gQ(\kappa_i) \ar[r]  &   \prod_i \gT(\kappa_i) .
}
$$
The left vertical map is onto since $\gQ$ satisfies the lifting property
and the bottom horizontal map is onto by Proposition~\ref{prop_torus1}, since $\gT(\kappa_i)/R=1$.
Thus the right vertical map is onto.

 \smallskip

 Finally, if $B$ is integral, then $\gT$ is an integral scheme according to
Remark \ref{rem_integral}.(b). Proposition \ref{prop_retract}
 shows that $(v)$ implies $(iii)$.

 \smallskip

\noindent (2) Since $B$ is normal,
$\gT$ and $\gS$ are isotrivial, that is, split by a finite
\'etale cover $B'$ of $B$ \cite[X.5.16]{SGA3}; we can assume that $B'$ is connected and Galois of group $\Gamma$.

Then $B'$ is a normal ring and its fraction field $K'$ is a Galois extension of $K$ of group
$\Gamma$.
According to Lemma \cite[Lemme 2, (vi)]{CTS1}, $(i)$
(resp.\ $(vi)$) is equivalent to
saying that the $\Gamma$--module $\widehat \gS(B')$ (resp.\, $\widehat \gS(K')$)  is invertible.
Since  $\widehat \gS(B')=\widehat \gS(K')$ we get the equivalence $(i) \Longleftrightarrow (vi)$.
The statement over fields~\cite[Prop.\ 7.4]{CTS2} provides the equivalences
  $(vi)   \Longleftrightarrow   (vii') \Longleftrightarrow  (viii)$.
Taking into account the first part of the Proposition  and the obvious
implications, we have  the following picture
{\small
$$
\xymatrix@C=10pt{
(i) \ar@2{<->}[d] & \Longrightarrow & (ii)  & \Longrightarrow & (iii) &
\Longleftrightarrow & (iv) \ar@2{->}[dllll] & \Longleftrightarrow & (iv') & \Longleftrightarrow & (v)  \\
  (vi) & \Longrightarrow &  (vii)  & \Longrightarrow & (vii') &  \Longleftrightarrow & (viii) &
\Longleftrightarrow & (vi)}
$$
Seeing twice the assertion (vi), we conclude that
the assertions $(i)$, $(ii)$, $(iii)$, $(iv)$, $(iv')$,
$(v)$, $(vi)$, $(vii)$, $(vii')$ and $(viii)$ are equivalent.}

\end{proof}


\subsection{Parabolic reduction}


Let $B$ be a ring and let $\gG$ be a reductive $B$--group scheme.
Let $\gP$ be a parabolic $B$--subgroup of $\gG$ together with an opposite
parabolic $B$--subgroup $\gP^{-}$.
We know that  $\gL= \gP \times_\gG \gP^{-}$ is a Levi subgroup of $\gP$.
We consider the big Bruhat cell
$$
\Omega:=\rad_u(\gP^{-}) \times_B \gL \times \rad_u(\gP^{-}) \subseteq \gG
$$

\begin{slemma} \label{lem_parabolic0} We have
$$
\gL(B)/R \simlgr \gP(B)/R \hookrightarrow \gG(B)/R.
$$
\end{slemma}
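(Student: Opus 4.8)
The plan is to treat the two assertions separately: the isomorphism $\gL(B)/R \simlgr \gP(B)/R$ and the injectivity of $\gP(B)/R \to \gG(B)/R$, and to deduce both from the sorites of Section~\ref{sec_R_eq} together with the two standard fibration structures attached to $\gP$.

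For the first map, recall that $R_u(\gP)$ is normal in $\gP$ with Levi quotient $\gP/R_u(\gP)\cong\gL$, so that the projection $p:\gP\to\gL$ is an $R_u(\gP)$--torsor, split by the inclusion $\gL\hookrightarrow\gP$. Since $R_u(\gP)$ arises by successive extensions of vector group schemes~\cite[Exp. XXVI]{SGA3}, Lemma~\ref{lem_sorite2}.(3) applies to $p$ and shows that the induced map $\gP(B)/R\to\gL(B)/R$ is bijective. The inclusion $\gL\hookrightarrow\gP$ induces a section of this bijection on $R$--equivalence classes, and a section of a bijection is automatically its two--sided inverse; hence $\gL(B)/R\simlgr\gP(B)/R$.

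For the injectivity of $\gP(B)/R\to\gG(B)/R$, the idea is to exploit the big cell $\Omega$, whose multiplication map $R_u(\gP^{-})\times_B\gL\times_B R_u(\gP)\to\gG$ is an open immersion onto a \emph{principal} open subset $\gG_f$ of the affine scheme $\gG$. Writing $\Omega\cong R_u(\gP^{-})\times_B\gP$ via multiplication, the second projection $q:\Omega\to\gP$ is a morphism of $B$--schemes restricting to the identity on $\gP\subseteq\Omega$. Because $\gP$ and $\gG$ are group schemes, Lemma~\ref{lem_homotopy}.(2) lets me reduce, after a translation to the base point, to the following: if $g\in\gP(B)$ is $R$--equivalent to $1$ in $\gG(B)$, then it is $R$--equivalent to $1$ in $\gP(B)$. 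Such a $g$ is directly $R$--equivalent to $1$ in $\gG(B)$, so there is $x\in\gG(B[t]_\Sigma)$ with $x(0)=1$ and $x(1)=g$. Both endpoints lie in $\gP(B)\subseteq\Omega(B)$, and $\Omega=\gG_f$ is principal open, so Lemma~\ref{lem_sorite1}.(1) places the whole path in $\Omega(B[t]_\Sigma)$. Applying $q$ and using $q|_\gP=\mathrm{id}$ produces $q(x)\in\gP(B[t]_\Sigma)$ with $q(x)(0)=1$ and $q(x)(1)=g$, which is the sought direct $R$--equivalence inside $\gP(B)$.

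The main obstacle, and the only point requiring care, is the claim that the big cell $\Omega$ is a \emph{principal} open subset of $\gG$ rather than merely open: it is precisely this that allows me to invoke Lemma~\ref{lem_sorite1}.(1) with no hypothesis on $B$, avoiding the semilocal case of Lemma~\ref{lem_sorite1}.(2). I would justify it by choosing a faithful representation of $\gG$ in which $\gP^{-}$ is the stabilizer of a partial flag, so that $\Omega$ becomes the non--vanishing locus of the corresponding product of minors; a single such function $f$ then cuts out $\Omega$, and the identity lies in $\Omega$, so $f(1)\in B^\times$, as required for the evaluation argument. Granting this, the composite $\gP(B)/R\to\Omega(B)/R\to\gG(B)/R$ is the natural map, and the argument above shows it is injective, completing the proof.
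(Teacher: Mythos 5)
Your skeleton coincides with the paper's: the isomorphism $\gL(B)/R \simlgr \gP(B)/R$ comes from Lemma \ref{lem_sorite2}.(3) applied to the $R_u(\gP)$--torsor $\gP \to \gL$, and the injectivity comes from the principality of the big cell combined with Lemma \ref{lem_sorite1}.(1) and Lemma \ref{lem_homotopy}.(2). Your way of concluding --- retracting the path through $q\colon \Omega \cong R_u(\gP^{-})\times_B \gP \to \gP$ and using $q|_{\gP}=\id$ --- is a harmless variant of the paper's, which instead observes that $\Omega(B)/R \to \gP(B)/R$ is bijective (again by Lemma \ref{lem_sorite2}.(3), since $R_u(\gP)$ and $R_u(\gP^{-})$ are successive extensions of vector group schemes) and composes with the injection $\Omega(B)/R \hookrightarrow \gG(B)/R$.

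The genuine gap sits exactly at the point you yourself flag as the ``only point requiring care'': the claim that $\Omega$ is a \emph{principal} open subset of $\gG$. Your proposed justification --- choose a faithful representation of $\gG$ in which $\gP^{-}$ stabilizes a partial flag and cut $\Omega$ out by a product of minors --- cannot work in the generality of the lemma, which assumes only that $B$ is a ring and $\gG$ a reductive $B$--group scheme. Over such a base, $\gG$ need not admit \emph{any} faithful representation: $B$--linearity of reductive group schemes is a genuine extra condition, which is why the paper defines the term separately and invokes \cite{Thomason} or \cite{Gi21} whenever it is needed. For instance, a non-isotrivial torus $\gT$ over a suitable connected affine base is not linear (cf. \cite{Gi21}), and $\gG=\gT\times_B\SL_{2,B}$ is then a reductive group scheme carrying a strictly proper parabolic but no faithful representation, so your construction cannot even begin. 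Moreover, even when a faithful representation exists, the identification of parabolics with flag stabilizers and of the big cell with the non-vanishing locus of minors is a field-level theorem that would still need to be proved over $B$. The paper fills this hole with Lemma \ref{lem_cell} in Appendix \ref{appendix_cell}: by twisting, the triple $(\gG,\gP,\gP^{-})$ is reduced to the split Chevalley case over $\ZZ$, where a function $f_0$ with $\ZZ[\Omega_0]=\ZZ[\gG_0]_{f_0}$ is supplied by \cite[3.8.2.(a)]{BT2}, and the essential additional work is to show that $f_0$ may be chosen $\Aut(\gG_0,\gP_0,\gP_0^{-})$--invariant so that it descends to the twisted form. Your proof becomes complete once your sketch is replaced by an appeal to (or a reproof of) that lemma.
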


\begin{proof}
Let $\gU$ be the unipotent radical of $\gP$,
this is a successive extension of vector group schemes
and we have a Levi decomposition $\gP= \gU \rtimes \gL$
\cite[XVI.2.1 and 2.3]{SGA3}. In particular the projection
$\gP \to \gL$ is a $\gU$--torsor.
 Lemma  
\ref{lem_sorite3}, case (i), yields the  isomorphism
$\gP(B)/R \simlgr \gL(B)/R$ whence the isomorphism
$\gL(B)/R \simlgr \gP(B)/R$.
According to Lemma \ref{lem_cell}, the big cell $\Omega$ is a principal
open subset of $\gG$ so $ \Omega(B)/R$ injects in  $\gG(B)/R$
according to Lemma \ref{lem_sorite1}.(1).
Since $\rad_u(\gP)$ and $\rad_u(\gP^{-})$ are
successive extensions of vector group schemes,
the map $\Omega(B)/R \to \gP(B)/R$ is bijective by Lemma
\ref{lem_sorite3}, case (i),
hence $\gP(B)/R$ injects into $\gG(B)/R$.
\end{proof}

Given a homomorphism
$\lambda: \GG_{m,B} \to \gG$, its centralizer
$\gZ_\gG(\lambda)$ is representable
by a smooth closed $B$-group scheme of
$G$ \cite[Th.\ 4.1.7]{Co}. We can also consider the attractor functor
$\gP_\gG(\lambda)$ defined as
$$
\gP_\gG(\lambda)(C)= \Bigl\{g \in \gG(C) \,\mid \,
\lambda(t).g \in \gG(C[t]) \subset \gG(C[t, t^{-1}]) \Bigr\}
$$
for each $B$--algebra $C$. According to {\it loc.\ cit.\ \!\!},
this functor is representable by a closed smooth
$B$-subgroup scheme. Furthemore $\gP_\gG(\lambda)$
is a $B$--parabolic subgroup scheme of $\gG$ and
$\gZ_\gG(\lambda)$ is a Levi subgroup scheme of $\gP_\gG(\lambda)$ \cite[ex.\ 5.2.2]{Co}.
According to \cite[Thm.\ 7.3.1]{Gi4}, there exists a homomorphism $\lambda:
\GG_{m,B} \to \gG$ such that $\gP=\gP_\gG(\lambda)$ and
$\gL=\gZ_\gG(\lambda)$.
This reference requires $B$ to be connected
but the usual Noetherian reduction trick provides
the general case as in \cite[Cor.\ 7.3.2.(1)]{Gi4}

\begin{slemma}\label{lem_parabolic}
(1) Assume  that the group
$\gG(B)$ is generated by $\gP(B)$ and $\gP^{-}(B)$
(where $\gP^{-}$ is a $B$--parabolic subgroup
scheme of $\gG$ opposite to $\gP$).
Then the map $\gL(B)/R \to \gG(B)/R$ is an isomorphism.
\smallskip \newline
\noindent (2) Let $\gS$  be a  central split $B$-subtorus of
 $\gL$ such that  there is a factorization $\lambda: \GG_{m,B} \to \gS \hookrightarrow \gG$ and assume
that $\Pic(B)= \Pic(B[t]_\Sigma)=0$. Then we have an isomorphism
 $$
\xymatrix@C=30pt{
 \gL(B)/R \ar[r]^{\sim\quad} &   \bigl( \gL/\gS\bigr)(B)/R .
}
$$
\end{slemma}

\begin{proof}
(1) The map
$\gL(B)/R \to \gG(B)/R $  is injective
by Lemma \ref{lem_parabolic0}.
 The assumption implies that $\gL(B)$ generates $\gG(B)/R$
so that the injective map $\gL(B)/R  \to \gG(B)/R$ is onto hence an isomorphism.

\smallskip

\noindent (2) This follows of Lemma
\ref{lem_sorite3}, case (ii).
\end{proof}

\begin{scorollary}\label{cor_parabolic}
 We assume that $B$ is semilocal connected.
Let $\gS$  be a  central split $B$-subtorus of
 $\gL$ such that  there is a factorization $\lambda: \GG_{m,B} \to \gS \hookrightarrow \gG$.
Then we have isomorphisms
 $$
\xymatrix@C=30pt{
 \gG(B)/R  & \ar[l]_\sim  \gL(B)/R \ar[r]^{\sim\quad} &   \bigl( \gL/\gS\bigr)(B)/R .
}
$$
In particular, this holds if
$\gS$ is the maximal central split $B$--subtorus
of $\gL$ (as defined in \cite[XXVI.6.8]{SGA3}).
\end{scorollary}

\begin{proof}
As a consequence of Demazure's theorem
\cite[XXVI.5.1]{SGA3} (see \cite[th.\ 3.1.(c)]{GN}), we have
that $\gG(B)= \rad^u(\gP)(B) \, \rad^u(\gP^{-})(B) \, \gP(B)$
so that the group $\gG(B)$ is generated by
$\gP(B)$ and $\gP^{-}(B)$.  Lemma \ref{lem_parabolic}.(1)
applies and shows that the map $\gL(B)/R \to \gG(B)/R$ is
an isomorphism.
Also  we have $\Pic(B)= \Pic(B[t]_\Sigma)=0$ since $B$ and $B[t]_\Sigma$ are
semilocal rings. Thus Lemma \ref{lem_parabolic}.(2) applies
and shows that the map $\gL(B)/R \to    \bigl( \gL/\gS\bigr)(B)/R$ is an isomorphism.

Assume that $\gS$ is the  maximal central split $B$--subtorus of $\gL$. We need to show that
there is a factorization $\lambda: \GG_{m,B} \to \gS \hookrightarrow \gG$.
Put $K=\ker(\lambda)$. According
to \cite[IX]{SGA3}, $K$ is a $B$--subgroup of multiplicative type of $\GG_{m,B}$ and $\lambda$ factorizes uniquely
as follows
$$
\GG_{m,B} \to \GG_{m,B}/K \xrightarrow{\ol{\lambda}}
\gG
$$
   where $\GG_{m,B}/K$ is a $B$--group
of multiplicative type  (of finite type)
and  $\ol{\lambda}$ is  a closed immersion.
Since $B$ is connected, $K$ and
$\GG_{m,B}/K$ are diagonalizable in view of
\cite[IX.2.7.(1)]{SGA3} so that
$\GG_{m,B}/K$ is a split $B$-torus. Since, moreover, $\gL=\gZ_\gG(\lambda)$, it follows that $\GG_{m,B}/K \subseteq \gS$
so that $\lambda$ factorizes through $\gS$.
\end{proof}


\section{$\mathbf{A}^1$-equivalence and non-stable $K_1$-functors}



\subsection{$\mathbf{A}^1$-equivalence}


Let $B$ be an arbitrary (unital, commutative) ring.
Let $\cF$ be a $B$-functor in sets.
We say that two points $x_0,x_1 \in \cF(B)$ are {\it directly $\mathbf{A}^1$--equivalent} if
there exists
$x \in \cF\bigl( B[t]\bigr)$
such that  $x_0=x(0)$ and $x_1=x(1)$.
The (naive) $\mathbf{A}^1$-equivalence on $\cF(B)$ is the equivalence relation
generated by this relation.

Let $G$ be a $B$--group scheme.
We denote the equivalence class of $1\in G(B)$ by $\r0G(B)$ and the group of
$\mathbf{A}^1$-equivalence classes by
$$
G(B)/\r0=G(B)/\r0G(B).
$$
This group is functorial in $B$, and the functor $G(-)/\r0$ on the category of $B$-schemes is sometimes called the 1st
Karoubi-Villamayor $K$-theory functor corresponding to $G$, and denoted by
$KV_1^G(B)$~\cite{J,AHW}.

Clearly, for any ring $B$ we have a canonical surjection
$$
G(B)/\r0 \, \to \hskip-6mm \lgr \, G(B)/R.
$$
The analog of Lemma~\ref{lem_homotopy} is true for $\mathbf{A}^1$-equivalence.
In particular, two points $g_0,g_1 \in G(B)$ are $\mathbf{A}^1$-equivalent
if and only if they are directly $\mathbf{A}^1$--equivalent.


\subsection{Patching pairs and $\mathbf{A}^1$-equivalence}


Let $R \to R'$ be a morphism of rings and let $f \in R$.
We say that that $(R \to R',f)$ is a
{\it patching pair} if
$R'$ is flat  over $R$ and $R/fR \simlgr R'/fR$.
The other equivalent terminology is to say that
\begin{equation}\label{eq:patching}
\xymatrix{
R \ar[d]  \ar[r]  & R_f  \ar[d]   \\
R'   \ar[r]  & R'_f
}
\end{equation}
is a  {\it patching diagram}.
In this case, there is an  equivalence of categories between the category
of  $R$-modules and the category  of glueing data $(M',M_1,\alpha_1)$
where $M'$ is an $R'$--module, $M_1$ an $R_f$--module
and $\alpha_1: M' \otimes_{R'} R'_f \simlgr M_1 \otimes_{R_f} R'_f$  \cite[Tag 05ES]{St}.
Note that this notion of a patching diagram is less restrictive than the one used
by Colliot-Th\'el\`ene and Ojanguren in~\cite[\S 1]{CTO}.

\begin{sexamples}\label{ex_patch}{\rm
 (a) (Zariski patching) Let $g \in R$ such $R=fR+gR$.
Then \break $(R \to R_g,f)$ is a patching pair.

\smallskip

\noindent (b) Assume that $R$ is noetherian.
If  $\widehat R=\limproj R/f^nR$,
then $(R \to \widehat R, f)$ is a patching pair according to \cite[Tags 00MB, 05GG]{St}.

\smallskip

\noindent (c)  Assume that $R=k[[x_1,\ldots,x_n]]$ is a ring of formal power series
over a field and let $h$ be a monic Weierstrass polynomial of $R[x]$ of degree $\geq 1$.
Then $(R[x]\to R[[x]],h)$ is a patching  pair, see \cite[page 803]{BR}.
}
\end{sexamples}

We recall that $(R \to R',f)$ is a {\it glueing pair} if
$R/f^n R \simlgr R'/ f^n R'$ for each $n \geq 1$ and if the sequence
\begin{equation}\label{complex}
0 \to R \to R_f \oplus R' \xrightarrow{\gamma}  R'_f \to 0
\end{equation}
is exact where $\gamma(x,y)=x -y$ \cite[Tag 0BNI]{St}.

\smallskip

\begin{sexamples}\label{ex:glue}
{\rm
\noindent (a) A patching pair  is a glueing pair. Indeed, we have $R/f^n R \simlgr R'/f^n R'$
for all $n \geq 1$ by~\cite[Tags 05E7 and 05E9]{St},
and the complex \eqref{complex} is exact at $R$ and $R_f \oplus R'$ by~\cite[Tag 05EK]{St},
and at $R'_f$ by~\cite[Tag 0BNR]{St}.

\smallskip

\noindent (b) If $f$ is a non zero divisor in  $R$ and $\widehat R=\limproj R/f^nR$,
then $(R \to \widehat R, f)$ is a glueing pair \cite[Tag 0BNS]{St}, even if $R\to \hat R$ is not flat.
}
\end{sexamples}

If $(R \to R',f)$ is a glueing pair, the Beauville-Laszlo theorem
provides an equivalence of categories between the category
of flat  $R$-modules and the category  of glueing data $(M',M_1,\alpha_1)$
where $M'$ is a flat $R'$--module, $M_1$ a flat $R_f$--module
and $\alpha_1: M' \otimes_{R'} R'_f \simlgr M_1 \otimes_{R_f} R'_f$~\cite[Tags 0BP2, 0BP7 and 0BNX]{St}.
In particular we can patch torsors under an affine flat $R$--group scheme $G$ in this setting, this means
that the base change induces  an equivalence from the
category of $G$-torsors  to that of triples $(T,T', \iota)$
where $T$ is a $G$-torsor over $\Spec(R_f)$, $T'$ a $G$--torsor over $\Spec(R')$
and $\iota: T \times_{R_f} R'_f \simlgr T' \times_{R'} R'_f$ an isomorphism
of $G$--torsors over $\Spec(R'_f)$, see~\cite[lemma 2.2.10]{BC}. This is a generalization of~\cite[proposition 2.6]{CTO}.
More specifically, there is an exact sequence of pointed sets
\begin{equation}\label{eq:patch-torsors}
1 \to G(R') \backslash G(R'_f) / G(R_f)
\to H^1(R,G) \to H^1(R' ,G) \times H^1(R_f,G).
\end{equation}
This sequence can be used to relate the $\mathbf{A}^1$-equivalence on $G$ with local triviality
of $G$-torsors.

\begin{slemma}\label{lem_KV}
Let  $G$ be a flat $B$-linear $B$--group scheme. Let $h \in B$.

 \medskip

\noindent (1)
Let $(B \to A,h)$ be a glueing pair  and   assume  that
\begin{equation}\label{eq:torsor-ker-cond}
\ker\bigl( H^1(B[x], G) \to H^1(B_h[x], G) \bigr)=1.
\end{equation}
 Then  we have $\r0G( A_h) =  \r0G(A) \, \r0G(B_h)$ and the map
\begin{equation}\label{eq:BB-square23}
\ker\bigl( G(B)/\r0 \to G(B_h)/\r0  \bigr)
\, \to \,
\ker\bigl( G(A)/\r0 \to G(A_h)/\r0  \bigr)
\end{equation}
is surjective.

 \smallskip

\noindent  (2)
   Assume that $h$ is a non zero divisor in $B$ and that~\eqref{eq:torsor-ker-cond} is satisfied.
Let $\widehat B= \limproj_{n \geq 0} \, B/h^{n+1}B$ be the completion.
 Then we have the equality
 $\r0G(\widehat B_h) = \r0G(\widehat B) \, \r0(B_h)$
 and the map

\begin{equation}\label{eq:BB-square25}
\ker\bigl( G(B)/\r0 \to  G(B_h)/\r0   \bigr)   \, \to \,
\ker\bigl( G(\widehat B)/\r0 \to  G\bigl( \widehat B_h \bigr)/\r0 \bigr)
\end{equation}
is surjective.
Assuming  furthermore that $G( \widehat B_h) = G(\widehat B) \, \r0G(\widehat B_h)$,
 we have $G(B_h) = G( B) \, \r0(B_h)$.

\end{slemma}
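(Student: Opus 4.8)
The plan is to handle both parts by patching over the base extended by an affine line. First I would record two structural facts. A glueing pair $(B\to A,h)$ stays a glueing pair after $-\otimes_B B[x]$: flatness is preserved and $B[x]/h^nB[x]=(B/h^nB)[x]=(A/h^nA)[x]=A[x]/h^nA[x]$, so the Beauville--Laszlo sequence \eqref{eq:patch-torsors} is available for $(B[x]\to A[x],h)$. Moreover, exactness of \eqref{complex} gives the ring fibre product $B=B_h\times_{A_h}A$, whence $G(B)=G(B_h)\times_{G(A_h)}G(A)$ since $G$ is affine. These are the only inputs beyond \eqref{eq:patch-torsors}.

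For part (1) I would first prove $\r0G(A_h)=\r0G(A)\,\r0G(B_h)$; the inclusion $\supseteq$ is formal. Conversely, given $\gamma\in\r0G(A_h)$, the $\mathbf{A}^1$-analogue of Lemma~\ref{lem_homotopy} provides $g\in G(A_h[x])$ with $g(0)=1$ and $g(1)=\gamma$; through \eqref{eq:patch-torsors} for $(B[x]\to A[x],h)$ this $g$ defines a class $\xi\in H^1(B[x],G)$ whose restrictions to $A[x]$ and to $B_h[x]$ are trivial, so $\xi\in\ker\bigl(H^1(B[x],G)\to H^1(B_h[x],G)\bigr)$, which vanishes by the hypothesis \eqref{eq:torsor-ker-cond}. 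By injectivity of the boundary map $g$ then lies in the trivial double coset, i.e. $g=a\,b$ with $a\in G(A[x])$ and $b\in G(B_h[x])$. Evaluating at $x=0,1$ gives $b(0)=a(0)^{-1}$, $a(1)=a(0)\,u$ with $u\in\r0G(A)$, and $b(1)=b(0)\,v$ with $v\in\r0G(B_h)$, so normality of $\r0G(A)$ yields $\gamma=\bigl(a(0)\,u\,a(0)^{-1}\bigr)\,v$, proving the inclusion. The surjectivity in \eqref{eq:BB-square23} is then bookkeeping: for $[a]$ in the right-hand kernel I write the image of $a$ in $G(A_h)$ as $\alpha\beta$ with $\alpha\in\r0G(A)$ and $\beta\in\r0G(B_h)$; then $\alpha^{-1}a\in G(A)$ and $\beta\in G(B_h)$ agree in $G(A_h)$, so the fibre product yields $b\in G(B)$ with $b|_A=\alpha^{-1}a$ and $b|_{B_h}=\beta$, whence $[b]$ lies in the left-hand kernel and maps to $[a]$.

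For part (2), $(B\to\widehat B,h)$ is a glueing pair because $h$ is a non zero divisor (examples following \eqref{complex}), and I take $A=\widehat B$. Granting the inclusion $\r0G(\widehat B_h)\subseteq\r0G(\widehat B)\,\r0G(B_h)$, the surjectivity \eqref{eq:BB-square25} follows exactly as above, using $G(B)=G(B_h)\times_{G(\widehat B_h)}G(\widehat B)$. The last assertion is also formal: given $\beta\in G(B_h)$, the hypothesis $G(\widehat B_h)=G(\widehat B)\,\r0G(\widehat B_h)$ writes its image as $w\varepsilon$ with $w\in G(\widehat B)$ and $\varepsilon\in\r0G(\widehat B_h)$; splitting $\varepsilon=\varepsilon_1\varepsilon_2$ with $\varepsilon_1\in\r0G(\widehat B)$ and $\varepsilon_2\in\r0G(B_h)$ by the inclusion, the elements $\beta\varepsilon_2^{-1}\in G(B_h)$ and $w\varepsilon_1\in G(\widehat B)$ agree in $G(\widehat B_h)$, so the fibre product produces $b\in G(B)$ with $\beta=b\,\varepsilon_2\in G(B)\,\r0G(B_h)$.

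The one step that does not transport from part (1) is the inclusion $\r0G(\widehat B_h)\subseteq\r0G(\widehat B)\,\r0G(B_h)$, and this is the main obstacle: the vanishing \eqref{eq:torsor-ker-cond} is unavailable, since the kernel $\ker\bigl(H^1(B[x],G)\to H^1(B_h[x],G)\bigr)$ receives the entire double coset $G(\widehat B[x])\backslash G(\widehat B_h[x])/G(B_h[x])$ and is typically nonzero. As before the inclusion reduces, via evaluation at $x=0,1$ and normality of $\r0$, to the factorization $g=a\,b$ of any $g\in G(\widehat B_h[x])$ with $g(0)=1$, i.e. to the triviality of the associated $\xi\in H^1(B[x],G)$; here $\xi$ is trivial over $\widehat B[x]$, over $B_h[x]$, and at $x=0$. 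Rather than an $H^1$-vanishing I would exploit completeness: the additive model $\widehat B_h[x]=\widehat B[x]+B_h[x]$ holds because every $d/h^n$ with $d\in\widehat B$ is corrected by an element of $B$ using $\widehat B/h^n\widehat B=B/h^nB$, and I would upgrade this to the multiplicative factorization by an $h$-adic successive approximation, building $a$ step by step along the $h$-adic filtration and taking the limit in the complete ring $\widehat B$. Making this Newton-type iteration rigorous for a merely flat affine $G$, so that the successive corrections can be carried out inside $G$ and the process converges, is the delicate point on which the argument turns.
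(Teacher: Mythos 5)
Your part (1) is correct and is essentially the paper's own proof: both arguments pass to the glueing pair $(B[x]\to A[x],h)$, use the sequence \eqref{eq:patch-torsors} together with the hypothesis \eqref{eq:torsor-ker-cond} to factor a homotopy $g\in G(A_h[x])$ with $g(0)=1$ into a product of elements of $G(A[x])$ and $G(B_h[x])$, and then evaluate at $x=0,1$. Your normalization of base points via normality of $\r0G(A)$, versus the paper's direct renormalization of the two factors, is cosmetic; and your derivation of the surjectivity of \eqref{eq:BB-square23} from the product decomposition plus the fibre product $G(B)=G(B_h)\times_{G(A_h)}G(A)$ (which comes from exactness of \eqref{complex}) is exactly the paper's argument.

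For part (2), the ``main obstacle'' you identify comes from reading the statement as dropping the hypothesis \eqref{eq:torsor-ker-cond}, which is not how the paper reads it. Observe that \eqref{eq:torsor-ker-cond} involves only $B$, $h$ and $G$ --- the ring $A$ does not occur in it --- so it is meaningful, and intended, that it remain in force in part (2); the assumption that $h$ be a non zero divisor serves only to guarantee that $(B\to\widehat B,h)$ is a glueing pair. Indeed, the paper's entire proof of (2) is the sentence ``this is the special case $A=\widehat B$'' (plus ``the last fact is a straightforward consequence''). Under that reading nothing remains to be done: the inclusion $\r0G(\widehat B_h)\subseteq \r0G(\widehat B)\,\r0G(B_h)$ is one half of the equality furnished by (1) with $A=\widehat B$, the surjectivity of \eqref{eq:BB-square25} is \eqref{eq:BB-square23} verbatim in that case, and the final decomposition $G(B_h)=G(B)\,\r0G(B_h)$ follows by the fibre-product bookkeeping that you wrote out correctly. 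Your attempt to prove the inclusion unconditionally --- upgrading the additive decomposition $\widehat B_h[x]=\widehat B[x]+B_h[x]$ to a multiplicative one by an $h$-adic Newton iteration, with no $H^1$-vanishing available --- therefore aims at a strictly stronger statement than the one the paper proves, and, as you concede, that iteration is not carried out; the obstruction you point out, a possibly nontrivial class in $\ker\bigl(H^1(B[x],G)\to H^1(B_h[x],G)\bigr)$, is real, and it is exactly what hypothesis \eqref{eq:torsor-ker-cond} is there to kill. So on your reading the proof of (2) is genuinely incomplete; on the paper's reading, your own part (1) together with the formal reductions you already gave finishes it.
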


\begin{proof}(1)  Since $(B[t] \to A[t],h)$ is a glueing pair, we have an exact sequence of pointed sets
$$
1 \to G(B_h[x]) \backslash G(A_h[x]) / G(A[x])
\to H^1(B[x] ,G) \to H^1(B_h[x] ,G) \times H^1(A[x],G).
$$
Then our assumption provides a decomposition $G(A_h[x]) = G(A[x]) \, G(B_h[x])$,
and a fortiori a decomposition  $G(A_h) = G(A) \, G(B_h)$.
Let  $x \in \r0G( A_h)$. Then there exists $g \in G(A[x]_h)$
such that $g(0)=1$ and $g(1)=x$. We can decompose then
$g= g_1 g_2$ with $g_1 \in G(A[x])$, $g_2 \in  \, G(B[x]_h)$.
Since  $1= g_1(0) g_2(0)$ we can assume that
 $g_1(0)=1$ and $g_2(0)=1$.
It follows that $x \in \r0G(A) \, \r0G(B_h)$.
This  establishes the equality
 $\r0G(A_h) = \r0G(A) \, \r0G(B_h)$.

For showing the surjectivity   of the map  \eqref{eq:BB-square23},
let $x\in G(A)$ be such that its image in $G(A_h)$ belongs to $\r0G(A_h)$. Then there are $y\in\r0G(A)$, $z\in \r0G(B_h)$
such that $x=yz$. Since $xy^{-1}\in G(A)$ and $z\in G(B_h)$ have the same image in $G(A_h)$,
and since $(B,A,h)$ is a glueing pair, there is an element
$\tilde x\in G(B)$ such that the image of $\tilde x$ in $G(A)$ is $xy^{-1}$ and the image of $\tilde x$ in $G(B_h)$
is $z$. It follows that $[\tilde x] \in G(B)/\r0$ is mapped to $[x]\in G(B)/\r0$ and to
$[1]\in G(B_h)/\r0$, as required.

\smallskip

\noindent (2) By Example~\ref{ex:glue} (b) this is the special case $A=\widehat B$ of (1).
The last fact is a straightforward consequence.
Indeed, we have
$$
G(\hat B_h)=G(\hat B)\cdot\r0G(\hat B_h)=G(\hat B)\cdot\r0G(\hat B)\cdot\r0G(B_h)=G(\hat B)\cdot\r0G(B_h).
$$
Since the sequence~\eqref{complex} for the pair $(B\to \hat B,\, h)$ is exact,
an element of $G(\hat B)$ that belongs to the image  of $G(B_h)$ in $G(\hat B_h)$ lifts to $G(B)$.
It follows that $G(B_h)=G(B)\cdot\r0G(B_h)$.
\end{proof}




The condition~\eqref{eq:torsor-ker-cond} in Lemma~\ref{lem_KV} is not easy to check in general.
 Later on we will discuss a case where it is known to hold as a corollary of the work of Panin on the
Serre--Grothendieck conjecture~\cite{Pa,Pa20}. However, Moser obtained the following unconditional result
in the special case of Example~\ref{ex_patch} (a).

\begin{slemma}\label{lem_moser} (Moser, \cite[lemma 3.5.5]{Moser}, see also \cite[lemma 3.2.2]{AHW})
 Let $G$ be a finitely presented $B$-group scheme which is $B$-linear.

 \smallskip

 \noindent (1) Let $f_0,f_1 \in B$ such that $Bf_0+Bf_1=B$.
Let $g \in  G(B_{f_0 f_1}[T ])$ be an element such that  $g(0) = 1$. Then there exists a decomposition
$g = h_0^{-1} \, h_1$ with $h_i \in G(B_{f_i}[T ])$ and $h_i(0) = 1$
for $i=0,1$.

 \smallskip

 \noindent (2) The sequence of pointed sets
 $$
\xymatrix@C=20pt{
 G(B)/\r0 \ar[r]  & G(B_{f_0})/\r0 \times G(B_{f_1})/\r0 \ar@<2pt>[r]\ar@<-2pt>[r] & G(B_{f_0f_1})/\r0
}
$$
 is exact at the middle term.
 \end{slemma}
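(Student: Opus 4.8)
The plan is to deduce (2) formally from (1), and to prove (1) by a dilation argument in the style of Quillen and Suslin. Throughout I fix a closed $B$-embedding $G \hookrightarrow \GL_{N,B}$, which exists by $B$-linearity, and I use that $G$ is finitely presented. The key technical input, and the step I expect to be the main obstacle, is the following \emph{dilation lemma}: for any $f \in B$, any $c \in B$, and any $g \in G(B_f[T])$ with $g(0)=1$, there is an integer $M$ such that the element $g\bigl((S + c f^{M})T\bigr)\, g(ST)^{-1}$ of $G(B_f[T,S])$ lifts to $G(B[T,S])$. To prove it, consider the element $\Psi := g\bigl((S+U)T\bigr)\, g(ST)^{-1} \in G\bigl(B_f[T,S,U]\bigr)$, obtained by the evident substitutions; it equals $1$ at $U=0$. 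Since $\det g(ST)$ is a unit in $B_f[T,S]$, every matrix coordinate of $\Psi$ lies in the localization $(B[T,S,U])_f$, and, being trivial at $U=0$, is divisible by $U$ there. Finite presentation of $G$ bounds the $f$-denominators occurring, so substituting $U = c f^{M}$ for $M$ large clears all of them and leaves coordinates in $B[T,S]$; a routine verification (using finite presentation, and automatic when $f$ is a non-zerodivisor) that these still satisfy the defining relations of $G$ produces the desired lift.

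Granting the dilation lemma, I would prove (1) as follows. Using $B_{f_0f_1}=(B_{f_0})_{f_1}=(B_{f_1})_{f_0}$, choose $M_0,M_1$ large and, by comaximality of $f_0^{M_0}$ and $f_1^{M_1}$, elements $a,b\in B$ with $a f_0^{M_0}+b f_1^{M_1}=1$. Applying the dilation lemma once over $B_{f_1}$ with localizer $f_0$ and increment $a f_0^{M_0}$ yields $\delta_0(T,S)\in G(B_{f_1}[T,S])$ representing $g\bigl((S+a f_0^{M_0})T\bigr)g(ST)^{-1}$, and once over $B_{f_0}$ with localizer $f_1$ and increment $b f_1^{M_1}$ yields $\delta_1(T,S)\in G(B_{f_0}[T,S])$ representing $g\bigl((S+b f_1^{M_1})T\bigr)g(ST)^{-1}$. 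Composing these two homotopies in the variable $S$, namely evaluating $\delta_0$ at $S=0$ and $\delta_1$ at $S=a f_0^{M_0}$, gives, using $g(0)=1$ and $a f_0^{M_0}+b f_1^{M_1}=1$,
$$g(T) = \underbrace{\delta_1(T,\,a f_0^{M_0})}_{\in\, G(B_{f_0}[T])}\cdot\underbrace{\delta_0(T,0)}_{\in\, G(B_{f_1}[T])}.$$
Setting $h_0$ to be the inverse of the first factor and $h_1$ the second gives $g=h_0^{-1}h_1$ with $h_i\in G(B_{f_i}[T])$, and evaluating at $T=0$ shows $h_i(0)=1$, as required.

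Finally I would deduce (2) by a short diagram chase. The image of $G(B)/\r0$ lands in the equalizer of the two parallel maps, since for $x\in G(B)$ the restrictions of $x|_{B_{f_0}}$ and $x|_{B_{f_1}}$ to $B_{f_0f_1}$ both equal $x|_{B_{f_0f_1}}$; so only the reverse inclusion needs work. Given classes represented by $x_0\in G(B_{f_0})$ and $x_1\in G(B_{f_1})$ agreeing in $G(B_{f_0f_1})/\r0$, the $\mathbf{A}^1$-analog of Lemma~\ref{lem_homotopy} provides a direct homotopy $g\in G(B_{f_0f_1}[T])$ between their restrictions; replacing $g$ by $x_0^{-1}g$ I may assume $g(0)=1$ and $g(1)=x_0^{-1}x_1$. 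Applying (1) gives $g=h_0^{-1}h_1$ with $h_i\in G(B_{f_i}[T])$ and $h_i(0)=1$, whence $x_0\,h_0(1)^{-1}\in G(B_{f_0})$ and $x_1\,h_1(1)^{-1}\in G(B_{f_1})$ coincide over $B_{f_0f_1}$. As $G$ is an affine $B$-scheme it is a Zariski sheaf (cf. Example~\ref{ex_patch}(a)), so these glue to a single $x\in G(B)$. Since $h_i(0)=1$ forces $h_i(1)\in \r0G(B_{f_i})$, the class of $x$ restricts to the classes of $x_0$ and $x_1$, establishing exactness at the middle term.
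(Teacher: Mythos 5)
Your part (2) is, step for step, the paper's own argument: produce a direct $\mathbf{A}^1$-homotopy over $B_{f_0f_1}$ (the analogue of Lemma~\ref{lem_homotopy}), split it using (1), absorb the endpoints $h_i(1)\in \r0 G(B_{f_i})$ into the classes, and glue by the Zariski sheaf property of the affine scheme $G$; there is nothing to add there. For part (1), by contrast, the paper gives no proof at all: it quotes Moser's lemma for noetherian $B$ and disposes of the general case by noetherian approximation, whereas you attempt a self-contained proof via the Quillen--Suslin dilation lemma --- which is exactly the method of the cited sources \cite{Moser}, \cite{AHW}. Your deduction of (1) from the dilation lemma is correct modulo two small repairs: the threshold $M$ must be asserted to be independent of $c$ (your proof does give this, but your statement quantifies $c$ before $M$, and otherwise the choice of $a,b$ with $af_0^{M_0}+bf_1^{M_1}=1$ is circular, since $a,b$ depend on $M_0,M_1$); and the lifts must be normalized, e.g.\ by replacing $\delta(T,S)$ with $\delta(T,S)\,\delta(0,S)^{-1}$, to guarantee $h_i(0)=1$, because the maps $G(B_{f_i}[S])\to G(B_{f_0f_1}[S])$ need not be injective.

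The genuine gap is the last step of your dilation lemma, which you call ``a routine verification''. When $f$ is a zero divisor --- precisely the case the lemma's generality over arbitrary $B$ is meant to cover --- substituting $U=cf^{M}$ first and then lifting coordinates does not produce a point of $G(B[T,S])$: writing $B[G]=B[y_1,\dots,y_m]/(p_1,\dots,p_r)$ (the $y_i$ must include the inverse determinant, which bare matrix entries omit), any choice of coordinate lifts $\tilde y_i$ satisfies the relations only up to elements killed by a power of $f$, since $B[T,S]\to B_f[T,S]$ is not injective; once $U$ has been specialized to the ring element $cf^{M}$, no mechanism remains to remove these $f$-power-torsion discrepancies, so the lift may simply fail to lie on $G$ (and its determinant need not be invertible). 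The missing idea --- the actual content of \cite[lemma 3.5.5]{Moser} and \cite[lemma 3.2.2]{AHW} --- is to postpone the specialization: keep $U$ a free variable, lift the coordinates of $\Psi(T,S,f^{N}U)$ in the form $\tilde y_i=e_i+U\tilde\theta_i$ (where the $e_i$ are the coordinates of the identity section), note that each discrepancy $p_j(\tilde y)$ then lies in $U\cdot B[T,S,U]$ and is killed by one fixed $f^{l}$ (finitely many $j$, by finite presentation --- this is where finite presentation genuinely enters), and perform one further dilation $U\mapsto f^{l}U$: a discrepancy $\sum_{c\ge 1}q_{j,c}U^{c}$ with $f^{l}q_{j,c}=0$ becomes $\sum_{c\ge 1}q_{j,c}f^{lc}U^{c}=0$ identically. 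This produces an honest lift in $G(B[T,S,U])$ of $\Psi(T,S,f^{N+l}U)$, and only then does one specialize $U=c$ to obtain $g\bigl((S+cf^{N+l})T\bigr)g(ST)^{-1}$. With this reordering (dilate, lift, then specialize) your argument becomes complete, and, as you note, the detour is unnecessary when $f$ is a nonzerodivisor; as written, however, the key step fails for general $B$.
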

\begin{proof}
 (1) The original reference does the case $B$ noetherian and the general case holds
 by the usual noetherian approximation trick.

 \smallskip

 \noindent (2) Let $[g_0] \in G(B_{f_0})/\r0$ and let $[g_1] \in G(B_{f_1})/\r0$
 such that $[g_0]=[g_1] \in G(B_{f_0f_1})/\r0$. Then there exists $g \in G(B_{f_0f_1}[T])$
 such that $g_0 \, g_1^{-1}= g(1) \in G(B_{f_0f_1}[T])$ and $g(0)=1$.
 By (1) we write $g = h_0^{-1} \, h_1$ with $h_i \in G(B_{f_i}[T ])$ and $h_i(0) = 1$
for $i=0,1$ so that $g_0 \, g_1^{-1} = h_0^{-1}(1) \, h_1(1)$.
Since $[h_i(1) g_i]= [g_i] \in G(B_{f_i})/\r0$, we can replace $g_i$ by $h_i(1) g_i$
and deal then with the case  $g_0 = g_1 \in  G(B_{f_0f_1})$.
This defines an unique element $m \in G(B)$ such that $[m]= [g_i] \in G(B_{f_i})/\r0$.
\end{proof}

 \begin{sremark}{\rm By induction we get the following generalization.
 Let $f_1, \dots, f_c \in B$ such that $Bf_1+ \dots + Bf_c=B$ and put
 $f=f_1\dots f_c$.
Let $g \in  G(B_{f}[T ])$ be an element such that  $g(0) = 1$. Then there exists a decomposition
$g = h_1 \dots h_c$ with $h_i \in G(B_{f_i}[T ])$ and $h_i(0) = 1$
for $i=1, \dots, c$. It follows that the image of
$G(B)/\r0$ in $\prod_{i=1,..,c} G(B_{f_i})/\r0$
consists of elements having same image in $G(B_f)/\r0$.
 }
 \end{sremark}

Since Lemma~\ref{lem_moser} does not presuppose any results about $G$-torsors,
Moser was able to use it to establish a local-global principle for torsors~\cite[3.5.1]{Moser}
generalizing Quillen's local-global principle for finitely presented
modules~\cite[Theorem 1]{Q}.  In our context, we combine Lemma~\ref{lem_moser} with a
theorem of Colliot-Th\'el\`ene and Ojanguren to obtain the following result.

\begin{sproposition} \label{prop_moser}
Let $k$ be an infinite field and let $G$ be an affine $k$--algebraic group. Let $A$
be the local ring at a prime ideal of a polynomial algebra $k[t_1, \dots, t_d]$. Then the homomorphism
$$
G( A)/\r0 \to G\bigl( k(t_1, \dots, t_d) \bigr)/\r0
$$
is injective.
\end{sproposition}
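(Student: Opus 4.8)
The plan is to reduce the injectivity to the torsor-triviality statement \eqref{eq:torsor-ker-cond} over $A[T]$, to verify the latter by combining the Colliot-Th\'el\`ene--Ojanguren generic-triviality theorem with Moser's local--global principle, and then to feed it into the $\mathbf A^1$-patching formalism so as to extend homotopies across the branch locus. Write $B=k[t_1,\dots,t_d]$, let $x\in\Spec(B)$ be the point cut out by the given prime, so that $A=\cO_{B,x}$ and $K=\mathrm{Frac}(B)=k(t_1,\dots,t_d)$, and let $g\in G(A)$ have trivial image in $G(K)/\r0$. Because the analogue of Lemma~\ref{lem_homotopy}.(2) holds for $\mathbf A^1$-equivalence, $g$ is \emph{directly} $\mathbf A^1$-equivalent to $1$ over $K$, so there is $\gamma\in G(K[T])$ with $\gamma(0)=1$ and $\gamma(1)=g$. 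As $G$ is of finite presentation over $k$, the functor $G(-)/\r0$ commutes with filtered colimits (the $\mathbf A^1$-analogue of Lemma~\ref{lem_limit}), and $K[T]=\limind_{0\neq f\in A}A_f[T]$; hence $\gamma$ is already defined over $A_f[T]$ for some nonzero $f\in A$. Thus $g\in\r0G(A_f)$, and it suffices to prove $\ker\bigl(G(A)/\r0\to G(A_f)/\r0\bigr)=1$; geometrically this asks to extend the homotopy $\gamma$, a priori defined only over the complement of the hypersurface $V(f)$, to one defined near $x$.

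The heart of the argument is the hypothesis \eqref{eq:torsor-ker-cond} for $B=A$ and $h=f$, that is, $\ker\bigl(H^1(A[T],G)\to H^1(A_f[T],G)\bigr)=1$. I would establish it geometrically: spread $A$ out to a smooth affine $k$-variety $U=\Spec(C)$ with $A=\cO_{U,x}$, and let $E$ be a $G$-torsor on $A[T]$, trivial over $A_f[T]$, which we extend to $C[T]$ after shrinking $U$. At each closed point $y\in U$ the local ring $\cO_{U,y}$ is regular and essentially smooth over the infinite field $k$, so by homotopy invariance of $G$-torsors over such rings the restriction $E|_{\cO_{U,y}[T]}$ is extended from $\cO_{U,y}$; Moser's local--global principle for torsors, a consequence of Lemma~\ref{lem_moser} (see \cite[3.5.1]{Moser}), then shows that $E\cong E_0\times_C C[T]$ for $E_0=E|_{T=0}$. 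Restricting the triviality of $E$ to $T=0$, the torsor $E_0$ is trivial over $C_f$, hence generically trivial; as $A$ is a regular local ring essentially smooth over $k$ and an affine algebraic group is linear, the Colliot-Th\'el\`ene--Ojanguren theorem forces $(E_0)_A$ to be trivial over $A$, whence $E$ is trivial over $A[T]$.

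With \eqref{eq:torsor-ker-cond} available, I would conclude by the patching mechanism underlying Lemmas~\ref{lem_moser}.(1) and~\ref{lem_KV}.(1): using the glueing pair attached to $A[T]$ and $f$ together with the torsor exact sequence \eqref{eq:patch-torsors}, the homotopy $\gamma$ produces a $G$-torsor on $\mathbf A^1_A$ trivial over $A_f[T]$, and its triviality, supplied by \eqref{eq:torsor-ker-cond}, factors $\gamma=a\cdot c$ with $a\in G(A[T])$, $c\in G(A_f[T])$ and $a(0)=c(0)=1$. Then $g=\gamma(1)=a(1)\,c(1)$ with $a(1)\in\r0G(A)$, so that $[g]=[c(1)]$ in $G(A)/\r0$, and the remaining task is to absorb the factor $c(1)$, a priori only $\mathbf A^1$-trivial over $A_f$, into $\r0G(A)$.

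The reductions of the first paragraph are routine, and the verification of \eqref{eq:torsor-ker-cond} is the clean conceptual core. The main obstacles are twofold. First, one must ensure that the Colliot-Th\'el\`ene--Ojanguren theorem and the homotopy invariance invoked above apply to the given affine group $G$ rather than only to a connected reductive one; this should follow by d\'evissage along the unipotent radical and the component group, where the relevant torsor and $\r0$ classes vanish by the vector-group and cohomological-dimension arguments of Lemma~\ref{lem_sorite2}. The second and more serious obstacle is the very last step: the naive factorisation only shows that $[g]$ agrees in $G(A)/\r0$ with the value $[c(1)]$ of a homotopy that is $\mathbf A^1$-trivial over $A_f$, so to remove this residual defect and force $[g]=1$ one must genuinely exploit the $f$-adic completion and the overlap analysis in the spirit of the proof of Lemma~\ref{lem_KV}, rather than a single splitting of $\gamma$.
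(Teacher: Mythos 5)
You have identified the right difficulty yourself, but it is fatal rather than residual, and the route you chose cannot be completed as proposed. First, the patching step has no content in your setting: Lemma~\ref{lem_KV}.(1) and the sequence \eqref{eq:patch-torsors} require a genuine glueing pair $(B\to A',h)$, i.e.\ a \emph{second} ring, and they split elements of the strictly larger ring $G(A'_h)$ into a factor over $A'$ times a factor over $B_h$. With the single local ring $A$ and the single element $f$ there is no such pair; your homotopy $\gamma$ already lies in $G(A_f[T])$, so the asserted factorisation $\gamma=a\cdot c$ with $a\in G(A[T])$ and $c\in G(A_f[T])$ is satisfied vacuously by $a=1$, $c=\gamma$, and the conclusion $[g]=[c(1)]$ is the tautology $[g]=[g]$. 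In the paper, the missing second ring is produced by Panin's theorem \cite[th. 2.5]{Pa} in the proof of Theorem~\ref{thm:surj}.(2): a finite \'etale extension $A'$ of $B[t]$, a retraction $\phi\colon A'\to B$, and a monic $h$ with $h(1)\in B^\times$; only with these data does Lemma~\ref{lem_KV}.(1) transfer the trivial kernel over $B[t]$ to $A'$ and back via $\phi$. That machinery is available only under the hypotheses of Theorem~\ref{thm:surj} (reductive with a strictly proper parabolic, or anisotropic closed fibers), not for an arbitrary affine algebraic group. Second, your verification of \eqref{eq:torsor-ker-cond} invokes homotopy invariance of $G$-torsors over essentially smooth local rings and the Colliot-Th\'el\`ene--Ojanguren generic-triviality theorem for an arbitrary affine $k$-group; both are theorems about reductive (or at least smooth connected linear) groups. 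The suggested d\'evissage along the unipotent radical and the component group does not repair this: $k$ is only assumed infinite, hence possibly imperfect, so the unipotent radical need not be defined over $k$ nor be split, and homotopy invariance of $H^1$ genuinely fails already for infinitesimal or wound unipotent groups; the paper's own d\'evissage of this kind (Corollary~\ref{cor:polynomial_local}) is carried out over a finite, hence perfect, field.

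The paper's actual proof avoids torsors entirely, which is precisely what makes the statement true for every affine algebraic group: it applies the axiomatised Colliot-Th\'el\`ene--Ojanguren injectivity method (Proposition~\ref{prop_cto}) directly to the functor in groups $F(B)=G(B)/\r0$. Property ${\bf P}_1$ is finite presentation of $G$; property ${\bf P}_2$ follows from homotopy invariance $F(L)\simeq F\bigl(L[t_1,\dots,t_n]\bigr)$ (the $\mathbf{A}^1$-analogue of Lemma~\ref{lem_homotopy}.(1)) together with specialisation of the variables at a point of $L^n$ where the relevant denominator is invertible, the $k$-field $L$ being automatically infinite; and the Zariski patching property ${\bf P}'_3$ is exactly Moser's factorisation Lemma~\ref{lem_moser}.(2), which presupposes nothing about torsors. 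A torsor-theoretic argument along the lines you propose exists, but it proves only the reductive isotropic case and is exactly the content of Theorem~\ref{thm:surj}.(2).
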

\begin{proof}
Our plan is to use Colliot-Th\'el\`ene and Ojanguren method \cite[\S 1]{CTO} as abstracted in
the appendix \ref{appendix_cto}.
We  consider the  $k$--functor in groups $B \mapsto F(B)=G( B)/\r0$.
The claim follows from Proposition~\ref{prop_cto}
once properties $\bf P_1$, $\bf P_2$ and $\bf P'_3$ are checked for the $k$--functor
$F$. The property $\bf P_1$ is clear, since $G$ is finitely presented over $k$.

Let $L$ be a $k$--field and let $d \geq 0$ be an integer. We
have $F(L)= F\bigl( L[t_1,\dots, t_d])$, and $F(L)$ injects in $ F\bigl( L(t_1,\dots, t_d) \bigr)$,
since every polynomial over $L$ has an invertible value. Property $\bf P_2$  is established. On the other hand Lemma \ref{lem_moser}.(2) establishes
the surjectivity of the  map
$$
 \ker\bigl( G(B)/\r0 \to G(B_{f_0})/\r0 \bigr) \to
 \ker\bigl( G(B_{f_1})/\r0 \to G(B_{f_0f_1})/\r0 \bigr)
$$
for $B=Bf_0+Bf_1$ so that Zariski patching property $\bf P'_3$ holds
for the functor $F$.
\end{proof}

\begin{sremark}{\rm The extension to the finite field case is established in
Corollary \ref{cor:polynomial_local}.
 }
\end{sremark}





\subsection{Non stable $K_1$-functor} \label{subsec_non_stable}


 Let $\gG$ be a reductive  group scheme over our base ring $B$.
We say that a parabolic $B$-subgroup $\gP$ of $\gG$ is {\it strictly proper,
if $\gP$ itersects properly every semisimple normal $B$-subgroup of $\gG$,
or, equivalently, if
for any ring homomorphism $B\to\bar k$ from $B$ to an algebraically closed field $\bar k$,
the type (in the sense of~\cite[Exp. XXVI, \S 3.2]{SGA3}) of the parabolic subgroup $\gP_{\bar k}$
does not contain any connected component of the Dynkin diagram of $\bG_{\bar k}$.}

Let $\gP$ be a strictly proper parabolic subgroup of $\gG$.
Let $\gP^{-}$ be an opposite $B$--parabolic subgroup scheme of $\gG$, and denote by
$E_\gP(B)$ the subgroup of $\gG(B)$ generated by $\rad_u(\gP)(B)$ and  $\rad_u(\gP^{-})(B)$
(it does not depend on the choice of $\gP^-$ by~\cite[XXVI.1.8]{SGA3}).
We consider the Whitehead coset
$$
K_1^{\gG,\gP}(B)=\gG(B)/ E_\gP(B).
$$
As a functor on the category of commutative $B$-algebras, $K_1^{\gG,\gP}(-)$ is also called the non-stable (or unstable)
$K_1$-functor associated to $\gG$ and $\gP$.

Recall that if $B$ is semilocal, then the functor $C\mapsto E_\gP(C)$ on the category of commutative $B$-algebras
$C$ does not depend on the choice of a strictly proper parabolic $B$-subgroup $\gP$, see~\cite[XXVI.5]{SGA3}
and~\cite[Th.\ 2.1.(1)]{S1}. In particular, in this case $E_\gP(B)$ is a normal subgroup of $\gG(B)$.
For an arbitrary ring $B$, the same holds if $\gG$ satisfies the condition (E) below, see~\cite{PS}.
In these two cases we will occasionally write $K_1^\gG(C)$ instead of $K_1^{\gG,\gP}(C)$, omitting the specific
strictly proper parabolic $B$-subgroup.

\medskip

\noindent{\it Condition} (E). For any maximal ideal $\gm$ of $B$,
all irreducible components of the relative root system
of $\gG_{B_\gm}$ in the sense of~\cite[XXVI.7]{SGA3}  are of rank at least 2.

\medskip

Note that the condition (E) is satisfied if $\gG$ has $B$-rank $\ge 2$, since in this case all $\gG_{B_\gm}$
also have $B_\gm$-rank $\ge 2$.

Since the radicals $\rad_u(\gP)$ and $\rad_u(\gP^{-})$ are
successive extensions of vector group schemes~\cite[XXVI.2.1]{SGA3},
Lemma \ref{lem_sorite2}.(1)
implies that $E_\gP(B) \subseteq \r0 \gG(B) \subseteq \gG(B)$.
We get then  surjective maps
\[
K_1^{\gG,\gP}(B) \, \to \hskip-6mm \lgr \, \gG(B)/\r0 \, \to \hskip-6mm \lgr \, \gG(B)/R.
\]


\subsection{Comparison of $K_1^{\gG}$, $\mathbf{A}^1$-equivalence and $R$-equivalence}


\begin{slemma}\label{lem_W_trivial2}
We consider the following assertions:

\smallskip

$(i)$ The map $K_1^{\gG,\gP}(B) \to K_1^{\gG,\gP}(B[u])$ is bijective;

\smallskip

$(ii)$ $\gG(B[u])= \gG(B) \, E_\gP(B[u])$;

\smallskip

$(iii)$ The map $K_1^{\gG,\gP}(B) \to G(B)/\r0$ is
 bijective.

 \smallskip

\noindent Then we have the implications $(i) \Longleftrightarrow (ii)
 \Longrightarrow (iii)$.
 Furthermore if $(iii)$  holds,
 we have that $E_\gP(B)=\r0\gG(B)$; in particular
 $E_\gP(B)$ is a normal subgroup of $\gG(B)$ which does
 not depend of $\gP$.
 \end{slemma}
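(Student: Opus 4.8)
The plan is to treat the equivalence $(i)\Longleftrightarrow(ii)$ by a splitting argument, then to deduce $(iii)$ from $(ii)$ by evaluating a single homotopy at its two endpoints, and finally to read off normality and $\gP$-independence from the resulting identity $E_\gP(B)=\r0\gG(B)$. For $(i)\Longleftrightarrow(ii)$ I would first observe that the evaluation $B[u]\to B$ at $u=0$ is a retraction of $B\to B[u]$, so by functoriality of $C\mapsto K_1^{\gG,\gP}(C)$ the induced map $K_1^{\gG,\gP}(B)\to K_1^{\gG,\gP}(B[u])$ of pointed sets is split injective; hence $(i)$ is equivalent to its surjectivity. Now the image of this map is exactly the set of classes admitting a constant representative $g\in\gG(B)\subseteq\gG(B[u])$, so surjectivity means that every coset $h\,E_\gP(B[u])$ meets $\gG(B)$, i.e. $\gG(B[u])=\gG(B)\,E_\gP(B[u])$, which is precisely $(ii)$.

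For $(ii)\Longrightarrow(iii)$, recall from the discussion preceding the lemma that $E_\gP(B)\subseteq\r0\gG(B)$ (because $R_u(\gP)$ and $R_u(\gP^{-})$ are successive extensions of vector group schemes, Lemma~\ref{lem_sorite2}.(1)), so the map $K_1^{\gG,\gP}(B)\to\gG(B)/\r0$ is already surjective and only its injectivity is at stake. I would prove the stronger inclusion $\r0\gG(B)\subseteq E_\gP(B)$. Let $g\in\r0\gG(B)$; by the $\mathbf{A}^1$-analog of Lemma~\ref{lem_homotopy}.(2), $g$ is directly $\mathbf{A}^1$-equivalent to $1$, so there is $h\in\gG(B[u])$ with $h(0)=1$ and $h(1)=g$. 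Using $(ii)$ I write $h=h_0\,e$ with $h_0\in\gG(B)$ constant and $e\in E_\gP(B[u])$. Evaluation at a $B$-point of $\mathbf{A}^1$ carries $R_u(\gP)(B[u])$ and $R_u(\gP^{-})(B[u])$ into $R_u(\gP)(B)$ and $R_u(\gP^{-})(B)$, hence $e(0),e(1)\in E_\gP(B)$; specializing $h=h_0\,e$ at $u=0$ gives $h_0=e(0)^{-1}\in E_\gP(B)$, and at $u=1$ gives $g=h_0\,e(1)\in E_\gP(B)$. Combined with the reverse inclusion this yields $E_\gP(B)=\r0\gG(B)$, and in particular the bijectivity asserted in $(iii)$.

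The last assertion is then immediate: assuming $(iii)$, the injectivity of the natural projection $\gG(B)/E_\gP(B)\to\gG(B)/\r0\gG(B)$ together with $E_\gP(B)\subseteq\r0\gG(B)$ forces $\r0\gG(B)\subseteq E_\gP(B)$ (apply injectivity to the class of an arbitrary $g\in\r0\gG(B)$ against the class of $1$), whence $E_\gP(B)=\r0\gG(B)$, exactly as the previous paragraph produced. Since $\r0\gG(B)$ is a normal subgroup of $\gG(B)$ defined without any reference to $\gP$, the same now holds for $E_\gP(B)$. I expect the only delicate point to be the evaluation step in $(ii)\Longrightarrow(iii)$, namely that specialization at a $B$-point sends $E_\gP(B[u])$ into $E_\gP(B)$; this rests solely on the functoriality of the elementary subgroup through its generators $R_u(\gP)$ and $R_u(\gP^{-})$, together with the $\mathbf{A}^1$-version of the homotopy lemma that replaces a chain of $\mathbf{A}^1$-equivalences by a single homotopy.
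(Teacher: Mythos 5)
Your proposal is correct and follows essentially the same route as the paper: split injectivity via evaluation at $u=0$ for $(i)\Longleftrightarrow(ii)$, and for $(ii)\Longrightarrow(iii)$ a single homotopy (direct $\mathbf{A}^1$-equivalence, via the $\mathbf{A}^1$-analog of Lemma \ref{lem_homotopy}.(2)) decomposed through $(ii)$ and evaluated at its two endpoints. Your normalization to $g\in\r0\gG(B)$ (proving $\r0\gG(B)\subseteq E_\gP(B)$ directly) is only a cosmetic variant of the paper's comparison of two general classes, and your explicit treatment of the final assertion is a harmless addition.
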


\begin{proof} $(i) \Longleftrightarrow (ii)$. The map $K_1^{\gG,\gP}(B) \to K_1^{\gG,\gP}(B[u])$ is always
injective, since it has a left inverse induced by $u\mapsto 0$. Clearly, this map is surjective,
if and only if we have the decomposition $\gG(B[u])= \gG(B) \, E_\gP(B[u])$.

\smallskip

\noindent $(ii) \Longrightarrow (iii)$.
The map $K_1^{\gG,\gP}(B) \to G(B)/\r0$ is
 surjective. Let $g_0, g_1 \in \gG(B)$ mapping to the same element
 of $G(B)/\r0$. There exists $g(t) \in \gG(B[t])$
 such that $g(0)=g_0$ and $g(1)=g_1$.
 Our assumption implies that $g(t)= g \, h(t)$ with $g \in \gG(B)$
 and $h(t) \in E_\gP(B[u])$. It follows that $g_i= g \, h(i)$ for
 $i=0,1$ with $h(i) \in E_\gP(B)$.
 We get that $g_0= g \, h(0)= (g \, h(1)) \, (h(1)^{-1} \, h(0)) \in g_1 \, E_\gP(B)$.
 Thus $g_0, g_1$ have same image in $K_1^{\gG,\gP}(B)$.

\smallskip

The last assertion of the lemma is immediate.
\end{proof}



\begin{sremarks}\label{rem_invariance}{\rm (a) Assume that $\gG$ satisfies condition $(E)$.
In this case, homotopy invariance reduces to the case of
the ring $B_\gm$ for each maximal ideal $\gm$ of
$B$ according to a generalization of the Suslin local-global principle \cite[lemma 17]{PS}.

\smallskip

\noindent (b) If $B$ is a regular ring containing a field $k$, and $\gG$ satisfies $(E)$,
then we know that $K_1^{\gG}(B)  \simlgr K_1^{\gG}(B[u])$ by~\cite[Th.\ 1.1]{St20}.

\smallskip

\noindent (c) Let us provide a counterexample to $K_1^{\gG}(B)  \simlgr K_1^{\gG}(B[u])$ in the  non-regular case.
Given a field  $k$ (of characteristic zero), we consider the domain $B=k[x^2, x^3] \subset k[x]$.
We claim that $K_1^{\SL_n}(B) \subsetneq K_1^{\SL_n}(B[u])$
for $n>>0$ so that $1=K_1^{\SL_n}(B_\gm) \subsetneq K_1^{\SL_n}(B_\gm[u])$
for some maximal ideal of $B$.
For  $n >>0$, we have $K_1^{\SL_n}(B)= SK_1(B)$ and
$K_1^{\SL_n}(B[u])= \SK_1(B[u])$.
Inspection of the proof of Krusemeyer's computation
of $\SK_1(B)$ \cite[Prop.\ 12.1]{Kr} provides
functorial maps  $\Omega^1_A \to \SK_1(A \otimes_k B)$ for
a $k$--algebra $A$.
We get then  commutative diagram of maps
$$
\xymatrix@C=30pt{
\Omega^1_k   \ar[d] \ar[r]^\sim  &  \SK_1(B) \ar[d] \\
\Omega^1_{k[u]}   \ar[d] \ar[r] &  \SK_1(B[u]) \ar[d] \\
\Omega^1_{k(u)}    \ar[r]^\sim &  \SK_1(B_{k(u)})
}
$$
where the top and the bottom horizontal maps are isomorphisms \cite[Prop.\ 12.1]{Kr}.
Since $\Omega^1_k \subsetneq \Omega^1_{k[u]}$,
a diagram chase yields that $\SK_1(B) \subsetneq \SK_1(B[u])$.
Since $K_1^{\SL_n}(B_m)=1$, this example also shows that
the condition $(iii)$ of Lemma~\ref{lem_W_trivial2} does not imply $(i)$.

\noindent (d) In case of regular rings, the condition $(iii)$ of Lemma~\ref{lem_W_trivial2} may hold while $(i)$ does not,
if $\gG$ does not satisfy (E). Let $k$ be a field.
Let $\gP$ be the standard parabolic subgroup of $\SL_2$ consisting of upper triangular matrices.
Then one has $\SL_2(k[x])=E_\gP(k[x])$.
Consequently, $K_1^{\SL_2,\gP}(k[x])=1$, and hence $\SL_2(k[x])/\r0=1$, so $(iii)$ holds.
On the other hand, $K_1^{\SL_2,\gP}(k[x,u])\neq 1$~\cite{C}, so $(i)$ does not hold.

}
\end{sremarks}

\begin{slemma}\label{lem_W_trivial3} We consider the  following assertions:

\smallskip

$(i)$ The map $K_1^{\gG,\gP}(B) \to K_1^{\gG,\gP}(B[u]_\Sigma)$ is bijective;

\smallskip

$(ii)$ $\gG(B[u]_\Sigma)= \gG(B) \, E_\gP(B[u]_\Sigma)$;

\smallskip

$(iii)$ The map $K_1^{\gG,\gP}(B) \to G(B)/R$ is
 bijective.

 \smallskip

 \noindent Then we have the implications $(i) \Longleftrightarrow (ii)
 \Longrightarrow (iii)$.  Furthermore if $(iii)$ holds,
 we have that $E_\gP(B)=R\gG(B)$; in particular
 $E_\gP(B)$ is a normal subgroup of $\gG(B)$ which does
 not depend of $\gP$.

\end{slemma}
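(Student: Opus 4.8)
The plan is to transcribe the proof of Lemma \ref{lem_W_trivial2} almost verbatim, substituting the localized ring $B[u]_\Sigma$ for the polynomial ring $B[u]$ and $R$-equivalence for $\mathbf{A}^1$-equivalence. Two facts make this substitution legitimate: evaluation at $0$ and at $1$ extend from $B[t]$ to $B[t]_\Sigma$, as recorded just after the definition of $\Sigma$; and, by Lemma \ref{lem_homotopy}.(2), any two $R$-equivalent points of a group functor are directly $R$-equivalent.

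For $(i) \Longleftrightarrow (ii)$, first I would note that the map $K_1^{\gG,\gP}(B) \to K_1^{\gG,\gP}(B[u]_\Sigma)$ is always injective. Evaluation at $0$ is a ring homomorphism $B[u]_\Sigma \to B$ splitting the structure map $B \to B[u]_\Sigma$; since $E_\gP$ is generated by the points of the functorial unipotent radicals $R_u(\gP)$ and $R_u(\gP^-)$, this evaluation carries $E_\gP(B[u]_\Sigma)$ into $E_\gP(B)$ and hence induces a retraction $K_1^{\gG,\gP}(B[u]_\Sigma) \to K_1^{\gG,\gP}(B)$. Thus the map is split injective, and is bijective exactly when it is surjective, which unwinds to the decomposition $\gG(B[u]_\Sigma)= \gG(B)\, E_\gP(B[u]_\Sigma)$, i.e. to $(ii)$.

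For $(ii) \Longrightarrow (iii)$, the surjectivity of $K_1^{\gG,\gP}(B) \to \gG(B)/R$ is already known from the chain of surjections at the end of the previous subsection. For injectivity I take $g_0, g_1 \in \gG(B)$ with equal class in $\gG(B)/R$; by Lemma \ref{lem_homotopy}.(2) they are directly $R$-equivalent, so there is $g(t) \in \gG(B[t]_\Sigma)$ with $g(0)=g_0$ and $g(1)=g_1$. Using $(ii)$ I write $g(t)=g\,h(t)$ with $g\in\gG(B)$ and $h(t)\in E_\gP(B[t]_\Sigma)$; evaluating at $0$ and $1$ yields $g_i=g\,h(i)$ with $h(i)\in E_\gP(B)$, whence $g_0=g_1\bigl(h(1)^{-1}h(0)\bigr)\in g_1 E_\gP(B)$, so $g_0,g_1$ agree in $K_1^{\gG,\gP}(B)$. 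Finally, if $(iii)$ holds then the comparison map is the canonical projection $\gG(B)/E_\gP(B)\to\gG(B)/R\gG(B)$ with $E_\gP(B)\subseteq R\gG(B)$, so bijectivity forces $E_\gP(B)=R\gG(B)$; normality and independence of $\gP$ are then inherited from $R\gG(B)$.

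I do not anticipate a genuine obstacle, as the argument is purely formal; the only place demanding slight care is the compatibility of $E_\gP$ with evaluation at $0$ and $1$ on $B[t]_\Sigma$, which is immediate from its definition via unipotent radicals. The substantive input is simply Lemma \ref{lem_homotopy}.(2), which supplies the single connecting element $g(t)\in\gG(B[t]_\Sigma)$ that the localized polynomial ring makes available.
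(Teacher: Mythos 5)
Your proof is correct and follows exactly the route the paper intends: the paper's own proof of this lemma is literally "This is similar with that of Lemma \ref{lem_W_trivial2}," and your transcription — using that evaluation at $0$ and $1$ extend to $B[t]_\Sigma$, and invoking Lemma \ref{lem_homotopy}.(2) to replace a chain of $R$-equivalences by a single direct one — is precisely the intended adaptation, including the correct handling of the final claim that bijectivity in $(iii)$ forces $E_\gP(B)=R\gG(B)$.
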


\begin{proof} This is similar to that of Lemma
\ref{lem_W_trivial2}
\end{proof}


\section{Passage to the field of fractions}





\begin{slemma}\label{lem_monic}
Let $B$ be a regular ring containing a field, and let $G$ be a reductive group over $B$ having a strictly proper parabolic $B$-subgroup.
Let $f\in B[x]$ be a monic polynomial.
Then the natural map of \'etale
cohomology sets $H^1_{\et}(B[x],G)\to H^1_{\et}(B[x]_f,G)$
has trivial kernel.
\end{slemma}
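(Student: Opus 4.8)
The plan is to deduce the statement from the Serre--Grothendieck conjecture in the equicharacteristic case, proved by Panin \cite{Pa}, combined with a Horrocks-type extension argument and the local--global patching machinery of Moser recalled in Lemma~\ref{lem_moser}. First I would reduce to the case where $B$ is Noetherian and $\Spec B$ is connected, hence $B$ a regular domain: $G$ is finitely presented and $f$ has finitely many coefficients, so both descend to a finitely generated subring, and $H^1_{\et}(-,G)$ commutes with filtered colimits, so a limit argument in the spirit of Lemma~\ref{lem_limit} applies; the finitely many connected components of $\Spec B$ are treated separately. I would also record the one place the monic hypothesis enters at the level of divisors: since $B$ is regular, hence normal, every monic factor of $f$ in $\mathrm{Frac}(B)[x]$ already lies in $B[x]$, so each irreducible component of $V(f)$ is cut out by a monic polynomial and is in particular principal in $\mathbf{A}^1_B=\Spec B[x]$. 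This is what prevents a Picard-type obstruction to injectivity.

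Let $[E]$ lie in the kernel, so $E$ is a $G$-torsor on $\mathbf{A}^1_B$ whose restriction to $B[x]_f$ is trivial. The key first step is to show that $E$ is trivial at every local ring $\mathcal O_{\mathbf{A}^1_B,z}$. For $z\notin V(f)$ this is immediate, as $z$ lies in $\Spec B[x]_f$. For $z\in V(f)$ the ring $\mathcal O_{\mathbf{A}^1_B,z}$ is regular local and equicharacteristic, and $E$ is trivial over its fraction field, which is contained in $B[x]_f$; hence $E$ is trivial over $\mathcal O_{\mathbf{A}^1_B,z}$ by Panin's theorem \cite{Pa}. Thus $E$ is Zariski-locally trivial on $\mathbf{A}^1_B$. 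When $B$ is local, I would then invoke a Horrocks-type theorem for torsors (see \cite{AHW}, \cite{Moser}): triviality of $E$ over $B[x]_f$ with $f$ monic forces $E$ to be extended from $B$, say $E\cong \mathrm{pr}^*E_0$ with $E_0=E|_{x=0}$; restricting the stalkwise triviality to the zero section shows $E_0$ is trivial over $B$, whence $E$ is trivial.

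Finally I would pass from the local case to arbitrary regular $B$ by the local--global principle for torsors of Moser \cite{Moser}, which generalizes Quillen patching and is available precisely because $G$ is $B$-linear with a strictly proper parabolic subgroup, so that the elementary-subgroup patching underlying Lemma~\ref{lem_moser} applies. I expect this last step to be the main obstacle. Zariski-local triviality alone does not imply global triviality for a general reductive group---special groups such as $\SL_n$ already show this---so the argument genuinely uses the isotropy hypothesis: the strictly proper parabolic supplies the unipotent subgroups and the $\mathbf{A}^1$-chains that make the relevant $H^1$ satisfy a Nisnevich-type local--global principle. Making the reduction to the local case rigorous in this generality, rather than the naive localization (which loses exactly the information carried by the trivialization over $B[x]_f$), is the crux of the proof.
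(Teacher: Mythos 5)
Your plan has the same skeleton as the paper's proof: settle the statement over each local ring $B_m$, then globalize by a patching principle that requires $B$-linearity (supplied, as in the paper, by Thomason's theorem since $B$ is regular). Your use of Panin's equicharacteristic Grothendieck--Serre theorem to get triviality of $E$ at every local ring of $\mathbf{A}^1_B$ is also correct. However, both of the steps that carry the real weight are delegated to references that do not contain them, and in each case the missing content is exactly what the paper quotes from \cite{St20}. The ``Horrocks-type theorem'' you invoke over a local base --- $E$ Zariski-locally trivial and trivial over $B[x]_f$ with $f$ monic implies $E$ is extended from $B$ --- is not available in \cite{AHW} or \cite{Moser} in the generality needed here: the Horrocks and homotopy-invariance results of \cite{AHW} require the group to be defined over an (infinite) base field, whereas your $G$ lives over the equicharacteristic regular local ring $B$ itself (the paper points out this restriction of \cite{AHW} in its introduction), and \cite{Moser} provides the Quillen-type factorization recorded in Lemma~\ref{lem_moser} together with a patching principle for torsors, but no Horrocks theorem for reductive torsors. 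The statement you need is in substance \cite[Lemma 5.4]{St20} combined with \cite[Proposition 2.2]{CTO}, i.e.\ precisely what the paper cites; its proof is where isotropy genuinely enters (one must repair the glued torsor on $\mathbb{P}^1_B$ at infinity and on the closed fiber, using elementary subgroups or affine-Grassmannian techniques), and Zariski-local triviality alone does not plug into any off-the-shelf extension theorem.

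The globalization step has a second, independent gap. Quillen--Moser patching yields only that $E$ is extended, $E\cong \mathrm{pr}^*E_0$ with $E_0=E|_{x=0}$; your zero-section argument then shows that $E_0$ is trivial over each $B_m$, i.e.\ Zariski-locally trivial over $B$, and for non-local $B$ this does not imply triviality (nontrivial line bundles are already Zariski-locally trivial). What remains to be proved is that an extended torsor which becomes trivial over $B[x]_f$, $f$ monic, is trivial over $B$; your remark about monic factors and the principality of the components of $V(f)$ settles this for $\GG_m$, i.e.\ for Picard groups, but says nothing for a general reductive $G$, and this sub-statement is essentially the lemma itself restricted to extended torsors, so it cannot be waved through. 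The kernel-triviality form of the local--global principle, as opposed to the extendedness form, is exactly \cite[Lemma 4.2]{St20}, which is what the paper uses at this point. (A minor further point: your opening limit argument is broken, since a finitely generated subring of $B$ need not be regular; it is also unnecessary, because regular rings are Noetherian, so one can pass to connected components and reduce to a domain directly, as the paper does in its first line.)
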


\begin{proof}
Clearly, we can assume that $B$ is a domain.
Let $K$ be the field of fractions of $B$.
By~\cite[Lemma 5.4]{St20} for any maximal ideal $m$ of $B$
the map $H^1_{\et}(B_m[x],G)\to H^1_{\et}(K[x],G)$ has trivial kernel.
Furthermore, the map $H^1_{\et}(K[x],G)\to H^1_{\et}(K(x),G)$ has trivial kernel by~\cite[Proposition 2.2]{CTO}.
Then for any monic polynomial $f$ the map $H^1_{\et}(B_m[x],G)\to H^1_{\et}(B_m[x]_{f},G)$
has trivial kernel.
Since $B$ is regular, by~\cite[Corollary 3.2]{Thomason} $G$ is $B$-linear. Since $G$ is reductive,
it is also $B$-flat. Then the claim holds for $G$ by~\cite[Lemma 4.2]{St20}.
\end{proof}

In the extreme opposite case we have the following fact.

\begin{slemma} \label{lem_dec_aniso}
Let $B$ be a Noetherian commutative ring, and
let $G$ be a $B$-linear reductive $B$-group. We assume
that $G_{B/m}$ is anisotropic for each maximal ideal $m$ of $B$.
Let $f\in B[x]$ be a monic polynomial.
Then the natural map of \'etale
cohomology sets
$H^1_{\et}(B[x],G)\to H^1_{\et}(B[x]_f,G)$
has trivial kernel.
\end{slemma}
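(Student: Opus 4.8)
The plan is to compactify the affine line and to exploit the fact that a reductive group with anisotropic residual fibres carries no nontrivial torsors over a projective line. We may assume $\deg f\ge 1$ (a monic polynomial of degree $0$ is $1$ and the statement is vacuous) and, as in the proof of Lemma~\ref{lem_monic}, that $B$ is a domain. Since $f$ is monic, the closed subscheme $V(f)\subset \mathbf{A}^1_B=\Spec(B[x])$ is finite and flat over $\Spec(B)$ (indeed $B[x]/f$ is free of rank $\deg f$) and is disjoint from the section at infinity; hence inside $\mathbf{P}^1_B$ we have an open cover $\mathbf{P}^1_B=\mathbf{A}^1_B\cup V$, where $V:=\mathbf{P}^1_B\setminus V(f)$ is an open neighbourhood of $\infty_B$, with overlap $\mathbf{A}^1_B\cap V=U:=\Spec(B[x]_f)$. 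Given $\xi\in H^1_{\et}(B[x],G)$ with $\xi|_{B[x]_f}=1$, we first choose a trivialization of $\xi|_U$; then gluing the torsor $\xi$ on $\mathbf{A}^1_B$ to the trivial torsor on $V$ along this trivialization (Zariski descent for $G$--torsors, valid since $G$ is smooth so that $H^1_{\et}=H^1_{\fppf}$) produces a class $\bar\xi\in H^1_{\et}(\mathbf{P}^1_B,G)$ with $\bar\xi|_{\mathbf{A}^1_B}=\xi$ and $\bar\xi|_{V}=1$. In particular $\sigma_\infty^*\bar\xi=1$ for the section at infinity $\sigma_\infty\colon \Spec(B)\to \mathbf{P}^1_B$.

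The key input, where anisotropy is used, is that every $G$--torsor on $\mathbf{P}^1_B$ is extended from the base, i.e. $\bar\xi\cong p^*\eta$ for $p\colon \mathbf{P}^1_B\to \Spec(B)$ and some $\eta\in H^1_{\et}(B,G)$. Granting this, we apply $\sigma_\infty^*$ and use $\sigma_\infty^*p^*=\id$ to get $\eta=\sigma_\infty^*\bar\xi=1$, whence $\bar\xi=1$ and therefore $\xi=\bar\xi|_{\mathbf{A}^1_B}=1$, which is exactly the triviality of the kernel. The fibrewise version of the extension statement is the classical fact (Grothendieck--Harder) that over a field $\kappa$ with $G_\kappa$ anisotropic the pullback $p^*\colon H^1_{\et}(\kappa,G)\to H^1_{\et}(\mathbf{P}^1_\kappa,G)$ is a bijection: choosing a closed $B$--embedding $G\hookrightarrow \GL_{N,B}$ (available by $B$--linearity), the Harder--Narasimhan/semistability formalism shows that a $G$--bundle over $\mathbf{P}^1_\kappa$ admits a canonical reduction to a parabolic $\kappa$--subgroup, which for anisotropic $G_\kappa$ can only be $G_\kappa$ itself; the resulting semistable bundle of degree $0$ is then forced to be trivial, as one checks on the associated vector bundle. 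This is precisely the place that requires $G_{B/m}$ to be anisotropic for every maximal ideal $m$, since these are the residual fibres $\mathbf{P}^1_{\kappa(m)}$ of $p$.

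The main obstacle will be upgrading this fibrewise triviality to the relative statement over the (merely Noetherian) base $B$. For this I would run a cohomology-and-base-change argument for the proper flat morphism $p$: fibrewise triviality of $\bar\xi$ gives $H^0(\mathbf{P}^1_{\kappa(\mathfrak p)},\mathrm{ad}\,\bar\xi)=\Lie(G)$ of constant rank and $H^1(\mathbf{P}^1_{\kappa(\mathfrak p)},\mathrm{ad}\,\bar\xi)=0$ on every fibre, so that $p_*(\mathrm{ad}\,\bar\xi)$ is locally free, commutes with base change, and $R^1p_*(\mathrm{ad}\,\bar\xi)=0$. The vanishing of $R^1p_*$ makes the functor of trivializations of $\bar\xi$ formally smooth and unobstructed along each fibre, which together with $f$--adic completeness (Grothendieck's existence theorem along a fibre) and the already available trivialization over $\sigma_\infty$ should allow one to spread the trivialization out, showing $\bar\xi\cong p^*\eta$; alternatively one reduces to $B$ local and then to its completion and residue fields, invoking the field case above. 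Verifying the deformation-theoretic and base-change steps cleanly, and locating the sharpest available reference for the anisotropic $\mathbf{P}^1$--constancy, is the technical heart of the argument.
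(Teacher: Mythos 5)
Your skeleton is the same as the paper's: glue $\xi$ to the trivial torsor on $\mathbf{P}^1_B\setminus V(f)$ (possible since $f$ is monic) to obtain $\bar\xi\in H^1_{\et}(\mathbf{P}^1_B,G)$ with $\bar\xi|_\infty=1$, kill $\bar\xi$ on fibres using anisotropy, conclude that $\bar\xi$ is extended from the base, and evaluate at the section at infinity. The genuine gap is the step you yourself flag as ``the technical heart'': the passage from fibrewise triviality to $\bar\xi\cong p^*\eta$. You attempt this directly over an arbitrary Noetherian $B$ via a cohomology-and-base-change, deformation-theoretic and formal-GAGA sketch, but none of it is carried out, and over a non-semilocal base this rigidity statement is not a routine verification: all known proofs (Raghunathan \cite{R78}, Panin--Stavrova--Vavilov \cite[Prop. 9.6]{PaStV}, \v{C}esnavi\v{c}ius \cite[lemma 5.2.1]{Ces-GS}) require $B$ semilocal, or reduced with $\Pic(B)=0$ in Tsybyshev's variant \cite{Tsy}, while the only treatment of non-semilocal bases with anisotropic fibres is Fedorov's affine-Grassmannian theorem \cite[Theorem 5]{F} --- a substantial result, not a base-change exercise. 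A further wrinkle in your sketch: anisotropy, hence fibrewise triviality, is only given at the \emph{closed} fibres $G_{B/m}$, whereas your base-change argument invokes triviality of $\mathrm{ad}\,\bar\xi$ ``on every fibre'' $\kappa(\mathfrak p)$; this can be patched by semicontinuity, but it is not automatic. (Also, your one-line fibrewise argument --- semistable of degree $0$, ``as one checks on the associated vector bundle'' --- does not suffice to trivialize a $G$-torsor, since a $G$-torsor with trivial associated vector bundle need not be trivial; the correct fibrewise input is the cited theorem \cite[Th. 3.8 (b)]{Gi02}.)

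The paper closes exactly this gap by a reduction you omitted: since $G$ is $B$-linear, the local-global principle \cite[Lemma 4.2]{St20} reduces triviality of the kernel to the maximal localizations of $B$, i.e. to the case where $B$ is (semi)local. In that case, triviality of $\bar\xi$ on the closed fibres (anisotropy plus \cite[Th. 3.8 (b)]{Gi02}) combined with \cite[lemma 5.2.1]{Ces-GS} shows that $\bar\xi$ lies in the image of $H^1_{\et}(B,G)\to H^1_{\et}(\mathbf{P}^1_B,G)$, and evaluation at infinity finishes the proof. Note also that your preliminary reduction ``to a domain'' is not the relevant one here and plays no role afterwards; the reduction that matters, and that makes the rigidity step a quotable result rather than an open-ended deformation argument, is the semilocal one.
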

\begin{proof}
Assume first that $B$ is semilocal.
Let $\xi=[E]\in H^1_{\et}(B[x],G)$ be an element of  the kernel.
We extend $E$ to a $G$-bundle $\hat E$
 on $\mathbb{P}^1_B$
by patching it to the trivial $G$-bundle over  $\mathbb{P}^1_B \setminus\{f=0\}$.
We denote by $\hat\xi$ its class;
since $f$ is monic, we have $\hat\xi|_\infty=*$.

Let $m_1,\dots, m_c$ be the maximal ideals of $B$ and put  $k_i=B/m_i$.
Since $G_{k_i}$ is anisotropic, then by~\cite[Th.\ 3.8 (b)]{Gi02} $\hat\xi_{k_i}$ is trivial.
Next we apply \cite[Lemma 8.3]{Ces-GS} and get that $\hat\xi$
belongs to the image of $H^1_{\et}(B,G) \to H^1_{\et}(\mathbb{P}^1_B,G)$.
Since $\hat\xi|_\infty=*$, we conclude that $\hat\xi=*$.
Thus $E$ is a trivial $G$--torsor over $B[x]$.

If $B$ is not necessarily semilocal, the claim reduces to the maximal localizations of $B$
by applying the local-global principle~\cite[Lemma 4.2]{St20}.
\end{proof}

\begin{sremarks}{\rm
\noindent (a) The statement that a principal $G$-bundle on $\mathbb{P}^1_B$
is in the image of $H^1_{\et}(B,G) \to H^1_{\et}(\mathbb{P}^1_B,G)$, once it has
trivial restrictions to $\mathbb{P}^1_{B/m_i}$ for all $i$~\cite[Lemma 8.3]{Ces-GS}, is sometimes called the rigidity property of $G$-bundles.
It was proved in~\cite[Th.\ 1]{R78} and~\cite[Prop.\ 9.6]{PaStV} under the assumption that $B$ is semilocal and contains a field
(i.e. is equicharacteristic).
Tsybyshev~\cite[Theorem 1]{Tsy} was able to prove it assuming only that $B$ is reduced
and $\Pic(B)=0$.
\v{C}esnavi\v{c}ius~\cite{Ces-GS} observed that one can remove the condition that $B$ contains a field by using
Alper's theorem stating that $\GL_N/G$ is affine for any $B$~\cite[Cor.\ 9.7.7]{A}.
The idea to use~\cite[Th.\ 3.8 (b)]{Gi02} for anisotropic groups appeared in~\cite[p. 178]{FP}
and in~\cite[Th.\ 1 and remark 2.1.(iii) on the anisotropic case]{F}.
Fedorov also introduced the use of affine Grassmannians to treat the case of not necessarily semilocal $B$ and anisotropic $G$
~\cite[Theorem 5]{F}.

\smallskip

\noindent (b) Let $G_0$ the underlying Chevalley $B$--group scheme
of $G$. The condition of linearity on $G$ is satisfied if
the  $\Out(G_0)_S$--torsor $\mathrm{Isomext}(G_{0}, G)$ is isotrivial,
see \cite[Prop.\ 3.2]{M2}; this reference provides then a representation
such that $\GL_n/G$ is affine, so there is no need to appeal
to Alper's result in this case.
This includes the semisimple case and
the case when $B$ is a normal ring due to Thomason \cite[Corollary 3.2]{Thomason}.

\smallskip

\noindent (c) The claim of Lemma~\ref{lem_dec_aniso} does not hold if
$G$ is anisotropic over $B$ and isotropic
over $B/m$, even if $B$ is regular local  and $G$ is simply connected~\cite[Corollary 2.3]{F}.

 }
\end{sremarks}

\begin{stheorem}\label{thm:surj}
Let $B$ be a regular semilocal domain that contains a field $k$, and let $K$ be the
fraction field of $B$.
Let $\gG$ be a reductive $B$-group scheme.

(1) Assume that either $\gG$ contains a strictly proper parabolic $B$-subgroup, or
$\gG$ is anisotropic over $B/m$ for all maximal ideals $m$ of $B$.
Then the map
$$
\gG(B)/R\to \gG(K)/R
$$
is surjective.

(2) Assume that $\gG$ contains a strictly proper parabolic $B$-subgroup.
Then  the map
$$\gG(B)/\r0 \to \gG(K)/\r0
$$ is injective.
\end{stheorem}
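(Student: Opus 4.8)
The plan is to reduce both assertions, via the finite presentation of $\gG$, to a comparison of $\gG$-points over $B$ and over a single localization $B_h$ ($0\neq h\in B$), and then to control that comparison through the monic-polynomial rigidity of Lemmas~\ref{lem_monic} and~\ref{lem_dec_aniso} combined with the patching and descent machinery already in place. Since $\gG$ is of finite presentation and $K=\varinjlim_{0\neq h\in B}B_h$, Lemma~\ref{lem_limit} gives $\gG(K)/R=\varinjlim_h\gG(B_h)/R$ and $\gG(K)/\r0=\varinjlim_h\gG(B_h)/\r0$. Thus (1) reduces to producing, for each $h$, a decomposition $\gG(B_h)=\gG(B)\cdot\r0\gG(B_h)$ (which gives surjectivity on $\r0$-classes, hence a fortiori the asserted surjectivity on $R$-classes), while (2) reduces to showing that the kernel of $\gG(B)/\r0\to\gG(B_h)/\r0$ dies in the colimit over $h$. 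Throughout I would record that, $B$ being regular and hence normal, Thomason's theorem~\cite[Corollary 3.2]{Thomason} makes $\gG$ a $B$-linear group scheme in both cases, so that Lemmas~\ref{lem_KV} and~\ref{lem_dec_aniso} are applicable.

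For the surjectivity in (1) I would feed this into Lemma~\ref{lem_KV}.(2). As $B$ is a domain and $h\neq0$, the element $h$ is a non-zero-divisor, so $(B\to\widehat B,h)$ is a glueing pair for the $h$-adic completion $\widehat B$; Lemma~\ref{lem_KV}.(2) then reduces the decomposition $\gG(B_h)=\gG(B)\,\r0\gG(B_h)$ to its analogue over the completion, $\gG(\widehat B_h)=\gG(\widehat B)\,\r0\gG(\widehat B_h)$, provided the torsor condition~\eqref{eq:torsor-ker-cond}, $\ker\bigl(H^1(B[x],\gG)\to H^1(B_h[x],\gG)\bigr)=1$, has been checked. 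So everything rests on verifying~\eqref{eq:torsor-ker-cond} together with the completion decomposition.

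The heart of the argument, and the step for which the equicharacteristic hypothesis is indispensable, is exactly this verification. Here I would invoke Panin's geometric presentation~\cite[Theorem 2.5]{Pa} to convert the localization of $B$ at $h$ into a localization at a polynomial that is \emph{monic} in a suitable coordinate over a smaller regular semilocal ring, so that the $h$-adic completion acquires, by Weierstrass preparation, the power-series patching shape of Example~\ref{ex_patch}.(c). In this monic form Lemma~\ref{lem_monic} (when $\gG$ carries a strictly proper parabolic $B$-subgroup) or Lemma~\ref{lem_dec_aniso} (when $\gG$ is anisotropic over every residue field $B/m$) shows the relevant $H^1$-kernel vanishes; through the patching sequence~\eqref{eq:patch-torsors} this both yields~\eqref{eq:torsor-ker-cond} and supplies the decomposition over $\widehat B$ that Lemma~\ref{lem_KV}.(2) consumes. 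I expect this matching of the presentation to the localization datum, making $h$ monic and the completion a genuine patching pair while keeping the two fibrewise hypotheses aligned with the two rigidity lemmas, to be the main obstacle.

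For the injectivity in (2) I would run an appropriate relative version of the Colliot-Th\'el\`ene--Ojanguren descent abstracted in Proposition~\ref{prop_cto} and already exploited in Proposition~\ref{prop_moser}, now for the functor $C\mapsto\gG(C)/\r0$. Property~$\mathbf P_1$ is finite presentation; property~$\mathbf P_2$, the injectivity of $F(L)$ into $F$ of a rational function field, holds because the $\r0$-analogue of Lemma~\ref{lem_homotopy}.(1) gives $F(L)=F\bigl(L[t_1,\dots,t_d]\bigr)$, which injects into $F\bigl(L(t_1,\dots,t_d)\bigr)$ since every polynomial over $L$ takes an invertible value; and the Zariski patching property~$\mathbf P'_3$ is Moser's Lemma~\ref{lem_moser}.(2). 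The genuinely new point relative to Proposition~\ref{prop_moser}, where the group lived over the ground field, is that $\gG$ is defined only over $B$, and it is precisely Lemma~\ref{lem_monic} that furnishes the rigidity along the generic point of a monic divisor needed to close the descent over the equicharacteristic regular base (after a Popescu-type reduction to the smooth case, and with the finite-residue-field case handled as in Corollary~\ref{cor:polynomial_local}).
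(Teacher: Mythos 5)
Your toolkit is the right one (Panin's presentation theorem, the monic rigidity Lemmas \ref{lem_monic} and \ref{lem_dec_aniso}, torsor patching), but the way you assemble it breaks at both key steps. For (1): you reduce to the decomposition $\gG(B_h)=\gG(B)\cdot\r0\gG(B_h)$ and want it from Lemma \ref{lem_KV}.(2) applied to the $h$-adic completion $(B\to\widehat B,h)$. That lemma needs the torsor condition \eqref{eq:torsor-ker-cond}, i.e.\ triviality of $\ker\bigl(H^1(B[x],\gG)\to H^1(B_h[x],\gG)\bigr)$ for your $h\in B$; but Lemmas \ref{lem_monic} and \ref{lem_dec_aniso} only cover localization at a polynomial that is \emph{monic in the auxiliary variable}, and a constant $h\in B$ is never such a polynomial --- nothing in the paper proves this kernel vanishing for constant $h$. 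Panin's theorem does not repair this: it does not rewrite $B_h$ as a monic localization of $B$; it produces a monic $h\in B[t]$, a finite \'etale extension $A$ of $B[t]$ receiving $C_f$, and the glueing pair one must patch along is $(B[t]\to A,h)$, not $(B\to\widehat B,h)$. The paper's proof of (1) runs the patching sequence \eqref{eq:patch-torsors} for $(B[t]\to A,h)$, where the $H^1$-kernel \emph{is} killed by Lemma \ref{lem_monic} or \ref{lem_dec_aniso} (here $h$ is monic), to decompose $\widetilde g=\Phi^{-1}(v_*(g))=b\cdot a$ with $b\in\gG(B[t]_h)$, $a\in\gG(A)$; then --- and this step is entirely absent from your sketch --- it descends to $B$ via Panin's retraction $\phi:A\to B$ and the two evaluations: $\phi_h(a)\in\gG(B)$, while $\phi_h(b)=b|_{t=0}$ is $R$-equivalent over $B_{h(0)}$ to $b|_{t=1}\in\gG(B)$ precisely because $h(0)\neq 0$ and $h(1)\in B^\times$, so $b$ itself is the connecting chain. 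That is where the $R$-equivalence is manufactured; your Weierstrass-preparation decomposition over $\widehat B_h$ is both unproved and unnecessary (and note your target is strictly stronger than what the paper establishes, which is only $R$-surjectivity, not an $\r0$-decomposition of $\gG(B_h)$).

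For (2), the Colliot-Th\'el\`ene--Ojanguren axioms cannot be ``run'' for $C\mapsto\gG(C)/\r0$: that functor is defined only on $B$-algebras, Proposition \ref{prop_cto} concludes something about localizations of polynomial rings over a field, and $B$ is essentially smooth but not such a ring; more fundamentally, the induction behind CTO collapses when the group is non-constant, since restriction to hyperplanes and hypersurfaces changes $\gG$. Overcoming exactly this is the role of property (v) in Panin's theorem (the isomorphism $\Phi:\gG_B\times_B A\simeq\gG\times^u_C A$); the paper then combines Lemma \ref{lem_KV}.(1) for $(B[t]\to A,h)$ --- with \eqref{eq:torsor-ker-cond} checked by applying Lemma \ref{lem_monic} over the base ring $B[x]$ in the variable $t$ --- together with $\mathbf{A}^1$-homotopy invariance $\gG(B)/\r0=\gG(B[t])/\r0$ and $h(1)\in B^\times$ to get $\ker\bigl(\gG(A)/\r0\to\gG(A_h)/\r0\bigr)=1$, and finishes by applying $\phi_*$. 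None of this mechanism appears in your sketch. Finally, your plan to handle finite residue fields ``as in Corollary \ref{cor:polynomial_local}'' is circular: the finite-field case of that corollary is deduced in the paper from Theorem \ref{thm:surj}(2) itself. No such case distinction is needed, because Panin's theorem is available over finite fields; the finite-field issue is an artifact of your CTO/Moser route (Proposition \ref{prop_moser} genuinely requires an infinite field), not of the statement being proved.
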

\begin{proof}
Clearly, we can assume that $k$ is a finite field or $\mathbb{Q}$
without loss of generality. Then the embedding $k\to B$ is geometrically regular,
since $k$ is perfect~\cite[(28.M), (28.N)]{Mats}.
Then by Popescu's theorem~\cite{Po90,Swan} $B$ is a filtered direct limit of smooth
$k$-algebras.
Since the group scheme $\gG$ is finitely
presented over $B$, and the functors $\gG(-)/R$ and $\gG(-)/\r0$
commute with filtered direct limits, we can assume that $\gG$ is defined over
a smooth $k$-domain $C$, and
$B=C_S$ is a localization of $C$ at a set $S$ that is the complement of a union of a finite set of
 prime ideals $p_i$ of $C$. Moreover, since parabolic subgroups of $\gG$ are also finitely presented,
depending on the assumption on $\gG$ we can secure that
$\gG$ contains a strictly proper parabolic subgroup over $C$, or $\gG$ is anisotropic over
 $C_{p_i}/p_iC_{p_i}$ for all $p_i$'s.

(1) We need to show that $\gG(B)/R\to \gG(K)/R$ is surjective, where $K$
is the fraction field of $B$ and $C$. Clearly, it is enough to show the same
 for the localization of $C$
at the complement of the union of maximal ideals $m_i\supseteq p_i$ (note that if $\gG$ is anisotropic over
$C_{p_i}/p_iC_{p_i}$, then it is automatically anisotropic over $C/m_i$). Hence we can
assume that $B$ is
a localization of $C$ at the complement of a union of a finite set of maximal ideals. On top
of that, in order to show that
$\gG(B)/R\to \gG(K)/R$ is surjective, it is enough to show that for any
 $f\in\bigcap_i m_i$ and any $g\in \gG(C_f)$
the image of $g$ in $\gG(K)$ belongs to $\gG(B)\cdot R\gG(K)$.

We apply Panin's theorem \cite[Th.\ 2.5]{Pa}.
This provides  a monic polynomial $h \in B[t]$,
an inclusion of rings $B[t] \subset A$, a homomorphism $\phi: A \to B$  and a
commutative diagram
\begin{equation}\label{eq:panin-diag}
\xymatrix@C=30pt{
 B[t] \ar[d] \ar[r]   & A \ar[d]    &   \ar[l]_{u} C  \ar[d]\\
 B[t]_h \ar[r] & A_h     & C_f \ar[l]_{v}.
}
\end{equation}
such that

\smallskip

(i) the left hand square is a elementary distinguished Nisnevich square in the category of smooth $B$-schemes in the sense of \cite[3.1.3]{MV};

\smallskip

(ii) the composite $C  \xrightarrow{u} A \xrightarrow{\phi} B$ is
the canonical localization homomorphism;

\smallskip

(iii) the map $B[t] \to A \xrightarrow{\phi}  B$ is the evaluation at $0$;

\smallskip

(iv) $h(1) \in B^\times$;

\smallskip

(v) there is an $A$--group scheme isomorphism $\Phi: \gG_B \times_B A \simlgr \gG \times^u_C A$.

\medskip

By inspection of the construction  $A$ is finite \'etale over $B[t]$ and
$h(t) = N_{A/B[t]}(u(f))= u(f) a$ with $a \in  A$.
Property (4) of  \cite[theorem 3.4]{PaStV} states that the map $\phi: A \to  B$
extends to a map $A_a \to  B$, so that
$\phi(a) \in  B^\times$.
We compute
\begin{eqnarray} \nonumber
h(0) &=& \phi(h)  \qquad [\mbox{property \enskip} (iii)]\\ \nonumber
&=& \phi(u(f)) \, \phi(a)  \\ \nonumber
&=& f \, \phi(a) \qquad [\mbox{property \enskip} (ii)];
\end{eqnarray}
it  follows that $h(0)$ is a non-zero element of $B$.
In particular $\phi$ extends to a map $\phi_h: A_h \to B_{h(0)}$.

\noindent
Since $(B[t]\to A,h)$ is a glueing pair, we have an exact
sequence of pointed sets
$$
1 \to \gG(B[t]_h) \backslash \gG(A_h) / \gG(A)
\to H^1(B[t] ,\gG) \to H^1(B[t]_h ,\gG) \times H^1(A,\gG).
$$
Our assumptions on $\gG$ imply that the map $H^1(B[t],\gG) \to H^1(B[t]_h,\gG)$
has trivial kernel.
Indeed, if $\gG$ contains a strictly proper parabolic subgroup over $B$,
this follows from Lemma~\ref{lem_monic}.
If $\gG$ is anisotropic modulo every maximal ideal of $B$, then the same
follows from Lemma~\ref{lem_dec_aniso}, taking into account that $B$ is regular and hence
by~\cite[Corollary 3.2]{Thomason} $G$ is $B$-linear. Therefore we have $\gG(A_h)=\gG(B[t]_h)\gG(A)$.

Set $\widetilde g=\Phi^{-1}(v_*(g)) \in \gG(A_h)$.
Then $\widetilde g=b\cdot a$, where $b\in \gG(B[t]_h)$ and $a\in\gG(A)$. Note that by (iii) we have
$\phi(h)=h(0)$.
We have $\phi_h(\widetilde g)=\phi_h(v(g))=g\in \gG(B_{h(0)})$ by (ii). It follows that $g=\phi_h(b)\cdot\phi_h(a)$.
Clearly we have $\phi_h(a)\in\gG(B)\subseteq \gG(B_{h(0)})$.
We claim that $\phi_h(b)\in\gG(B)\cdot R\gG(B_{h(0)})$. Indeed,
we have $\phi_h(b)=b|_{t=0}$ by (iii), and since $h(1)\in B^\times$, we have $b|_{t=1}\in\gG(B)$.
Then the image of $b$ in $\gG\bigl(B_{h(0)}[t]_h \bigr)$ provides an $R$-equivalence between $\phi_h(b)$ and an element of $\gG(B)$.
Summing up, the image of any $g\in\gG(C_f)$ under the composition
  $\gG(C_f)\xrightarrow{v}\gG(A_h)\xrightarrow{\phi_h}\gG(B_{h(0)})$ belongs to $\gG(B)\cdot R\gG(B_{h(0)})$. It follows that
the image of $g$ in $\gG(K)$ belongs to $\gG(B)\cdot R\gG(K)$.

(2) Let $[g] \in \ker\bigl( \gG(B)/\r0 \to \gG(K)/\r0 \bigr)$.
Up to shrinking of $X=\Spec(C)$, we can assume that  $g \in \gG(C)$.
Then there exists then $f \in C$ such that
$[g] \in \ker\bigl( \gG(C)/\r0 \to \gG(C_f)/\r0 \bigr)$.
As in (1), we apply Panin's theorem \cite[Th.\ 2.5]{Pa} and obtain a diagram~\eqref{eq:panin-diag} satisfying the properties
(i)--(v).
But this time we set $\widetilde g=\Phi^{-1}(u_*(g)) \in \gG(A)$ and we have
$[\widetilde g] \in  \ker\bigl( \gG(A)/\r0 \to \gG(A_h)/\r0 \bigr)$.
According to Lemma \ref{lem_monic}, the map $H^1(B[t][x],\gG) \to H^1(B[t]_h[x],\gG)$
has trivial kernel so that  Lemma \ref{lem_KV}.(1) shows that the map

\begin{equation}\label{eq:BB-square27}
\ker\bigl( \gG(B[t])/\r0 \to  \gG(B[t]_h)/\r0   \bigr)  \, \to \,
\ker\bigl( \gG(A)/\r0   \to  \gG(A_h)/\r0 \bigr)
\end{equation}
is surjective.
Since $\gG(B)/\r0= \gG(B[t])/\r0$ and $h(1)\in B^\times$,
we deduce  that
$$
\ker\bigl( \gG(A)/\r0 \to \gG(A_h)/\r0 \bigr)=1.
$$
We have $[\widetilde g] =1 \in \gG(A)/\r0$
and get $[u_*(g)]=1 \in  \gG(A)/\r0$.
By applying $\phi_*$, the property (ii) yields $[g]=1 \in  \gG(B)/\r0$.
\end{proof}

\begin{scorollary}\label{cor:polynomial_local}
Let $k$ be a field and let $G$ be an affine $k$--algebraic group. Let $A$
be the local ring at a prime ideal of a polynomial algebra $k[t_1, \dots, t_d]$. Then the homomorphism
$$
G( A)/\r0 \to G\bigl( k(t_1, \dots, t_d) \bigr)/\r0
$$
is injective.
\end{scorollary}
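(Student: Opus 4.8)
The plan is to reduce everything to the infinite residue field situation already handled in Proposition~\ref{prop_moser}, by the classical coprime‑degree trick over finite fields. If $k$ is infinite the statement is precisely Proposition~\ref{prop_moser}, so I would immediately assume $k=\FF_q$ is finite. Write $A=k[t_1,\dots,t_d]_\gp$, let $\kappa=\kappa(\gp)$ be the residue field and $K=k(t_1,\dots,t_d)$ the fraction field, and let $\ell=\FF_{q^m}\subset\kappa$ be the algebraic closure of $k$ in $\kappa$, which is finite over $k$ because $\kappa$ is finitely generated over $k$. I would then fix two distinct primes $\ell_1\neq\ell_2$ not dividing $m$ and set $k_i=\bigcup_{n\ge 0}\FF_{q^{\ell_i^{\,n}}}$, two infinite algebraic towers.

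The first step is base change to the $k_i$. Since $\ell_i\nmid m$, the towers $k_i$ and $\ell$ are linearly disjoint over $k$, so $k_i\otimes_k\kappa$ is a field; hence $A_i:=A\otimes_k k_i$ is again the local ring at a single prime of $k_i[t_1,\dots,t_d]$, a domain with fraction field $k_i(t_1,\dots,t_d)$. As $k_i$ is infinite, Proposition~\ref{prop_moser} applies over $k_i$ and says that $G(A_i)/\r0\to G(k_i(t_1,\dots,t_d))/\r0$ is injective. Now let $[g]$ lie in the kernel of $G(A)/\r0\to G(K)/\r0$. Its image under $K\to k_i(t_1,\dots,t_d)$ is trivial, so $[g]=1$ in $G(A_i)/\r0$. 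Because $A_i$ is the filtered direct limit of the finite subextensions $A^{(i)}_n=A\otimes_k\FF_{q^{\ell_i^{\,n}}}$ and $G(-)/\r0$ commutes with filtered direct limits (the analogue of Lemma~\ref{lem_limit}), there is an $n_i$ with $[g]=1$ already in $G(A^{(i)}_{n_i})/\r0$. Putting $d_i=\ell_i^{\,n_i}$, I obtain two finite cyclic Galois extensions $A\to A\otimes_k\FF_{q^{d_i}}$ of coprime degrees $d_1,d_2$ over which $[g]$ becomes $\r0$‑trivial.

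The second step is to descend using a transfer. One wants a norm homomorphism $\mathrm N_i\colon G(A\otimes_k\FF_{q^{d_i}})/\r0\to G(A)/\r0$ attached to the finite \'etale extension $A\to A\otimes_k\FF_{q^{d_i}}$ whose composite with restriction carries $[g]$ to $[g]^{\,d_i}$. Applying it to $[g]$, which restricts to $1$, gives $[g]^{\,d_1}=1$ and $[g]^{\,d_2}=1$; since $\gcd(d_1,d_2)=1$, the cyclic subgroup generated by $[g]$ in the (a priori non‑abelian) group $G(A)/\r0$ is trivial, so $[g]=1$. This yields the desired injectivity.

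The hard part will be the second step: constructing the transfer $\mathrm N_i$ for the Karoubi–Villamayor functor $G(-)/\r0$ along a finite separable extension and verifying $\mathrm N_i\circ\mathrm{res}=(\,\cdot\,)^{d_i}$, which is delicate precisely because $G(-)/\r0$ need not be abelian and $G$ is an arbitrary affine group. The first step, by contrast, is routine once one checks that $A\otimes_k k_i$ remains \emph{local}, which is the only reason the primes $\ell_i$ are taken coprime to $[\ell:k]$. I would also note that when $\gp$ is not maximal, so that $\kappa$ is infinite, the transfer can be avoided: base change along a single transcendental extension $k\subset k(s)$ makes $A\otimes_k k(s)$ local over the infinite field $k(s)$, and a transcendental element of $\kappa$ provides a retraction $A\otimes_k k(s)\to A$ that splits off the kernel directly; it is exactly the closed‑point case (finite $\kappa$), where no such retraction exists, that forces the coprime‑degree transfer argument.
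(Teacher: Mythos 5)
Your step 1 (base change to the two infinite towers $k_i=\bigcup_n\FF_{q^{\ell_i^n}}$, checking that $A\otimes_k k_i$ stays local, and descending trivially of $[g]$ to a finite layer by the limit property) is sound. But the proof collapses at step 2, which you yourself flag as "the hard part" and do not carry out: there is no transfer $\mathrm{N}_i\colon G(A\otimes_k\FF_{q^{d_i}})/\r0\to G(A)/\r0$ with $\mathrm{N}_i\circ\mathrm{res}=(\,\cdot\,)^{d_i}$ in the generality you need. For a noncommutative $G$ a norm map does not even exist at the level of points: the usual norm $\prod_{C/B}(G_C)\to G$ (Weil restriction followed by the product over the Galois orbit) requires $G$ commutative, since the product of the conjugate points must be independent of their ordering. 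Nor is there any known transfer structure on the Karoubi--Villamayor functor $G(-)/\r0$ or on non-stable $K_1$ for general affine (or even semisimple simply connected) groups; the absence of a corestriction for Whitehead groups is a classical obstruction in the Kneser--Tits problem. Since the corollary is stated for an arbitrary affine algebraic group, and your coprimality argument funnels entirely through $\mathrm{N}_i$, the proof is incomplete at its crux, and this route cannot be repaired without a genuinely new construction.

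For comparison, the paper's proof over a finite field uses no transfer at all: it exploits perfectness of $k$ and Lang's theorem. One first replaces $G$ by $G_{red}$ (harmless since $A$ and $A[u]$ are reduced), then splits off $\pi_0(G)$ (a finite \'etale scheme, for which $\mathbf{A}^1$-equivalence is trivial), then splits off the unipotent radical $U$, which is $k$-split because $k$ is perfect, so that $U(A)/\r0=1$ and $H^1(-,U)=1$; finally the reductive quotient $G/U$ is quasi-split by Lang's theorem, hence is a torus or carries a strictly proper parabolic $k$-subgroup, and Theorem~\ref{thm:surj}(2) applies directly over the finite field. I would also point out that your closing aside about non-maximal primes is incorrect as stated: $A\otimes_k k(s)$ is \emph{not} local when $\kappa(\gp)$ contains elements transcendental over $k$ (already for $d=1$, $\gp=0$, the ring $k(t_1)\otimes_k k(s)$ has infinitely many maximal ideals such as $(t_1-s)$), and if one localizes $k(s)[t_1,\dots,t_d]$ at the prime above $\gp$ to restore locality, the retraction onto $A$ no longer exists (the element $s-f$ becomes a unit upstairs but maps to $0$).
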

\begin{proof}
If $k$ is infinite, this is the claim of Proposition~\ref{prop_moser}.
Assume that $k$ is finite. Let $G_{red}$ denote the reduced affine algebraic $k$-scheme corresponding to $G$.
Since $k$ is perfect, $G_{red}$ is a smooth algebraic $k$-subgroup of $G$~\cite[Prop.\ 1.26, Cor.\ 1.39]{Milne}.
Since $A$ is reduced, $G(A)=G_{red}(A)$ and $G(A[u])=G_{red}(A[u])$, therefore, $G(A)/\r0=G_{red}(A)/\r0$,
and hence we can assume that $G$ is smooth from the start. Let $G^\circ$ be the connected component of the identity $e\in G(k)$.
Let $\pi_0(G)$ be the finite \'etale $k$-scheme of connected components of $G$
Then $G^\circ$ is a smooth geometrically connected algebraic $k$-subgroup of $G$, the fiber of the natural map
$G\to\pi_0(G)$ at the image of $e$~\cite[Prop.\ 1.31, 1.34]{Milne}. Since $\pi_0(G)$ is $k$-finite, we have $\pi_0(G)(A[u])=\pi_0(G)(A)$,
and hence $\pi_0(G)(A)/\r0=\pi_0(G)(A)$ injects into $\pi_0(G)(K)/\r0=\pi_0(G)(K)$, where $K=k(t_1,\ldots,t_d)$.
Therefore, in order to prove the claim for $G$, it is enough to prove it for $G^\circ$.
Hence we can assume that $G$ is smooth and connected. Let $U$ be the unipotent radical of $G$ over $k$, i.e. the largest
smooth connected unipotent normal $k$-subgroup of $G$. Since $k$ is perfect,
the group $U$ is $k$-split, admits a subnormal series each of whose quotients are isomorphic to $\A^1_{k}$~\cite[14.63]{Milne}.
Therefore $U(A)/\r0=1$ and $H^1(R,U)=1$ for every $k$-algebra $R$.
Also, since $k$ is perfect, $G/U$ is a reductive algebraic $k$-group~\cite[Prop.\ 19.11]{Milne}. By Lang's theorem
 \cite[Th.\ 2]{Lg},
 $G/U$ admits a Borel $k$-subgroup hence is quasi-split;
therefore either $G/U$ is a $k$-torus, or it contains a strictly proper parabolic $k$-subgroup
and then satisfies Theorem~\ref{thm:surj} (2). In both cases the map
$(G/U)(A)/\r0\to (G/U)(K)/\r0$ is injective. Now let $g\in G(A)$ be mapped into $\r0G(K)\subseteq G(K)$. By the
previous argument, there is $h(u)\in (G/U)(A[u])$ such that $h(0)=1$ and $h(1)$ is the image of $g$ in $(G/U)(A)$.
Since $H^1(A,U)=H^1(A[u],U)=1$, there is $g(u)\in G(u)$ such that $g(0)\in U(A)$ and $g(1)g^{-1}\in U(A)$.
Since $U(A)\subseteq \r0G(A)$, we conclude that $g\in\r0G(A)$, as required.
\end{proof}

\begin{scorollary}\label{cor:hens_surj}
Let $B$ be a henselian regular local ring that contains a field, and let $K$ be the
fraction field of $B$. Let $\gG$ be a reductive $B$-group scheme.
Then the map
$$
\gG(B)/R\to \gG(K)/R
$$
is surjective.
\end{scorollary}

\begin{proof}
Assume first that $\gG$ is anisotropic. Let $m$ be the maximal ideal of $B$ and let $l=B/m$. Since $B$
is henselian, $\gG_l$ is also
anisotropic by~\cite[Exp. X, Cor. 4.6]{SGA3} and~\cite[Exp. XXVI, 7.15]{SGA3}. Thus, Theorem~\ref{thm:surj} applies to
$\gG$.

Next, assume that $\gG$ is not anisotropic, and let $\gS$ be a maximal split $B$-subtorus of $\gG$.
By~\cite[Exp. XXVI, Prop. 6.16]{SGA3} $\gG$ contains a  parabolic $B$-subgroup $\gP$ such that $\gS$ is
the maximal central split $B$-subtorus of a Levi subgroup $\gL$ of $\gP$ (it is possible that $\gP$ is not proper,
i.e. $\gG=\gP=\gL$, and $\gS$ is a central subtorus of $\gG$).
By Corollary~\ref{cor_parabolic}
there is a factorization $\lambda: \GG_{m,B} \to \gS \hookrightarrow \gG$ (in the notation of that corollary),
and isomorphisms
 $$
\xymatrix@C=30pt{
 \gG(B)/R  & \ar[l]_\sim  \gL(B)/R \ar[r]^{\sim\quad} &   \bigl( \gL/\gS\bigr)(B)/R.
}
$$
Next, $\gS_K$ is a split $K$-subtorus of $\gL_K$, such that there is a factorization
$\lambda_K: \GG_{m,K} \to \gS_K \hookrightarrow \gG_K$. Therefore,
applying the same corollary to $\gG_K$, $\gL_K$ and $\gS_K$, we conclude that there are isomorphisms
 $$
\xymatrix@C=30pt{
\gG(K)/R  & \ar[l]_\sim  \gL(K)/R \ar[r]^{\sim\quad} &   \bigl( \gL/\gS\bigr)(K)/R .
}
$$
It follows that in order to show that $\gG(B)/R\to\gG(K)/R$ is surjective, it is enough to show that
$\bigl( \gL/\gS\bigr)(B)/R\to \bigl( \gL/\gS\bigr)(K)/R$ is surjective. The group $\gL/\gS$ is an anisotropic reductive
$B$-group, therefore, the previous case applies.
\end{proof}


\section{The case of simply connected semisimple isotropic groups}



\subsection{Coincidence of equivalence relations}


We address the following question.

\begin{squestion} \label{question_iso} Assume that $B$ is regular semilocal and that
$\gG$ is semisimple simply connected
and
equipped with  a strictly proper parabolic $B$-subgroup.\\
Is the map $K_1^{\gG,\gP}(B) \, \to \,  \gG(B)/R$ an isomorphism?\\
\noindent Is the map $\gG(B)/\r0 \, \to \,  \gG(B)/R$ an isomorphism?
\end{squestion}

The answer is known to be positive in both cases if $B$ is a field. This is implied
by Margaux--Soul\'e isomorphism  \cite[Th.\ 3.10]{M1} combined with~\cite[Th.\ 7.2]{Gi2}.


\begin{stheorem} \label{thm_KV_R}
Assume that $B$ is a semilocal regular domain
containing a field $k$ and denote by $K$ its fraction field.
Let $\gG$ be a semisimple simply connected $B$-group having a strictly proper parabolic $B$-subgroup.
Then we have a commutative square of isomorphisms
$$
\xymatrix@C=30pt{
G(B)/\r0  \ar[d]_\wr \ar[r]^{\ \sim} &  G(B)/R \ar[d]_\wr \\
G(K)/\r0 \ar[r]^{\ \sim} &  G(K)/R
}
$$
\end{stheorem}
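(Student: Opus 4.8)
The plan is to obtain the four edges of the square from results already in hand and then to close by an elementary diagram chase; no new geometric input beyond Theorem~\ref{thm:surj} and the field case is needed. Write $\alpha\colon \gG(B)/\r0 \to \gG(B)/R$, $\beta\colon \gG(K)/\r0 \to \gG(K)/R$, $\gamma\colon \gG(B)/\r0 \to \gG(K)/\r0$ and $\delta\colon \gG(B)/R \to \gG(K)/R$ for the top, bottom, left and right arrows. The square commutes, since both the passage $/\r0 \to /R$ and the base change along $B \to K$ are functorial, so that $\delta\alpha = \beta\gamma$.

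First I would record the three nontrivial edges. The bottom arrow $\beta$ is an isomorphism: the generic fibre $\gG_K$ is semisimple simply connected over the field $K$ and, taking the generic fibre of a strictly proper parabolic $B$-subgroup, carries a strictly proper parabolic $K$-subgroup, so $\beta$ is precisely the field-case isomorphism recalled after Question~\ref{question_iso} (Margaux--Soul\'e combined with \cite[Th. 7.2]{Gi2}). The right arrow $\delta$ is surjective by Theorem~\ref{thm:surj}(1), and the left arrow $\gamma$ is injective by Theorem~\ref{thm:surj}(2), both applicable because $\gG$ carries a strictly proper parabolic $B$-subgroup. Finally $\alpha$ is the canonical surjection arising from $\r0\gG(B) \subseteq R\gG(B)$, hence onto.

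Now I would run the chase. Since $\beta$ is bijective and $\gamma$ injective, the composite $\beta\gamma = \delta\alpha$ is injective; hence $\alpha$ is injective, and being also surjective it is bijective. Then $\delta = \beta\gamma\alpha^{-1}$ is a composite of injective maps, so $\delta$ is injective, and being surjective it is bijective. Finally $\gamma = \beta^{-1}\delta\alpha$ is a composite of bijections, hence bijective. Thus all four arrows are isomorphisms, as asserted.

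The real content lies entirely in the inputs, not in this statement: the surjectivity and injectivity in Theorem~\ref{thm:surj} rest on Panin's geometric presentation together with the kernel-triviality and patching Lemmas~\ref{lem_monic} and~\ref{lem_KV}, while $\beta$ packages the field-level theorems. Given these, the only point to watch is the order of the chase, namely that injectivity must be propagated first through $\alpha$, then through $\delta$, before bijectivity of $\gamma$ falls out.
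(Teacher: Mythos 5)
Your proof is correct and follows essentially the same route as the paper: the bottom arrow is the field-case isomorphism (Margaux--Soul\'e combined with \cite[Th. 7.2]{Gi2}), injectivity of the left vertical from Theorem~\ref{thm:surj}(2) forces the top surjection to be injective, and then surjectivity of the right vertical from Theorem~\ref{thm:surj}(1) closes the chase exactly as in the paper. The only (harmless) addition is your explicit remark that the strictly proper parabolic $B$-subgroup specializes to a strictly proper parabolic $K$-subgroup, a point the paper leaves implicit.
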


\begin{proof} Let $K$ be the fraction field of $B$.
The bottom horizontal arrow of the square is an isomorphism by the Margaux--Soul\'e theorem~\cite[Th.\ 3.10]{M1} combined with~\cite[Th.\ 7.2]{Gi2}.
On the other hand,   the left vertical map is  injective by Theorem \ref{thm:surj} (2).
Then the top horizontal arrow is also injective. Since it is surjective by definition, it is an isomorphism.
The right vertical arrow is surjective by Theorem \ref{thm:surj} (1). Hence the vertical arrows are also isomorphisms.
\end{proof}


\begin{sremark}{\rm
The above result does not extend to anisotropic groups. For example, let $k$ be an infinite field and let $G$ be a  wound linear algebraic group, i.e. does not contain any subgroups isomorphic to $\GG_a$ or $\GG_m$.
Then by~\cite[Corollary 3.8]{GF}
we have $G(k[x])=G(k)$ and, consequently, $G(k)/\r0=G(k)$.
This applies in particular to the case of an anisotropic reductive $k$--group $G$.
On the other hand,
the $R$-equivalence class group of $G$ may be even trivial, e.g. if $G$ is a
semisimple anisotropic group of rank $\le 2$. Indeed, in this case every element of $G(k)$ is $R$-equivalent to a semisimple regular
element, and all maximal tori of $G$ are of rank $\le 2$ and hence rational.
}
\end{sremark}

In the same vein, we can establish the following fact.

\begin{scorollary}
Let $k$ be a field and let
$G$ be a semisimple simply connected $k$--group
of $k$-rank $\geq 1$.
Let $A$ be the localization of $k[x_1,\dots, x_d]$ at
a prime ideal. Then we have a commutative square of isomorphisms
$$
\xymatrix@C=30pt{
G(k)/\r0  \ar[d]_\wr \ar[r]^\sim &  G(k)/R \ar[d]_\wr  \\
G(A)/\r0 \ar[r]^{\sim} &  G(A)/R.
}
$$
\end{scorollary}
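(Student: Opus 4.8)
The plan is to derive the Corollary from Theorem~\ref{thm_KV_R} applied to $A$, combined with a direct comparison of $R$-equivalence over $k$ and over the fraction field. Write $K=k(x_1,\dots,x_d)$ for the fraction field of $A$ (equivalently of $k[x_1,\dots,x_d]$), and fix a strictly proper parabolic $k$-subgroup $P$ of $G$; it base changes to a strictly proper parabolic subgroup over every $k$-algebra. Since $A$ is a regular local domain containing $k$ with fraction field $K$, Theorem~\ref{thm_KV_R} already provides the bottom edge $G(A)/\r0\simlgr G(A)/R$ of the square together with the isomorphism $G(A)/R\simlgr G(K)/R$ induced by $A\hookrightarrow K$. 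The top edge $G(k)/\r0\simlgr G(k)/R$ is the field case invoked in the proof of Theorem~\ref{thm_KV_R} (Margaux--Soul\'e \cite[Th. 3.10]{M1} with \cite[Th. 7.2]{Gi2}). In a commutative square whose two horizontal arrows are isomorphisms, the left vertical arrow is an isomorphism if and only if the right one is; so it suffices to prove that the right vertical arrow $G(k)/R\to G(A)/R$ is bijective.

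Because the inclusions compose as $k\hookrightarrow A\hookrightarrow K$ and the map $G(A)/R\simlgr G(K)/R$ is already an isomorphism, the arrow $G(k)/R\to G(A)/R$ is an isomorphism as soon as the composite $G(k)/R\to G(K)/R$ is. Thus everything reduces to the purely field-theoretic assertion that the map attached to the transcendental extension $k\subset K$ is bijective. To prove this I would introduce the auxiliary ring $A_0=k[x_1,\dots,x_d]_{(x_1,\dots,x_d)}$, the local ring at the origin: it is a regular local domain containing $k$, with residue field $k$ and fraction field $K$, and the inclusion $k\hookrightarrow A_0$ is split by the retraction $x_i\mapsto 0$. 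Theorem~\ref{thm_KV_R} applied to $A_0$ gives $G(A_0)/R\simlgr G(K)/R$, so it remains only to show that the split injection $G(k)/R\to G(A_0)/R$ is also surjective.

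The surjectivity is where the real work lies, and I would establish it by a scaling homotopy. Given $g\in G(A_0)$, finite presentation of $G$ lets me assume $g\in G\bigl(k[x_1,\dots,x_d]_s\bigr)$ for some $s$ with $s(0)\neq 0$. The substitution $x_i\mapsto t\,x_i$ yields $\Gamma(t):=g(tx_1,\dots,tx_d)$, a priori defined after inverting $P(t):=s(tx_1,\dots,tx_d)$. The crucial observation is that $P(0)=s(0)\in k^\times$ and $P(1)=s\in A_0^\times$ (as $s\notin(x_1,\dots,x_d)$), so $P\in\Sigma$ and hence $\Gamma(t)\in G\bigl(A_0[t]_\Sigma\bigr)$. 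Since $\Gamma(0)=g(0)\in G(k)$ and $\Gamma(1)=g$, this exhibits $g$ as directly $R$-equivalent over $A_0$ to a point coming from $G(k)$, giving surjectivity. This step is the $R$-equivalence incarnation of the contraction of $\mathbf{A}^d$ to the origin, consistent with Lemma~\ref{lem_homotopy}, and it does not use isotropy of $G$.

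Assembling the pieces, $G(k)/R\simlgr G(A_0)/R\simlgr G(K)/R$ shows $G(k)/R\simlgr G(K)/R$; factoring the latter through $A$ then forces $G(k)/R\simlgr G(A)/R$, and commutativity of the square together with its two horizontal isomorphisms upgrades this to the full square of isomorphisms, the left vertical $G(k)/\r0\simlgr G(A)/\r0$ being automatic. The main obstacle is precisely the field comparison $G(k)/R\simlgr G(K)/R$: the idea that unlocks it is to route through the origin localization $A_0$, whose residue field is $k$ so that a genuine retraction exists, and to supply the missing surjectivity by the elementary scaling homotopy above rather than through a local--global torsor argument. (One could alternatively extract injectivity of $G(k)/R\to G(K)/R$ from Corollary~\ref{cor:polynomial_local} and surjectivity from Theorem~\ref{thm:surj}(1), but the scaling argument handles both at once and is self-contained.)
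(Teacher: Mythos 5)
Your proof is correct, but it takes a genuinely different route from the paper at the decisive step. The paper's proof is two lines: it quotes \cite[th. 5.8]{Gi2}, which asserts exactly that $G(k)/\r0 \to G\bigl(k(x_1,\dots,x_d)\bigr)/\r0$ is an isomorphism, and then invokes Theorem~\ref{thm_KV_R} for $A$ to propagate this around the square. You instead manufacture the comparison between $k$ and $K=k(x_1,\dots,x_d)$ yourself: the scaling homotopy $\Gamma(t)=g(tx_1,\dots,tx_d)$ --- whose membership in $G\bigl(A_0[t]_\Sigma\bigr)$ you justify correctly, since $P(t)=s(tx_1,\dots,tx_d)$ satisfies $P(0)=s(0)\in k^\times$ and $P(1)=s\in A_0^\times$ --- shows that every point of $G(A_0)$ is directly $R$-equivalent to a $k$-point, so $G(k)/R\simlgr G(A_0)/R$ (the retraction $x_i\mapsto 0$ giving injectivity), and a second application of Theorem~\ref{thm_KV_R}, now to the origin localization $A_0$, gives $G(A_0)/R\simlgr G(K)/R$. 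The final assembly (two horizontal isomorphisms plus the right vertical one force the left vertical one) is sound. What the paper's route buys is brevity, the field-level input being quoted as a known theorem at the level of $\r0$; what your route buys is self-containedness: the scaling step is elementary, needs no isotropy, and is just the localized analogue of the paper's own Lemma~\ref{lem_homotopy}(1) --- but this comes at the cost of invoking the heavy Theorem~\ref{thm_KV_R} (hence Panin's machinery via Theorem~\ref{thm:surj}) twice, over $A_0$ as well as over $A$, instead of once.
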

\begin{proof}
By~\cite[Th.\ 5.8]{Gi2} there is an isomorphism
$G(k)/\r0 \simlgr G\bigl(k(x_1,\dots, x_d) \bigr)/\r0$.
Then the claim follows from Theorem~\ref{thm_KV_R}.
\end{proof}




\begin{stheorem} \label{thm_main}
Assume that $B$ is a semilocal regular domain
containing a field $k$ and that
$\gG$ is semisimple simply connected $B$-group of $B$--rank $\geq 2$.
Then  the map $K_1^\gG(B) \to \gG(B)/R$ is an isomorphism.
\end{stheorem}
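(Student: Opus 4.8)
The plan is to realize the map of the statement as the composite
$$
K_1^{\gG,\gP}(B) \to \gG(B)/\r0 \to \gG(B)/R
$$
of the two canonical surjections, and to prove that each one is an isomorphism. Since $\gG$ is strictly of $B$-rank $\geq 2$ it is in particular isotropic and admits a strictly proper parabolic $B$-subgroup $\gP$, so both the Whitehead coset $K_1^{\gG,\gP}(B)$ and the comparison results below are available; moreover, as $B$-rank $\geq 2$ forces condition $(E)$, the subgroup $E_\gP(B)$ is normal and independent of $\gP$, so I may write $K_1^\gG(B)$ unambiguously.

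The right-hand map $\gG(B)/\r0 \to \gG(B)/R$ is handled directly by Theorem~\ref{thm_KV_R}: its hypotheses (a semilocal regular domain containing a field, and a semisimple simply connected group with a strictly proper parabolic) are met here, and that theorem asserts precisely that this map is an isomorphism. So the substance of the proof is the left-hand map.

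For the left-hand map I would invoke homotopy invariance of the non-stable $K_1$-functor. Because $\gG$ has $B$-rank $\geq 2$, all irreducible components of the relative root systems of the $\gG_{B_\gm}$ have rank at least $2$, i.e.\ condition $(E)$ holds; and since $B$ is a regular domain containing a field $k$, Remark~\ref{rem_invariance}(b), which rests on~\cite[th. 1.1]{St20}, yields the bijectivity of $K_1^\gG(B) \to K_1^\gG(B[u])$. This is exactly assertion $(i)$ of Lemma~\ref{lem_W_trivial2}, and the implication $(i) \Longrightarrow (iii)$ of that lemma then gives that $K_1^\gG(B) \to \gG(B)/\r0$ is bijective (as a by-product one recovers $E_\gP(B)=\r0\gG(B)$, independent of $\gP$). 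Composing the two isomorphisms completes the argument.

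The genuinely hard input is the homotopy invariance theorem~\cite[th. 1.1]{St20} for regular rings containing a field under condition $(E)$; once it is granted, the remainder is a formal assembly of Lemma~\ref{lem_W_trivial2} with Theorem~\ref{thm_KV_R}. The only minor point I would check explicitly is that strict $B$-rank $\geq 2$ does guarantee a strictly proper parabolic $B$-subgroup, so that the hypotheses of Theorem~\ref{thm_KV_R} and of the $K_1$-formalism are literally in force.
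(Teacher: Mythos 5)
Your proposal is correct, and every ingredient you invoke is indeed available in the paper, but your route is genuinely different from the paper's own proof. The paper never passes through $\gG(B)/\r0$: it forms the commutative square whose rows are $K_1^{\gG}(B) \to \gG(B)/R$ and $K_1^{\gG}(K) \to \gG(K)/R$, where $K$ is the fraction field of $B$, observes that the bottom map is an isomorphism by the field case of the Kneser--Tits comparison \cite[th. 7.2]{Gi2}, that the left vertical map $K_1^{\gG}(B)\to K_1^{\gG}(K)$ is injective by \cite[th. 1.2]{St20}, and that the top map is surjective; a diagram chase (injective composite forces injective top arrow) finishes the proof. You instead factor the map as $K_1^{\gG,\gP}(B)\to \gG(B)/\r0\to \gG(B)/R$, proving the first arrow bijective via $\mathbf{A}^1$-invariance (\cite[th. 1.1]{St20}, i.e.\ Remark~\ref{rem_invariance}(b), fed into Lemma~\ref{lem_W_trivial2}, $(i)\Longrightarrow(iii)$) and the second via Theorem~\ref{thm_KV_R}. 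Both arguments rest on \cite{St20}, but on different theorems of it, and they differ in weight: the paper's chase needs nothing beyond the two quoted results, whereas Theorem~\ref{thm_KV_R} is itself proved using Theorem~\ref{thm:surj}, i.e.\ the Panin-type geometric machinery, so your route imports strictly more. In exchange, your proof exhibits explicitly the stronger formulation announced in the introduction, namely that both surjections $K_1^{\gG,\gP}(B)\to \gG(B)/\r0 \to \gG(B)/R$ are isomorphisms, so that $E_\gP(B)=\r0\gG(B)=R\gG(B)$ (though this also follows formally from the paper's version, since both maps are surjective and their composite is injective). Finally, the point you flag at the end is fine: if every normal semisimple $B$-subgroup of $\gG$ contains $(\GG_{m,B})^2$, such a torus is non-central (the center of a semisimple group scheme is finite), so one may choose a cocharacter $\lambda$ of $\gG$ that is non-central in every almost simple factor, and the dynamic parabolic $\gP_\gG(\lambda)$ of \cite[Th. 7.3.1]{Gi4} is then strictly proper.
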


\begin{proof} Let $K$ be the fraction field of $B$.
We consider the commutative diagram
$$
\xymatrix@C=30pt{
K_1^{\gG}(B)  \ar[d] \ar@{->>}[r] &  G(B)/R \ar[d] \\
K_1^{\gG}(K) \ar[r]^{\sim} &  G(K)/R
}
$$
where the bottom isomorphism is \cite[Th.\ 7.2]{Gi2}.
On the other hand,   the left vertical map is  injective
 \cite[Th.\ 1.2]{St20}. By diagram chase, the top horizontal
 map is an isomorphism.
\end{proof}


\subsection{The retract rational case}



\begin{slemma}\label{lem_W_trivial}
We assume that  the base ring $B$ is a semilocal domain.
Let $\gG$ be a reductive $B$--group scheme having
a strictly proper $B$--parabolic subgroup $\gP$.
 We consider the following assertions:

\smallskip

$(i)$ $K_1^{\gG,\gP}(F)=1$ for every $B$-field $F$;

\smallskip

$(ii)$ $\gG$ satisfies the lifting property (see Definition~\ref{def:lift});

\smallskip

$(iii)$  $(\gG,e)$ is a retract rational $B$--scheme.

\smallskip

\noindent Then the  following implications
$(i) \Longrightarrow (ii) \Longrightarrow (iii)$ hold.
\end{slemma}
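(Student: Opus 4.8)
The plan is to prove the two implications separately; the implication $(ii)\Longrightarrow(iii)$ is essentially a reduction to Proposition~\ref{prop_retract}, while $(i)\Longrightarrow(ii)$ carries the actual content.

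For $(ii)\Longrightarrow(iii)$, I would first record that $\gG$, being reductive over $B$, is a smooth affine finitely presented $B$-group scheme with connected geometric fibers. Since $B$ is a domain, writing $K$ for its fraction field, the fibre $\gG_K$ is connected, so Remark~\ref{rem_integral}.(b) shows that $\gG$ is integral. Thus $(\gG,e)$ is a pointed affine finitely presented integral $B$-scheme, and hypothesis $(ii)$ asserts exactly that it admits an open $B$-subscheme satisfying the lifting property, namely $\gG$ itself. Proposition~\ref{prop_retract} then gives that $(\gG,e)$ is retract $B$-rational, which is $(iii)$.

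The bulk of the argument is $(i)\Longrightarrow(ii)$. Fix a semilocal $B$-ring $C$ with maximal ideals $\gm_1,\dots,\gm_r$ and residue fields $F_i=C/\gm_i$; I must lift an arbitrary tuple $(g_1,\dots,g_r)\in\prod_i\gG(F_i)$ to $\gG(C)$. Fix once and for all an opposite parabolic $\gP^{-}$ over $B$ (the class $E_\gP$ is independent of this choice by~\cite[XXVI.1.8]{SGA3}), and base change it to each $F_i$. The key structural input is that the unipotent radicals $R_u(\gP)$ and $R_u(\gP^{-})$ are successive extensions of vector group schemes~\cite[XXVI.2.1]{SGA3}; since a torsor under a vector group over an affine base has vanishing cohomology, each such extension splits over $\Spec(B)$, so that $R_u(\gP)$ and $R_u(\gP^{-})$ are each isomorphic, as pointed $B$-schemes, to an affine space $\bW(\cL)$. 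Consequently, for every $m\ge 0$ the product $\gZ_m:=\bigl(R_u(\gP)\times_B R_u(\gP^{-})\bigr)^{m}$ has underlying scheme an affine space, hence satisfies the lifting property: $\gZ_m(C)$ is a finitely generated $C$-module on which $C\to\prod_iF_i$ induces a surjection by the Chinese remainder theorem.

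Now I would invoke hypothesis $(i)$. As $K_1^{\gG,\gP}(F_i)=1$, i.e. $\gG(F_i)=E_\gP(F_i)$, each $g_i$ is a finite alternating product of elements of $R_u(\gP)(F_i)$ and $R_u(\gP^{-})(F_i)$. Inserting identity factors, which lie in both radicals, and taking $m$ to be the maximum of the finitely many word lengths, I may arrange that every $g_i$ is the value $\mu_m(z_i)$ of some $z_i\in\gZ_m(F_i)$ under the iterated multiplication morphism $\mu_m\colon\gZ_m\to\gG$, $(a_1,b_1,\dots,a_m,b_m)\mapsto a_1b_1\cdots a_mb_m$. Lifting $(z_i)$ to some $z\in\gZ_m(C)$ via the lifting property of $\gZ_m$ and setting $g=\mu_m(z)\in\gG(C)$, functoriality yields that $g$ reduces to $\mu_m(z_i)=g_i$ in each $\gG(F_i)$, which establishes $(ii)$. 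The step requiring the most care — and the main, if modest, obstacle — is precisely this uniform treatment of the residue fields: the decompositions of the various $g_i$ have a priori different lengths and shapes, and it is the finiteness of $\Specmax(C)$ together with padding by identities that lets one realize all of them at once as values of a single $B$-morphism $\mu_m$ out of the affine space $\gZ_m$. One should also check along the way that the formation of $R_u(\gP)$ and of $\gP^{-}$ commutes with the base changes $B\to C\to F_i$, so that $\mu_m$ is genuinely defined over $B$ and condition $(i)$ applies to each $F_i$.
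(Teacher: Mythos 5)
Your proof is correct and takes essentially the same route as the paper: for $(i)\Longrightarrow(ii)$ both arguments write each $g_i$ as a word of a common length in $R_u(\gP)(F_i)$ and $R_u(\gP^{-})(F_i)$, lift the letters through the surjections $R_u(\gP)(C)\to\prod_i R_u(\gP)(F_i)$ and $R_u(\gP^{-})(C)\to\prod_i R_u(\gP^{-})(F_i)$, and take the product, while $(ii)\Longrightarrow(iii)$ is in both cases a direct application of Proposition~\ref{prop_retract}. Your explicit padding-by-identities and the scheme-theoretic trivialization of the unipotent radicals as affine spaces merely spell out details the paper leaves implicit.
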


\begin{proof} $(i) \Longrightarrow (ii)$.
Let $C$ be a semilocal $B$-ring with
residue fields $F_1,\dots, F_s$.
We have to show that the map $\gG(C) \to \prod_{i=1,\dots,s} \gG(F_i)$
is onto. We are given an element $(g_1, \dots, g_s) \in \prod_{i=1,\dots,s} \gG(F_i)$.
Our assumption implies that there exists a positive integer $d$ such that
$$
g_i = u_{i,1} \, v_{i,1}  \, u_{i,2} \, v_{i,2} \dots u_{i,d} \, v_{i,d}
$$
with $u_{i,j} \in \rad_u(\gP)( F_i)$ (resp.\, $v_{i,j} \in \rad_u(\gP^{-})( F_i)$)
for $i=1,\dots, s$ and $j=1,\dots ,d$.
Since $\rad_u(\gP)(C) \to \prod_{i=1,..,s} \, \rad_u(\gP)( F_i)$ is onto
(and similarly for $\rad_u(\gP^{-})$), we can lift each $(u_{i,j})_{i=1,\dots, s}$
in some $u_j \in \rad_u(\gP)(C)$
(resp.\, $(v_{i,j})_{i=1,\dots, s}$
in  $v_j \in \rad_u(\gP^{-})(C)$).
Thus the product $u_{1} \, v_{1}  \, u_{2} \, v_{2} \dots u_{d} \, v_{d}$
lifts the $g_i$'s.

\smallskip

\noindent $(ii) \Longrightarrow (iii)$. This follows from
Proposition \ref{prop_retract}.
\end{proof}

\begin{sproposition} \label{prop_W_trivial}
Assume that $B$ is a semilocal domain  and that
$\gG$ is semisimple simply connected $B$-group having a strictly proper parabolic $B$-subgroup.
Let $K$ be the fraction field of $B$.  Then the following assertions are equivalent:

\smallskip

$(i)$ $\gG$ satisfies the lifting property;

\smallskip

$(ii)$ $(\gG,1)$ is a retract rational $B$--scheme;

\smallskip

$(iii)$ $\gG$ is $R$--trivial on semilocal rings, that is $\gG(C)/R=1$ for each semilocal $B$-ring $C$;

\smallskip

$(iv)$ $\gG(F)/R=1$ for each $B$-field $F$.

\end{sproposition}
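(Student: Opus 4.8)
The plan is to prove the cyclic chain $(iv)\Rightarrow(i)\Rightarrow(ii)\Rightarrow(iv)$, which already gives the equivalence of $(i)$, $(ii)$ and $(iv)$, and then to fit $(iii)$ into the picture via the trivial implication $(iii)\Rightarrow(iv)$ together with the substantive implication $(ii)\Rightarrow(iii)$. Throughout I will use that over a field the natural maps $K_1^{\gG,\gP}(F)\to \gG(F)/\r0\to \gG(F)/R$ are isomorphisms (Margaux--Soul\'e~\cite[Th.~3.10]{M1} combined with~\cite[Th.~7.2]{Gi2}), so that for any $B$-field $F$ one has $\gG(F)/R=1$ if and only if $\gG(F)=E_\gP(F)$, the latter being automatic when $F$ is finite by the positive solution of the Kneser--Tits problem over finite fields.

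For $(iv)\Rightarrow(i)$, assuming $\gG(F)/R=1$ for every $B$-field $F$, the isomorphism above gives $K_1^{\gG,\gP}(F)=1$ for every $B$-field $F$, which is exactly hypothesis $(i)$ of Lemma~\ref{lem_W_trivial}; that lemma then yields the lifting property $(i)$. The implication $(i)\Rightarrow(ii)$ is the second implication of Lemma~\ref{lem_W_trivial}. For $(ii)\Rightarrow(iv)$ I would fix a $B$-field $F$ and observe that retract rationality is stable under the base change $B\to F$, so $(\gG_F,1)$ is retract rational over $F$. If $F$ is infinite, $\gG(F)$ is dense in $\gG_F$ and Proposition~\ref{prop_vanish}, applied with base ring $F$, gives $\gG(F)/R=1$; if $F$ is finite, then $\gG(F)=E_\gP(F)$ and again $\gG(F)/R=K_1^{\gG,\gP}(F)=1$. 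This establishes $(i)\Leftrightarrow(ii)\Leftrightarrow(iv)$.

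It remains to incorporate $(iii)$. The implication $(iii)\Rightarrow(iv)$ is immediate, since a $B$-field is in particular a semilocal $B$-ring. For $(ii)\Rightarrow(iii)$ I would fix a semilocal $B$-ring $C$ with residue fields $F_1,\dots,F_s$ and show $\gG(C)/R=1$. As retract rationality base changes to $C$, the definition furnishes an open $(\gU,1)\subseteq(\gG_C,1)$ which is a $C$-retract of an open subscheme of some $\mathbf{A}^N_C$; by Lemma~\ref{lem_retract}.(2) every point of $\gU(C)$ is $R$-equivalent to $1$, so $\gU(C)\subseteq R\gG(C)$. The key elementary observation is that the congruence subgroup $\ker\bigl(\gG(C)\to\prod_i\gG(F_i)\bigr)$ is contained in $\gU(C)$: if $g$ reduces to $1\in\gU(F_i)$ at every closed point, then $g^{-1}(\gU)$ is an open subset of $\Spec(C)$ containing all maximal ideals, hence equals $\Spec(C)$, so $g\in\gU(C)$. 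Thus the entire congruence subgroup is $R$-trivial.

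Finally I would reduce an arbitrary $g\in\gG(C)$ to this congruence subgroup by means of the elementary group. By the equivalence $(ii)\Leftrightarrow(iv)$ already established, $\gG(F_i)=E_\gP(F_i)$ for every $i$, so, denoting by $\bar g_i$ the image of $g$ in $\gG(F_i)$, each $\bar g_i$ is a word in $R_u(\gP)(F_i)$ and $R_u(\gP^{-})(F_i)$; after rewriting in a strictly alternating pattern and padding with $1$'s (possible since $1$ lies in both unipotent radicals), these words can be taken of one common length and pattern for all $i$ at once. Because $R_u(\gP)$ and $R_u(\gP^{-})$ are successive extensions of vector group schemes, the reduction maps $R_u(\gP)(C)\to\prod_iR_u(\gP)(F_i)$ and $R_u(\gP^{-})(C)\to\prod_iR_u(\gP^{-})(F_i)$ are surjective, so the word lifts factor by factor to an element $w\in E_\gP(C)$ with $\bar w_i=\bar g_i$ for all $i$. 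Then $w\in E_\gP(C)\subseteq R\gG(C)$ and $w^{-1}g$ lies in the congruence subgroup, hence in $\gU(C)\subseteq R\gG(C)$; therefore $g=w\,(w^{-1}g)\in R\gG(C)$ and $\gG(C)/R=1$. The main obstacle is exactly this last step: the possible presence of finite residue fields, where density of rational points fails and Proposition~\ref{prop_vanish} no longer applies, is precisely what forces the detour through $E_\gP$ and the Kneser--Tits input at the residue fields.
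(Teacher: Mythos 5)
Your proof is correct, and it draws on exactly the same external inputs as the paper: Lemma~\ref{lem_W_trivial} (hence Proposition~\ref{prop_retract}), Lemma~\ref{lem_retract}, Proposition~\ref{prop_vanish}, the Margaux--Soul\'e isomorphism \cite[Th. 3.10]{M1} combined with \cite[Th. 7.2]{Gi2}, the Kneser--Tits result over finite fields \cite[1.1.2]{T}, and the surjectivity of $R_u(\gP)(C)\to\prod_i R_u(\gP)(F_i)$. Your implications $(iv)\Rightarrow(i)$ and $(i)\Rightarrow(ii)$ coincide with the paper's. Where you genuinely diverge is in the handling of $(iii)$: the paper proves $(ii)\Rightarrow(iii)$ \emph{directly}, completing the cycle $(i)\Rightarrow(ii)\Rightarrow(iii)\Rightarrow(iv)\Rightarrow(i)$, and to do so it must split the residue fields into finite ones (where $E_\gP(\kappa_i)=\gG(\kappa_i)$ by Tits) and infinite ones (where it uses density of $E_\gP(\kappa_i)$ in $\gG_{\kappa_i}$), working with the auxiliary open subscheme $\gV=\gU\, E_\gP(B)$ and showing $\gV(B)=\gG(B)$. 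You instead first close the shorter cycle $(iv)\Rightarrow(i)\Rightarrow(ii)\Rightarrow(iv)$ (your $(ii)\Rightarrow(iv)$ being the field case of the paper's dichotomy) and only then prove $(iii)$ from $(ii)$ together with $(iv)$: having $(iv)$ available lets you quote $\gG(F_i)=E_\gP(F_i)$ at \emph{every} residue field of the semilocal ring $C$ via Margaux--Soul\'e, so the word-lifting argument is uniform and no density considerations are needed; your observation that the congruence kernel $\ker\bigl(\gG(C)\to\prod_i\gG(F_i)\bigr)$ lies inside $\gU(C)$ (the semilocal closed-point criterion, cf. Lemma~\ref{lem_sorite1}.(2)) then yields the clean decomposition $\gG(C)=E_\gP(C)\cdot\gU(C)\subseteq R\gG(C)$. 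The trade-off: the paper's route keeps the implication $(ii)\Rightarrow(iii)$ logically self-contained and purely geometric at infinite residue fields, whereas yours buys a dichotomy-free, more group-theoretic argument at the cost of invoking the Whitehead-group computation at all residue fields and of the specific ordering in which the equivalences are established.
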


\begin{proof} Let $\gP$ be a strictly proper parabolic subgroup scheme of $\gG$.

\smallskip

\noindent $(i) \Longrightarrow (ii)$. We assume that $\gG$ satisfies the lifting property.
Then Proposition \ref{prop_retract}, $(ii) \Longrightarrow (i)$, shows that
$\gG$ is retract rational over $B$.

\smallskip

\noindent $(ii) \Longrightarrow (iii)$.  If all residue fields are infinite,  this is Proposition \ref{prop_vanish}.
For the general case, we proceed as follows. We need
to show that $\gG(C)/R=1$ for any semilocal $B$-ring $C$. Assume that $C$ has residue fields $\kappa_1,\ldots,\kappa_c$,
such that $\kappa_1, \dots, \kappa_b$ are finite fields
and that $\kappa_{b+1}, \dots , \kappa_c$ are infinite.
Let $(\gU, 1)$ be an open retrocompact subset of $(\gG,1)$
which is a $B$--retract of some open retrocompact of $(\mathbf{A}^N_B,0)$.
We know that $E_\gP(\kappa_i)= \gG(\kappa_i)$ for $i=1,\dots, b$ \cite[1.1.2]{T}.
We consider the open $C$--subscheme $\gV= \gU \, E_\gP(C)$ of $\gG_C$.
Since $E_P(\kappa_i)$ is dense in $G_{\kappa_i}$ for $i=b+1, \dots,c$, we have
$\gV_{\kappa_i}= \gG_{\kappa_i}$ for $b+1=1, \dots, c$.
Since the map $E_\gP(C) \to \prod_{i=1,\dots, b} E_\gP(\kappa_i)$ is onto,
we have  $\gV(C)= \gG(C)$.
Lemma \ref{lem_sorite2}.(1) shows that $\gU(C)/R=1$ so that   $\gV(C)/R=1$. Thus  $\gG(C)/R=1$.

\smallskip

\noindent $(iii) \Longrightarrow (iv)$. Obvious.

\smallskip

\noindent $(iv) \Longrightarrow (i)$.
Since $(iii)$ holds in particular for
any $B$-field $F$, we have $K_1^{\gG,\gP}(F)=1$ for every $B$-field $F$
according to Margaux--Soul\'e isomorphism \cite[Th.\ 3.10]{M1}.
Lemma \ref{lem_W_trivial},
   $(i)  \Longrightarrow (ii)$, implies that $G$ satisfies the lifting property
for any semilocal $B$--algebra $C$.
\end{proof}


This can be refined in the regular case.

\begin{stheorem} \label{thm_vanish}
Assume that $B$ is a semilocal regular domain containing a field $k$ and that
$\gG$ is a semisimple simply connected $B$-group having a strictly proper parabolic $B$-subgroup.
Let $K$ be the fraction field of $B$. Then the following assertions are equivalent:

\smallskip

$(i)$ $\gG$ satisfies the lifting property;

\smallskip

$(i')$  $\gG$ satisfies the lifting property for each $B$-ring $C$ which is a semilocal regular domain
and such that $B$ embeds in $C$;

\smallskip

$(ii)$ $(\gG,1)$ is a retract rational $B$--scheme;

\smallskip

$(iii)$ $\gG$ is $R$--trivial, that is, $\gG(C)/R=1$ for each semilocal $B$-ring $C$;

\smallskip

$(iii')$ $\gG(C)/\r0=1$ for each $B$-ring $C$ which  is a semilocal regular domain;

\smallskip

$(iv)$ $\gG(F)/R=1$ for each $B$-field $F$;

\smallskip

$(v)$ $\gG_K$ is a retract rational $K$-variety.

\smallskip

If, moreover, $\gG$ is of $B$-rank $\ge 2$, then the above statements are also equivalent to the following:

$(iii'')$ $K_1^{\gG}(C)=1$ for each $B$-ring $C$ which is a semilocal regular domain.
\end{stheorem}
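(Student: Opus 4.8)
The plan is to isolate $(v)$ as the only genuinely new assertion and to first prove that all the other conditions are equivalent to one another, using only the earlier results, and then to attach $(v)$ to this block.

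First I would note that $(i)\Leftrightarrow(ii)\Leftrightarrow(iii)\Leftrightarrow(iv)$ is Proposition~\ref{prop_W_trivial}, which applies because a semilocal regular domain is in particular a semilocal domain. To incorporate $(iii')$, observe that every $B$-ring $C$ that is a semilocal regular domain contains $k$ and inherits the strictly proper parabolic by base change, so Theorem~\ref{thm_KV_R} gives $\gG(C)/\r0\simlgr\gG(C)/R$; thus $(iii)\Rightarrow(iii')$, while $(iii')\Rightarrow(iv)$ is immediate since any $B$-field $F$ is itself a semilocal regular domain and $\gG(F)/\r0$ surjects onto $\gG(F)/R$. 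For $(i')$, clearly $(i)\Rightarrow(i')$, and $(i')\Rightarrow(ii)$ by rerunning the implication $(ii)\Rightarrow(i)$ of Proposition~\ref{prop_retract}: the semilocalization of $B[t_1,\dots,t_N]$ used there is again a regular domain containing $B$, so $(i')$ furnishes exactly the lifting required. Finally, if $\gG$ is strictly of $B$-rank $\ge2$, then Theorem~\ref{thm_main} gives $K_1^{\gG}(C)\simlgr\gG(C)/R$ for every such $C$, yielding $(iii)\Rightarrow(iii'')$ and $(iii'')\Rightarrow(iii')$ (the latter from $K_1^{\gG}\twoheadrightarrow\gG(-)/\r0$).

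It then remains to relate $(v)$ to this block. The easy direction is $(iv)\Rightarrow(v)$: since every $K$-field is a $B$-field, $(iv)$ forces $\gG(F)/R=1$ for all $K$-fields $F$, and the field case of Proposition~\ref{prop_W_trivial} applied over $K$ turns this into retract rationality of $\gG_K$.

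The hard direction $(v)\Rightarrow(iv)$ is the main obstacle. Via Theorem~\ref{thm_KV_R} it is equivalent to showing $\gG(K_C)/R=1$ for the fraction field $K_C$ of every $B$-ring $C$ that is a semilocal regular domain. When the structure map $B\to C$ is injective one has $K\subseteq K_C$, so retract rationality of $\gG_K$ base-changes to $\gG_{K_C}$ and this case is free; in particular $(v)$ already gives $\gG(B)/R\simlgr\gG(K)/R=1$. The real content is the case where $B\to C$ has a nonzero kernel $\mathfrak{p}$, i.e. the verification of $(iv)$ at $B$-fields lying over a residue field $\kappa(\mathfrak{p})$ with $\mathfrak{p}\neq0$; here one must propagate retract rationality from the generic fibre $\gG_K$ to the special fibre $\gG_{\kappa(\mathfrak{p})}$. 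I would reduce this to the statement that retract rationality lifts from $K$ to the regular local ring $B_{\mathfrak{p}}$ (whose fraction field is again $K$), after which base change along $B_{\mathfrak{p}}\to\kappa(\mathfrak{p})$ transports it to $\gG_{\kappa(\mathfrak{p})}$ and the field case over $\kappa(\mathfrak{p})$ closes $(iv)$. To establish this lifting I would apply N\'eron--Popescu desingularization, as in the proof of Theorem~\ref{thm:surj}, to write $B_{\mathfrak{p}}$ as a filtered limit of smooth $k$-algebras and realize $\kappa(\mathfrak{p})$ as the function field of a smooth $k$-variety, and then use the surjectivity in Theorem~\ref{thm:surj}(1) together with the field-case equivalences to carry $R$-triviality from the generic point down to that subvariety. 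I expect this generic-to-special propagation---equivalently, a specialization of $R$-equivalence across the non-henselian regular local ring $B_{\mathfrak{p}}$---to be the crux of the whole proof, since it is exactly where the henselian hypothesis of the later specialization theorems must be replaced by regularity.
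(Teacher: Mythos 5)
Your proposal agrees with the paper's proof everywhere except at the crucial direction. The block $(i)\Leftrightarrow(ii)\Leftrightarrow(iii)\Leftrightarrow(iv)$ via Proposition~\ref{prop_W_trivial}, the attachment of $(i')$, $(iii')$ and $(iii'')$ via Proposition~\ref{prop_retract}, Theorem~\ref{thm_KV_R} and Theorem~\ref{thm_main}, and the implication $(iv)\Rightarrow(v)$ (the paper quotes \cite[cor. 5.10]{Gi2}; your use of Proposition~\ref{prop_W_trivial} over $K$ is an acceptable variant) are all as in the paper. The gap is in your last paragraph, i.e.\ in $(v)\Rightarrow(iv)$, and it is genuine. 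Theorem~\ref{thm:surj}(1) is a surjection $\gG(B_{\mathfrak{p}})/R\twoheadrightarrow\gG(K)/R$: it transports information \emph{towards} the fraction field, so once $\gG(K)/R=1$ it is vacuous and says nothing about $\gG(\kappa(\mathfrak{p}))$. Combining it with Theorem~\ref{thm_KV_R} only yields $\gG(B_{\mathfrak{p}})/R=1$, which does not descend to $\kappa(\mathfrak{p})$, because $\gG(B_{\mathfrak{p}})\to\gG(\kappa(\mathfrak{p}))$ has no reason to be onto for a non-henselian local ring --- that surjectivity is the lifting property, essentially what is being proved. Likewise, proving retract rationality of $\gG$ over $B_{\mathfrak{p}}$ is, by Proposition~\ref{prop_W_trivial} applied over $B_{\mathfrak{p}}$, equivalent to $R$-triviality at all $B_{\mathfrak{p}}$-fields, $\kappa(\mathfrak{p})$ included: this is circular. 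Finally, N\'eron--Popescu does not help either, because retract rationality of $\gG_K$ does not descend to $\gG_{\mathrm{Frac}(C_i)}$ for the finite-level smooth algebras $C_i$: retract rationality ascends along field extensions, it does not descend to subfields.

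The idea you are missing --- the paper's proof of $(v)\Rightarrow(i')$ --- is to \emph{complete}. Given a semilocal regular domain $C\supseteq B$ and a residue field $F$ of $C$, let $\hat C$ be the completion of the localization of $C$ at the corresponding maximal ideal. Then $\hat C$ is a complete, hence henselian, regular local ring containing a field, and --- the key point --- the localization and completion maps are injective, so the fraction field $\hat K$ of $\hat C$ is an \emph{extension} of $K$. Therefore retract rationality of $\gG_K$ ascends to $\hat K$ and gives $\gG(\hat K)/R=1$; Theorem~\ref{thm_KV_R} applied to $\hat C$ gives $\gG(\hat C)/R\cong\gG(\hat K)/R=1$; and Hensel's lemma \cite[th. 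I.8]{Gruson} makes $\gG(\hat C)\to\gG(F)$ surjective, whence $\gG(F)/R=1$ and then $K_1^{\gG}(F)=1$ by \cite[Th. 7.2]{Gi2}. By the proof of Lemma~\ref{lem_W_trivial}, $(i)\Rightarrow(ii)$, this gives the lifting property for $C$, i.e.\ $(i')$, and your block of equivalences then closes the circle through $(i')\Rightarrow(ii)\Rightarrow(iii)\Rightarrow(iv)$. Note that this inverts your framing: one does not replace the henselian hypothesis by regularity; one \emph{creates} henselianness by completing, and this costs nothing precisely because enlarging the base field preserves retract rationality of $\gG_K$.
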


Since the preceding statement is rather long,
we extract the following.
\begin{scorollary} \label{cor_vanish}
Assume that $B$ is a semilocal regular domain containing a field $k$ and of fraction field $K$. Let $\gG$ be a semisimple simply connected $B$-group having a strictly proper parabolic $B$-subgroup.
 Then the following assertions are equivalent:

 \smallskip

$(ii)$ $(\gG,1)$ is a retract rational $B$--scheme;

 \smallskip

$(v)$ $\gG_K$ is a retract rational $K$-variety.

\end{scorollary}

\begin{proof}[Proof of Theorem~\ref{thm_vanish}]

Let $\gP$ be a strictly
proper parabolic $B$--subgroup scheme of $\gG$. We
detail only the additional facts from Proposition \ref{prop_W_trivial}
which provides already the equivalences  $(i) \Longleftrightarrow (ii)
\Longleftrightarrow (iii) \Longleftrightarrow (iv)$.

\smallskip

\noindent $(i) \Longrightarrow (i')$. Obvious.

\smallskip

\noindent $(i') \Longrightarrow (ii)$. In the proof of
Proposition \ref{prop_retract}, $(ii) \Longrightarrow (i)$,
we apply the lifting to a semilocalization of $B[t_1, \dots,t_n ]$
which is a regular semilocal domain which contains $B$. So the proof of Proposition \ref{prop_W_trivial},  $(i) \Longleftrightarrow (iii)$,
works so that  $(\gG, 1)$ is retract $B$--rational.

\smallskip

\noindent $(iii) \Longrightarrow (iii')$.
By~\cite[Th.\ 3.10]{M1} combined with~\cite[Th.\ 7.2]{Gi2}
we have $\gG(F)/\r0=\gG(F)/R$ for
each $B$--field $F$. Then the claim follows by Theorem~\ref{thm_KV_R}.

\smallskip

\noindent $(iii') \Longrightarrow (iv)$. Obvious.

\smallskip

\noindent $(iv) \Longrightarrow (v)$.  The assumption implies that the semisimple simply connected
$K$--group $G= \gG_K$ satisfies $G(E)/R=1$ for all
$K$--fields $E$. According to \cite[Cor.\ 5.10]{Gi2}, $G$ is a retract $K$--rational variety.

\smallskip


\noindent $(v)\Longrightarrow (i')$.
Let $C$ be a semilocal regular domain which contains $B$.
It is clear from the proof of  the implication  $(i) \Longrightarrow (ii)$ of Lemma~\ref{lem_W_trivial}
that it is enough to show that $K_1^\gG(F)=1$ for every residue field $F$ of $C$.
Let $\hat C$ be the completion of the localization of $C$ at the prime ideal corresponding to $F$.
Then $\hat C$ is a regular local ring,
and the fraction field $\hat K$ of $\hat C$ is an extension
of $K$. Since $\gG_K$ is retract rational, we have $\gG(\hat K)/R=1$. Then $\gG(\hat C)/R=1$ by Theorem~\ref{thm_KV_R}.
Since $\gG$ is affine and smooth, the map $\gG(\hat C)\to\gG(F)$ is surjective~\cite[Th.\ I.8]{Gruson}.
Hence $\gG(\hat C)/R\to \gG(F)/R$ is surjective and $\gG(F)/R=1$.
According to \cite[Th.\ 7.2]{Gi2}, we have $K_1^\gG(F)=\gG(F)/R=1$.

\smallskip

We assume now that $\gG$ is of $B$-rank $\ge 2$.

\noindent $(iii)\Longrightarrow (iii'')$
Follows from Theorem~\ref{thm_main}.

\smallskip

\noindent $(iii'')\Longrightarrow (iii')$. Obvious.
\end{proof}

\begin{scorollary}\label{cor:rat-rr}
Let $k$ be a field and let $\gX$ be an integral $k$-smooth scheme. Let $\gG$ be a
semisimple simply connected $\gX$-group scheme having a strictly proper parabolic $\gX$-subgroup. If
$\gG_{k(\gX)}$ is $k(\gX)$-retract rational, then $\gG_{k(x)}$ is $k(x)$-retract
rational for every $x\in\gX$.
\end{scorollary}
\begin{proof}
By Theorem~\ref{thm_vanish} for every local ring $\cO_{\gX,x}$ of $\gX$, the group scheme
$\gG_{\cO_{\gX,x}}$ is $\cO_{\gX,x}$-retract rational. Hence $\gG_{k(x)}$ is retract rational as well.
\end{proof}

Since a positive answer to the  Kneser-Tits problem over fields
  is known in a bunch of cases,
we get the following concrete result.

\begin{scorollary} \label{cor_main}
Assume that $B$ is a connected semilocal ring containing a field $k$.
We assume that $\gG$ is semisimple simply connected isotropic $B$-group and that
$\gG_K$ is absolutely almost $K$--simple.
Then $\gG(B)/\r0=1$ in the following cases:

\begin{enumerate}
\item $\gG$ is quasi-split;

\item  the components of the anisotropic kernel of $\gG$
are of rank $\leq 2$;

\item $\gG= \SL_{m}(A)$ where $m \geq 2$ and $A$ is an Azumaya $R$--algebra
of squarefree index;

\item $\gG$ is of type $B_n$, $C_n$;

\item $\gG=\Spin(q)$ for a regular quadratic form  $q$
which is even dimensional (and isotropic);

 \item  $\gG_K=\Spin(A,h)$ where  $A$ is an Azumaya  $R$--algebra
of degree $2$ or  $4$ equipped with an orthogonal involution of first kind
and  $h$ is an isotropic regular hermitian form.

\item $\gG$ if of type  $^{3,6}D_4$ or  $^1E_6$;

\item $\gG$ is of type $^2E_6$ with one of the following Tits indices
$$
\begin{array}{lll}
\mbox{\rm a)}
\quad
\begin{picture}(80,30)
\put(00,02){\line(1,0){25}}
\put(65,02){\oval(80,10)[l]}
\put(0,02){\circle*{3}}
\put(25,02){\circle*{3}}
\put(65,-3){\circle*{3}}
\put(65,7){\circle*{3}}
\put(45,7){\circle*{3}}
\put(45,-3){\circle*{3}}
\put(0,02){\circle{10}}
\put(25,02){\circle{10}}
\put(-5,12){$\alpha_2$}
\put(20,12){$\alpha_4$}
\put(40,12){$\alpha_3$}
\put(62,12){$\alpha_1$}
\put(62,-13){$\alpha_6$}
\put(40,-13){$\alpha_5$}
\end{picture}
&
\mbox{\qquad \rm b)}
\quad
\begin{picture}(80,30)
\put(00,02){\line(1,0){25}}
\put(65,02){\oval(80,10)[l]}
\put(0,02){\circle*{3}}
\put(25,02){\circle*{3}}
\put(65,-3){\circle*{3}}
\put(65,7){\circle*{3}}
\put(45,7){\circle*{3}}
\put(45,-3){\circle*{3}}
\put(65,02){\oval(10,20)}
\put(0,02){\circle{10}}
\put(-5,11){$\alpha_2$}
\put(20,11){$\alpha_4$}
\put(40,11){$\alpha_3$}
\put(62,17){$\alpha_1$}
\put(62,-15){$\alpha_6$}
\put(40,-13){$\alpha_5$}
\end{picture}
\mbox{\qquad \rm c)}
\quad
\begin{picture}(80,30)
\put(00,02){\line(1,0){25}}
\put(65,02){\oval(80,10)[l]}
\put(0,02){\circle*{3}}
\put(25,02){\circle*{3}}
\put(65,-3){\circle*{3}}
\put(65,7){\circle*{3}}
\put(45,7){\circle*{3}}
\put(45,-3){\circle*{3}}
\put(65,02){\oval(10,20)}
\put(-5,11){$\alpha_2$}
\put(20,11){$\alpha_4$}
\put(40,11){$\alpha_3$}
\put(62,17){$\alpha_1$}
\put(62,-15){$\alpha_6$}
\put(40,-13){$\alpha_5$}
\end{picture}
\end{array}
$$

\vskip3mm

\noindent where for the last case we assume that $6 \in k^\times$.

\bigskip

\item $\gG$ is of type $E_7$ with one of the following Tits indices
$$
\begin{array}{ll}
\mbox{\rm a)}
\qquad
\begin{picture}(105,45)
\put(00,02){\line(1,0){100}}
\put(60,02){\line(0,1){20}}
\put(00,02){\circle*{3}}
\put(20,02){\circle*{3}}
\put(40,02){\circle*{3}}
\put(60,02){\circle*{3}}
\put(80,02){\circle*{3}}
\put(100,02){\circle*{3}}
\put(60,22){\circle*{3}}
\put(20,02){\circle{10}}
\put(60,02){\circle{10}}
\put(80,02){\circle{10}}
\put(100,02){\circle{10}}
\put(-5,-11){$\alpha_7$}
\put(15,-11){$\alpha_6$}
\put(35,-11){$\alpha_5$}
\put(55,-11){$\alpha_4$}
\put(75,-11){$\alpha_3$}
\put(95,-11){$\alpha_1$}
\put(55,30){$\alpha_2$}
\end{picture}
&
\mbox{\qquad \rm  b)}
\qquad
\begin{picture}(125,45)
\put(00,02){\line(1,0){100}}
\put(60,02){\line(0,1){20}}
\put(00,02){\circle*{3}}
\put(20,02){\circle*{3}}
\put(40,02){\circle*{3}}
\put(60,02){\circle*{3}}
\put(80,02){\circle*{3}}
\put(100,02){\circle*{3}}
\put(60,22){\circle*{3}}
\put(00,02){\circle{10}}
\put(20,02){\circle{10}}
\put(100,02){\circle{10}}
\put(-5,-11){$\alpha_7$}
\put(15,-11){$\alpha_6$}
\put(35,-11){$\alpha_5$}
\put(55,-11){$\alpha_4$}
\put(75,-11){$\alpha_3$}
\put(95,-11){$\alpha_1$}
\put(55,30){$\alpha_2$}
\end{picture}
\end{array}
$$

$$
\begin{array}{l}
\mbox{\qquad \rm  c)}
\qquad
\begin{picture}(125,45)
\put(00,02){\line(1,0){100}}
\put(60,02){\line(0,1){20}}
\put(00,02){\circle*{3}}
\put(20,02){\circle*{3}}
\put(40,02){\circle*{3}}
\put(60,02){\circle*{3}}
\put(80,02){\circle*{3}}
\put(100,02){\circle*{3}}
\put(60,22){\circle*{3}}
\put(20,02){\circle{10}}
\put(-5,-11){$\alpha_7$}
\put(15,-11){$\alpha_6$}
\put(35,-11){$\alpha_5$}
\put(55,-11){$\alpha_4$}
\put(75,-11){$\alpha_3$}
\put(95,-11){$\alpha_1$}
\put(55,30){$\alpha_2$}
\end{picture}
\end{array}
$$

\medskip

\item $\gG$ is of type $E_8$ with the following Tits indices

\vskip-10mm

$$
\begin{array}{l}
\begin{picture}(125,55)
\put(00,02){\line(1,0){120}}
\put(80,02){\line(0,1){20}}
\put(00,02){\circle*{3}}
\put(20,02){\circle*{3}}
\put(40,02){\circle*{3}}
\put(60,02){\circle*{3}}
\put(80,02){\circle*{3}}
\put(100,02){\circle*{3}}
\put(120,02){\circle*{3}}
\put(80,22){\circle*{3}}
\put(00,02){\circle{10}}
\put(20,02){\circle{10}}
\put(40,02){\circle{10}}
\put(120,02){\circle{10}}
\put(-5,-11){$\alpha_8$}
\put(15,-11){$\alpha_7$}
\put(35,-11){$\alpha_6$}
\put(55,-11){$\alpha_5$}
\put(75,-11){$\alpha_4$}
\put(95,-11){$\alpha_3$}
\put(115,-11){$\alpha_1$}
\put(75,30){$\alpha_2$}
\end{picture}
\end{array}
$$
$$
\begin{array}{l}
\begin{picture}(125,55)
\put(00,02){\line(1,0){120}}
\put(80,02){\line(0,1){20}}
\put(00,02){\circle*{3}}
\put(20,02){\circle*{3}}
\put(60,02){\circle*{3}}
\put(80,02){\circle*{3}}
\put(100,02){\circle*{3}}
\put(120,02){\circle*{3}}
\put(80,22){\circle*{3}}
\put(00,02){\circle{10}}
\put(40,02){\circle*{3}}
\put(120,02){\circle{10}}
\put(-5,-11){$\alpha_8$}
\put(15,-11){$\alpha_7$}
\put(35,-11){$\alpha_6$}
\put(55,-11){$\alpha_5$}
\put(75,-11){$\alpha_4$}
\put(95,-11){$\alpha_3$}
\put(115,-11){$\alpha_1$}
\put(75,30){$\alpha_2$}
\end{picture}
\end{array}
$$
$$
\begin{array}{l}
\begin{picture}(125,55)
\put(00,02){\line(1,0){120}}
\put(80,02){\line(0,1){20}}
\put(00,02){\circle*{3}}
\put(20,02){\circle*{3}}
\put(20,02){\circle{10}}
\put(60,02){\circle*{3}}
\put(80,02){\circle*{3}}
\put(100,02){\circle*{3}}
\put(120,02){\circle*{3}}
\put(80,22){\circle*{3}}
\put(00,02){\circle{10}}
\put(40,02){\circle*{3}}
\put(-5,-11){$\alpha_8$}
\put(15,-11){$\alpha_7$}
\put(35,-11){$\alpha_6$}
\put(55,-11){$\alpha_5$}
\put(75,-11){$\alpha_4$}
\put(95,-11){$\alpha_3$}
\put(115,-11){$\alpha_1$}
\put(75,30){$\alpha_2$}
\end{picture}
\end{array}
$$
\end{enumerate}

\bigskip

\smallskip

\noindent If furthermore $\gG$ is of $B$--rank $\geq 2$, then
 $K_1^\gG(B)=1$.
\end{scorollary}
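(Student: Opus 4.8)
The plan is to reduce the statement to the known solutions of the Kneser--Tits problem over fields and then to transport the resulting triviality from fields to the semilocal base $B$ by means of Theorem~\ref{thm_vanish} and Theorem~\ref{thm_main}. First note that, since $\gG$ is isotropic and $\gG_K$ is absolutely almost simple, $\gG$ admits a strictly proper parabolic $B$-subgroup $\gP$: there is a single simple factor, so every proper parabolic meets it. Thus the standing hypotheses of those theorems are in force, and the whole argument splits into a field-theoretic input, valid for \emph{every} $B$-field, and a purely ring-theoretic output supplied by the earlier results. The substantive case is that of a regular semilocal domain, which is what allows us to invoke the $\mathbf{A}^1$-statements below.

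For the field-theoretic input I would verify condition $(iv)$ of Theorem~\ref{thm_vanish}, namely that $\gG(F)/R=1$ for every $B$-field $F$. By the Margaux--Soul\'e isomorphism \cite[Th. 3.10]{M1} combined with \cite[Th. 7.2]{Gi2}, for an isotropic simply connected $F$-group this vanishing is equivalent to the triviality of the Whitehead coset $K_1^{\gG,\gP}(F)=\gG(F)/E_\gP(F)$. Each of the cases $(1)$--$(11)$ is precisely a Tits index, or a family of forms, for which the Kneser--Tits conjecture has a known positive answer over an arbitrary field: the quasi-split case is classical; case $(2)$ follows from the fact that a semisimple anisotropic group whose components have rank $\le 2$ is $R$-trivial, exactly as in the remark following Theorem~\ref{thm_KV_R}; cases $(3)$--$(6)$ reduce to the vanishing of the relevant reduced Whitehead and spinor groups for Azumaya algebras and for quadratic or hermitian forms; and the exceptional indices $(7)$--$(11)$ are covered by the corresponding entries of the Kneser--Tits literature. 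The point demanding care is \emph{uniformity}: I must confirm that each index still yields a trivial Whitehead group after base change to an arbitrary $B$-field $F$, and not merely over $K$. This is harmless, because a $B$-field factors through a residue field and any field extension can only shrink the anisotropic kernel, hence make $\gG$ more isotropic; each listed type retains a positive Kneser--Tits answer under such specialization.

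Granting condition $(iv)$, the ring-theoretic conclusion is immediate. The implication $(iv)\Rightarrow(iii')$ of Theorem~\ref{thm_vanish} (equivalently, $(iv)\Rightarrow(iii)\Rightarrow(iii')$ through the equivalences already established in Proposition~\ref{prop_W_trivial}) yields $\gG(C)/\r0=1$ for every $B$-ring $C$ that is a semilocal regular domain; applying this with $C=B$ gives $\gG(B)/\r0=1$, which is the main assertion. For the final clause, assume in addition that $\gG$ is strictly of $B$-rank $\ge 2$. Then Theorem~\ref{thm_main} furnishes an isomorphism $K_1^{\gG}(B)\xrightarrow{\sim}\gG(B)/R$; since the canonical surjection $\gG(B)/\r0\twoheadrightarrow\gG(B)/R$ together with the vanishing just proved forces $\gG(B)/R=1$, we conclude $K_1^{\gG}(B)=1$, that is $E_\gP(B)=\gG(B)$.

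I expect the main obstacle to be the field-theoretic bookkeeping of the second paragraph: assembling, case by case and with the correct references, the known positive solutions of the Kneser--Tits problem for all the listed indices, and verifying that each one survives base change to an arbitrary $B$-field rather than only to the fraction field $K$. Once this catalogue is in place, the passage to the ring $B$ is entirely formal, being exactly what Theorems~\ref{thm_vanish} and~\ref{thm_main} were designed to provide.
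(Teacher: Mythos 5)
Your proposal correctly reproduces the first half of the paper's argument: for the field-theoretic input you invoke the known positive answers to the Kneser--Tits problem for each listed index (the paper does this via retract rationality of $\gG_K$, citing \cite[th. 6.1]{Gi2} plus Garibaldi, Parimala--Tignol--Weiss, Alsaody--Chernousov--Pianzola and Thakur for the exceptional cases), and you then feed this into Theorem~\ref{thm_vanish}, $(iv)\Rightarrow(iii')$ (the paper uses $(v)\Rightarrow(iii')$ or $(iv)\Rightarrow(iii')$, which amounts to the same thing), and into Theorem~\ref{thm_main} for the final $K_1^\gG(B)=1$ clause. Note in passing that the paper's route through $(v)$ is slightly more economical on the bookkeeping you worry about: it only requires retract rationality of $\gG_K$ over the single field $K$, not a verification over every $B$-field.

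However, there is a genuine gap. The corollary assumes only that $B$ is a \emph{connected semilocal ring containing a field}; it need not be regular, nor even a domain. Theorems~\ref{thm_vanish} and~\ref{thm_main} are stated for semilocal \emph{regular domains}, so your step ``applying this with $C=B$'' is not licit in the generality claimed: you acknowledge that ``the substantive case is that of a regular semilocal domain,'' but you never reduce the general case to it, and your argument as written only proves the corollary under that extra hypothesis. The paper devotes the entire second half of its proof to this reduction, via Hoobler's trick: one writes $B=C/I$ for a henselian pair $(C,I)$ with $C=\limind C_\alpha$ a filtered limit of semilocalizations of affine $k$-spaces; using smoothness of $\Aut(G_0,P_0)$ and Strano's theorem one lifts $(\gG,\gP)$ to a pair $(\widetilde\gG,\widetilde\gP)$ over $C$; smoothness of $\widetilde\gG$ gives surjectivity of $\widetilde\gG(C)/\r0\to\gG(B)/\r0$; a limit argument descends $(\widetilde\gG,\widetilde\gP)$ to some $C_{\alpha_0}$, where the regular case applies since each $C_\alpha$ is a regular semilocal domain; and one concludes $\gG(B)/\r0=1$ (and similarly $K_1^\gG(B)=1$) by passing to the limit. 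Without this lifting argument, or some substitute for it, the statement as given is not proved.
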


\medskip

\begin{proof}
We assume firstly that the $k$--algebra $B$ is a regular domain
so that  the statement is a case by case  application of
Theorem \ref{thm_vanish}, $(v) \Longrightarrow (iii')$ or  $(iv) \Longrightarrow (iii')$.
Almost all results of retract rationality over $K$ are quoted  in \cite[Th.\ 6.1]{Gi2} excepted the following  cases.

 \smallskip

\noindent{\it Third outer $E_6$ case, i.e. $E_{6,1}^{29}$.}
This is due to Garibaldi, see \cite[Th.\ 6.2]{Gi2}.

\smallskip

\noindent{\it Second $E_8$ case, i.e. $E_{8,2}^{66}$.} This is
a result by Parimala-Tignol-Weiss \cite[\S 3]{PTW}.

 \smallskip

\noindent{\it Third $E_7$ (resp.\, $E_8$) case, i.e.
$E_{7,1}^{78}$ (resp.\ $E_{8,2}^{78}$).} The $R$--triviality over fields is
a result by Alsaody-Chernousov-Pianzola \cite[Th.\ 8.1]{ACP} and  by Thakur
\cite[Th.\ 4.2  and Cor.\ 4.3]{Thakur} independently.

\smallskip

To deduce the general case where $B$ is not necessarily regular,
we use Hoobler's trick, see~\cite[proof of theorem 2]{Ho} or~\cite[p. 109]{Ka}.
There exists a henselian pair $(C,I)$ such that $C/I=B$
and $C= \limind C_\alpha$ where each $C_\alpha$ is  a semilocalization of
 an affine $k$--space.

We are given a minimal $B$--parabolic subgroup $\gP$ of $\gG$.
Denote by $G_0$ the split Chevalley $k$--form of $\gG$.
Then $(\gG,\gP)$ is a form of $(G_0,P_0)$. Since $\Aut(G_0,P_0)$ is a smooth affine
$k$--group~\cite[lemme 5.1.2]{Gi4}, the map
$$H^1(C,\Aut(G_0,P_0)) \to H^1(C/I,\Aut(G_0,P_0))
$$
is bijective \cite[th.\1]{Strano}.
This implies that there exists a couple $(\widetilde \gG, \widetilde \gP)$ over $C$ such that
 $(\widetilde \gG, \widetilde \gP) \times_C B = (\gG, \gP)$.
 Since $\widetilde \gG$ is smooth over $C$, the map
${\widetilde \gG}(C) \to \gG(B)$ is onto and so is  ${\widetilde \gG}(C)/\r0 \to \gG(B)/\r0$.

On the other hand, we have $H^1(C,\Aut(G_0))= \limind H^1(C_\alpha,\Aut(G_0))$
\cite[VI$_B$.10.16]{SGA3}. It follows that there exists $\alpha_0$ and a
couple $(\gG_{\alpha_0}, \gP_{\alpha_0})$ such that $(\gG_{\alpha_0}, \gP_{\alpha_0})
\times_{C_{\alpha_0}} C=
(\widetilde \gG, \widetilde \gP)$.
We have  $\widetilde \gG(C)= \limind_{\alpha \geq \alpha_0}
 \gG_{\alpha_0}(C_\alpha)$.
But $\gG_{\alpha_0}(C_\alpha)/\r0=1$ by the
regular case of the theorem. Since   $\limind_{\alpha \geq \alpha_0}
 \gG_{\alpha_0}(C_\alpha) / \r0 \to  \widetilde \gG(C)
/ \r0 $ is onto, we conclude that
$\widetilde \gG(C)/\r0=1$.

If furthermore $\gG$ is of $B$--rank $\geq 2$, then we have similarly
$K_1^{\gG_{\alpha_0}}(C_\alpha) =1$ from the regular case and
a composite of surjective maps
$\limind_{\alpha \geq \alpha_0}
 K_1^{\gG_{\alpha_0}}(C_\alpha)  \to \hskip-3mm \to  K_1^{\widetilde \gG}(C)
 \to \hskip-3mm \to K_1^\gG(B)$. Thus  $K_1^\gG(B)=1$.
\end{proof}


\section{Behaviour for henselian pairs}

\smallskip

We address the following question with respect to  a henselian pair
$(B,I)$ \cite[15.11]{St}; this concerns, for example, the case of
a nilpotent ideal.

\begin{squestion} \label{question_henselian}
Let $\gG$ be a reductive $B$--group scheme. Is
the map $\gG(B)/R \to  \gG(B/I)/R$ an isomorphism?
\end{squestion}

Note that since $\gG$ is affine and smooth over $B$, the map $\gG(B)\to\gG(B/I)$ is surjective~\cite[Th.\ I.8]{Gruson},
and hence the map of
$R$-equivalence class groups is surjective.


\subsection{The torus case}


\begin{slemma}\label{lem_tor_isotrivial} Assume that $B/I$ is a normal noetherian domain. Let $\gT$ be a
$B$-torus. Then  $\gT$ is isotrivial.
\end{slemma}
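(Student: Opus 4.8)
The plan is to descend to the closed fibre, where normality yields isotriviality, and then to transport the splitting back up the henselian pair $(B,I)$. Set $B_0=B/I$ and $\gT_0=\gT\times_B B_0$. Since $B_0$ is a normal domain, it is in particular connected, so $\gT_0$ is isotrivial by \cite[Exp.~X, 5.16]{SGA3}: there is a connected finite \'etale cover $B_0\to C_0$, which we may take to be Galois, such that $\gT_0\times_{B_0}C_0$ is split of rank $n=\rank\gT$. This is the only step that uses the hypothesis that $B/I$ is a normal domain.

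Next I would transport this cover. As $(B,I)$ is a henselian pair, base change along $B\to B_0$ is an equivalence from the category of finite \'etale $B$-algebras to that of finite \'etale $B_0$-algebras \cite[Tag~09ZL]{St}; let $B\to C$ be the finite \'etale lift of $C_0$, so that $C/IC\cong C_0$. Since $B\to C$ is integral, the pair $(C,IC)$ is again henselian \cite[Tag~09XK]{St}, and $\gT_C:=\gT\times_B C$ is a $C$-torus whose reduction $\gT_C\times_C C_0\cong\gT_0\times_{B_0}C_0$ is split.

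The heart of the matter is to promote this splitting from $C_0$ to $C$. Consider the functor $Y=\uIsom_{C\text{-}\mathrm{gp}}(\GG_{m,C}^{\,n},\gT_C)$ of $C$-group-scheme isomorphisms. Over the \'etale cover of $C$ on which $\gT_C$ becomes split, $Y$ is isomorphic to $\uIsom$ of two split tori, that is, to the constant scheme $\underline{\GL_n(\ZZ)}$; hence $Y$ is a locally constant \'etale sheaf and is representable by a scheme \'etale and separated over $C$ \cite[Exp.~X]{SGA3}. The chosen splitting of $\gT_C\times_C C_0$ is a point $s_0\in Y(C_0)$. Passing to an affine open of $Y$ through which $s_0$ factors, this open is an \'etale $C$-algebra, and since $(C,IC)$ is henselian the section $s_0$ lifts to a section $s\in Y(C)$ by \cite[Tag~09XE]{St}. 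By construction $s$ is an isomorphism $\GG_{m,C}^{\,n}\simlgr\gT_C$, so $\gT_C$ is split.

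Finally I would check that $\Spec(C)\to\Spec(B)$ is a finite \'etale cover which splits $\gT$ and is surjective: idempotents lift along the henselian pair $(B,I)$, so $\Spec(B)$ is connected because $B_0$ is a domain, and therefore the image of $\Spec(C)$, being nonempty open and closed, is all of $\Spec(B)$. Thus $\gT$ is isotrivial. The main obstacle is the third step: one must know that the Isom-functor of tori is represented by an \'etale $C$-scheme \emph{without} presupposing that $\gT_C$ is isotrivial, which is precisely why I argue by \'etale-local splitting and descent rather than by the structure theory of a single torus. Once this is in place the lifting is a formal application of the defining property of henselian pairs for \'etale morphisms, and the reduction to the closed fibre together with the transport of the cover are purely formal consequences of the theory of henselian pairs.
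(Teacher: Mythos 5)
Your proof follows the paper's own argument step for step in its first two stages: normality of $B/I$ together with \cite[X.5.16]{SGA3} gives a finite \'etale cover $C_0$ of $B/I$ splitting the reduced torus, and the henselian pair $(B,I)$ lets you lift $C_0$ to a finite \'etale $C$ over $B$ with $(C,IC)$ again a henselian pair (Tags 09ZL and 09XK, exactly as in the paper). Where the paper then simply quotes \cite[cor. 3.1.3.(b)]{Ces-GS} to lift the splitting isomorphism along $(C,IC)$, you attempt to prove that lifting by hand via the scheme $Y=\uIsom(\GG_{m,C}^{\,n},\gT_C)$, which is indeed representable by a separated \'etale $C$-scheme by SGA3, Exp.~X. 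That is the right idea---it is essentially how such lifting statements are proved---but your execution has a genuine gap.

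The gap is the sentence ``Passing to an affine open of $Y$ through which $s_0$ factors.'' This is asserted without proof and is not automatic. The scheme $Y$ is separated and \'etale over $\Spec(C)$ but is far from quasi-compact: its geometric fibres are torsors under $\GL_n(\ZZ)$, an infinite group once $n\geq 2$. Moreover $\Spec(C_0)$ is not a point, so the image of $s_0$ is only a quasi-compact closed subset of $Y$, and such subsets of separated \'etale schemes over affine bases need not lie in any affine open: already in $U=\mathbf{A}^2\setminus\{0\}$, which is \'etale over $\mathbf{A}^2$, the closed subscheme $Z\setminus\{0\}$ for $Z$ the curve $y^2+y=x^3-x$ lies in no affine open of $U$, since an affine open containing it would have principal complement $V(f)$ with $Z\cap V(f)=\{0\}$, forcing $(0,0)$ to be torsion in the Mordell--Weil group, which it is not. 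The natural attempted repair---that the connected component of $Y$ through the image of $s_0$ is finite, hence affine, over $C$---is circular: finiteness of the components of $Y$ amounts to finiteness of the monodromy of $\gT_C$, i.e.\ precisely to the isotriviality being proved, and over non-normal bases $\uIsom$-schemes of tori genuinely can have non-quasi-compact components. (Note also that Tag 09XE is merely the definition of a henselian pair; the lifting criterion you want, Tag 09XI, is stated for \'etale \emph{ring} maps, which is exactly why affineness must be arranged first.) The statement you need is true, but proving it is the real content of the step: shrink $Y$ to a quasi-compact open $U$ containing the image of $s_0$, use Zariski's Main Theorem to realize $U$ as an open subscheme of $\Spec(D)$ with $D$ finite over $C$, observe that the image of $s_0$ is cut out by an idempotent of $D/ID$, lift that idempotent along the henselian pair $(D,ID)$, use $I\subseteq\rad(C)$ (so that every closed point of $\Spec(D)$ lies over $V(IC)$) to check that the resulting clopen piece $\Spec(D_1)$ is contained in $U$ and hence is finite \'etale over $C$ with $D_1/ID_1\cong C/IC$, and conclude $D_1\cong C$ by Tag 09ZL. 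Either supply such an argument or, as the paper does, quote \cite[cor. 3.1.3.(b)]{Ces-GS}.
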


\begin{proof} Since $B/I$ is a normal noetherian domain, $\gT_{B/I}$ is isotrivial \cite[X.5.16]{SGA3}, that is, there
exists a finite \'etale cover $C_0$ of $B/I$ such  that $\gT_{C_0} \cong \GG_{m,C_0}^r$.
Since $(B,I)$ is a henselian pair, $C_0$ lifts to a finite \'etale cover $C$ of $B$
\cite[Tag 09ZL]{St} and furthermore $(C,I C)$ is a henselian pair ({\it ibid}, Tag 09XK).
According to \cite[Prop.\ 6.1.3.(a)]{Ces-Prob}, the isomorphism $\gT_{C_0} \cong \GG_{m,C_0}^r$
lifts to an isomorphism $\gT_{C} \cong \GG_{m,C}^r$
so that $\gT \times_B C$ is split. Thus $\gT$ is isotrivial.
\end{proof}

A first evidence for the question \ref{question_henselian} is the following fact.

\begin{slemma}\label{lem_torus_pair} Let $\gT$ be a
$B$-torus. Assume that $B/I$ is a regular domain. Then  the map
$\gT(B)/R \to  \gT(B/I)/R$ is an isomorphism.
\end{slemma}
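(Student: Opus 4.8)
The plan is to reduce everything to the regular ring $B/I$, where $R$-equivalence on $\gT$ is computed by the flasque resolution (Proposition~\ref{prop_torus1}), and then to transport that computation up to $B$ using the invariance of $H^1$ of smooth affine group schemes along the henselian pair $(B,I)$. Since $\gT$ is affine and smooth over $B$, the reduction $\gT(B)\to\gT(B/I)$ is onto (as noted after Question~\ref{question_henselian}, by~\cite[th. I.8]{Gruson}), so the induced map $\gT(B)/R\to\gT(B/I)/R$ is surjective and only injectivity remains. As $B/I$ is a regular, hence normal, domain, Lemma~\ref{lem_tor_isotrivial} shows that $\gT$ is isotrivial over $B$; I would therefore fix a flasque resolution $1\to\gS\to\gQ\xrightarrow{\pi}\gT\to 1$ over $B$ (via~\cite[Prop. 1.3]{CTS2}), with $\gQ$ quasitrivial and $\gS$ flasque, noting that base change to $B/I$ keeps it a flasque resolution.

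Next I would set up the characteristic map $\delta_B\colon\gT(B)\to H^1(B,\gS)$ attached to this resolution, together with its analogue $\delta_{B/I}$, which fit into a square functorial in $B\to B/I$; by the cohomology exact sequence one has $\ker\delta_B=\pi(\gQ(B))$ and likewise over $B/I$. Two facts feed the argument. First, $\pi(\gQ(B))\subseteq R\gT(B)$ holds unconditionally: $\gQ$ is quasitrivial, so $R\gQ(B)=\gQ(B)$ by Lemma~\ref{lem_weil} (using $\GG_m(C)/R=1$ from Lemma~\ref{lem_sorite2}.(1)), and functoriality of $R$-equivalence under the homomorphism $\pi$ gives the inclusion. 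Second, over the regular domain $B/I$, Proposition~\ref{prop_torus1} gives $R\gT(B/I)=\pi(\gQ(B/I))=\ker\delta_{B/I}$.

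The injectivity then runs as follows. Let $x\in\gT(B)$ have image $\bar x\in\gT(B/I)$ that is $R$-equivalent to $1$. By the second fact, $\delta_{B/I}(\bar x)=0$. The key input is that the restriction $H^1(B,\gS)\to H^1(B/I,\gS)$ is injective (indeed bijective): this is the invariance of nonabelian $H^1$ along the henselian pair $(B,I)$ for the smooth affine group scheme $\gS$, furnished by~\cite[th. 1]{Strano} (equivalently~\cite[cor. 3.1.3.(b)]{Ces-GS}), the same mechanism already used in Lemma~\ref{lem_tor_isotrivial}. Since $\delta_B(x)$ maps to $\delta_{B/I}(\bar x)=0$, injectivity forces $\delta_B(x)=0$, so $x\in\pi(\gQ(B))$, which by the first fact lies in $R\gT(B)$. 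Hence $x$ is $R$-trivial, and the map $\gT(B)/R\to\gT(B/I)/R$ is injective, completing the proof.

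The main obstacle I expect is the correct invocation of the henselian invariance $H^1(B,\gS)\simlgr H^1(B/I,\gS)$ for the possibly non-quasitrivial flasque torus $\gS$: I would need to confirm that the cited smooth-affine statement applies verbatim to a torus over $B$ and, crucially, requires no regularity of $B$ itself (only smoothness of $\gS$ and the henselian-pair hypothesis). Everything else is a formal transfer of the already-established regular/field case.
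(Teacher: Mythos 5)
Your proof is correct and follows essentially the same route as the paper: isotriviality via Lemma~\ref{lem_tor_isotrivial}, a flasque resolution $1\to\gS\to\gQ\xrightarrow{\pi}\gT\to 1$, the inclusion $\pi(\gQ(B))\subseteq R\gT(B)$, Proposition~\ref{prop_torus1} over the regular domain $B/I$, and Strano's henselian-pair invariance of $H^1(-,\gS)$ (which indeed needs only smoothness and affineness of $\gS$, no regularity of $B$). The only difference is cosmetic: you prove injectivity directly through the characteristic map, where the paper runs a diagram chase on the two exact sequences to identify $\gT(B)/\pi(\gQ(B))$ with $\gT(B/I)/\pi(\gQ(B/I))$ and then identifies both with the $R$-equivalence class groups.
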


\begin{proof} By definition, the
regular domain  $B/I$ is noetherian and also is normal
\cite[Tags 00OD, 0567]{St}. The $B$--torus $\gT$ is isotrivial according to Lemma \ref{lem_tor_isotrivial}.
Let $1 \to \gS \to \gQ \xrightarrow{\pi}  \gT \to 1$ be a flasque resolution.
We have a commutative diagram of exact sequences
$$
\xymatrix@C=20pt{
0  \ar[r] & \gT(B)/\pi(\gQ(B)) \ar[r] \ar[d] & H^1(B,\gS) \ar[r] \ar[d] &  H^1(B,\gQ) \ar[d] \\
0  \ar[r] & \gT(B/I)/\pi(\gQ(B/I)) \ar[r] & H^1(B/I,\gS) \ar[r] &  H^1(B/I,\gQ). \\
}
$$
According to \cite[Th.\ 1]{Strano}, the maps $H^1(B,\gS) \to H^1(B/I,\gS)$
and $H^1(B,\gQ) \to H^1(B/I,\gQ)$ are isomorphisms. By diagram chase
we conclude that the map $\gT(B)/\pi(\gQ(B)) \to \gT(B/I)/\pi(\gQ(B/I))$ is an isomorphism.

Example \ref{ex_R_triv}(3)
 states that $R\gQ(B)=\gQ(B)$,
hence the inclusion $\pi(\gQ(B)) \subseteq R \gT(B)$.
It follows that  we deal with a  surjection $\gT(B)/\pi(\gQ(B))\to \gT(B)/R$.
Summarizing we have a commutative diagram
\[\xymatrix@1{
\gT(B)/ \pi(\gQ(B)) \ar[d]^\wr \ar@{->>}[r] & \gT(B)/R \ar[d] \\
\gT(B/I)/ \pi(\gQ(B/I)) \ar[r]^{\quad \sim} & \gT(B/I)/R
}\]
where the bottom isomorphism is provided by Proposition \ref{prop_torus1}.
By diagram chase we conclude that the top map  $\gT(B)/\pi(\gQ(B))=\gT(B)/R$ is an isomorphism and so is the
the map $\gT(B)/R \to \gT(B/I)/R$.
\end{proof}



\subsection{A generalization}


Using the case of tori, we obtain the following partial result for $R$-equivalence of arbitrary reductive groups.
We do it by generalizing an argument of Raghunathan~\cite[\S 1]{R}.

\begin{slemma} \label{lem_red_tor} We assume that $B/I$ is a regular domain.
Let $\gG$ be a reductive $B$--group scheme
admitting  $B$-subtori $\gT_1, \dots, \gT_n$ such that
$\Lie(\gG)(B)$ is generated as $B$--module by the $\Lie(\gT_i)(B)$'s.
Then $\ker\bigl(\gG(B) \to \gG(B/I) \bigr) \subseteq R\gG(B)$.
\end{slemma}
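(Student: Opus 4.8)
The plan is to reduce the assertion to the torus case of Lemma~\ref{lem_torus_pair}, by writing an element of $\ker\bigl(\gG(B)\to\gG(B/I)\bigr)$ as a product of elements coming from the $\gT_i$, each of which reduces to the identity modulo $I$. To this end I would first form the product morphism
$$
\psi\colon \gT_1 \times_B \cdots \times_B \gT_n \longrightarrow \gG,\qquad (t_1,\dots,t_n)\longmapsto t_1\cdots t_n,
$$
whose differential at the identity section $e=(1,\dots,1)$ is the sum map, i.e. on $B$-points the map $\bigoplus_i \Lie(\gT_i)(B)\to\Lie(\gG)(B)$, which is surjective by hypothesis. As surjections of modules are preserved by base change, this differential stays surjective over every residue field of $B$, where it computes the differential of the corresponding fibre of $\psi$. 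Since both $\prod_i\gT_i$ and $\gG$ are smooth over $B$ and the fibrewise differential is surjective along $e$, the morphism $\psi$ is smooth at every point of $e(\Spec B)$, hence on an open neighbourhood $\gW$ of $e(\Spec B)$.

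Next, given $g\in\ker\bigl(\gG(B)\to\gG(B/I)\bigr)$, I would invoke the lifting property of the smooth morphism $\psi|_\gW\colon\gW\to\gG$ over the henselian pair $(B,I)$, in the form used elsewhere in this section \cite[th. I.8]{Gruson}. The $B$-point $g$ of $\gG$ and the $B/I$-point $e_{B/I}\in\gW(B/I)$ are compatible after reduction, since $\psi(e_{B/I})=1$ equals the reduction of $g$; hence there is a lift $\tilde g=(s_1,\dots,s_n)\in\gW(B)$ with $\psi(\tilde g)=g$ and $\tilde g\equiv e\pmod I$. Consequently $g=s_1\cdots s_n$ with $s_i\in\gT_i(B)$ and $s_i\in\ker\bigl(\gT_i(B)\to\gT_i(B/I)\bigr)$ for each $i$.

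Finally, each $\gT_i$ is a $B$-torus, so Lemma~\ref{lem_torus_pair} applies and the map $\gT_i(B)/R\to\gT_i(B/I)/R$ is (in particular) injective. Since $s_i$ maps to $1$ in $\gT_i(B/I)$, its class is trivial in $\gT_i(B/I)/R$, whence $s_i\in R\gT_i(B)$. By functoriality of $R$-equivalence along the inclusion $\gT_i\hookrightarrow\gG$, the image of $s_i$ in $\gG(B)$ lies in $R\gG(B)$; and because $R\gG(B)$ is a subgroup of $\gG(B)$, the product $g=s_1\cdots s_n$ belongs to $R\gG(B)$, which is the desired inclusion.

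I expect the main obstacle to be the smoothness-and-lifting step of the second paragraph: one must check that fibrewise surjectivity of the differential genuinely produces smoothness of $\psi$ near the identity section, and then apply the henselian lifting statement in a form that controls the \emph{reduction} of the lift, so that $\tilde g\equiv e\pmod I$. This last point is essential, since it is precisely what places each $s_i$ in $\ker\bigl(\gT_i(B)\to\gT_i(B/I)\bigr)$, the exact input consumed by Lemma~\ref{lem_torus_pair}. The remaining steps (the differential computation and the functoriality/subgroup argument) are routine given the results already established.
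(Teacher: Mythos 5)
Your proof is correct and follows essentially the same route as the paper's: the same product morphism $\gT_1\times_B\cdots\times_B\gT_n\to\gG$, smoothness along the unit section via surjectivity of the differential over the residue fields, the Hensel lifting property for the henselian pair $(B,I)$ \cite[th. I.8]{Gruson} to write $g=s_1\cdots s_n$ with each $s_i\in\ker\bigl(\gT_i(B)\to\gT_i(B/I)\bigr)$, and finally Lemma~\ref{lem_torus_pair} plus functoriality of $R$-equivalence. The step you flag as the main obstacle is handled in the paper exactly as you propose, phrased as surjectivity of the induced map on kernels.
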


\begin{proof} We consider the map of $B$--schemes

\[\xymatrix@1{
f: \gT_1 \times_B \dots \times_B \gT_n & \to & \gG \\
(t_1,\dots, t_n) & \mapsto &   t_1 \dots t_n.
}\]
For each maximal ideal $\gm$ of $B$,
the differential at $1_{B/\gm}$ is
\[\xymatrix@1{
df_{1_k}: \Lie(\gT_1)(B/\gm) \oplus \dots \oplus \Lie(\gT_n)(B/\gm) & \to
& \Lie(G)(B /\gm) \\
(X_1,\dots, X_n) & \mapsto &   {X_1}+  \dots + {X_n}
}\]
which is onto by construction. It follows that the map $f$ is smooth
at $1_{B/\gm}$ for each  maximal ideal of $\gm$.
The Jacobian criterion shows that $f$ is smooth in the neighborhood of the unit section
of $\gT_1 \times_B \dots \times_B \gT_n$.
The Hensel lemma \cite[Th.\ I.8]{Gruson} (see also
\cite[Prop.\ 6.1.1]{Ces-Prob}) shows that the induced map
$$
\ker\bigl( (\gT_1 \times_B \dots \times_B\gT_n)(B) \to (\gT_1 \times_B \dots \times_B\gT_n)(B/I) \bigr)  \to  \ker\bigl( \gG(B) \to \gG(B/I) \bigr)
$$
is surjective.
The torus case Lemma \ref{lem_torus_pair}  shows  that $\ker\bigl( \gT_i(B) \to \gT_i(B/I) \bigr)
\subseteq R\gT_i(B)$ for $i=1,...,n$. Thus
$\ker\bigl( \gG(B) \to \gG(B/I) \bigr) \subseteq
R\gG(B)$.
\end{proof}

Together with Lemma \ref{lem_unirational}.(1), we get the following fact.

\begin{scorollary}\label{cor_red_tor}
 Let $R$ be a semilocal ring with infinite residue fields and let $\gG$
 be a reductive $R$--group scheme assumed $R$-linear. Let $(B,J)$ be a henselian pair
 where $B$ is an $R$--algebra such that $B/J$ is a regular domain. Then
$\ker\bigl( \gG(B) \to \gG(B/J) \bigr) \subseteq R\gG(B)$.
\end{scorollary}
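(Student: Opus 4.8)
The plan is to deduce this corollary directly from the preceding Lemma~\ref{lem_red_tor} by producing, over the ring $B$, a family of subtori of $\gG$ whose Lie algebras generate $\Lie(\gG)$ as a module. Indeed, Lemma~\ref{lem_red_tor} asserts exactly the desired inclusion $\ker\bigl(\gG(B)\to\gG(B/J)\bigr)\subseteq R\gG(B)$ under the two hypotheses that $B/J$ be a regular domain --- which is given --- and that there exist $B$--subtori $\gT_1,\dots,\gT_n$ with $\Lie(\gT_i)(B)$ generating $\Lie(\gG)(B)$. So the entire task reduces to constructing such subtori over $B$.

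To obtain them, I would apply Lemma~\ref{lem_unirational}.(1) over the base $R$: since $R$ is semilocal with infinite residue fields and $\gG$ is reductive over $R$, that lemma furnishes maximal $R$--subtori $\gT_1,\dots,\gT_n$ of $\gG$ whose Lie submodules $\Lie(\gT_i)(R)$ together generate $\Lie(\gG)(R)$ as an $R$--module. I would then base change along the structure morphism $R\to B$, so that each $\gT_{i,B}=\gT_i\times_R B$ is a $B$--subtorus of $\gG_B=\gG\times_R B$, the latter again reductive over $B$.

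The one point requiring care is that the generating property survives base change. Since formation of the Lie algebra commutes with base change and the modules involved are finitely generated locally free, one has $\Lie(\gT_{i,B})(B)=\Lie(\gT_i)(R)\otimes_R B$ and $\Lie(\gG_B)(B)=\Lie(\gG)(R)\otimes_R B$; as the $\Lie(\gT_i)(R)$ generate $\Lie(\gG)(R)$, right-exactness of $-\otimes_R B$ shows that the $\Lie(\gT_{i,B})(B)$ generate $\Lie(\gG_B)(B)$. With the subtori in place, Lemma~\ref{lem_red_tor} applied to $\gG_B$ with ideal $J$ yields the stated inclusion. I do not expect a serious obstacle: the argument is purely a combination of the two cited lemmas together with a routine right-exact base-change check, and the $R$--linearity hypothesis plays no role in this reduction (it is only needed, if at all, as input to Lemma~\ref{lem_unirational}).
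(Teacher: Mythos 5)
Your proposal is correct and is essentially the paper's own proof: the paper deduces this corollary in one line by combining Lemma~\ref{lem_unirational}.(1) applied over $R$ with Lemma~\ref{lem_red_tor} applied to $\gG_B$ and the henselian pair $(B,J)$, which is exactly your reduction, with your base-change/right-exactness check filling in the details the paper leaves implicit. Your side remark is also accurate: the $R$-linearity hypothesis is not used in Lemma~\ref{lem_unirational}.(1) (the paper invokes linearity only in part (2) of that lemma), so it is indeed superfluous for this argument.
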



\subsection{The semisimple case}


We continue with the henselian pair $(B,I)$.
One evidence for answering  positively the
question \ref{question_henselian} is the case of the group $\SL_N(\cA)$ for an Azumaya
$B$--algebra $\cA$ of degree invertible in $B^\times$ for $N>>0$
since Hazrat has proven that the map $\SK_1(\cA) \to \SL_1(\cA/I)$
is an isomorphism, if $B$ is semilocal~\cite{Hazrat}.
Firstly we make a variation on \cite[\S 3.4]{GPS}.

\begin{slemma}\label{lem_root2}
Let $F$ be a field and let  $G$ be a reductive $F$-group.
Let $P$ be a strictly proper parabolic $F$--subgroup and let $P^{-}$ be an opposite
parabolic subgroup to $P$. We put $U= \rad_u(P)$ and $U^{-}=\rad_u(P^{-})$  and consider
the subgroup $E_P(F)$ (resp.\ $E_P(F[\epsilon])$) of
$G(F)$  (resp.\ $G(F[\epsilon])$) as defined
 in \S \ref{subsec_non_stable}.

We consider the following commutative diagram
\[
\xymatrix@C=30pt{
 0   \ar[r]& \Lie(G)  \ar[r]  &  G(F[\epsilon]) \ar[r] & G(F) \ar[r] & 1 \\
 & & E_P(F[\epsilon]) \ar[r] \incl[u] & E_P(F) \ar[r] \incl[u] & 1
}
\]
and define  $V_P= \ker( E_P(F[\epsilon]) \to  E_P(F) ) \subseteq \Lie(G)$.

\smallskip

(1) The $F$--subspace  $V_P$ is an ideal of $\Lie(G)$
which is $G(F)$-stable. We have \break $V_P= E_P(F) .\Lie(U)+ E_P(F) .\Lie(U^{-})$.

\smallskip

(2) If $G$ is semisimple simply connected, we have $V_P= \Lie(G)$.
\end{slemma}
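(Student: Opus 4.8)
The plan is to exploit the canonical splitting of reduction modulo $\epsilon$. Identifying $\Lie(G)=\ker\bigl(G(F[\epsilon])\to G(F)\bigr)$ with the additive (abelian) group of elements $1+\epsilon X$, the conjugation action of $G(F)$ on this kernel is the adjoint action, $g\,(1+\epsilon X)\,g^{-1}=1+\epsilon\,\Ad(g)X$, while the ring map $F\to F[\epsilon]$ splits $F[\epsilon]\to F$, so by functoriality $E_P(F[\epsilon])\to E_P(F)$ has a group-theoretic section and $E_P(F[\epsilon])=V_P\rtimes E_P(F)$. First I would record the inclusions $\Lie(U),\Lie(U^{-})\subseteq V_P$: indeed $U(F[\epsilon]),U^{-}(F[\epsilon])\subseteq E_P(F[\epsilon])$, and the kernels of $U(F[\epsilon])\to U(F)$ and $U^{-}(F[\epsilon])\to U^{-}(F)$ are exactly $\Lie(U)$ and $\Lie(U^{-})$.

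For the formula in (1) I would write a general element of $V_P$ as a word $x_1\cdots x_m$ with $x_i\in U(F[\epsilon])\cup U^{-}(F[\epsilon])$. Each $x_i$ differs from (the section of) its image $\bar x_i\in E_P(F)$ by an element $n_i\in\Lie(U)\cup\Lie(U^{-})$; pushing all the $n_i$ to the left and using that conjugation by the section of $\bar g\in E_P(F)$ acts as $\Ad(\bar g)$, the word becomes $\bigl(\sum_i \Ad(\bar x_1\cdots\bar x_{i-1})\,n_i\bigr)$ times the section of $\bar x_1\cdots\bar x_m$. When the element lies in $V_P$ the section factor is trivial, and one reads off $V_P=E_P(F).\Lie(U)+E_P(F).\Lie(U^{-})$; in particular $V_P$ is an $F$-subspace and is $E_P(F)$-stable. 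The full $G(F)$-stability I would deduce from normality of the elementary subgroup: $F[\epsilon]$ is local and $P$ is strictly proper, so $E_P(F[\epsilon])$ is normal in $G(F[\epsilon])$, whence conjugation by $G(F[\epsilon])$, which acts on $\Lie(G)$ through $\Ad(G(F))$, preserves $V_P=E_P(F[\epsilon])\cap\Lie(G)$. That $V_P$ is moreover an ideal, $[\Lie(G),V_P]\subseteq V_P$, is part of the same explicit root-subgroup analysis as in~\cite[\S 3.4]{GPS} (over an infinite field it also follows by differentiating the $\Ad(G)$-stability).

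For (2) I would show that the ideal generated by $\Lie(U)+\Lie(U^{-})$ exhausts $\Lie(G)$ once $G$ is semisimple simply connected. Writing $G$ as a product of Weil restrictions of absolutely almost simple simply connected groups, and noting that $E_P$, $\Lie(U)$, $\Lie(U^{-})$ and $V_P$ all split along this decomposition (with $P$ strictly proper, the factor parabolics stay proper), I reduce to $G$ absolutely almost simple simply connected and isotropic. Choosing $\lambda$ with $P=P_G(\lambda)$, $L=Z_G(\lambda)$ yields the weight grading $\Lie(G)=\Lie(U^{-})\oplus\Lie(L)\oplus\Lie(U)$, with the two outer summands already inside $V_P$, so it remains to prove $\Lie(L)\subseteq V_P$. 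Since $V_P$ is an ideal, $[\Lie(U),\Lie(U^{-})]\subseteq V_P$; I would recover the semisimple part $\Lie([L,L])$ from brackets of opposite root subgroups of $L$ (obtained from $[\Lie(U),\Lie(U^{-})]$ via root strings), and recover the toral part as the span of the coroot elements $h_\alpha$, which lie in $V_P$ for every root $\alpha$ once the relevant root spaces have been placed in $V_P$.

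The step carrying all the weight — and the exact place where simple connectedness is indispensable — is that these coroot elements span the full toral part of $\Lie(L)$ in arbitrary characteristic. For a simply connected group the cocharacter lattice of a maximal torus equals the coroot lattice, which is precisely what forces the $h_\alpha$ to span $\Lie(T)$, and hence the central-torus direction of $\Lie(L)$, even in small characteristic where $\Lie(G)$ need not be a semisimple Lie algebra and the adjoint representation is reducible. That the hypothesis cannot be dropped is shown by $\mathrm{PGL}_p$ in characteristic $p$, where the coroots span only a codimension-one subspace of the Cartan and $V_P\subsetneq\Lie(G)$. The main obstacle is therefore the positive-characteristic bookkeeping: one must verify that the structure constants do not vanish modulo $p$, so that the root-string brackets reach every root space of $\Lie(L)$, and that the coroot-span argument is applied on the simply connected group; both are exactly the explicit computations of~\cite[\S 3.4]{GPS}, which I would cite rather than reproduce.
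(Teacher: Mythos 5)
Your treatment of part (1) is essentially the paper's own argument: normality of $E_P$ over the local ring $F[\epsilon]$ gives the $\Ad(G(F))$-stability of $V_P$, and rewriting a word in $U(F[\epsilon])\cup U^{-}(F[\epsilon])$ by pushing the kernel components to one side gives $V_P=E_P(F).\Lie(U)+E_P(F).\Lie(U^{-})$; this half is fine.

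Part (2) has a genuine gap. Your pivotal intermediate claim --- that for $G$ semisimple simply connected the \emph{Lie ideal} of $\Lie(G)$ generated by $\Lie(U)+\Lie(U^{-})$ is all of $\Lie(G)$ --- is false in characteristic $2$, so it cannot be saved by any verification of structure constants. Concretely, take $G=\mathrm{Sp}_{2n}$, $n\ge 2$, over a field of characteristic $2$, and let $P$ be the stabilizer of an isotropic line, so that the roots of $U$ are $2e_1$ and $e_1\pm e_j$, $j\ge 2$. The only pairs of roots summing to $2e_j$ ($j\ge2$) are $\{e_j-e_k,\,e_j+e_k\}$ with $k\ne j$, whose structure constant is $\pm 2=0$, and $d(2e_j)=2\,de_j$ kills $\Lie(T)$; hence
$$
S\;=\;\Lie(T)\oplus\bigoplus_{\alpha\in\Phi,\ \alpha\neq\pm 2e_j\ (j\ge2)}\mathfrak{g}_\alpha
$$
is a \emph{proper} Lie ideal of $\mathfrak{sp}_{2n}$ containing $\Lie(U)+\Lie(U^{-})$. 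Thus brackets and root strings can never reach the long root spaces $\mathfrak{g}_{\pm 2e_j}\subseteq\Lie([L,L])$, which is exactly the part of $\Lie(L)$ your argument claims to recover; the coroot span only handles the toral direction. (The same failure occurs for types $B_n$, $F_4$ in characteristic $2$ and $G_2$ in characteristic $3$.)

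What rescues the lemma is the datum your plan discards: $V_P$ is not merely an ideal but is stable under $\Ad(E_P(F))$, and for split $G$ the group $E_P(F)$ contains Weyl-group representatives carrying $\mathfrak{g}_{2e_1}\subseteq\Lie(U)\subseteq V_P$ onto $\mathfrak{g}_{2e_j}$. This is why the paper's proof of (2) is organized around group-theoretic inputs rather than ideal generation: for infinite $F$ it quotes $E_P(F).\Lie(U)=\Lie(G)$ \cite[lemma 3.3.(3)]{GPS}, valid in every characteristic and for arbitrary isotropic $G$ (in particular it sees inside the anisotropic kernel of $L$, which relative root strings cannot); over a finite field $G$ is quasi-split, the split case is \cite[lemma 3.3.(1)]{GPS}, and the ideal-classification argument (Hogeweij) is invoked only for quasi-split non-split groups, whose type is then forced to be $A$, $D$ or $E_6$ --- simply laced, precisely so that pathological ideals like $S$ above do not exist. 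Your uniform ideal-theoretic reduction, with the characteristic-$p$ details deferred to \cite[\S 3.4]{GPS}, cannot be repaired by that citation: the relevant structure constants genuinely vanish, and the statements in that reference concern the action of $E_P(F)$ (orbit spans and kernels of elementary subgroups), not Lie ideals.
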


\begin{proof}(1)
It follows from~\cite[XXVI.5.1]{SGA3} that $E_P(F[\epsilon])$ (resp.\  $E_P(F)$)  is a  normal
subgroup of $G(F[\epsilon])$ (resp.\ $G(F)$). It implies that
$V_P$ is a Lie subalgebra of $\Lie(G)$ which is furthermore $G(F)$--equivariant.
Since  $\Lie(U), \Lie(U^{-})$ are contained in $V_P$, it follows that
$E_P(F) .\Lie(U)+ E_P(F). \Lie(U^{-}) \subseteq V_P$.
Conversely, we are given an element $v \in V_P$. It is of the shape
$v= u_1 u_2  \dots u_{2n}$ with $u_{2i+1} \in U(F[\epsilon])$ and $u_{2i} \in U^{-}(F[\epsilon])$.
We have  a decomposition $v= v_1 (g_2 v_2 g_2^{-1}) \dots   (g_{2n} v_{2n} g_{2n}^{-1})$
with $v_{2i+1} \in \Lie(U)$, $v_{2i} \in  \Lie(U^{-})$ and
$g_1, \dots, g_{2n} \in E_P(F)$. We have proven that $v$ belongs  to
 $E_P(F) .\Lie(U)+ E_P(F) .\Lie(U^{-})$.

\smallskip

\noindent (2)
Without loss of generality we can assume that $G$ is almost absolutely $F$--simple.
If $F$ is infinite, we have that $E_P(F) .\Lie(U)=\Lie(G)$ according to \cite[lemma 3.3.(3)]{GPS}
so a fortiori $V_P=\Lie(G)$.
We can then assume that $F$ is finite so that $G$ is quasi-split (Lang, see the proof of Cor.\ \ref{cor:polynomial_local}).
We have then  $E_P(F) = G(F)$  according to \cite[1.1.2]{T}.
If $G$ is split, the statement is \cite[lemma 3.3.(1)]{GPS}.
It remains to deal with the quasi-split non split case, it implies that $G$ is of outer  type $A$, $D$ or $E_6$.
In particular, all geometrical roots have same length and $G$ is not of type $A_1$. If
$G$ has $F$--rank 1, $P$ is a Borel subgroup of $G$
and if $G$ has $F$--rank $\geq 2$, we can replace
$P$ by a Borel subgroup $B$ in view of \cite[remark 2 after Theorem 1]{PS}. In both cases, we can then assume
that $P=B$ is a Borel subgroup.
Let $T$ be maximal torus of $B$,
we recall  the decomposition $\Lie(G)= \Lie(T) \oplus \Lie(U) \oplus \Lie(U^{-})$.

We consider the ideal $V_P \otimes_F F_s$ of $\Lie(G) \otimes_F F_s$.
According to \cite[prop.\ 2.6.a]{Hg}, $V_P \otimes_F F_s$ is central or contains $\Lie(T) \otimes_F F_s$.
Since $V_P$ is not central, we conclude that
$\Lie(T) \otimes_F F_s\subset V_P \otimes_F F_s$. It follows that $\Lie(T)  \subseteq V_P$, since
these are linear $F$-spaces, and thus satisfy
$V_P=(V_P \otimes_F F_s)\cap\Lie(G)$ and $\Lie(T)=(\Lie(T) \otimes_F F_s)\cap\Lie(G)$.
Since $\Lie(U), \Lie(U^{-}) \subset V_P$, we conclude that $V_P=\Lie(G)$.
\end{proof}

\begin{sproposition}\label{prop:hens-pair}
Let $R$ be a semilocal ring and let $\gG$ be a semisimple  group scheme over $B$,
such that its simply connected cover morphism $f: \gG^{sc} \to \gG$ is smooth.
We assume that $\gG$ has a strictly proper parabolic $R$-subgroup $\gP$.
Let $(B,I)$ be a henselian pair where
 $B$ is an $R$--algebra.
Then the map  $K_1^{\gG,\gP}(B)\to K_1^{\gG,\gP}(B/I)$ is an isomorphism.
\end{sproposition}

\begin{proof}
 Let $\gP^{-}$ be a parabolic $R$-subgroup of $\gG$, opposite to $\gP$. Let $\gU= \rad(\gP)$,  $\gU^{-}= \rad(\gP^{-})$.

Since $\gG$ is affine smooth, the map $\gG(B) \to \gG(B/I)$ is surjective
according to  the generalization of Hensel's lemma to  henselian pairs
\cite[Th.\ I.8]{Gruson}, hence $K_1^{\gG,\gP}(B)\to K_1^{\gG,\gP}(B/I)$
is onto. To show that it is injective, it is enough to prove that
$\ker(\gG(B)\to \gG(B/I)) \le E_{\gP}(B)$, since $E_{\gP}(B)$ surjects onto $E_\gP(B/I)$.

Combining the lifting method of   \cite[lemma 3.5]{GPS} and Lemma \ref{lem_root2},
 there exist $g_1, \dots, g_{2m} \in E_{\gP}(R)$
such that the  product map
$$
h: (\gU \times \gU^{-})^m   \to \gG, \enskip (u_1,\dots, u_{2m}) \mapsto  {^{g_1}\!u_1} \dots
{^{g_{2m}}\!u_{2m}}
$$
is smooth  at each  $(1,...,1)_{\kappa_i}$.
Then $h$ is smooth in the neighborhood of the origin of $(\gU \times \gU^{-})^m$.
Hensel's lemma yields $\ker(\gG(B)\to \gG(B/I))\le E_{\gP}(B)$.
\end{proof}

\section{Specialization for $R$--equivalence}


\subsection{The case of tori}


Let $A$ be a henselian local ring with maximal ideal $m$ and  residue field $k$.
As a special case of Lemma \ref{lem_tor_isotrivial}, any  $A$-torus is isotrivial.

\begin{sproposition} \label{prop_torus2}
Let  $A$ be  a local ring  of residue field $k$.
Let $\gT$ be an $A$-torus and put $T= \gT \times_A k$. Then

\smallskip

\noindent (1) If $A$ is henselian, then the natural map $\gT(A)/R \to  \gT(k)/R$ is an isomorphism.
In particular we have  $\ker\bigl( \gT(A) \to T(k) \bigr) \subseteq R\gT(A)$.

\smallskip

\noindent (2) If $A$ is regular and
$K$ denotes the fraction field of $A$, then the natural map $\gT(A)/R \lgr \gT(K)/R$ is an isomorphism.
\end{sproposition}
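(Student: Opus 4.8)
The plan is to deduce part (1) directly from the henselian-pair case already treated, and to reduce part (2) to a purity statement for flasque tori.

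For part (1), note that since $A$ is henselian local with maximal ideal $m$, the couple $(A,m)$ is a henselian pair, and the residue field $k=A/m$ is a field, hence in particular a regular domain. Thus Lemma \ref{lem_torus_pair} applies verbatim with $B=A$ and $I=m$, yielding the isomorphism $\gT(A)/R \simlgr \gT(k)/R$. The supplementary assertion is then immediate: if $g\in\ker\bigl(\gT(A)\to T(k)\bigr)$, then its class in $\gT(k)/R$ is trivial, so by injectivity of the above isomorphism $g\in R\gT(A)$.

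For part (2), I would first set up a flasque resolution. As $A$ is regular it is a normal domain, so $\gT$ is isotrivial by \cite[X.5.16]{SGA3}, and \cite[Prop. 1.3]{CTS2} provides an exact sequence $1\to\gS\to\gQ\xrightarrow{\pi}\gT\to 1$ of $A$-tori with $\gQ$ quasitrivial and $\gS$ flasque. Since $A$ is a regular local domain (in particular regular semilocal), Proposition \ref{prop_torus1} gives $\gT(A)/R\simlgr H^1(A,\gS)$; base-changing the resolution to the field $K$---where $\gQ_K$ stays quasitrivial and $\gS_K$ stays flasque---and applying the same proposition over $K$ gives $\gT(K)/R\simlgr H^1(K,\gS_K)$. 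Under these two identifications the natural map $\gT(A)/R\to\gT(K)/R$ becomes the restriction map $H^1(A,\gS)\to H^1(K,\gS_K)$.

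The remaining and main point is to show that this restriction map is bijective, and this is where flasqueness of $\gS$ is used in an essential way. Injectivity of $H^1(A,\gS)\to H^1(K,\gS_K)$ holds for an arbitrary torus over the regular local ring $A$, while surjectivity is precisely the purity property of flasque tori over regular local rings; both are part of the Colliot-Th\'el\`ene--Sansuc theory \cite{CTS2}. I expect this bijectivity to be the crux of the argument. Granting it, a diagram chase combining it with the two isomorphisms of the previous paragraph shows that $\gT(A)/R\to\gT(K)/R$ is an isomorphism.
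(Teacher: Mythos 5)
Your proposal is correct and takes essentially the same route as the paper: part (1) is exactly the paper's application of Lemma~\ref{lem_torus_pair} to the henselian pair $(A,m)$, and part (2) is the paper's argument via a flasque resolution, Proposition~\ref{prop_torus1}, and the Colliot-Th\'el\`ene--Sansuc results on the restriction map $H^1(A,\gS)\to H^1(K,\gS)$ (surjectivity by \cite[Th. 2.2]{CTS2}, injectivity by Th. 4.1 of the same paper). Your explicit justification of isotriviality of $\gT$ (via normality of the regular local ring $A$) is a detail the paper leaves implicit, and your attribution of the two cohomological inputs matches the paper's citations.
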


\begin{proof}
\noindent (1) Let $m$ be the maximal ideal of $A$. Since $A$ is henselian, $(A,m)$ is a Henselian pair. Then, since
$A/m=k$ is a regular domain,
by Lemma~\ref{lem_torus_pair} $\gT(A)/ R \to T(k)/R$ is an isomorphism.

\smallskip

\noindent (2) We consider  a flasque resolution
$$
1 \to \gS \to \gQ \xrightarrow{\pi} \gT \to 1.
$$
According to Proposition \ref{prop_torus1},
we have isomorphisms
$$
\gT(A)/ \pi( \gQ(A)) \simlgr  \gT(A)/R \simlgr H^1(A,\gS)
$$ and
$\gT(K)/ \pi( \gQ(K)) \simlgr  \gT(K)/R \simlgr H^1(K,\gS)$.
Since $\gS$ is flasque, the restriction map
$H^1(A,\gS) \to H^1(K,\gS)$
is  surjective \cite[Th.\ 2.2]{CTS2} and is injective ({\it ibid}, Th.\ 4.1).
Thus the map  $\gT(A)/R \to  \gT(K)/R$ is an isomorphism.
\end{proof}

\begin{scorollary} \label{cor_torus2}
We assume that the henselian local ring $A$ is
regular with residue field $k$ and
fraction field $K$. For any $A$-torus $\gT$
we have two isomorphisms
$$
T(k)/R \xleftarrow{\ \sim\ }  \gT(A)/R  \xrightarrow{\ \sim\ } \gT(K)/R.
$$
\end{scorollary}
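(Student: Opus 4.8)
The plan is simply to assemble the two isomorphisms furnished by Proposition~\ref{prop_torus2}, observing that the hypotheses placed on $A$ in the corollary supply exactly the hypotheses needed for each of its two parts; there is nothing genuinely new to prove here.

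First I would record that a regular local noetherian ring is an integral domain, so that the fraction field $K$ is defined and $\gT_K$ makes sense. Moreover, since $A$ is henselian local, the torus $\gT$ is isotrivial (as recalled at the start of the subsection), which is what guarantees the existence of a flasque resolution $1 \to \gS \to \gQ \xrightarrow{\pi} \gT \to 1$ underlying part (2). With these remarks in place, I would apply Proposition~\ref{prop_torus2}.(1), using that $A$ is henselian, to conclude that $\gT(A)/R \to T(k)/R$ is an isomorphism; and Proposition~\ref{prop_torus2}.(2), using that $A$ is regular with fraction field $K$, to conclude that $\gT(A)/R \to \gT(K)/R$ is an isomorphism. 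Concatenating the two at their common middle term gives the displayed diagram
$$
T(k)/R \simla \gT(A)/R \simlgr \gT(K)/R .
$$

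The corollary presents no real obstacle of its own: all the work is already invested in Proposition~\ref{prop_torus2}, whose part (1) rests on the henselian-pair comparison of Lemma~\ref{lem_torus_pair} (applied with $I=m$ and $A/m=k$, a regular domain) and whose part (2) rests on the surjectivity and injectivity of $H^1(A,\gS) \to H^1(K,\gS)$ for a flasque $\gS$, via~\cite[Th. 2.2 and 4.1]{CTS2}. The only point to verify is that the two parts really do share the common middle term $\gT(A)/R$, which they do by construction, so that the two isomorphisms glue into the asserted zig-zag.
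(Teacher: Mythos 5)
Your proposal is correct and is exactly how the paper obtains this corollary: the statement is an immediate concatenation of Proposition~\ref{prop_torus2}.(1) (using that $A$ is henselian) and Proposition~\ref{prop_torus2}.(2) (using that $A$ is regular, hence an integral domain with fraction field $K$), glued at the common term $\gT(A)/R$. Your side remarks — that regularity guarantees $A$ is a domain, and that henselianity (or normality) gives isotriviality of $\gT$ and hence the flasque resolution underlying part (2) — are precisely the hypotheses the proposition consumes, so nothing further is needed.
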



\subsection{Reduction to the anisotropic case}\label{subsec_aniso}

We come back to the setting of the introduction
where  $A$ is  a  henselian  local domain  of residue field $k$
and fraction field $K$.

Let $\gG$ be a reductive $A$--group scheme.
Let $\gP$ be a parabolic $A$--subgroup of $\gG$
and let $\gL$ be a Levi subgroup of $\gP$.
We know that $\gL= Z_\gG(\gS)$ where $\gS$ is the maximal central  $A$--split subtorus $\gS$ of
$\gL$ \cite[XXVI]{SGA3}. We put $G=\gG\times_A k$, $P= \gP \times_A k$ and define similarly
$L$ and $S$. According to Corollary~\ref{cor_parabolic}, we have the following commutative
diagram where horizontal maps are isomorphisms
\begin{equation}\label{diag_big}
 \xymatrix{
\gG(K)/R  & \ar[l]_\sim  \gL(K)/R \ar[r]^{\sim\quad} &   \bigl( \gL/\gS\bigr)(K)/R \\
 \gG(A)/R \ar[d] \ar[u] & \ar[l]_\sim  \gL(A)/R \ar[r]^{\sim\quad}  \ar[d] \ar[u]&
 \ar[d] \ar[u] \bigl( \gL/\gS\bigr)(A)/R \\
 G(k)/R  & \ar[l]_\sim  L(k)/R \ar[r]^{\sim\quad} &   \bigl( L/S\bigr)(k)/R .
}
\end{equation}
By diagram chase, we get the following facts.

\begin{slemma} \label{lem_aniso}
(1) If $\bigl(\gL/\gS\bigr)(A) /R \to \bigl(L/S\bigr)(k) /R$
is injective, then the two maps $\gG(A)/R \to G(k)/R$ and $\gL(A)/R \to L(k)/R$ are isomorphisms.

\smallskip

\noindent (2) If $\bigl(\gL/\gS\bigr)(A) /R \to \bigl(\gL/\gS\bigr)(K) /R$
is injective (resp.\ surjective, resp.\  isomorphism), then $\gG(A)/R \to \gG(K)/R$ is
injective  (resp.\ surjective, resp.\ an isomorphism)
and the map $\gL(A)/R \to \gL(K)/R$ is  injective (resp.\ surjective, resp.\ an isomorphism).
\end{slemma}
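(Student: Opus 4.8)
The plan is to run a diagram chase in the commutative diagram \eqref{diag_big}, exploiting that every \emph{horizontal} arrow there is an isomorphism: this is precisely Corollary~\ref{cor_parabolic} applied over the local ring $A$, over the field $K$, and over the field $k$, each of which is semilocal connected. The only extra input I need is the surjectivity of the three \emph{downward} (specialization) arrows. Since $\gG$, $\gL$ and $\gL/\gS$ are reductive, hence smooth and affine over $A$, and $(A,m)$ is henselian, Hensel's lemma \cite[th. I.8]{Gruson} shows that $\gG(A)\to G(k)$, $\gL(A)\to L(k)$ and $(\gL/\gS)(A)\to (L/S)(k)$ are onto, whence so are the induced maps on $R$-equivalence class groups. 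Throughout I will use the elementary fact that in a commutative square whose two horizontal arrows are bijective, the two vertical arrows share each of the properties ``injective'', ``surjective'', ``bijective'' (one vertical arrow is the conjugate of the other by the horizontal bijections).

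For part (1) I start from the hypothesis that the rightmost downward arrow $(\gL/\gS)(A)/R\to (L/S)(k)/R$ is injective; combined with its surjectivity noted above, it is an isomorphism. In the lower right square of \eqref{diag_big} the horizontal arrows are isomorphisms, so transferring this property across forces $\gL(A)/R\to L(k)/R$ to be an isomorphism. Feeding this into the lower left square (again with isomorphic horizontal arrows) yields that $\gG(A)/R\to G(k)/R$ is an isomorphism, which proves (1).

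For part (2) the argument is formally identical, but now uses the upper half of \eqref{diag_big} and the three \emph{upward} (restriction to the generic fibre) arrows. Given that the rightmost upward arrow $(\gL/\gS)(A)/R\to (\gL/\gS)(K)/R$ is injective (resp.\ surjective, resp.\ bijective), the transfer fact above propagates this property first across the upper right square to $\gL(A)/R\to \gL(K)/R$, and then across the upper left square to $\gG(A)/R\to \gG(K)/R$.

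I expect no genuine obstacle here: the statement is a purely formal consequence of the isomorphisms constituting the rows of \eqref{diag_big}. The only points demanding care are keeping the directions of the vertical arrows straight (specialization downward, restriction to the generic fibre upward) and, in part (1), remembering to invoke Hensel's lemma so as to upgrade the given injectivity to a bijection before running the chase.
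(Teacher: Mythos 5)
Your proof is correct and takes essentially the same approach as the paper: the paper's own proof likewise combines the surjectivity of the specialization maps given by Hensel's lemma (smoothness of the groups over the henselian ring $A$) with a diagram chase through the horizontal isomorphisms of diagram \eqref{diag_big} supplied by Corollary~\ref{cor_parabolic}. The only immaterial difference is bookkeeping: the paper transfers the hypothesized injectivity leftward and combines it with surjectivity of $\gG(A)/R \to G(k)/R$ and $\gL(A)/R \to L(k)/R$, whereas you first upgrade the rightmost vertical arrow to a bijection by applying Hensel's lemma to $\gL/\gS$ as well.
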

\begin{proof}
Since $\gG$ and $\gL$ are smooth $A$-schemes and $A$ is henselian, the maps $\gG(A)/R\to G(k)/R$
and $\gL(A)/R \to L(k)/R$ are surjective. The rest follows from Corollary~\ref{cor_parabolic}.
\end{proof}

It follows that the specialization  problem reduces to
the case of $\gL$ and even to  $\gL/\gS$.
In particular, if $\gP$ is minimal, then $\gL/\gS$ is anisotropic.


\subsection{The lifting map}


\begin{slemma}\label{lem:hens-local-RG}
Let $A$ be a henselian local ring with residue field $k$ and let $\gG$ be a reductive $A$-group.
Then $\ker\bigl( \gG(A) \to G(k) \bigr) \subseteq R\gG(A)$.
\end{slemma}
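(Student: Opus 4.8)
The plan is to prove the equivalent assertion that the natural map $\gG(A)/R \to G(k)/R$ is \emph{injective}; it is automatically surjective because $\gG$ is affine and smooth over the henselian local ring $A$, so $\gG(A) \to G(k)$ is onto (as recalled after Question~\ref{question_henselian}). First I would observe that the entire problem can be funnelled into the torus case already settled in Lemma~\ref{lem_torus_pair} and Proposition~\ref{prop_torus2}.(1), and that the mechanism for doing so depends on whether the residue field $k$ is infinite or finite. Accordingly I would split the argument into these two cases.

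If $k$ is infinite, then $A$ is local with infinite residue field, hence in particular semilocal with infinite residue fields, and Lemma~\ref{lem_unirational}.(1) applies directly (its part (1) needs no linearity hypothesis). It produces maximal $A$-subtori $\gT_1,\dots,\gT_n$ of $\gG$ whose Lie modules $\Lie(\gT_i)(A)$ together generate $\Lie(\gG)(A)$ as an $A$-module. Since $A/m=k$ is a field, hence a regular domain, Lemma~\ref{lem_red_tor} applied with $B=A$ and $I=m$ yields at once $\ker\bigl(\gG(A)\to G(k)\bigr)\subseteq R\gG(A)$. Note that this uses neither regularity of $A$ nor linearity of $\gG$, only the henselian torus input built into Lemma~\ref{lem_red_tor}.

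The finite residue field case is the main obstacle, and it is precisely here that the previous argument breaks down: over a small finite field the Lie algebras of the subtori of $\gG_k$ need not span $\Lie(\gG_k)$ (already $\SL_2$ over $\mathbb{F}_2$ fails, since it has too few maximal tori), so Lemma~\ref{lem_unirational} is unavailable. Instead I would exploit the rigidity of reductive groups over finite fields. Choose a minimal parabolic $A$-subgroup $\gP$ of $\gG$, a Levi $A$-subgroup $\gL$ of $\gP$, and the maximal central split subtorus $\gS\subseteq\gL$, as in \S\ref{subsec_aniso}; then $\gL/\gS$ is anisotropic. Its closed fibre $(\gL/\gS)\times_A k = L/S$ is an anisotropic reductive group over the finite field $k$, hence quasi-split by Lang's theorem; but a quasi-split anisotropic reductive group has no proper parabolic and is therefore a torus. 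Consequently the reductive $A$-group $\gL/\gS$ has toral special fibre, and so it is itself an $A$-torus.

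It then suffices to invoke the torus case: by Proposition~\ref{prop_torus2}.(1) the map $(\gL/\gS)(A)/R \to (L/S)(k)/R$ is an isomorphism, in particular injective. Feeding this hypothesis into the commutative diagram of \S\ref{subsec_aniso} through Lemma~\ref{lem_aniso}.(1) shows that $\gG(A)/R \to G(k)/R$ is an isomorphism, whence a fortiori $\ker\bigl(\gG(A)\to G(k)\bigr)\subseteq R\gG(A)$, as desired. I expect the only step demanding genuine care to be this passage through the anisotropic Levi quotient in the finite case, and the single input there to be checked by hand is the implication that anisotropy over a finite field forces $\gL/\gS$ to be a torus; every other ingredient is a bookkeeping application of the torus computations of \S\ref{subsec_tori} and of the reduction diagram of \S\ref{subsec_aniso}.
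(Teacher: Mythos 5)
Your infinite-residue-field case is exactly the paper's own argument: the paper simply invokes Corollary~\ref{cor_red_tor}, which is the combination of Lemma~\ref{lem_unirational}.(1) and Lemma~\ref{lem_red_tor} that you spell out, applied to the henselian pair $(A,\gm)$. Your remark that part (1) of Lemma~\ref{lem_unirational} needs no linearity hypothesis is correct, and it even repairs a small imprecision, since Corollary~\ref{cor_red_tor} is stated for $R$-linear groups whereas the $\gG$ of the present lemma is not assumed linear.

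In the finite-residue-field case you genuinely diverge from the paper, and there the paper is both shorter and stronger: by \cite[XXVI.7.15]{SGA3} combined with Lang's theorem, $\gG$ is quasi-split over $A$, and Gauss decomposition \cite[XXVI.5.1]{SGA3}, together with the $R$-triviality of tori over $k$ (Lang's theorem kills $H^1(k,\cdot)$ of the flasque torus in a flasque resolution) and its transfer to $A$ by Proposition~\ref{prop_torus2}.(1), gives $\gG(A)/R=\gG(k)/R=1$ outright. Your alternative -- running the reduction diagram \eqref{diag_big} and feeding the torus case into Lemma~\ref{lem_aniso}.(1) -- is viable, but as written it has one concrete gap: you assert that the closed fibre $L/S$ of $\gL/\gS$ is anisotropic over $k$. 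What \S\ref{subsec_aniso} asserts is anisotropy of $\gL/\gS$ over $A$, and passing from the base to the closed fibre is precisely where henselianness must be used: a proper parabolic $k$-subgroup of $L/S$ corresponds to a proper parabolic $k$-subgroup of $G$ strictly contained in $P$, and this lifts, by smoothness and properness of the scheme of parabolics of $\gL$ plus Hensel's lemma, to a proper parabolic $A$-subgroup of $\gG$ strictly contained in $\gP$, contradicting minimality. Without henselianness the transfer is simply false: over $\ZZ_{(p)}$, the norm-one group of a maximal order of a quaternion division algebra over $\QQ$ that splits at $p$ is anisotropic over the base yet split over $\FF_p$. So your self-diagnosis points at the wrong step: the implication ``anisotropic over a finite field $\Rightarrow$ torus'' is the easy part (Lang), while the fibrewise anisotropy, and likewise the promotion ``toral closed fibre $\Rightarrow$ $\gL/\gS$ is an $A$-torus'' (constancy of the type over the connected base $\Spec(A)$), are the statements that actually need proof.

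Two further points. First, \S\ref{subsec_aniso} and Lemma~\ref{lem_aniso} are stated for henselian local \emph{domains}, while your $A$ is merely henselian local; this is harmless, since Lemma~\ref{lem_aniso}.(1) only uses the $A$- and $k$-columns of \eqref{diag_big}, i.e.\ Corollary~\ref{cor_parabolic} over $A$ and over $k$ together with Hensel surjectivity, but it should be said. Second, the cleanest repair of your finite case is to borrow the paper's first step: quasi-splitness of $\gG$ over $A$ forces any minimal parabolic $\gP$ to be a Borel (a Borel of $G$ inside $P$ lifts, by Hensel applied to the scheme of Borels of $\gL$, to a Borel of $\gG$ inside $\gP$), so $\gL$ is a maximal torus and $\gL/\gS$ is an $A$-torus with no anisotropy transfer needed; then Proposition~\ref{prop_torus2}.(1) and Lemma~\ref{lem_aniso}.(1) conclude exactly as you say.
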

\begin{proof}
If $k$ is infinite, the claim follows from Corollary~\ref{cor_red_tor}. If $k$ is finite, then
  Lang's theorem \cite[Th.\ 2]{Lg}
shows that  $G$ admits a Borel $k$-subgroup.
Since the $A$-scheme of Borel subgroups of $\gG$ is smooth,
the Hensel's lemma shows that $\gG$ admits an $A$-Borel
subgroup scheme. It follows that $\gG$ is quasi-split by~\cite[XXIV.3.9.1]{SGA3}.
Then one has $\gG(A)/R=\gG(k)/R=1$ by Gauss decomposition~\cite[XXVI.5.1]{SGA3}
combined with the fact that quasi-split tori over $A$ and $k$ are $R$-trivial.
\end{proof}

The above lemma shows that the map $\gG(A) \to \gG(A)/R$ factorizes through
$G(k)$, i.e.\ defines a surjective homomorphism  $\phi:  G(k) \to \gG(A)/R$.
One way to prove that the map $\gG(A)/R \to G(k)/R$ is an isomorphism
would be to show that $\phi$ factorizes
through $G(k)/R$, that is to complete the following diagram

\begin{equation}\label{eq_problem}
\xymatrix{
G(k) \ar[r]^\phi \ar[d] & \gG(A)/R \ar[r] &  1. \\
G(k)/R  \ar@{.>}[ur].
}
\end{equation}

\noindent The dotted map is called (when it exists) \emph{the lifting map}. In what follows we
prove the existence of the lifting map in two different cases.

\begin{sproposition}\label{prop:hens-section}
Let $A$ be a henselian local ring with the residue field $k$, and let
$\gG$ be a reductive group over $A$. Assume that $A$ is equicharacteristic, i.e. $A$ contains a field.
Then $\gG(A)/R\to G(k)/R$ is an isomorphism.
\end{sproposition}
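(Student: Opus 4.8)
The plan is to prove surjectivity and injectivity separately, the former being essentially free and the latter being the whole point. Since $\gG$ is smooth and affine and $A$ is henselian local, the reduction map $\gG(A)\to G(k)$ is surjective (as already recorded before the statement), so $\gG(A)/R\to G(k)/R$ is surjective. By Lemma~\ref{lem:hens-local-RG} the kernel of $\gG(A)\to G(k)$ lies in $R\gG(A)$, so the composite $\gG(A)\to\gG(A)/R$ factors through reduction and yields a surjective homomorphism $\phi\colon G(k)\to\gG(A)/R$; moreover the composite $G(k)\xrightarrow{\phi}\gG(A)/R\to G(k)/R$ is the canonical projection. Consequently the map $\gG(A)/R\to G(k)/R$ is injective if and only if $\phi$ kills the subgroup $RG(k)$, i.e.\ if and only if the \emph{lifting map} of diagram~\eqref{eq_problem} exists. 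So I would reduce the whole statement to the assertion that $\phi(RG(k))=1$.

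The key idea is to use the equicharacteristic hypothesis to reduce to the case of a \emph{constant} group scheme. First, since $A$ is an equicharacteristic henselian local ring, I would invoke the existence of a field of representatives, that is a coefficient field $k\hookrightarrow A$, equivalently a ring section $s\colon k\to A$ of the reduction $\pi\colon A\to k$, so that $\pi\circ s=\id$ and $A$ becomes a $k$-algebra for which $\pi$ is a $k$-augmentation. Second, writing $G_0$ for the split Chevalley form of the (constant) type of $\gG$, the group $\gG$ is classified by a class in $H^1(A,\Aut(G_0))$, and $\Aut(G_0)$ is smooth affine, being an extension of the finite étale $\Out(G_0)$ by $G_0^{\mathrm{ad}}$. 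As $(A,m)$ is a henselian pair, Strano's theorem~\cite[th.~1]{Strano} gives a bijection $H^1(A,\Aut(G_0))\simlgr H^1(k,\Aut(G_0))$. The constant lift $G\times_k A$, the base change of $G=\gG\times_A k$ along $s$, has special fibre $G$ because $\pi\circ s=\id$, hence the same image as $\gG$ under this bijection; by injectivity I conclude $\gG\cong G\times_k A$ as $A$-groups.

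Granting the constancy $\gG\cong G\times_k A$, the section $s$ induces a genuine base-change homomorphism $s_*\colon G(k)\to G(A)=\gG(A)$ splitting $\pi_*\colon\gG(A)\to G(k)$, and likewise induces $k[t]_\Sigma\to A[t]_\Sigma$ (units go to units, so $\Sigma_k$ lands in $\Sigma_A$). Now take $\bar g\in RG(k)$. By Lemma~\ref{lem_homotopy}(2) the element $\bar g$ is \emph{directly} $R$-equivalent to $1$, so there is $\bar h\in G(k[t]_\Sigma)$ with $\bar h(0)=1$ and $\bar h(1)=\bar g$. Applying $s_*$ produces $H:=s_*(\bar h)\in\gG(A[t]_\Sigma)$ with $H(0)=1$ and $H(1)=s_*(\bar g)$, which exhibits $s_*(\bar g)\in R\gG(A)$, i.e.\ $\phi(\bar g)=[s_*(\bar g)]=1$. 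This proves $\phi(RG(k))=1$ and hence injectivity. Equivalently, for arbitrary $g\in\gG(A)$ with $\pi_*(g)=\bar g\in RG(k)$, Lemma~\ref{lem:hens-local-RG} gives $g\sim_R s_*(\bar g)$, and the above gives $s_*(\bar g)\sim_R 1$.

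I expect the main obstacle to be the very first input of the second paragraph: the existence of a coefficient field for an equicharacteristic \emph{henselian} (as opposed to complete) local ring, which is precisely the point on which the fields-of-representatives discussion is needed. The rigidity input is comparatively routine: one only needs $\Aut(G_0)$ to be smooth affine so that \cite[th.~1]{Strano} applies to the henselian pair $(A,m)$, after which the identification $\gG\cong G\times_k A$ and the path-lifting through $s_*$ are formal. Note that this argument uses neither the parabolic-reduction step nor regularity of $A$, consistent with the hypotheses of the proposition.
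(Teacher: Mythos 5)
Your overall strategy is the one the paper uses: prove surjectivity via Hensel's lemma, reduce injectivity to the existence of the lifting map of diagram~\eqref{eq_problem}, produce a section $s\colon k\to A$, identify $\gG$ with the constant group $G\times_k A$ by henselian rigidity, and push direct $R$-equivalence paths forward along $s_*\colon G(k[t]_\Sigma)\to \gG(A[t]_\Sigma)$. You are in fact more explicit than the published proof on the rigidity step: the paper only says ``since $A\to k$ admits a section, the map $R\gG(A)\to RG(k)$ is surjective,'' which indeed presupposes the identification $\gG\cong G\times_k A$ (without it, $s$ is not a map of $A$-algebras and cannot be fed into the functor $\gG(-)$); the paper spells this identification out only later, in the proof of Theorem~\ref{thm:hens-inj}, via \cite[XXIV.8.1]{SGA3}. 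Your route through \cite[th. 1]{Strano} is a perfectly acceptable substitute, since $\Aut(G_0)=G_0^{\mathrm{ad}}\rtimes\Out(G_0)$ is smooth and affine.

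There is, however, one genuine gap, and it sits exactly at the point you flag as ``the main obstacle'': an equicharacteristic henselian local ring need \emph{not} admit a coefficient field. The Zorn-plus-Hensel argument works in characteristic zero, and more generally whenever the residue field is separably generated over a perfect subfield of $A$, because Hensel's lemma lifts roots of \emph{separable} polynomials; but it breaks down in characteristic $p$, where purely inseparable elements cannot be lifted, and the existence of a coefficient field genuinely fails for suitable henselian local $\mathbb{F}_p$-algebras with non-separably-generated (e.g.\ perfect, infinite) residue fields. This is precisely why the paper does not claim a coefficient field for $A$ itself: Lemma~\ref{lem:hens-section} (the point credited to \v{C}esnavi\v{c}ius in the acknowledgments) asserts only that $A$ is a \emph{filtered direct limit} of henselian local rings $A_i$ admitting sections --- these $A_i$ are henselizations of essentially finite-type algebras over the prime field, so their residue fields are finitely generated, hence separably generated, over a perfect field, and there the lifting argument does apply. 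The paper then reduces the Proposition to the section case using the finite presentation of $\gG$ and the fact that $\gG(-)/R$ commutes with filtered direct limits (Lemma~\ref{lem_limit}). Your proof becomes complete once the appeal to a coefficient field for $A$ is replaced by this limit reduction; everything after that point goes through exactly as you wrote it.
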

\begin{proof}
By Lemma~\ref{lem:hens-section} $A$ is a filtered direct limit
of henselian local rings $A_i$ such that the map from $A_i$ to its residue field admits a section. Since $\gG$
is finitely presented over $A$, and the functor $\gG(-)/R$
commutes with filtered direct limits by Lemma~\ref{lem_limit}, we can assume from the start that $A\to k$ admits a section.

We have $\ker(\gG(A)\to G(k))\subseteq R\gG(A)$ by Lemma~\ref{lem:hens-local-RG}. Since $A\to k$ admits a section,
the map $R\gG(A)\to RG(k)$ is surjective. These two statements together imply that $\gG(A)/R\to G(k)/R$ is injective.
The surjectivity is obvious.
\end{proof}



\begin{stheorem}\label{thm:hens-isotr}
Let $A$ be a henselian local ring with residue field $k$,
let $\gG$ be a semisimple  group scheme over $A$,
such that its simply connected cover morphism $f: \gG^{sc} \to \gG$ is smooth.
We assume that $\gG$ has a strictly proper parabolic subgroup $\gP$.

\smallskip

\noindent (1) The map  $K_1^{\gG,\gP}(A)\to  K_1^{G,P}(k)$ is an isomorphism.

\smallskip

\noindent (2)  If $\gG= \gG^{sc}$,
we have a square of isomorphisms
$$
\xymatrix{
K_1^{\gG,\gP}(A)\ar[d]^\wr   \ar[r]^\sim  & K_1^{G,P}(k) \ar[d]^\wr   \\
\gG(A)/R   \ar[r]^\sim  & G(k)/R .
}
$$

\smallskip

\noindent (3) Assume furthermore that  $A$ is a domain with fraction field $K$.
There is a natural lifting map $K_1^{G,P}(k) \to K_1^{\gG,\gP}(A)\to K_1^{\gG,\gP}(K)$.
\end{stheorem}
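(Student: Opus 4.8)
The plan is to deduce all three assertions from Proposition~\ref{prop:hens-couple} together with the known field case of the Kneser--Tits picture, so that the substantive work is already carried out in that proposition and only functoriality and a short diagram chase remain. For part (1), I would simply specialize Proposition~\ref{prop:hens-couple}: take $R=A$ (a local, hence semilocal, ring), $B=A$, and $I=\gm$ the maximal ideal of $A$, so that $B/I=k$. Because $A$ is henselian local, $(A,\gm)$ is a henselian couple, and the hypotheses on $\gG$ (semisimple over $A$, smooth simply connected cover, strictly proper parabolic $\gP$) match those of the proposition verbatim. It then yields directly that $K_1^{\gG,\gP}(A)\to K_1^{\gG,\gP}(k)=K_1^{G,P}(k)$ is an isomorphism.

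For part (2), I would work with the commutative square displayed in the statement. Its commutativity is forced by the naturality of the canonical transformation $K_1^{\gG,\gP}(-)\to(-)/R$ with respect to the base change $A\to k$. The top horizontal arrow is an isomorphism by part~(1). Since $\gG=\gG^{sc}$, the closed fibre $G=\gG_k$ is semisimple simply connected and $P=\gP_k$ is a strictly proper parabolic of $G$; hence the right vertical arrow $K_1^{G,P}(k)\to G(k)/R$ is an isomorphism by the Margaux--Soul\'e theorem~\cite[Th. 3.10]{M1} combined with~\cite[Th. 7.2]{Gi2}. Consequently the composite ``left vertical followed by bottom'' equals ``top followed by right vertical'', which is an isomorphism; in particular the left vertical map is injective. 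As the left vertical map $K_1^{\gG,\gP}(A)\to\gG(A)/R$ is always surjective (it is the canonical quotient map coming from $E_\gP(A)\subseteq R\gG(A)$), it is an isomorphism, and then the bottom arrow is an isomorphism as well.

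For part (3), I would define the lifting map to be the inverse of the isomorphism $K_1^{\gG,\gP}(A)\xrightarrow{\sim}K_1^{G,P}(k)$ from part~(1), read as $K_1^{G,P}(k)\xrightarrow{\sim}K_1^{\gG,\gP}(A)$, and then compose it with the functorial homomorphism $K_1^{\gG,\gP}(A)\to K_1^{\gG,\gP}(K)$ induced by the inclusion $A\hookrightarrow K$. This is precisely the point at which the extra hypothesis that $A$ is a domain enters, as it provides the fraction field $K$ and the structural ring map $A\to K$.

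The main obstacle is essentially absent beyond Proposition~\ref{prop:hens-couple} itself, whose proof rests on Lemma~\ref{lem_root2} and a Hensel/smoothness lifting argument; everything above is functoriality plus a three-line diagram chase. The one point genuinely needing verification is that $P=\gP_k$ remains a \emph{strictly proper} parabolic of the simply connected group $G=\gG_k$, so that the field-case isomorphism legitimately applies to the right vertical map of part~(2); this should follow from the compatibility of the strict-properness condition with base change along $A\to k$.
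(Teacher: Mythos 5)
Your proposal is correct and follows essentially the same route as the paper: part (1) is exactly the specialization of Proposition~\ref{prop:hens-couple} to the henselian couple $(A,\gm)$, part (2) is the same diagram chase combining surjectivity of the canonical map $K_1^{\gG,\gP}(A)\to\gG(A)/R$ with the field-case isomorphism $K_1^{G,P}(k)\simeq G(k)/R$ of \cite[Th. 7.2]{Gi2}, and part (3) is the same composition of the inverse of the isomorphism in (1) with the functorial map to $K_1^{\gG,\gP}(K)$. Your closing observation that strict properness of $\gP$ passes to the closed fibre (via the Dynkin scheme, which is finite \'etale over the henselian local base) is a legitimate point that the paper leaves implicit, and it does hold.
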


\begin{proof}
(1) This is a special case of Proposition \ref{prop:hens-pair}.

\smallskip

\noindent (2) If $\gG= \gG^{sc}$,  we have the following commutative diagram
$$
\xymatrix{
K_1^{\gG,\gP}(A)\ar[d]  \ar[r]^\sim  & K_1^{G,P}(k) \ar[d]^\wr   \\
\gG(A)/R   \ar[r]  & G(k)/R
}
$$
where the right vertical  isomorphism is \cite[Th.\ 7.2]{Gi2}.
Since the left vertical map is onto,  a diagram chase
shows that all maps are isomorphisms.

\smallskip

\noindent (3) It is a straightforward consequence.
\end{proof}

\smallskip


\subsection{The case of DVRs}\label{subsec_DVR}


Assume that $A$ is a henselian DVR and $\gG$ is a reductive group over $A$. We remind the reader of
the existence of a specialization map
$$
\varphi: \gG(K)/R \to G(k)/R
$$
which is characterized by the property $\varphi([g])= [\overline{g}]$
for all $g \in G(A)$ \cite[Th.\ 0.2]{Gi1}.  In other words we have a
commutative diagram

\begin{equation}\label{eq_DVR}
\xymatrix{
\gG(A)/R \ar[r] \ar[d] & \gG(K)/R \ar[dl]^{\varphi} \\
G(k)/R .
}
\end{equation}

\noindent This is based on the existence of a specialization map $\gX(A)/R \to \gX(k)/R$
for a projective $A$--scheme $\gX$ due to Koll\'ar \cite{Ko} and Madore \cite{Ma},
see also \cite[Th.\ 6.1]{CT}.

\begin{sremark}\label{rem_char2}{\rm
The quoted reference~\cite[Th.\ 0.2]{Gi1} requires the assumption that $k$
is not of characteristic $2$. This assumption
occurs only in the de Concini--Procesi construction of the wonderful compactification of an adjoint
semisimple $A$-group scheme. It is folklore
that we can get rid of this assumption  by a refinement of
\cite[Th.\ 3.13]{CS}. By descent, the relevant case is that of adjoint Chevalley
 groups over $\ZZ$ which is used for example
 in \cite{STBT}. Note also that in the field case, there is a
 construction of the wonderful compactification in
 \cite[\S 6.1]{BK}.
 }
\end{sremark}

\begin{sremark}\label{rem_CT}{\rm
The existence of the specialization map in the reductive case over
a DVR has been established by another method by
Colliot-Th\'el\`ene, Harbater, Hartmann, Krashen, Parimala, and Suresh
which involves simpler compactifications \cite[Th.\ A.10]{CTHHKPS}.
 It follows from Lemma~\ref{lem_dvr2} below that the
two specialization maps coincide. See also Remark~\ref{rem_CT2}.
}
\end{sremark}

\begin{slemma}\label{lem_dvr2}
Let $A$ be a henselian DVR. For any reductive group $\gG$ over $A$ the map $\gG(A)/R\to\gG(K)/R$ is
surjective.
\end{slemma}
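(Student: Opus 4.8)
The plan is to reduce, via a minimal parabolic, to an anisotropic group and there to show that no rational points are lost, i.e. that $\gH(A)=\gH(K)$. First I would choose a minimal parabolic $A$-subgroup $\gP$ of $\gG$ (such a $\gP$ exists by the relative root datum theory of \cite[Exp. XXVI]{SGA3} over the local ring $A$), let $\gL$ be a Levi subgroup, let $\gS$ be the maximal central split $A$-subtorus of $\gL$, and put $\gH=\gL/\gS$, which is anisotropic over $A$ because $\gP$ is minimal. By Corollary~\ref{cor_parabolic} the horizontal maps of diagram~\eqref{diag_big} are isomorphisms, so the restriction map $\gG(A)/R\to\gG(K)/R$ is identified with $\gH(A)/R\to\gH(K)/R$; by Lemma~\ref{lem_aniso}(2) it therefore suffices to prove that $\gH(A)/R\to\gH(K)/R$ is surjective.

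Next I would verify that $\gH$ is anisotropic over the fraction field $K$ and not merely over $A$. If $\gH_K$ had a proper parabolic $K$-subgroup, the valuative criterion of properness applied to the projective $A$-scheme $\Par(\gH)$ would extend it to a proper parabolic $A$-subgroup, contradicting anisotropy over $A$. Likewise $\rad(\gH)$ is an anisotropic $A$-torus; having good reduction it is unramified, so its split rank over $K$ equals its split rank over the residue field $k$, namely $0$. Hence $\gH_K$ is anisotropic over $K$.

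The heart of the matter is then the claim $\gH(A)=\gH(K)$. Since $\gH$ is affine and $A=\widehat A\cap K$ inside $\widehat K$, where $\widehat A$ is the completion and $\widehat K$ its fraction field, one has $\gH(A)=\gH(\widehat A)\cap\gH(K)$; moreover $\gH_{\widehat K}$ remains anisotropic, as a parabolic or central split torus over $\widehat K$ would spread out over $\widehat A$ and descend to $A$. It thus remains to treat the complete case: because $\gH_{\widehat K}$ is anisotropic, the group $\gH(\widehat K)$ is bounded, while $\gH(\widehat A)$ is a hyperspecial, hence maximal bounded, subgroup of $\gH(\widehat K)$; the inclusion $\gH(\widehat A)\subseteq\gH(\widehat K)$ then forces equality. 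Intersecting with $K$ gives $\gH(A)=\gH(K)$, so $\gH(A)/R\to\gH(K)/R$ is even bijective, and by the reduction of the first paragraph $\gG(A)/R\to\gG(K)/R$ is surjective.

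The main obstacle is this last step, the boundedness input $\gH(A)=\gH(K)$: it rests on the Bruhat--Tits/Prasad fact that a reductive group over a complete (or henselian) discretely valued field is anisotropic if and only if its group of rational points is bounded, together with the identification of the integral points of a reductive good-reduction model with a maximal bounded subgroup. Everything else is a formal consequence of the parabolic reduction of Corollary~\ref{cor_parabolic} and of Lemma~\ref{lem_aniso}; note in particular that this argument yields surjectivity without appealing to the specialization map $\varphi$ of~\eqref{eq_DVR}.
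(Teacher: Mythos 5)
Your proposal is correct and follows essentially the same route as the paper: both reduce, via a minimal parabolic $A$-subgroup, Corollary~\ref{cor_parabolic} and Lemma~\ref{lem_aniso}(2), to the anisotropic quotient $\gL/\gS$, and both rest on the Bruhat--Tits fact that for such an anisotropic good-reduction group over a henselian DVR the $A$-points coincide with the $K$-points. The only differences are presentational: the paper simply cites \cite{BT2} for this equality (and, in its first case, uses Hilbert~90 to carry it from $\gL/\gS$ back to the Levi $\gL$), whereas you unwind that citation into the explicit boundedness/hyperspecial-maximality argument over the completion and verify anisotropy of the generic fiber directly.
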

\begin{proof}
 \noindent{\it First  case: $G=\gG_k$ is irreducible (that
is $G$ is the only parabolic $k$--subgroup of $G$).}
Let $S$ be the maximal
central split subtorus of $\gG_k$. It lifts to
a central split subtorus $\gS$ of $\gG$ \cite[XI]{SGA3}.
Since $G/S$ is anisotropic, we have $(\gG/\gS)(A)=(\gG/\gS)(K)$~\cite{BT2}.
Hilbert 90 theorem yields $\gG(A)/\gS(A)=\gG(K)/\gS(K)$ hence
a decomposition $\gG(K) = \gS(K)  \, \gG(A)$.
Since $R\gS(K)= \gS(K)$, we conclude that  $\gG(K) = \gG(A) \,  R\gG(K)$.

\smallskip

\noindent{\it General  case.}
Let $\gP$ be a minimal parabolic
$A$--subgroup of $\gG$. Let $\gP^{-}$ be an opposite
parabolic $A$--subgroup scheme to $\gP$. Then the Levi subgroup
$\gL=\gP \cap \gP^{-}$ is such that $L =\gL_k$ is irreducible.
Let $\gS$ be the maximal central split subtorus of $\gL$.
The first case shows that $(\gL/\gS)(A)/R\to (\gL/\gS)(K)/R$ is surjective.
By Lemma~\ref{lem_aniso} this implies
the surjectivity of $\gG(A)/R\to\gG(K)/R$.
\end{proof}

\begin{sproposition}\label{prop:hens-dvr-isotr}
Let $A$ be a henselian DVR. Let $k$ be the residue field of $A$ and
let $K$ be the fraction field of $A$. Let    $\gG$ be a semisimple simply connected
$A$--group scheme having a strictly proper parabolic $A$-subgroup $\gP$. Then we have the following commutative diagram
of isomorphisms
\begin{equation}\label{diag_hDVR}
 \xymatrix{
K_1^{\gG,\gP}(k)\ar[d]^\wr   & \ar[l]_\sim K_1^{\gG,\gP}(A)  \ar[r]^{\sim}\ar[d]^{\wr} & K_1^{\gG,\gP}(K)\ar[d]^\wr \\
  \gG(k)/R & \ar[l]_\sim  \gG(A)/R \ar[r]^{\sim}&    \gG(K)/R \\
}
\end{equation}



\end{sproposition}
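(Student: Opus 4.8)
The plan is to read the diagram~\eqref{diag_hDVR} as two squares glued along the central column $K_1^{\gG,\gP}(A)\to \gG(A)/R$, and to treat the ``residue'' square (the maps towards $k$) and the ``generic'' square (the maps towards $K$) by different inputs. Since $\gG$ is semisimple simply connected we have $\gG=\gG^{sc}$, so the simply connected cover morphism is the identity and in particular smooth, and $\gP$ is a strictly proper parabolic $A$-subgroup. First I would therefore invoke Theorem~\ref{thm:hens-isotr}(2) verbatim: it already asserts that the square
$$
\xymatrix{
K_1^{\gG,\gP}(A)\ar[d]^\wr   \ar[r]^\sim  & K_1^{G,P}(k) \ar[d]^\wr   \\
\gG(A)/R   \ar[r]^\sim  & G(k)/R
}
$$
consists of isomorphisms. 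This immediately supplies the two leftward horizontal arrows of~\eqref{diag_hDVR}, the $A$- and $k$-vertical arrows, and --- crucially for the generic side --- the fact that the reduction map $\gG(A)/R\to G(k)/R$ is an isomorphism.

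Next I would establish the generic side. Base-changing to the fraction field, $\gG_K$ is semisimple simply connected and $\gP_K$ is a strictly proper parabolic $K$-subgroup, so \cite[th. 7.2]{Gi2} gives that the rightmost vertical arrow $K_1^{\gG,\gP}(K)\to \gG(K)/R$ is an isomorphism. For the rightward horizontal arrow $\gG(A)/R\to \gG(K)/R$ I would argue separately for surjectivity and injectivity: surjectivity is exactly Lemma~\ref{lem_dvr2}, while injectivity I expect to be the main obstacle. Here the key idea is to exploit the specialization map $\varphi\colon \gG(K)/R\to G(k)/R$ of~\eqref{eq_DVR}, characterized by $\varphi([g])=[\overline g]$ for $g\in \gG(A)$. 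This characterization says precisely that the triangle~\eqref{eq_DVR} commutes, i.e. the composite $\gG(A)/R\to \gG(K)/R\xrightarrow{\varphi}G(k)/R$ is the reduction map. Since the reduction map was shown to be an isomorphism in the first step, it is in particular injective, which forces $\gG(A)/R\to \gG(K)/R$ to be injective; together with Lemma~\ref{lem_dvr2} this makes it an isomorphism.

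Finally I would deduce the remaining top-right arrow $K_1^{\gG,\gP}(A)\to K_1^{\gG,\gP}(K)$ formally. In the commutative square with this arrow on top, $\gG(A)/R\to \gG(K)/R$ on the bottom, and the two $K_1$-to-$R$ vertical isomorphisms on the sides, three of the four maps are now known to be isomorphisms, so a diagram chase forces the fourth to be one as well. The only genuinely non-formal step is the injectivity of $\gG(A)/R\to \gG(K)/R$, and the crux there is recognizing that the henselian-DVR specialization map factors the reduction isomorphism, so that its injectivity is inherited. Everything else is a bookkeeping application of Theorem~\ref{thm:hens-isotr}(2), \cite[th. 7.2]{Gi2}, and Lemma~\ref{lem_dvr2}.
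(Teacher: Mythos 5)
Your proposal is correct and follows essentially the same route as the paper: the left square comes from Theorem~\ref{thm:hens-isotr}(2) (the paper equivalently re-derives it from Proposition~\ref{prop:hens-couple} together with \cite[th. 7.2]{Gi2}), injectivity of $\gG(A)/R\to\gG(K)/R$ comes from composing with the specialization map of~\eqref{eq_DVR} and using that the reduction map is an isomorphism, surjectivity is Lemma~\ref{lem_dvr2}, and the remaining arrows follow by a diagram chase. No gaps.
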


\begin{proof}
By~Theorem~\ref{thm:hens-isotr} we have that $\gG(A)/R\to \gG(k)/R$ is an isomorphism. Then it follows
from the existence of specialization map~\eqref{eq_DVR} that $\gG(A)/R\to \gG(K)/R$ is injective.
By Lemma~\ref{lem_dvr2} the map $\gG(A)/R\to \gG(K)/R$ is surjective.

Consider the commutative diagram
$$
\xymatrix{
K_1^{\gG,\gP}(A)  \ar[r]  \ar[d] &  \gG(A)/R  \ar[d]  \ar[r]& 1\\
K_1^{\gG,\gP}(k)  \ar[r]  & \gG(k)/R .\\
}
$$
The bottom horizontal map is an isomorphism as we have used several times \cite{Gi2}
and the left vertical map is an isomorphism in view of Proposition \ref{prop:hens-pair}.
It follows that $K_1^{\gG,\gP}(A)   \to \gG(A)/R$ is injective and then
an isomorphism. The remaining isomorphisms follow immediately.
\end{proof}

\begin{sremark}{\rm
The surjectivity of the map $K_1^{\gG,\gP}(A)\to K_1^{\gG,\gP}(K)$ was previously proved in~\cite[lemme 4.5.1]{Gi2}.
Note that it is does not hold for $A=k[[t]]$ and $\gG=\GL_n$ or $\PGL_n$,
so seems specific to the semisimple simply connected case.
 }
\end{sremark}


\subsection{Specialization in the equicharacteristic case}\label{subsec_const}



Assume that $A$ is a complete regular local ring containing a prime field $k_0$ and let
$K$ be its fraction field.
According to \cite[vol.\ 20, Thm.\ 19.6.4 page 102]{EGA4},
$A$ is $k_0$-isomorphic  (non-canonically)
to a formal series ring $k[[t_1,\dots ,t_d]]$, where $k$ is the residue field of $A$.






Let $\gG$ be a reductive $A$-group scheme.
There exists a unique
reductive $k$--group $G$ such that $\gG \times_A  k[[t_1,\dots ,t_d]]
\cong G \times_k  k[[t_1,\dots ,t_d]]$ (see the proof of Corollary~\ref{cor:cons} below).
Since the fraction field $K= k((t_1,\dots, t_d))$ of $A$ is a (proper) subfield of the iterated Laurent power
series field $k((t_1)) \dots ((t_d))$, and
$$
G(k)/R\to G\bigl( k((t_1)) \dots ((t_d))\bigr)/R
$$ is an isomorphism~\cite[Cor.\ 0.3]{Gi1}, we can define a
specialization map $G(K)/R\to G(k)/R$ inductively,
$$
sp: G(K)/R \to G\bigl( k((t_1,\dots, t_d)) \bigr)/R \to G\bigl( k((t_1,\dots, t_{d-1})) \bigr)/R
\to \dots \to G(k)/R.
$$
However, it is unclear whether
this map does not depend of the choice of coordinates $t_1,\ldots,t_d$.
The following theorem solves this problem.

\begin{stheorem}\label{thm:const}
Let $k$ be an arbitrary field. Then for any reductive group $G$ over $k$ and any $d\ge 1$
the natural maps
$$
G(k)/R \to G\bigl(k[[t_1,\dots, t_d]]\bigr)/R \to G\bigl(k((t_1,\dots, t_d))\bigr)/R
$$
 are isomorphisms.
\end{stheorem}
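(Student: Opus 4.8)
The plan is to treat the two maps separately, writing $A=k[[t_1,\dots,t_d]]$ for the complete regular local ring and $K=k((t_1,\dots,t_d))$ for its fraction field, and regarding $G$ as the constant reductive $A$-group scheme $G\times_k A$, whose closed fibre over the residue field $k$ is again $G$. Since $A$ is complete, hence henselian, local and contains the field $k$, Proposition~\ref{prop:hens-section} shows that the reduction homomorphism $G(A)/R\to G(k)/R$ is an isomorphism; as the inclusion of constants $k\hookrightarrow A$ is a section of the projection $A\twoheadrightarrow k$, the natural map $G(k)/R\to G(A)/R$ is its inverse and is therefore also an isomorphism. This settles the first map, and it remains to prove that the localization map $\beta\colon G(A)/R\to G(K)/R$ is bijective.

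For injectivity I would bring in the iterated Laurent series field $L=k((t_1))\cdots((t_d))$. There is a chain of $k$-algebra homomorphisms $k\hookrightarrow A\hookrightarrow K\hookrightarrow L$, where the middle arrow is the fraction field inclusion and the last one comes from the successive embeddings $k[[t_1,\dots,t_d]]\subset k((t_1))\cdots((t_{d-1}))[[t_d]]\subset L$. The composite $k\to L$ is the standard inclusion of constants, for which $G(k)/R\to G(L)/R$ is an isomorphism by~\cite[cor.\ 0.3]{Gi1}. Hence in the factorization
$$
G(k)/R \xrightarrow{\ \sim\ } G(A)/R \xrightarrow{\ \beta\ } G(K)/R \longrightarrow G(L)/R
$$
both the total composite and the first arrow are isomorphisms, which forces $\beta$ to be injective.

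For surjectivity I would reduce to the anisotropic case by parabolic reduction. Choose a minimal parabolic $k$-subgroup $P$ of $G$; since $A$ is henselian local with residue field $k$ and $G\times_k A$ is constant, the relative root system over $A$ coincides with that over $k$, so $\gP=P\times_k A$ is a minimal parabolic $A$-subgroup with Levi $\gL=L\times_k A$ and maximal central split subtorus $\gS=S\times_k A$; consequently $\gL/\gS=(L/S)\times_k A$ has anisotropic closed fibre $L/S$. By Lemma~\ref{lem_aniso}(2) it is enough to prove surjectivity of $(\gL/\gS)(A)/R\to(\gL/\gS)(K)/R$. But $A$ is a regular local domain containing $k$ and $(L/S)\times_k A$ is anisotropic modulo the maximal ideal of $A$, so this is precisely Theorem~\ref{thm:surj}(1). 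Together with the injectivity above, $\beta$ is an isomorphism, completing the proof.

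The hard part is the surjectivity of $\beta$: injectivity is a formal consequence of the already-known behaviour over iterated Laurent fields, whereas surjectivity rests on Theorem~\ref{thm:surj}(1) and hence ultimately on Panin's geometric presentation over equicharacteristic regular semilocal rings. The role of the parabolic reduction of Lemma~\ref{lem_aniso} is to make Theorem~\ref{thm:surj}(1) applicable for an arbitrary reductive $G$ — which may be isotropic without possessing a strictly proper parabolic — by replacing it with the anisotropic Levi quotient $\gL/\gS$. The one point to verify with care is that a minimal $k$-parabolic stays minimal after the constant base change to $A$, which is guaranteed because $A$ is local with residue field $k$ and the group scheme is constant.
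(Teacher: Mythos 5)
Your proof is correct, and its skeleton is the same as the paper's: Proposition~\ref{prop:hens-section} for the map $G(k)/R\to G(A)/R$, Theorem~\ref{thm:surj} for surjectivity onto $G(K)/R$, and the embedding of $K=k((t_1,\dots,t_d))$ into the iterated Laurent series field $k((t_1))\cdots((t_d))$ together with \cite[cor.~0.3]{Gi1} for injectivity. The one genuine difference is the surjectivity step, and there your version is more careful than the paper's. The paper invokes Theorem~\ref{thm:surj} directly for an arbitrary reductive $G$, but as stated that theorem requires either a strictly proper parabolic subgroup or anisotropy modulo every maximal ideal, and an arbitrary reductive group need satisfy neither: for instance $G=\GG_m\times G_1$ with $G_1$ anisotropic semisimple has no parabolic other than $G$ itself (which is not strictly proper, since it does not intersect $G_1$ properly), yet $G$ is not anisotropic. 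You interpose the reduction to the anisotropic Levi quotient: a minimal $k$-parabolic $P$ with Levi $L$ and maximal central split torus $S$ gives constant data $\gP,\gL,\gS$ over $A$, Lemma~\ref{lem_aniso}(2) reduces surjectivity to that of $(\gL/\gS)(A)/R\to(\gL/\gS)(K)/R$, and Theorem~\ref{thm:surj}(1) applies to $\gL/\gS$ because its closed fibre $L/S$ is anisotropic over $k$. This is precisely the device the paper itself deploys in the DVR case (Lemma~\ref{lem_dvr2}), so your argument makes explicit a reduction that the paper's proof of this theorem leaves implicit; it buys a fully rigorous treatment of the mixed (isotropic but not strictly isotropic) case at the cost of one extra paragraph. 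A minor remark: the minimality of $\gP$ as an $A$-parabolic, which you verify via lifting of parabolic types over the henselian local ring $A$, is not actually needed --- all that matters is that the closed fibre $L/S$ is anisotropic, which is guaranteed by minimality of $P$ over $k$.
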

\begin{proof}
We set $A=k[[t_1,\dots, t_d]]$ and $K=k((t_1,\dots, t_d))$.
By Proposition~\ref{prop:hens-section}
we have the isomorphism
$G(k)/R \simlgr G(A)/R$. Corollary~\ref{cor:hens_surj}
shows that $G(A)/R \to G(K)/R$ is onto.
It remains  to prove that the surjective map $G(k)/R \to G(K)/R$ is an isomorphism
and we know that it holds in the one dimensional case,
 i.e. the map  $G(k)/R \to G\bigl( k((t)) \bigr)/R$
is an isomorphism \cite[Cor.\ 0.3]{Gi1}.
Using the embedding
$$K=k((t_1,\dots,  t_d)) \hookrightarrow k((t_1))\dots((t_d))$$
we get that $G(k)/R \to G(K)/R$ is injective.
\end{proof}

\begin{scorollary}\label{cor:cons}
Let $(A,m)$ be a complete regular local ring containing a
prime field $k_0$.
Let $k$ be the residue field of $A$, and let $K$ be the
fraction field of $A$. Let $\gG$ be a reductive group scheme over  $A$.
Then the maps $\gG(A) \to \gG(k)$
and $\gG(A) \subset \gG(K)$ induce two isomorphisms
$$
\gG(k)/R \xleftarrow{\ \sim\ }  \gG(A)/R  \xrightarrow{\ \sim\ } \gG(K)/R.
$$
\end{scorollary}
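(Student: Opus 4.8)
The plan is to reduce the statement to Theorem~\ref{thm:const} by showing that, once $A$ is written as a formal power series ring, the group scheme $\gG$ is the base change of a reductive group over the residue field $k$. First I would invoke the Cohen structure theorem \cite[$_1$.19.6.4]{EGA4} to fix a $k_0$--isomorphism $A \simlgr k[[t_1,\dots,t_d]]$ with $d=\dim A$; this identification is compatible with the reduction homomorphism onto $k$ and with the inclusion into the fraction field $K=k((t_1,\dots,t_d))$, and it provides a section $k \hookrightarrow A$ of the reduction map $A \to k$, namely the inclusion of constants. From here on I identify $A$ with $k[[t_1,\dots,t_d]]$.

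The key step, and the main obstacle, is to produce a reductive $k$--group $G$ together with an isomorphism $\gG \cong G\times_k A$. Let $G_0$ be the split Chevalley form attached to the root datum of $\gG$; since $\Spec(A)$ is connected this datum is constant and coincides with that of the closed fibre, so $\gG$ defines a class $\xi \in H^1(A,\Aut(G_0))$, where $\Aut(G_0)$ is the smooth affine automorphism group scheme \cite[XXIV]{SGA3}. Because $(A,m)$ is a henselian pair (indeed $A$ is complete) and $\Aut(G_0)$ is smooth, the reduction map $r\colon H^1(A,\Aut(G_0)) \to H^1(k,\Aut(G_0))$ is bijective by \cite[th. 1]{Strano}. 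Let $G$ be the reductive $k$--group corresponding to $r(\xi)$. The section $k \hookrightarrow A$ induces a map $s$ on cohomology with $r\circ s=\id$, and $s(r(\xi))$ is precisely the class of $G\times_k A$; since $r\bigl([G\times_k A]\bigr)=(r\circ s)(r(\xi))=r(\xi)$, injectivity of $r$ forces $[G\times_k A]=\xi$, that is $\gG\cong G\times_k A$. Uniqueness of $G$ follows the same way from injectivity of $r$.

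With this identification the three groups in the statement become $\gG(k)/R = G(k)/R$, $\gG(A)/R = G\bigl(k[[t_1,\dots,t_d]]\bigr)/R$, and $\gG(K)/R = G\bigl(k((t_1,\dots,t_d))\bigr)/R$. Theorem~\ref{thm:const} asserts exactly that the natural maps
$$
G(k)/R \to G\bigl(k[[t_1,\dots,t_d]]\bigr)/R \to G\bigl(k((t_1,\dots,t_d))\bigr)/R
$$
are isomorphisms. Under the identification, the map $\gG(A)/R \to \gG(K)/R$ of the corollary is the second of these, hence an isomorphism. The map $\gG(A)/R \to \gG(k)/R$ is induced by the reduction $A \to k$; since the composite $k \hookrightarrow A \to k$ is the identity, it is the two-sided inverse of the first isomorphism of Theorem~\ref{thm:const}, and therefore an isomorphism as well. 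This yields both asserted isomorphisms. I expect that all the genuine difficulty is concentrated in the rigidity argument of the second paragraph, for which the henselian (completeness) property of $A$ and the smoothness of $\Aut(G_0)$ are essential; everything else is a direct appeal to Theorem~\ref{thm:const}.
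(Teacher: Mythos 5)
Your proof is correct and takes essentially the same route as the paper: Cohen's structure theorem to identify $A$ with $k[[t_1,\dots,t_d]]$, rigidity of $H^1(-,\Aut(G_0))$ over the complete local ring to untwist $\gG$ into $G\times_k A$, and then a direct appeal to Theorem~\ref{thm:const}. The only cosmetic difference is that the paper cites \cite[XXIV.8.1]{SGA3} for the bijection $H^1(A,\Aut(G_0))\cong H^1(k,\Aut(G_0))$ instead of \cite[th. 1]{Strano}, and leaves implicit the section argument that you spell out (which the paper does make explicit in the proof of Theorem~\ref{thm:hens-inj}).
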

\begin{proof}
According to \cite[vol.\ 20, Thm.\ 19.6.4 page 102]{EGA4},
$A$ is $k_0$-isomorphic  (non-canonically)
to a formal series ring $k[[t_1,\dots ,t_d]]$, where $k$ is the residue field of $A$.
The group $\gG$ is the  twisted $A$--form of a Chevalley reductive group
$\ZZ$--scheme $\gG_0$ by a $\Aut(\gG_0)$--torsor $\gE$. Let $G=\gG\times_A k$ be the restriction of $\gG$
via the residue homomorphism $A\to k$.
Since
$$
H^1(\widehat{A}, \Aut(\gG_0)_k) \simlgr  H^1(k, \Aut(\gG_0)_k)
$$
\cite[XXIV.8.1]{SGA3}, it follows that
$\gG$ is isomorphic to ${G \times_k k[[t_1,\dots,  t_d]]}$. Then we can apply Theorem~\ref{thm:const}.
\end{proof}




\begin{stheorem}\label{thm:hens-inj}
Let $A$ be a henselian regular local ring containing a field $k_0$.
Let $k$ be the residue field of $A$ and let $K$ be the fraction field of $A$.
Let $\gG$ be a reductive group scheme over $A$.
Then the maps $\gG(A) \to \gG(k)$
and $\gG(A) \subset \gG(K)$ induce  two isomorphisms
$$
\gG(k)/R \xleftarrow{\ \sim\ }  \gG(A)/R  \xrightarrow{\ \sim\ } \gG(K)/R.
$$
In particular, we have a well-defined specialization
map $sp:\gG(K)/R \to \gG(k)/R$
and it is an isomorphism.
\end{stheorem}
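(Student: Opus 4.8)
The plan is to establish the two isomorphisms of the statement separately, and to reduce the second to results already proved for complete rings and for the fraction field. The left-hand isomorphism $\gG(A)/R \simlgr \gG(k)/R$ is immediate: $A$ is henselian local and contains the field $k_0$, so this is exactly Proposition~\ref{prop:hens-section}. Consequently the whole content lies in the right-hand map $\gG(A)/R \to \gG(K)/R$, and I will prove that it is bijective by handling injectivity and surjectivity by entirely different arguments.

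For injectivity I would pass to the completion. Let $\widehat A$ be the $m$-adic completion of $A$ and $\widehat K$ its fraction field; a regular local (hence Noetherian) domain containing $k_0$ has completion of the same type, and since $A \hookrightarrow \widehat A$ is faithfully flat and injective we obtain an induced inclusion $K \hookrightarrow \widehat K$ together with a commutative square relating the four groups $\gG(A)/R$, $\gG(K)/R$, $\gG(\widehat A)/R$, $\gG(\widehat K)/R$. Both $A$ and $\widehat A$ are henselian equicharacteristic local rings with the same residue field $k$, so Proposition~\ref{prop:hens-section} identifies each of $\gG(A)/R$ and $\gG(\widehat A)/R$ with $\gG(k)/R$ compatibly with the residue maps; hence $\gG(A)/R \to \gG(\widehat A)/R$ is an isomorphism. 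Because $\widehat A$ is complete, Corollary~\ref{cor:cons} gives that $\gG(\widehat A)/R \to \gG(\widehat K)/R$ is an isomorphism. Therefore the composite $\gG(A)/R \to \gG(\widehat A)/R \to \gG(\widehat K)/R$ is an isomorphism; as it coincides, via the square, with $\gG(A)/R \to \gG(K)/R \to \gG(\widehat K)/R$, its first factor $\gG(A)/R \to \gG(K)/R$ must be injective.

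For surjectivity I would invoke the parabolic reduction of Section~\ref{subsec_aniso}. Choosing a minimal parabolic $A$-subgroup $\gP$ of $\gG$, with Levi $\gL$ and maximal central split subtorus $\gS \subseteq \gL$, the quotient $\gL/\gS$ is a reductive $A$-group whose special fibre is anisotropic over $k = A/m$. As $A$ is a regular semilocal domain containing $k_0$, Theorem~\ref{thm:surj}(1) applies to $\gL/\gS$ in the case where it is anisotropic over every residue field, and yields the surjectivity of $(\gL/\gS)(A)/R \to (\gL/\gS)(K)/R$. By Lemma~\ref{lem_aniso}(2) this transfers to $\gG$, giving the surjectivity of $\gG(A)/R \to \gG(K)/R$. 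Combined with injectivity, the right-hand map is an isomorphism; the specialization map $sp$ is then the composite of the inverse of the right-hand isomorphism with the left-hand one, hence well defined and itself an isomorphism.

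I expect surjectivity to be the genuine obstacle. The completion argument yields only injectivity, since a bijective composite $\gG(A)/R \to \gG(K)/R \to \gG(\widehat K)/R$ forces the first map to be injective but says nothing about its image in $\gG(K)/R$, the field $K$ sitting strictly inside $\widehat K$. Thus one cannot bypass the anisotropic reduction, and the delicate point there is precisely that a minimal parabolic over the henselian local ring $A$ produces an anisotropic kernel $\gL/\gS$ over the residue field $k$, which is exactly the hypothesis letting Theorem~\ref{thm:surj}(1) deliver surjectivity onto $\gG(K)/R$ with no isotropy assumption on $\gG$ itself.
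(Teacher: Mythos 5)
Your proof is correct, and two of its three components coincide with the paper's own argument: the left-hand isomorphism is exactly Proposition~\ref{prop:hens-section}, and your injectivity argument through the completion $\widehat{A}$ --- identifying $\gG(A)/R$ and $\gG(\widehat{A})/R$ with $\gG(k)/R$ and then applying Corollary~\ref{cor:cons} to get $\gG(\widehat{A})/R\simeq \gG(\widehat{K})/R$ --- is essentially the paper's second paragraph. Where you genuinely diverge is surjectivity. The paper first reduces, via Lemma~\ref{lem:hens-section} and a limit argument, to the case where $A\to k$ admits a section, then uses henselian rigidity of $H^1(-,\Aut(\gG_0))$ to write $\gG\cong \gG_k\times_k A$, and applies Theorem~\ref{thm:surj} to $\gG$ itself via the dichotomy ``$\gG_k$ isotropic or anisotropic''. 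You instead invoke the parabolic reduction of \S\ref{subsec_aniso}: Lemma~\ref{lem_aniso}.(2) reduces the question to $\gL/\gS$, whose special fibre you claim is anisotropic, so that Theorem~\ref{thm:surj}.(1) applies in its anisotropic case. This is precisely the strategy the paper itself uses for henselian DVRs in Lemma~\ref{lem_dvr2}, with Theorem~\ref{thm:surj} replacing Bruhat--Tits boundedness, and it buys two things: it avoids the Popescu/section/rigidity reduction altogether, and it covers uniformly those reductive groups that are isotropic yet have no strictly proper parabolic (e.g.\ $\GG_m\times H$ with $H$ anisotropic semisimple), a case where neither hypothesis of Theorem~\ref{thm:surj}.(1) holds for $\gG$ itself and which the paper's dichotomy glosses over. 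The price is the one assertion you flag but do not justify: that for $\gP$ a minimal parabolic $A$-subgroup the fibre $(\gL/\gS)_k$ is anisotropic over $k$. This is true, but it is exactly where henselianness enters and deserves two sentences: the scheme of parabolic subgroups of $\gG$ of a fixed type is smooth and projective over $A$, so Hensel's lemma lifts parabolic subgroups of $\gG_k$ to $\gG$ and hence $\gP_k$ is still a minimal parabolic of $\gG_k$; and since reduction induces an equivalence between $A$-tori and $k$-tori for $A$ henselian local, $\gS_k$ is the maximal central split $k$-subtorus of $\gL_k$, whence $(\gL/\gS)_k$ is anisotropic by the usual structure theory over fields. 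With that justification supplied (it is at the same level of detail as the paper's own assertions at the end of \S\ref{subsec_aniso} and in Lemma~\ref{lem_dvr2}), your proof is complete.
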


\begin{proof}
By Lemma~\ref{lem:hens-section} $A$ is a filtered direct limit of henselian regular local rings
$A_i$ such that each $A_i$ contains a field
and the map from $A_i$ to its residue field admits a section.
Since the group scheme $\gG$ and its parabolic subgroups are finitely presented over $A$, and the functor $\gG(-)/R$
commutes with filtered direct limits, we can assume from the start that $A\to k$ admits a section.
Since $A$ is henselian, we have a bijection $H^1(A, \Aut(\gG_0)_k) \simlgr  H^1(k, \Aut(\gG_0)_k)$
\cite[XXIV.8.1]{SGA3}. Since $A\to k$ has a section,
it follows that
$\gG$ is isomorphic to $\gG_k \times_k A$.
Clearly, $\gG$ is isotropic if and only if $\gG_k$ is isotropic. Then
by Proposition~\ref{prop:hens-section} $\gG(A)/R\to \gG(k)/R$ is an isomorphism and
by Corollary~\ref{cor:hens_surj}
$\gG(A)/R\to\gG(K)/R$ is surjective.

Let $\hat A$ be the completion of $A$ at the maximal ideal and let $\hat K$ be its fraction field. Then $\hat A$
is a complete regular local ring containing $k_0$ and $k$ is its residue field. By Corollary~\ref{cor:cons}
the maps $\gG(\hat A)/R\to \gG(k)/R$ and $\gG(\hat A)/R\to \gG(\hat K)/R$ are isomorphisms.
Hence $\gG(A)/R\to \gG(\hat A)/R$ is an isomorphism, and consequently $\gG(A)/R\to \gG(K)/R$ is injective.
\end{proof}

\begin{scorollary}\label{cor:hens-inj}
Let $B$ be a regular local ring containing a
prime field $k_0$, let $L$ be the fraction field of $B$ and let $l$ be the
residue field of $B$.
Let $\hat B$ denote the completion of $B$ with respect to the maximal ideal, and let $\hat L$ denote the
fraction field of $\hat B$.
Let $\gG$ be a reductive group scheme over $B$. There is a well-defined specialization homomorphism
$sp: \gG(L)/R\to \gG(l)/R$, 
in the sense that it makes the following diagram commutative
\begin{equation}\label{eq:diag-sp-main}
\xymatrix{
\gG(l)/R & \ar[l]_\sim  \gG(\hat B)/R
\ar[r]^\sim &  \gG(\hat L)/R \\
 & \gG(B)/R \ar[u] \ar[r] \ar[ul] & \gG(L)/R \ar[u]
 \ar@{-->}[ull]_{{\qquad}_{\scriptstyle{sp}}}
}
\end{equation}
where the top horizontal maps are those of Theorem \ref{thm:hens-inj}.
\end{scorollary}

\begin{proof}
By Theorem~\ref{thm:hens-inj}
(or by Corollary~\ref{cor:cons})
the natural maps $\gG(\hat B)/R\to \gG(l)/R$ and $\gG(\hat B)/R\to \gG(\hat L)/R$ are isomorphisms.
The specialization map is the composition of the first isomorphism with the inverse of the second one
and the natural homomorphism $\gG(L)/R\to\gG(\hat L)/R$. The commutativity of the diagram is clear.
\end{proof}

\begin{sremarks}\label{rem_CT2}
{\rm
(a) Let $A$ be a henselian DVR containing a field, let $K$ be the fraction field of $A$ and $k$ be the residue field of $A$.
Let $\bG$ be a reductive group over $A$.
As mentioned in the beginning of section~\ref{subsec_DVR},  a specialization map $\varphi:\bG(K)/R\to \bG(k)/R$
was already constructed in~\cite{Gi1}. Since $\bG(A)/R\to\bG(K)/R$ is surjective by Lemma~\ref{lem_dvr2},
the commutativity of the diagram~\eqref{eq_DVR} implies that $\varphi$ is uniquely determined by its restriction to
the image of $\bG(A)/R$. Since this restriction is the canonical isomorphism $\bG(A)/R\to\bG(k)/R$, this map $\varphi$
coincides with the map $sp$ of Theorem~\ref{thm:hens-inj} and Corollary~\ref{cor:hens-inj}.
For the same reason, it coincides with the specialization map $sp_A:\gG(K)/R\to \gG(k)/R$
defined by Colliot-Th\'el\`ene, Harbater, Hartmann, Krashen, Parimala, and Suresh~\cite[Theorem\ A.10]{CTHHKPS}.
\smallskip \newline
\noindent (b)
If we relax the assumption and let $A$ be an aritrary DVR containing a field, then the specialization map
$sp:\gG(K)/R\to\gG(k)/R$ of Corollary~\ref{cor:hens-inj} also coincides
with the specialization map $sp_A:\gG(K)/R\to \gG(k)/R$
of~\cite[Theorem\ A.10]{CTHHKPS}, since both maps
coincide with the natural composition $\gG(K)/R\to\gG(\hat K)/R\xrightarrow{sp}\gG(k)/R$,
where $\hat K$ is the fraction field of the completion $\hat A$ of $A$.
\smallskip \newline
\noindent (c) Colliot-Th\'el\`ene, Harbater, Hartmann, Krashen, Parimala, and Suresh also construct
a specialization homomorphism for arbitrary regular local rings of dimension 2~\cite[Prop.\ A.12]{CTHHKPS},
as follows. Let $B$ be such a ring, and let $L$ and $l$ be the fraction and the residue fields of $B$. Let
$p$ be a regular height 1 prime ideal of $B$, so that
$B/p$ and $B_p$ are two DVRs. The specialization map $sp_B:\gG(L)/R\to\gG(l)/R$ is defined as the composition
$$
\gG(L)/R\xrightarrow{sp_{B_p}} \gG(K)/R\xrightarrow{sp_{B/p}} \gG(l)/R,
$$
where $K=B_p/pB_p$ is also the fraction field of $B/p$. It is proved
in~\cite[Theorem\ A.14]{CTHHKPS} that $sp_B$ is independent of the choice of $p$, and, moreover,
is functorial with respect to injective local homorphisms of 2-dimensional regular local rings.
Since the homomorphism $B\to\hat B$ from $B$ to its completion is of the latter kind, it follows that
$sp_B$ fits into the commutative diagram~\eqref{eq:diag-sp-main}
of Corollary~\ref{cor:hens-inj}, and hence coincides with
our map $sp$ defined by means of this diagram, as long as $sp_{\hat B}:\gG(\hat L)/R\to\gG(l)/R$
coincides with the natural isomorphism of the top row of~\eqref{eq:diag-sp-main}. In its turn, $sp_{\hat B}$ has to
coincide with this isomorphism
by~\cite[Prop. A.12 (b)]{CTHHKPS}.
}
\end{sremarks}


\section{Appendices}

\subsection{The big Bruhat cell is a principal open subscheme.}\label{appendix_cell}


For split groups and Borel subgroups, this statement goes back to Chevalley, see
\cite[lemma 4.5]{Bo}.

\begin{slemma}\label{lem_cell} Let $B$ be a ring and let $\gG$ be a reductive group $B$-scheme
equipped with a pair of opposite parabolic $B$--subgroups
$\gP^{\pm}$.
Then the big cell $\Omega$ of $\gG$ attached to $\gP$ and $\gP^{-}$
is a principal open subscheme of $\gG$. More precisely, there
exists $f \in B[\gG]$  such that $\Omega= \gG_f$
and $f$ can be chosen $\Aut(\gG,\gP, \gP^{-})$--invariant.
\end{slemma}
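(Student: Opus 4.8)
The plan is to produce $f$ as a single matrix coefficient of an exterior power of the adjoint representation, built only out of the canonical data attached to $(\gG,\gP,\gP^{-})$; this makes the $\Aut(\gG,\gP,\gP^{-})$-invariance automatic and simultaneously exhibits $\Omega$ as a principal open. Write $\gu=\Lie(R_u(\gP))$ and $\gq=\Lie(\gP^{-})$, so that the big cell gives a decomposition $\Lie(\gG)=\gq\oplus\gu$ into locally free $B$-modules; set $r=\rank_B(\gu)$ and $V=\wedge^{r}\Lie(\gG)$, with the representation $\wedge^{r}\Ad$. The line $\wedge^{r}\gu\subseteq V$ is stable under $\gP$, acting through a character $\chi$ trivial on $R_u(\gP)$; dually, the canonical projection $\pi_L\colon V\to L:=\wedge^{r}\bigl(\Lie(\gG)/\gq\bigr)$ is $\gP^{-}$-equivariant, and its restriction to $\wedge^{r}\gu$ is an isomorphism because $\gu$ is a complement to $\gq$. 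I would then define $f\in B[\gG]$ to be the matrix coefficient
\[
f(g)=\pi_L\bigl(\wedge^{r}\Ad(g)\,v\bigr)\big/\pi_L(v),
\]
where $v$ is a local generator of $\wedge^{r}\gu$; since $\pi_L(v)$ generates $L$, the quotient lands in $L\otimes L^{-1}=\cO_B$ and is independent of the choice of $v$, so $f$ is a well-defined global function with $f(e)\in B^{\times}$.

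Next I would identify the non-vanishing locus $\gG_f$ with $\Omega$. Because $v$ spans a $\gP$-stable line and $\pi_L$ is $\gP^{-}$-equivariant, with both characters trivial on the unipotent radicals, $f$ is bi-semi-invariant: $f(p^{-}gp)=\chi^{-}(p^{-})\,\chi(p)\,f(g)$ for $p\in\gP$, $p^{-}\in\gP^{-}$. Evaluating on a point $g=u^{-}\ell u$ of the cell $\Omega=R_u(\gP^{-})\times_B\gL\times_B R_u(\gP)$ gives $f(g)=\chi(\ell)\in B^{\times}$, so $f$ is invertible on $\Omega$ and in particular $\Omega\subseteq\gG_f$. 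For the reverse inclusion it suffices to compare the two opens fibrewise: every point of $\gG$ lies in some fibre $\gG_{\kappa(s)}$, so the set-theoretic equality $\gG_f=\Omega$ reduces to the case where $B$ is a field, where after base change to a separable closure $\gG$ becomes split and the statement is exactly Chevalley's computation of the big cell as the non-vanishing locus of such a function~\cite[lemma 4.5]{Bo}. Descent recovers the field case, and agreement of the underlying sets of two open subschemes of the flat, finitely presented $\gG$ on all fibres yields $\gG_f=\Omega$ as open subschemes.

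Finally, the invariance is immediate from the coordinate-free construction. Any $\theta\in\Aut(\gG,\gP,\gP^{-})$ induces $\theta_{*}=\Lie(\theta)$ preserving both $\gq$ and $\gu$, hence scaling $v$ by a unit $a_{\theta}$ and acting on $L$ by the \emph{same} unit $a_{\theta}$; using $\Ad(\theta(g))=\theta_{*}\,\Ad(g)\,\theta_{*}^{-1}$, these two factors cancel in the ratio defining $f$, so that $\theta^{*}f=f$ exactly (in particular no character or unit ambiguity survives). The main obstacle I anticipate is the clean passage between the explicit matrix coefficient and the geometry of $\Omega$ over a general base: one must verify that the decomposition $\Lie(\gG)=\gq\oplus\gu$ and the open immersion $R_u(\gP^{-})\times_B\gL\times_B R_u(\gP)\hookrightarrow\gG$ onto $\Omega$ are available in the relative setting, for which I would invoke~\cite[XXVI]{SGA3}, and that the fibrewise reduction to Chevalley's split case legitimately upgrades the set-theoretic equality $\gG_f=\Omega$ to an equality of open subschemes.
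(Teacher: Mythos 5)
Your route is genuinely different from the paper's, and where you carry it out it is correct. The paper reduces, by twisting along an $\Aut(\gG_0,\gP_0,\gP_0^-)$-torsor, to the split adjoint Chevalley group over $\ZZ$, quotes Bruhat--Tits \cite[3.8.2.(a)]{BT2} for the existence of $f_0$ with $\ZZ[\Omega_0]=\ZZ[\gG_0]_{f_0}$, proves $\gL_0$-invariance via Rosenlicht's unit theorem together with the divisibility of $\gL_0(\ol{\QQ})$, and finally averages over the finite outer group $\Gamma$ before descending. Your $f(g)=\det\bigl(\gu\xrightarrow{\Ad(g)}\Lie(\gG)\to\Lie(\gG)/\gq\bigr)$ is instead canonical, so the $\Aut(\gG,\gP,\gP^{-})$-invariance is indeed automatic: the unit $a_\theta$ cancels precisely because the isomorphism $\wedge^{r}\gu\simeq L$ intertwines the two actions of $\Lie(\theta)$. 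The bi-semi-invariance computation giving $\Omega\subseteq\gG_f$ is also correct; note that you do not even need the characters to be trivial on the unipotent radicals there (a claim that is genuinely delicate over non-reduced bases, where unipotent groups do admit nontrivial characters), since values of characters are automatically units. Finally, upgrading fibrewise set-theoretic equality to equality of open subschemes is legitimate, as open subschemes are determined by their underlying open sets.

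The genuine gap is the reverse inclusion $\gG_f\subseteq\Omega$. As the paper itself points out, \cite[lemma 4.5]{Bo} concerns split groups and \emph{Borel} subgroups, and the function appearing there is built from fundamental representations; it does not identify the non-vanishing locus of your adjoint-determinant function attached to an arbitrary opposite pair $(\gP,\gP^{-})$. What you actually need is: for an algebraically closed field $K$ over $B$ and $g\in\gG(K)$ with $f(g)\neq 0$, i.e.\ with $\Ad(g)\gu\oplus\gq=\Lie(\gG)\otimes_B K$, one has $g\in\Omega(K)$; this suffices, since every point $x$ of the scheme $\gG_f$ yields a $\kappa(x)$-point, and membership of a rational point in an open subscheme can be tested after the extension $\overline{\kappa(x)}/\kappa(x)$. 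This statement is true but requires an argument, for instance: $\Ad(g)\gu=\Lie\bigl(R_u({}^g\gP)\bigr)$, so the hypothesis reads $\Lie(R_u({}^g\gP))\oplus\Lie(\gP^{-})=\Lie(\gG)$; the intersection ${}^g\gP\cap\gP^{-}$ is smooth and contains a maximal torus $T$ \cite[XXVI.4.1.1]{SGA3}, and comparing $T$-root sets, the hypothesis forces the root set of $\gP^{-}$ to be the negative of that of ${}^g\gP$, so the two parabolics are opposite; since the parabolics opposite to $\gP^{-}$ form a torsor under $R_u(\gP^{-})$ acting by conjugation \cite[XXVI.4.3.2]{SGA3}, we get ${}^g\gP={}^{u^-}\!\gP$ for some $u^-\in R_u(\gP^{-})(K)$, hence $(u^-)^{-1}g\in N_\gG(\gP)(K)=\gP(K)$ and $g\in R_u(\gP^{-})\cdot\gP=\Omega$. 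With this (or an equivalent precise reference) inserted, your proof is complete; as written, the key identification $\gG_f=\Omega$ rests on a citation that does not cover it.
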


\begin{proof}
Without loss of generality, we can assume
 that $\gG$ is adjoint.
We can assume $B$ noetherian and connected so that
 $(\gG, \gP, \gP^{-})$ is a $B$--form of
 $(\gG_0, \gP_0, \gP_0^{-})_B$ where  $\gG_0$ is an adjoint Chevalley
 $\ZZ$--group scheme equipped with opposite parabolic $\ZZ$--group subschemes
 $(\gP_0, \gP_0^{-})$ related to the Chevalley pinning.

 Then  $(\gG, \gP, \gP^{-})$ is the twist of   $(\gG_0, \gP_0, \gP_0^{-})_B$
 by an $\Aut(\gG_0,\gP_0, \gP^{-}_0)$-torsor so that the statement boils
 down to the split case over $\ZZ$. We consider the Levi--subgroup
 $\gL_0 = \gP_0^+ \cap \gP_0^{-}$ so that
 $\Aut(\gG_0,\gP_0, \gP^{-}_0) = \Aut(\gG_0, \gP_0, \gL_0)$
 is the semi-direct product of $\gL_{0}$ and a finite constant $\ZZ$--group
 scheme $\Gamma$ \cite[lemme 5.1.2]{Gi4}.

 According to \cite[3.8.2.(a)]{BT2}
 there is a function $f_0 \in \ZZ[\gG_0]$
 such that
$\ZZ[\Omega_0]=\ZZ[\gG_0]_{f_0}$ and satisfying $f_0(1)=1$.
We claim that $f_0$ is $\gL_0$-invariant with respect to the
adjoint action.
We denote by $\Lambda= \Hom_{\ol{\QQ}-gr} (\gL_{\ol{\QQ}}, \GG_m)$
the lattice of characters and remind the reader of Rosenlicht decomposition \cite[Th.\ 3]{Ro}
 $$
 H^0(\gL_{\ol{\QQ}}, \GG_m) = \ol{\QQ}^\times \oplus  \Lambda
$$
which shows that $\Lambda = \bigl\{ f \in H^0(\gL_{\ol{\QQ}}, \GG_m) \, \mid \, f(1)=1 \bigr\}$.
We observe that
the induced action (by the adjoint action) of $\gL_0(\ol{\QQ})$ on $\Lambda$
is trivial. It follows that the map
\[
\phi: \gL_0(\ol{\QQ}) \to \ol{\QQ}[\gL_0]^\times \to  \Lambda, \enskip
 x \mapsto   {^x\!f}_0 \,  f_0^{-1}
\]
 is a group  homomorphism. Since
$\gL_{0, \ol{\QQ}}$ is generated by  its maximal tori, we have
 $\gL_0(\ol{\QQ}) = \langle \gL_0(\ol{\QQ})^n \rangle$ for all
 $n \geq 1$. We get that $\phi$ is zero and  this establishes the above claim.
 Taking the product of $\Gamma$-conjugates of $f_0$
 permits to assume that $f_0$ is   $\Aut(\gG_0, \gP_0, \gL_0)$-invariant.
 By descent, $f_0$ gives rise to then to $f \in B[\gG]$
 so that  $\Omega= \gG_{f}$.
\end{proof}

\subsection{Colliot-Th\'el\`ene and Ojanguren method for functors in pointed sets.}
\label{appendix_cto}
In this section we summarize the classic injectivity theorem of Colliot-Th\'el\`ene and Ojanguren~\cite[Th.\ 1.1]{CTO}.
Our goal is to make explicit the fact that a certain intermediate step in the proof of this theorem holds under weaker
assumptions than the theorem itself.

Let $k$ be an infinite field and let $R \mapsto F(R)$ be a covariant functor on the category
of $k$--algebras (commutative, unital) with values in pointed  sets. We consider the following properties:

\medskip

$({\bf P}_1)$ The functor $F$ commutes with filtered direct limits of $k$-algebras having flat transition morphisms.

\medskip

$({\bf P}_2)$ For each $k$--field $E$ and for each $n \geq 1$, the map
$$
F\bigl( E[t_1, \dots, t_n] \bigr) \to F\bigl( E(t_1, \dots, t_n) \bigr)
$$ has trivial kernel;

\medskip

$({\bf P}_3)$ (Patching property) For each finite type flat inclusion $A \hookrightarrow B$ of noetherian integral $k$--algebras  and each non-zero element $f \in A$ such that $A/fA \simlgr B/fB$, then the map
$$
\Ker\bigl( F(A) \to F(A_f)\bigr) \to \Ker\bigl( F(B) \to F(B_f)\bigr)
$$
is onto.

\medskip

One may consider the following  weaker property.

\medskip

$({\bf P}'_3)$ (Zariski patching) For each noetherian integral $k$--algebra $A$
and for each decomposition $A= Af + Ag$ with $f$ non-zero,
then the map
$$
\Ker\bigl( F(A) \to F(A_f)\bigr)  \to  \Ker\bigl( F(A_g) \to F(A_{fg} )\bigr)
$$
is onto.

\medskip

We have ${\bf P}_3 \Longrightarrow {\bf P}'_3$ by  taking  $B=A_g$ since  we have $B_f= A_{fg}$ and
$A/fA \simlgr B/fB$.

The following theorem was proved by Colliot-Th\'el\`ene and Ojanguren.

\begin{stheorem} \label{thm_cto} \cite[Th.\ 1.1]{CTO} We assume that $F$ satisfies ${\bf P}_1$, ${\bf P}_2$ and ${\bf P}_3$.
Let $A$ be a local ring of a smooth $L$--ring $C$  where $L$ is a $k$--field. Denote by $K$ the fraction field
of $A$. Then the map $
F\bigl( A \bigr) \to F\bigl( K\bigr)
$
 has trivial kernel.
\end{stheorem}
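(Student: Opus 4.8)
The plan is to reproduce the geometric argument of Colliot-Th\'el\`ene and Ojanguren~\cite{CTO}: normalize the situation to a relative affine line over a smaller base by a Quillen-type presentation lemma, transport the class across the resulting patching square by means of ${\bf P}_3$, and finally destroy it by combining ${\bf P}_2$ over the generic fibre of the base with an induction on $\dim A$. Since $L$ contains the infinite field $k$, the ground field $L$ is infinite, which is exactly the hypothesis the presentation lemma needs.

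First I would make the finite-type reduction. Writing $A=C_{\mathfrak p}$ and invoking ${\bf P}_1$, any class $\xi\in\ker\bigl(F(A)\to F(K)\bigr)$ comes from $\xi\in F(\mathcal O(X))$ for a smooth integral affine $L$-scheme $X=\Spec(C)$ with $\mathfrak p$ corresponding to a point $x$; triviality over $K$ then means $\xi$ dies on a dense open $U$, and I set $Z=X\setminus U$, a proper closed subset that may well contain $x$. The argument proceeds by induction on $d=\dim A$, the case $d=0$ (where $A=K$ is a field) being vacuous.

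The geometric core is the presentation step. After shrinking $X$ around $x$, I would construct a smooth morphism $\pi\colon X\to S$ of relative dimension one onto an open $S\subseteq\mathbb{A}^{d-1}_L$ such that $\pi|_Z\colon Z\to S$ is finite and $\pi$ carries a section through $x$. Localizing the base at $s=\pi(x)$ yields a regular local ring $R=\mathcal O_{S,s}$ of dimension $d-1$, over which $X_R=X\times_S\Spec(R)$ is a smooth relative curve containing the finite closed subscheme $Z_R$. An elementary distinguished (Nisnevich) square then places $X_R$ beside $\mathbb{A}^1_R$ through an \'etale map that is an isomorphism near $Z_R$, producing a patching pair $\bigl(R[t]\hookrightarrow\mathcal O(X_R),\,f\bigr)$ with $f\in R[t]$ monic and cutting out the image of $Z_R$. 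As $\xi$ is already trivial on $X_R\setminus Z_R$, it lies in $\ker\bigl(F(\mathcal O(X_R))\to F(\mathcal O(X_R)_f)\bigr)$, so ${\bf P}_3$ lifts it to a class $\eta\in\ker\bigl(F(R[t])\to F(R[t]_f)\bigr)$ mapping back to $\xi$.

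It then remains to kill $\eta$ at $x$. Base-changing to the generic fibre $K_S=\mathrm{Frac}(R)$, the restriction $\eta_{K_S}$ dies in $F(K_S(t))$ because $f$ is a nonzero monic polynomial and $K_S[t]_f\hookrightarrow K_S(t)$; hence $\eta_{K_S}=\ast$ by ${\bf P}_2$, and by ${\bf P}_1$ there is $0\neq g\in R$ with $\eta|_{R_g[t]}=\ast$. Restricting $\eta$ along the section $t=t_0$ gives a class $\sigma^{*}\eta\in F(R)$ whose image in $F(K_S)$ vanishes, so the induction hypothesis for the $(d-1)$-dimensional local ring $R$ forces $\sigma^{*}\eta=\ast$. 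The hard part will be twofold: first, the presentation itself --- producing $\pi$ with a compatible section through $x$ so that $Z$ is finite over the base (the Quillen--Gabber--Ojanguren normalization, where the infinitude of $L$ is indispensable); and second, organizing the final descent so that the two partial trivializations of $\eta$ (on $R_g[t]$ and on $R[t]_f$) together with the vanishing of $\sigma^{*}\eta$ over $R$ are glued, via a second use of the patching property, into genuine triviality of $\xi$ in a neighbourhood of $x$ rather than merely of its image in $F(R)$.
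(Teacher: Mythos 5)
The paper does not actually prove this theorem --- it is quoted from \cite[th. 1.1]{CTO} --- so I am comparing your attempt with the Colliot-Th\'el\`ene--Ojanguren argument, part of whose structure the paper exposes in Proposition \ref{prop_cto}. Your geometric skeleton is the correct one: spreading out via ${\bf P}_1$, fibering $X$ near $x$ into relative curves over an open of $\mathbf{A}^{d-1}_L$ with $Z$ finite over the base (where the infinitude of $L$ is indeed essential), forming the distinguished square with $\mathbf{A}^1_R$, and using ${\bf P}_3$ to lift the class to some $\eta\in\ker\bigl(F(R[t])\to F(R[t]_f)\bigr)$ with $f$ monic.

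The endgame, however, has a genuine gap, which you half-acknowledge in your last paragraph. Your induction is run on the statement ``$F(A)\to F(K)$ has trivial kernel'' with no polynomial variables, and that induction cannot close: after the patching step the class $\eta$ lives on $R[t]$, and the local ring of $\mathbf{A}^1_R$ at the relevant point $p(x)$ (which lies in $V(f)$ over the closed point $s$ of $R$) has dimension $d$, not $d-1$, so no inductive hypothesis applies to it. Your remedy --- trivializing $\eta$ on $R_g[t]$ via ${\bf P}_2$ and ${\bf P}_1$, evaluating along a section $t=t_0$, and then ``gluing'' --- cannot work as described: the element $g\in R$ may lie in the maximal ideal of $R$, so $p(x)$ sits exactly in the locus missed by both partial trivializations $D(f)$ and $D(g)$; moreover no second application of patching can bridge this, since $D(f)\cup D(g)$ fails to cover $\Spec R[t]$ (indeed $(f,g)\neq R[t]$ when $g$ is in the maximal ideal, because $f$ is monic, so $R[t]/(f)$ is a nonzero finite module over the local ring $R$ and $g$ lands in its radical). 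The correct fix, which is precisely how \cite{CTO} is organized and why ${\bf P}_2$ and Proposition \ref{prop_cto} carry $n$ variables, is Quillen-style strengthening of the induction: one first proves, by induction on $d$ using only Zariski patching ${\bf P}'_3$, that $F\bigl(A[x_1,\dots,x_n]\bigr)\to F\bigl(K(x_1,\dots,x_n)\bigr)$ has trivial kernel for \emph{all} $n\geq 0$ when $A$ is a local ring of a polynomial algebra (this is Proposition \ref{prop_cto}). The theorem then follows in one stroke, with no induction on itself and no section: $R$ is a local ring of $L[t_1,\dots,t_{d-1}]$, and $\eta$ dies in $F\bigl(\mathrm{Frac}(R)(t)\bigr)$ because it dies on $R[t]_f$ and $f\neq 0$; hence $\eta=\ast$ in $F(R[t])$ by the $n=1$ case of that proposition, so its image $\xi_R=\ast$ in $F(\mathcal{O}(X_R))$, and a fortiori $\xi$ dies in $F(A)$.
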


The proof of this theorem relies on the following result.

\begin{sproposition}\label{prop_cto}~\cite[prop.\ 1.5]{CTO}
We assume that $F$ satisfies ${\bf P}_1$, ${\bf P}_2$ and ${\bf P}'_3$.  Let $A$ be the local ring at a prime ideal
of a polynomial algebra $L[t_1, \dots, t_d]$ where $L$ is a $k$--field. Denote by $K$ the fraction field
of $A$. Then for each integer $n \geq 0$, the map
$$
F\bigl( A[x_1, \dots, x_n] \bigr) \to F\bigl( K(x_1, \dots, x_n)\bigr)
$$
 has trivial kernel.
\end{sproposition}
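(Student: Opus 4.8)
The plan is to reproduce the Colliot-Th\'el\`ene--Ojanguren argument, arguing by induction on $d$ and taking care that only the \emph{Zariski} patching ${\bf P}'_3$ (and not the stronger ${\bf P}_3$) is invoked, which is the whole point of isolating this proposition. Write $x=(x_1,\dots,x_n)$. The base case $d=0$ is $A=L=K$, so the assertion is literally ${\bf P}_2$ for the field $L$. For the inductive step, let $\xi\in F(A[x])$ lie in the kernel of restriction to $F(K(x))$. First I would peel off the function field: the map factors as $F(A[x])\to F(K[x])\to F(K(x))$, and by ${\bf P}_2$ for $K$ the second arrow has trivial kernel, so $\xi$ already dies in $F(K[x])$. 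Since $K[x]=\varinjlim_{0\ne s\in A}A_s[x]$ is a filtered colimit along flat (localization) maps, ${\bf P}_1$ produces a nonzero $s\in A$, which may be taken in $L[t_1,\dots,t_d]$, with $\xi|_{A_s[x]}=\ast$; if $s$ is a unit of $A$ we are done, so assume $s\in\mathfrak{p}$.

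Next I would bring in the projection $\mathbf{A}^d_L\to\mathbf{A}^{d-1}_L$. Because $L\supseteq k$ is infinite, a linear change of the coordinates $t_1,\dots,t_d$ renders $s$ monic in $t_d$ of some degree $m\ge 1$. Setting $\mathfrak{p}'=\mathfrak{p}\cap L[t_1,\dots,t_{d-1}]$ and $A'=L[t_1,\dots,t_{d-1}]_{\mathfrak{p}'}$, monicity makes $A'[t_d]/(s)$ finite free of rank $m$ over $A'$, so the vanishing locus $V(s)\subseteq\Spec A'[t_d]$ is \emph{finite} over $\Spec A'$; moreover $A$ is a localization of $A'[t_d]$. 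Using ${\bf P}_1$ to descend $\xi$ to a finite-type model, I may assume $\xi\in F(A'[t_d][x])$ is trivial over $A'[t_d]_s[x]$. Now restrict to the generic fibre of the projection, i.e.\ base-change along $A'\to K'=\mathrm{Frac}(A')=L(t_1,\dots,t_{d-1})$: the class $\xi_{K'}\in F(K'[t_d][x])$ becomes trivial over $K'[t_d]_s[x]\subseteq K'(t_d,x)$, hence over $K'(t_d,x)$, and by ${\bf P}_2$ for the field $K'$ in the $1+n$ variables $t_d,x$ it is already trivial over $K'[t_d][x]$. Spreading out by ${\bf P}_1$ yields a nonzero $a\in A'$ with $\xi$ trivial over $A'_a[t_d][x]$.

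The crux is the passage from these two partial trivializations, over $\{s\ne 0\}$ and over $\{a\ne 0\}$, to triviality of $\xi$ over $A$ itself: the two opens need not cover a neighbourhood of $\mathfrak{p}$, precisely because $a\in A'$ may vanish at the closed point of $\Spec A'$. This is exactly where the finiteness of $V(s)$ over the base enters. Using the elementary distinguished (Nisnevich) square attached to $(\Spec A'[t_d],\{s\ne 0\})$, whose closed complement $V(s)$ is finite over $\Spec A'$, one replaces $\xi$, up to a class trivial over $\{s\ne 0\}$, by a class pulled back from the base along the projection; the surjectivity in ${\bf P}'_3$, applied to the noetherian integral $k$-algebra $A'[t_d][x]$ with $f=s$ and a complementary $g$ chosen so that $V(s)\cap V(g)=\emptyset$, is what produces this replacement. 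The pulled-back class lives over the local ring $A'$ of $L[t_1,\dots,t_{d-1}]$, i.e.\ in dimension $d-1$, where the inductive hypothesis forces it to be trivial; feeding this back gives $\xi|_A=\ast$. I expect the genuine difficulty to lie entirely here: choosing the complementary divisor $g$ and organizing the square so that, away from $\{s\ne 0\}$, one truly lands in dimension $d-1$ and ${\bf P}'_3$ applies, together with the book-keeping of primes and of the extra variables $x$ under the successive localizations. By contrast ${\bf P}_1$ and ${\bf P}_2$ serve only the auxiliary roles of reducing to a polynomial $s$ and of trivializing over the generic fibre of the projection.
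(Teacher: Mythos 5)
Your overall plan---induction on $d$ carrying the auxiliary variables, the monic-in-$t_d$ trick, and a final appeal to ${\bf P}'_3$---is indeed the skeleton of the Colliot-Th\'el\`ene--Ojanguren argument, but the proposal breaks exactly at the decisive point. The step ``Using ${\bf P}_1$ to descend $\xi$ to a finite-type model, I may assume $\xi\in F(A'[t_d][x])$ is trivial over $A'[t_d]_s[x]$'' is not available: ${\bf P}_1$ only produces a model of $\xi$ over $(A'[t_d])_h[x]$ for some $h$ outside the prime, never over $C[x]:=A'[t_d][x]$ itself (a class over a local ring spreads out to \emph{some} open neighbourhood, not to the whole of $\Spec C$). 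This is not a harmless imprecision. If $\xi$ really lived in $F(C[x])$ and died over $C_s[x]$, the proof would be over on the spot: $C[x]=A'[t_d,x_1,\dots,x_n]$, $\mathrm{Frac}(A')(t_d,x_1,\dots,x_n)=K(x_1,\dots,x_n)$, so the inductive hypothesis for $d-1$ with $n+1$ auxiliary variables (this is precisely why the auxiliary variables are built into the statement) kills $\xi$ immediately---no generic-fibre step, no patching, no ${\bf P}'_3$ would be needed. The redundancy of your remaining steps under this assumption is the sign that the assumption hides the entire difficulty: the real problem is that the denominator $h$ may vanish on part of $V(s)$, and everything nontrivial in the CTO proof is devoted to getting around exactly this.

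For the same reason the ``crux'' paragraph does not close. To apply ${\bf P}'_3$ over $C[x]$ with $f=s$ you must exhibit your class as an element of $\Ker\bigl(F(C_g[x])\to F(C_{sg}[x])\bigr)$ for some $g$ with $Cs+Cg=C$; that is, the class must be defined on a principal open set $D(g)$ which together with $D(s)$ covers $\Spec C$, and be trivial on the overlap. You never produce such a $g$: any $g$ with $D(g)\subseteq D(h)$ has $V(g)\supseteq V(h)$, so this is impossible unless $V(h)\cap V(s)=\emptyset$ already, which is exactly what cannot be assumed. Moreover, ${\bf P}'_3$ only yields a lift $\eta\in\Ker\bigl(F(C[x])\to F(C_s[x])\bigr)$ restricting to the given class over $C_g[x]$; it does not permit ``replacing $\xi$, up to a class trivial over $\{s\neq 0\}$, by a class pulled back from the base along the projection''---that operation belongs to Nisnevich-excision arguments in the style of Panin's nice triples and is not a consequence of ${\bf P}_1$, ${\bf P}_2$, ${\bf P}'_3$. (Once a lift $\eta$ exists, the correct end-game is again the inductive hypothesis applied to $C[x]$, using $\mathrm{Frac}(A')(t_d,x)=K(x)$, not an appeal to a class living over $A'$.) Note finally that the paper does not reprove the proposition at all: it cites \cite[prop. 1.5]{CTO} for maximal localizations, records that inspection of that proof shows only the Zariski patching ${\bf P}'_3$ is used, and deduces the prime case from the maximal case via ${\bf P}_1$ and the fact that every prime of $L[t_1,\dots,t_d]$ is an intersection of maximal ideals (CTO's premi\`ere r\'eduction). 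Redoing the induction, as you attempt, is legitimate in principle, but as written your argument has a hole precisely where the original proof does its real work.
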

\begin{proof}
The original statement of~\cite[Prop.\ 1.5]{CTO} assumes that $F$ satisfies
${\bf P}_1$, ${\bf P}_2$ and ${\bf P}_3$, and that $A$ is a maximal localization of $L[t_1, \dots, t_d]$.
The inspection of the proof shows that instead of property ${\bf P}_3$, only the Zariski patching property
${\bf P}'_3$ was used. Furthermore, since every prime ideal of $L[t_1, \dots, t_d]$ is an intersection of maximal
ideals, and $F$ satisfies ${\bf P}_1$, the case where $A$ is a localization at a prime ideal follows from the case of maximal
localizations~\cite[p. 101, Premi\`ere r\'eduction]{CTO}.
\end{proof}

\bigskip

\subsection{Fields of representatives for henselian regular local rings}

The following fact was brought to our attention by K. \v{C}esnavi\v{c}ius.

\begin{slemma}\label{lem:hens-section}
Let $A$ be a henselian local ring containing a
prime field $k_0$. Then
$A$ is a filtered direct limit
of henselian local rings $A_i$ such that the map from $A_i$ to its residue field admits a section.
If $A$ is moreover regular, then the henselian local rings $A_i$ can be chosen regular as well.
\end{slemma}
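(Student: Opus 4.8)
The plan is to reduce everything to the \emph{prime} field $P$ of $A$ (which is $\QQ$ or $\FF_p$ according to the characteristic, and is contained in $k_0\subseteq A$), because the essential feature I will exploit is that $P$ is perfect: any finitely generated field extension of $P$ is separably generated, and such an extension can be lifted through a surjection onto the residue field of a henselian ring. This is exactly the property that the given base field $k_0$ may lack, so replacing $k_0$ by $P$ is the key manoeuvre.

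First I would dispose of the general (non-regular) case. Write $A=\limind_{\lambda} R_\lambda$ as the filtered union of its finite type $P$-subalgebras, let $m$ be the maximal ideal of $A$, and set $p_\lambda=m\cap R_\lambda$. Since every element of $R_\lambda\setminus p_\lambda$ maps to a unit of $A$, the structure map factors through $(R_\lambda)_{p_\lambda}\to A$, and as localization commutes with filtered colimits one gets $A=\limind_{\lambda}(R_\lambda)_{p_\lambda}$. Because $A$ is henselian it is its own henselization, and henselization commutes with filtered colimits of local rings along local homomorphisms \cite{St}; hence $A=\limind_{\lambda}(R_\lambda)^h_{p_\lambda}$. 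Each $A_\lambda:=(R_\lambda)^h_{p_\lambda}$ is henselian local with residue field $\kappa(p_\lambda)$ finitely generated over the perfect field $P$, so $\kappa(p_\lambda)$ admits a separating transcendence basis. Lifting such a basis to $A_\lambda$ (possible since the residue map is surjective and algebraic independence descends through it) produces a rational subfield of $A_\lambda$, and the residual finite separable extension then lifts by Hensel's lemma. This yields a coefficient field, i.e.\ a section of $A_\lambda\to\kappa(p_\lambda)$, proving the first assertion.

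For the regular case the obstacle is that the $A_\lambda$ above need not be regular, and I would resolve it by replacing the naive approximation with a smooth one. If $A$ is regular, then, $P$ being perfect, the map $P\to A$ is a regular ring homomorphism (its base changes to finite, hence separable, extensions of $P$ stay regular); by the N\'eron--Popescu theorem \cite{Po90,Swan} one may therefore write $A=\limind_{j}C_j$ with $C_j$ smooth finite type $P$-algebras mapping compatibly to $A$. Putting $q_j=\ker(C_j\to A\to k)$ and repeating the localization-and-henselization step gives $A=\limind_{j}(C_j)^h_{q_j}$, where each $(C_j)^h_{q_j}$ is the henselization of a smooth local $P$-algebra: it is henselian and regular (henselization preserves regularity \cite{St}), and its residue field is separable over $P$, so the same lifting argument furnishes a coefficient field. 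Thus the approximating rings can be chosen regular.

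The main obstacle, and the whole reason the colimit is needed, is the lifting of the residue field: over a non-perfect $k_0$ an inseparable residue extension may genuinely fail to lift into $A$, whereas over the perfect prime field every finitely generated residue field is separably generated and lifts. The remaining technical points I would check with care are that $A$ is indeed recovered as the filtered colimit of the henselizations $A_\lambda$ (respectively $(C_j)^h_{q_j}$), which rests on localization and henselization commuting with filtered colimits together with $A=A^h$, and that the lifted transcendence basis plus the Hensel-lifted separable generator really span a subfield mapping isomorphically onto the residue field.
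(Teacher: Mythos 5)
Your proposal is correct and follows essentially the same route as the paper's proof: reduce to a perfect (prime) base field, write $A$ as a filtered colimit of henselizations of localizations of finite-type algebras, build the coefficient field from a lifted separating transcendence basis plus a Hensel-lifted primitive element, and invoke N\'eron--Popescu in the regular case so that the approximating rings are henselizations of localizations of smooth algebras, hence regular. The only differences are expository (you spell out that henselization commutes with filtered colimits, where the paper simply replaces each $C_i$ by $(C_i)^h$), so there is nothing to add.
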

\begin{proof}
The local ring $A$ is a filtered direct limit of local rings $C_i$ that are localizations of finitely generated $k_0$-algebras
contained in $A$. Since $A$ is henselian, we can replace each $C_i$ by its henselization
$A_i=(C_i)^h$. Let  $k_i=A_i/m_i$ be the residue field of $A_i$. Then $k_i$ is a finitely generated field extension of $k_0$.
We claim that $A_i\to k_i$ admits a section. Indeed, since $k_0$ is perfect,
it follows that $k_i$ is separably generated over $k_0$, that is,
$k_i$ is a finite separable extension of a purely transcendental field extension
$L=k_0(t_1,\ldots,t_n)$ of $k_0$ of finite
transcendence degree~\cite[II, \S 13, Theorem 31]{ZaSa-I}.
Choose arbitrary
lifts $a_1,\ldots,a_n$ of $t_1,\ldots,t_n$ to $A_i$. Then $k_0(a_1,\ldots,a_n)\cong L$ is a subfield of $A_i$ that lifts $L$.
By the primitive element theorem $k_i=L[b]=L/P(t)$ where $P$ is a separable $L$--polynomial. Since $A$
is henselian, $P(t)$ has a root
$a \in A$ which lifts $b \in L$. We define then a $L$--map
$k_i=L[b] \to A$ by mapping $b$ to $a$.
The composite map $k_i \to  A \to k_i=L[b]$ is the identity as desired.

If $A$ is a regular henselian local ring, note that the embedding $k_0\to A$ is geometrically regular,
since $k_0$ perfect~\cite[(28.M), (28.N)]{Mats}.
Then by Popescu's theorem~\cite{Po90,Swan} $A$ is a filtered direct limit of localizations $C_i$ of smooth
$k_0$-algebras. Then the henselizations $A_i$ are also regular.
\end{proof}

\bigskip

\medskip

\end{document}